\numberwithin{equation}{section}
\newtheorem{teo}{Theorem}[section]
\newtheorem{prop}[teo]{Proposition}
\newtheorem{lemma}[teo]{Lemma}
\newtheorem{example}[teo]{Example}
\theoremstyle{definition} 
\newtheorem{definition}[teo]{Definition}
\theoremstyle{remark}
\newtheorem{remark}[teo]{Remark}
\newcommand{\del}{\partial}
\newcommand{\QQ}{\mathbb{Q}}
\newcommand{\Ric}{\text{Ric}}
\newcommand{\tr}{\text{tr}}
\newcommand{\st}{\left(}
\newcommand{\dt}{\right)}
\newcommand{\sq}{\left[}
\newcommand{\dq}{\right]}
\newcommand{\sg}{\left\{}
\newcommand{\dg}{\right\}}
\newcommand{\p}{\textbf{p}}
\newcommand{\q}{\textbf{q}}
\newcommand{\hg}{\textbf{h}}
\newcommand{\kg}{\textbf{k}}
\newcommand{\ag}{\textbf{a}}
\newcommand{\bg}{\textbf{b}}
\newcommand{\cg}{\textbf{c}}
\newcommand{\aaa}{{\bf a}}
\newcommand{\bbb}{{\bf b}}
\newcommand{\ccc}{{\bf c}}
\newcommand{\GGG}{ \mathbf G}
\newcommand{\etag}{\boldsymbol{\eta}}
\newcommand{\dd}{\partial\overline{\partial}}
\newcommand{\NN}{\mathbb{N}}
\newcommand{\s}{\textbf{S}}
\newcommand{\vol}{\text{Vol}_{\omega}}
\newcommand{\RR}{\mathbb{R}}
\newcommand{\ZZ}{\mathbb{Z}}
\newcommand{\CC}{\mathbb{C}}
\newcommand{\PP}{\mathbb{P}}
\newcommand{\Sp}{\mathbb{S}}
\newcommand{\ambbda}{\mathcal{B}_{\alpha}}
\newcommand{\dombd}{\mathfrak{B}\st\kappa, \beta,\sigma \dt}
\newcommand{\dombdmod}{\mathfrak{B}\st\kappa, \beta',\sigma' \dt}
\newcommand{\K}{K\"{a}hler}
\newcommand{\ww}{\bf{w}}
\newcommand{\aaaa}{\boldsymbol{\alpha}}
\newcommand{\bbbb}{\boldsymbol{\beta}}
\newcommand{\cccc}{\boldsymbol{\gamma}}
\newcommand{\hko}{\textbf{H}_{\hg,\kg}^{o}}
\newcommand{\Lg}{\mathbb{L}_{\omega}}
\newcommand{\Cc}{C_{\delta-4}^{0,\alpha}\st M_{\p} \dt}
\newcommand{\Cqdd}{C_{\delta}^{4,\alpha}\st M_{\p} \dt \oplus \mathcal{D}_{\p}\st \bg,\cg \dt}
\newcommand{\rep}{r_\varepsilon}
\newcommand{\xroi}{\chi_{j,r_0}}
\newcommand{\Rep}{R_\varepsilon}
\newcommand{\hkjj}{H_{\tilde{h}\tilde{k}}^{I}}
\newcommand{\hkii}{\textbf{H}_{\tilde{h}\tilde{k}}^{I}}
\newcommand{\Le}{\mathbb{L}_{\eta}}
\newcommand{\Ccx}{C_{\delta-4}^{0,\alpha}\st X \dt }
\newcommand{\csfii}{f_{B,\tilde{h},\tilde{k}}^{I}}
\newcommand{\cga}{c(\Gamma)}
\newcommand{\egaj}{e\st \Gamma_{j}\dt}
\newcommand{\egal}{e\st \Gamma_{N+l}\dt}
\newcommand{\cgaj}{c\st\Gamma_{j}\dt}
\newcommand{\cgal}{c\st\Gamma_{N+l}\dt}
\begin{document}

\title{On the resolution of constant scalar curvature K\"ahler orbifolds}
\author[Claudio Arezzo] {Claudio Arezzo}
\address{ICTP Trieste and Univ. of Parma, arezzo@ictp.it }
\author{Riccardo Lena}
\address{riccardo.lena@gmail.com}
\author{Lorenzo Mazzieri}
\address{Scuola Normale Superiore, Pisa, l.mazzieri@sns.it}

\begin{abstract}
In this paper, given a compact Kcsc orbifolds of any dimension and with nontrivial holomorphic vector fields,  we find sufficient conditions on the position of singular points
 in order to admit a Kcsc desingularization, generalizing the result of the first author with F. Pacard in the case of blowing up smooth points. A series of explicit examples are discussed.
\end{abstract}
\maketitle

\vspace{-,15in}

{\it{1991 Math. Subject Classification:}} 58E11, 32C17.

\section{Introduction}

\noindent The aim of this paper is to construct new families of K\"ahler constant scalar curvature (Kcsc from now on) metrics on 
compact complex manifolds and orbifolds. Thanks to the work of Rollin-Singer (\cite{RollinSinger}) and Arezzo-Pacard (\cite{ap1},\cite{ap2}) we have a good understanding of the behavior of the Kcsc equation under the standard blow up process when then base orbifold has no holomorphic vector fields, or when the base manifold is smooth but nontrivial holomorphic vector fields do exist.

\noindent This raises the natural question of whether more refined geometric constructions can be performed when the base object is singular and has continuous holomorphic symmetries. Even assuming isolated singular singular points on the base, this problem is widely open and delicate.
In this paper we address this question assuming the base $M$ has only {\em{isolated quotient singularities}}, hence {\em locally of the form $\CC^m/\Gamma_j$, where $m$ is the complex dimension of $M$, $j\in J$ parametrizes the set of points we want to desingularize, and $\Gamma_j$ is a finite subgroup of $U(m)$ acting freely away from the origin}.

\noindent Given such a singular object there is more than one way to ``desingularize" it, i.e. seeing it as a degeneration of smooth manifolds
 (or orbifolds with fewer singular points), one of which is of particular importance:  replace a small neighborhood of a singular point and replace it with a large piece of a K\"ahler {\em{resolution}} $\pi\colon (X_{\Gamma}, \eta) \rightarrow \CC^m/\Gamma$. For such a construction to even have a chance to preserve the Kcsc equation it is necessary that $(X_{\Gamma}, \eta)$ is scalar flat, i.e. it is necessary to assume that {\em{$\CC^m/\Gamma_j$ has a scalar flat ALE resolution.}}

\medskip

\noindent More precisely, having fixed a set of singular points $\{p_1, \dots, p_n\} \subset M$ each corresponding to a group $\Gamma_j$, and denoted by
$ B_{j,r}  : =  \{  z \in {\mathbb C}^{m} / \, \Gamma_j
\, : \,  |z| < r \}, $
we can define, for
all $r >0$ small enough (say $r \in (0, r_0)$)
\begin{equation}
M_r : = M \setminus \cup_j \, B_{j,r}  . \label{eq:2.3}
\end{equation}

\noindent On the other side, for each $j = 1, \ldots, n$, we are given
a $m$-dimensional \K\ manifold $(X_{\Gamma_j}, \eta_j)$, with one end biholomorphic to a
neighborhood of infinity in ${\mathbb C}^{m} / \, \Gamma_j$. 
\noindent Dual to the previous notations on the base manifold, we set 
$C_{j , R}  : =  \{ x \in {\mathbb C}^{n} / \, \Gamma_j \,
: \, |x | >  R \}, $
the complement of a closed large ball and the complement of an open
large ball in $X_{\Gamma_j}$ (in the  coordinates which parameterize a
neighborhood of infinity in $X_{\Gamma_j}$). We define, for all $R > 0$ large
enough (say $R > R_0$)

\begin{equation}
X_{\Gamma_j, R} : = X_{\Gamma_j} \setminus C_{j , R}. \label{eq:2.5}
\end{equation}
which corresponds to the manifold $X_{\Gamma_j}$ whose end has been
truncated. The boundary of $X_{\Gamma_j,R}$ is denoted by $\del C_{j,R}$.

\medskip

\noindent We are now in a position to describe the generalized connected sum
construction. Indeed, for all
$\varepsilon \in (0, r_0/R_0)$, we choose $r_{\varepsilon} \in (\varepsilon \, R_0 , r_0)$ and
define
\begin{equation}
R_{\varepsilon} : =  \frac{r_{\varepsilon}}{\varepsilon} . \label{eq:2.6}
\end{equation}
By construction
\[
M_{\varepsilon} : = M \sqcup _{{p_{1}, \varepsilon}} X_{\Gamma_1} \sqcup_{{p_{2},\varepsilon}} \dots
\sqcup _{{p_n, \varepsilon}} X_{\Gamma_n} ,
\]
is obtained by connecting  $M_{r_\varepsilon}$ with the truncated ALE spaces
$X_{\Gamma_{1, R_\varepsilon}}, \ldots, X_{\Gamma_{n , R_\varepsilon}}$. The identification of the
boundary $\del B_{j , r_{\varepsilon}}$ in $M_{r_{\varepsilon}}$ with the boundary $\del
C_{j , R_\varepsilon}$ of $X_{\Gamma_j, R_\varepsilon}$ is performed using the change of
variables
\[
(z^{1} , \ldots, z^{m} )  = \varepsilon \, (x^{1} , \ldots, x^{m}) ,
\]
where $(z^{1}, \ldots, z^{m} )$ are the coordinates in $B_{j , r_0}$
and $(x^{1}, \ldots, x^{m})$ are the coordinates in $C_{j , R_0}$.

\vspace{3mm}

\noindent It was proved in \cite{ap1} that if no nontrivial holomorphic vector fields exist on $M$ the ALE scalar flat condition is also sufficient to construct a family parametrized by the gluing parameter $\varepsilon$ on the manifold (or orbifold) obtained by this procedure. On the other hand, the known picture for the blow up, suggests that the number and position of points should be relevant to achieve the same existence theorem in presence of continuous symmetries. The main result of this paper is to exhibit this obstruction (the {\em{balancing condition}}) in this new singular case.

\noindent The content of the analysis performed in this paper is that the results strongly depends on $\Gamma_j$ or in turns on $(X_{\Gamma_j}, \eta_j)$.
Indeed, we will show that there are essentially two possibilities, and the new balancing condition takes a very different shape:
\begin{enumerate}
\item
there exists $K\subset J$ s.t. $(X_{\Gamma_k}, \eta_k)$ is scalar flat {\em{but not Ricci flat}} for $k\in K$
(hence $\Gamma_j$ has to contained in $U(m)$ but not in $SU(m)$ unless it is trivial);
\item
 $(X_{\Gamma_j}, \eta_j)$ is Ricci flat for all $j\in J$ (hence $\Gamma_j$ has to be nontrivial and contained in $SU(m)$).
\end{enumerate}

\noindent Of course we are not interested in flat models since they wouldn't change the original metric. A notable special case of the first situation is when $K=J$, but it easy to see that the original analysis carried out in \cite{ap2} would continue to hold and so the old balancing condition as stated for standard blow ups will continue to suffice to get the result. 

\noindent This fact keeps on holding as long as $K$ is nonempty and in fact we will see that Ricci-flat singularities do not contribute to the balancing condition, and only the balancing condition on scalar (non Ricci) flat points contribute. The key difference in handling these cases come form the following observation:
it is a standard fact  (see Proposition \ref{asintpsieta2}) that a scalar flat metric on an ALE space can be written as

\begin{equation}
\eta _j=i\partial\overline{\partial}\st \frac{|x|^2}{2}+e({\Gamma_j})|x|^{4-2m} - c({\Gamma_j})|x|^{2 - 2m} +\psi_{\eta_j}\st x \dt \dt \,,
\end{equation}
for $m \geq 3$ and 
\begin{equation}
\eta_{j}=i\dd\st \frac{|x|^{2}}{2}+e({\Gamma_j})\log\st |z| \dt - c({\Gamma_j})|z|^{-2}+\mathcal{O}\st |z|^{-4} \dt \dt
\end{equation}
for $m=2$, with $e({\Gamma_j}), c({\Gamma_j})\in \RR$ and where the function $\psi_\eta$ satisfies the estimate
\begin{equation}
\psi_{\eta}\st x \dt=\mathcal{O}\st |x|^{ - 2m} \dt \, .\label{eq:asintpsieta4-2m}
\end{equation}

\noindent  Ricci flat models are forced to have $e(\Gamma_j) = 0$  and $c(\Gamma_j) >0$ unless it is flat, even though it is an important open problem whether this condition characterizes such metrics. In any case, this missing term is exactly what force us to find a genuinely new route to prove the existence of Kcsc metrics.

\noindent In order to state our results, let us recall that the linear part of the scalar curvature map is given by the (Lichnerowicz) operator
\begin{equation}
{\mathbb L}_{\omega} f \,  =  \, \Delta^{2}_{\omega} f \,   +  \, 4 \, \langle \, \rho_{\omega} \, | \,   i\dd f \, \rangle  \, , 
\end{equation}

\noindent and from now on we denote by $\varphi_{1},\ldots,\varphi_{d}$  an independent set of smooth functions of zero mean
such that $\{1,\varphi_{1},\ldots,\varphi_{d}\}$ forms a basis of ker$\Lg$. It is a classical fact that this space is in fact isomorphic to the space of holomorphic vector fields vanishing somewhere on $M$. The case when $K \neq \emptyset$ is in fact much easier 
than the case $K=\emptyset$, and in fact can be handled by simple variations of the original Arezzo-Pacard construction
 (it was in fact already stated in complex dimension $2$ without proof in \cite{RollinSingerII}). 
 We will state and prove the output of this analysis in this case also because we believe it largely clarifies the most difficult case.

\begin{teo}
\label{belliebrutti}
Let $(M,g,\omega)$ be a Kcsc orbifold with isolated singularities.
\begin{itemize} 
\item Let $\p\:=\sg p_1,\ldots,p_{N}\dg\subseteq M$ the set of points  with neighborhoods biholomorphic to  a ball of  $\CC^m/\Gamma_{j}$ with $\Gamma_{j}$ nontrivial such that $\CC^{m}/\Gamma_{j}$ admits an ALE  Kahler Ricci-flat resolution $\st X_{\Gamma_{j}},h,\eta_{j} \dt$.

\item Let $\q:=\sg q_1,\ldots,q_{K}\dg\subseteq M$
be a set of points with neighborhoods biholomorphic to a ball of  $\CC^m/\Gamma_{N+l}$ such that
$\CC^{m}/\Gamma_{N+l}$ admits a scalar flat ALE resolution $(Y_{\Gamma_{N+l}},k_{l},\theta_{l})$ with $e({\Gamma_{N+l}})\neq 0$.
\end{itemize}

\noindent 
If there exist $\ag:=\st a_{1},\ldots,a_{K} \dt\in \st\RR^{+}\dt^{K}$ such that

	\begin{displaymath}
	\left\{\begin{array}{lcl}
	\sum_{l=1}^{K}\frac{a_{l}e\st \Gamma_{N+l} \dt}{|\st \Gamma_{N+l} \dt|}\varphi_{i}\st q_{l} \dt=0 && i=1,\ldots, d\\
	&&\\
	\st \frac{a_{l} e(\Gamma_{N+l})}{|e(\Gamma_{N+l})|}  \varphi_{i}\st q_{l} \dt \dt_{\substack{1\leq i\leq d\\1\leq l\leq K}}&& \textrm{has full rank}
	\end{array}\right.
	\end{displaymath}


then there exists $\varepsilon_0>0$ such that  for any $\varepsilon < \varepsilon_0$ and any ${\bf{b}} = (b_1, \dots, b_n) \in \st\RR^{+}\dt^{N}$

\[
{\tilde{M}} : = M \sqcup _{{p_{1}, \varepsilon}} X_{\Gamma_1} \sqcup_{{p_{2},\varepsilon}} \dots
\sqcup _{{p_{N}, \varepsilon}} X_{\Gamma_{N}}\sqcup_{{q_{1}, \varepsilon}} X_{\Gamma_{N+1}} \sqcup_{{q_{2},\varepsilon}} \dots \sqcup _{{q_{N+K}, \varepsilon}}X_{\Gamma_{N+K}},
\]

\noindent has a Kcsc metric in the class  $\pi^*[\omega] + \sum_{l=1}^K\varepsilon^{2m-2} \mathfrak{a}_{l} ^{2m-2}[\tilde{\theta_{l}}] + \sum_{j=1}^N\varepsilon^{2m}\mathfrak{b}_{j}^{2m}  [\tilde{\eta_j}]   $ where
\begin{itemize}
\item 
$\mathfrak{i}_{l}^{*}\sq \tilde{\theta}_{l} \dq=[\theta_{l}]$
with  $\mathfrak{i}_{l}:Y_{\Gamma_{N+l},\Rep}\hookrightarrow \tilde{M}$ the standard inclusion (and analogously for $\tilde{\eta}_{j}$),
\item for some $\gamma >0$
\begin{align}
\left|\mathfrak{a}_{l}^{2m-2} - \frac{|\Gamma_{N+l}|a_{l}}{4 \, |\Sp^{3}||e({\Gamma_{N+l}})|}\right| \leq C \varepsilon^{\gamma}&\qquad &\textrm{for }m= 2\\
\left|\mathfrak{a}_{l}^{2m-2} - \frac{|\Gamma_{N+l}|a_{l}}{8(m-2)(m-1) \, |\Sp^{2m-1}||e({\Gamma_{N+l}})|}\right| \leq C \varepsilon^{\gamma}&\qquad& \textrm{for }  m\geq 3
 \end{align}
\item
and $\mathfrak{b}_{j}$ is a continuous function of $\varepsilon$ and $\ag$, such that $|\mathfrak{b}_{j}^{2m}-b_j| = o(1)$.

\end{itemize}
\end{teo}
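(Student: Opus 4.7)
The plan is to adapt the Arezzo-Pacard gluing strategy of \cite{ap1, ap2} to the present singular setting. The key observation is that the scalar flat but non Ricci flat ALE pieces $(Y_{\Gamma_{N+l}}, \theta_l)$ produce, through the coefficient $e\st\Gamma_{N+l}\dt \neq 0$, an error term of order $\varepsilon^{2m-2}$, which is precisely the order at which the balancing condition is formulated. The Ricci flat pieces contribute only at the strictly higher order $\varepsilon^{2m}$, and hence their scaling parameters $\bg$ remain essentially free, appearing in the final statement as the $o(1)$ relation $|\mathfrak{b}_j^{2m} - b_j| = o(1)$.

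First I would build a family of approximate Kähler metrics $\omega_{\varepsilon, \ag, \bg}$ on $\tilde M$ by gluing, via cutoff functions supported in annular transition regions around each singular point, the base metric $\omega$ with truncated versions of the asymptotic Kähler potentials of $\varepsilon^2 \eta_j$ at each $p_j$ and of $\mathfrak{a}_l^{2m-2} \varepsilon^2 \theta_l$ at each $q_l$. By Proposition \ref{asintpsieta2} the leading contribution to the deviation of the scalar curvature from the constant $s_0$ is captured by the term $e\st \Gamma_{N+l}\dt |x|^{4-2m}$, whose bilaplacian is, up to an explicit dimensional constant (namely $8(m-2)(m-1)|\Sp^{2m-1}|$ for $m\geq 3$ and $4|\Sp^3|$ for $m=2$), a Dirac mass at $q_l$. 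Tested against the basis $\sg \varphi_1, \ldots, \varphi_d \dg$ of $\ker \Lg$ modulo constants, this error evaluates at leading order to a universal multiple of $\sum_{l=1}^K \frac{a_l \, e\st \Gamma_{N+l}\dt}{|\Gamma_{N+l}|} \varphi_i\st q_l \dt$, precisely the expression appearing in the first equation of the balancing hypothesis.

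Next I would set up weighted Hölder spaces $C^{4,\alpha}_\delta$ with weight $\delta \in (4-2m,0)$ on the truncated base and on each truncated ALE piece and, following \cite{ap1,ap2}, construct uniformly bounded right inverses of the linearized scalar curvature operator $\Lg$ on each piece. The cokernel on the base orbifold is $d$-dimensional, spanned by $\varphi_1,\ldots,\varphi_d$. The first equation of the balancing hypothesis, satisfied by the chosen $\ag$, kills the leading order projection of the error onto this cokernel; the full rank condition on the matrix $\st \frac{a_l e(\Gamma_{N+l})}{|e(\Gamma_{N+l})|} \varphi_i(q_l) \dt$ then ensures that finite dimensional perturbations of $\ag$ span the cokernel, allowing one to absorb the higher order residual projection via a finite dimensional implicit function argument.

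Finally the Kcsc equation on $\tilde M$ is reformulated as a fixed point problem $u = \mathcal{N}_\varepsilon(u, \ag, \bg)$ on a small ball of $C^{4,\alpha}_\delta$, where the nonlinear remainder is governed by the quadratic structure of the scalar curvature map together with weighted Schauder estimates. A contraction mapping combined with the finite dimensional adjustment of $\ag$ and free choice of $\bg$ yields $u_\varepsilon$, $\mathfrak{a}_l(\varepsilon)$, $\mathfrak{b}_j(\varepsilon)$ solving the equation for every $\varepsilon < \varepsilon_0$; tracing the leading order matching of the coefficient of $e\st\Gamma_{N+l}\dt$ to $a_l$ gives the explicit formulas for $\mathfrak{a}_l^{2m-2}$ up to the stated error $C\varepsilon^\gamma$. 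The main technical obstacle I anticipate is the mixed scale bookkeeping: organizing the iteration so that the coarser scale $\varepsilon^{2m-2}$ drives the balancing on the $\q$ points while the finer $\varepsilon^{2m}$ corrections arising from the $c(\Gamma_j)$ terms on the Ricci flat pieces and from the cutoff commutators are genuinely absorbed as higher order perturbations and do not pollute the leading order cancellation of the cokernel projection.
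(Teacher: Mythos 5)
Your overall architecture is sound: you correctly identify that the scalar-flat-but-not-Ricci-flat pieces inject an $\varepsilon^{2m-2}$-order error through $e(\Gamma_{N+l})|x|^{4-2m}$ whose bilaplacian is a Dirac mass, that the balancing condition kills the leading projection onto $\ker(\Lg)$, and that the Ricci-flat pieces only contribute at the strictly higher order $\varepsilon^{2m}$ so that $\bg$ is essentially free. The explicit constants $8(m-2)(m-1)|\Sp^{2m-1}|$ and $4|\Sp^3|$ are also right.

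However, your implementation is a genuinely different route from the paper's. You propose a single global gluing: build one approximate metric on $\tilde M$ via cutoff interpolation, then perturb globally by a fixed point on a ball of $C^{4,\alpha}_\delta(\tilde M)$, absorbing the cokernel by a finite dimensional implicit function argument in $\ag$. This is the Lyapunov--Schmidt style used, e.g., by Sz\'ekelyhidi. The paper instead follows the Arezzo--Pacard \emph{Cauchy data matching} scheme: it first constructs, by separate fixed point arguments, genuine Kcsc metrics on the truncated base $M_{r_\varepsilon}$ (Proposition \ref{famigliabaseap2}) and on each truncated ALE piece (Proposition \ref{famigliamodelloap2}), each depending on boundary parameters $(\hg,\kg)$ and $(\tilde h,\tilde k)$, and then matches the resulting Cauchy data along the gluing spheres via the Dirichlet-to-Neumann isomorphism $\mathcal{P}$ of Theorem \ref{dirneu}. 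In the paper, the balancing hypothesis never appears as a condition on the projection of a global error onto a cokernel; it appears earlier, in Proposition \ref{balancrough} and Remark \ref{gabc}, as the solvability condition for building the multi-poles fundamental solution $\GGG_{\aaa,\bbb,\cccc}$, which is then subtracted into the ansatz so that the residual lies in $C^{4,\alpha}_\delta(M_\p,\q)\oplus\mathcal{D}_{\p,\q}(\aaaa,\bbbb,\cccc)$ and the linear operator is inverted there (Theorem \ref{invertibilitapesatodef} and Remark \ref{inversadef}). This is why you must keep in mind, if you pursue your route, that the singular profiles $|z|^{4-2m}$ near the $q_l$'s and $|z|^{2-2m}$ near the $p_j$'s are not elements of $C^{4,\alpha}_\delta$ with $\delta\in(4-2m,0)$, so either you build them into the approximate solution in a controlled way (which is precisely what $\GGG_{\aaa,\bbb,\cccc}$ does) or you must enlarge the domain by the deficiency spaces as the paper does; your outline glosses over this and it is exactly where the balancing/nondegeneracy hypothesis enters the linear theory. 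Both schemes can be made to work here, since the paper itself notes this case is handled ``by simple variations of the original Arezzo--Pacard construction''; the data-matching route buys you a cleaner separation of the ALE linear analysis (Proposition \ref{isomorfismopesati}) from the base linear analysis, while your global approach trades that for a single iteration at the price of justifying uniform invertibility and the cokernel identification on the degenerating family $\tilde M_\varepsilon$.
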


\noindent To check whether the above conditions are satisfied, it is then required to know only the sign of the coefficients $e(\Gamma)$, which is 
in turn a problem of great independent interest because of the importance of  these models in physical context (see for example \cite{LeBrun}, \cite{Honda}, \cite{CalderbankSinger}).
On the other hand it is clear that a change of ${\bf{b}}$ changes the final K\"ahler class where the Kcsc metric lives.

\noindent Whether, given $\Gamma$ in $U(m)$ ($SU(m)$ respectively), a scalar flat (respectively Ricci flat) K\"ahler resolution exists
is by itself an important problem in different areas of mathematics and we will not digress on it here. It suffices to recall the reader that 
Ricci flat  models do exist for any subgroup of $SU(m)$ with $m=2$, thanks to the work of Kronheimer, while in higher dimension one need to assume the existence of a K\"ahler crepant resolution and then apply deep results by Joyce \cite{j} and Van Coevering (\cite{VanCoevering}). As for scalar (non Ricci)  flat models even in dimension $2$, Calderbank-Singer have produced large families of examples associated to cyclic subgroups (\cite{CalderbankSinger}).

\medskip

\noindent  The main result of this paper is then the following which handles the case $K=\emptyset$:

\begin{teo}\label{maintheorem}
			Let $\st M,g,\omega\dt$ be a compact Kcsc orbifold with isolated singularities. Let $\p\:=\sg p_1,\ldots,p_{N}\dg\subseteq M$ the set of points  with neighborhoods biholomorphic to  a ball of  $\CC^m/\Gamma_{j}$ with $\Gamma_{j}$ nontrivial such that $\CC^{m}/\Gamma_{j}$ admits an ALE  Kahler Ricci-flat resolution $\st X_{\Gamma_{j}},h,\eta_{j} \dt$. Suppose moreover that there exists $ \bg \in \mathbb{R}_{+}^{N}$ and $\cg\in\RR^{N}$ such that
			
		\begin{displaymath}
		\left\{\begin{array}{lcl}
		\sum_{j=1}^{N}b_{j}\Delta_{\omega}\varphi_{i}\st p_{j} \dt+c_{j}\varphi_{i}\st p_{j} \dt=0 && i=1,\ldots, d\\
		&&\\
		\st b_{j}\Delta_{\omega}\varphi_{i}\st p_{j} \dt+c_{j}\varphi_{i}\st p_{j} \dt \dt_{\substack{1\leq i\leq d\\1\leq j\leq N}}&& \textrm{has full rank}
		\end{array}\right.
		\end{displaymath}
	
	If 
	\begin{equation}\label{eq:tuning}
	c_{j}=s_{\omega}b_{j}
	\end{equation}

	Then  there exists $\varepsilon_0>0$ such that  for any $\varepsilon < \varepsilon_0$ 
	\[
	\tilde{M} : = M \sqcup _{{p_{1}, \varepsilon}} X_{\Gamma_1} \sqcup_{{p_{2},\varepsilon}} \dots
	\sqcup _{{p_N, \varepsilon}} X_{\Gamma_N}
	\]

	\noindent has a Kcsc metric in the class $\pi^{*}\sq\omega \dq+ \sum_{j=1}^{N}\varepsilon^{2m}\mathfrak{b}_{j}^{2m}\sq \tilde{\eta}_{j} \dq$ where
	$\mathfrak{i}_{j}^{*}\sq \tilde{\eta}_{j} \dq=[\eta_{j}]$
	with  $\mathfrak{i}_{j}:X_{\Gamma_{j},\Rep}\hookrightarrow \tilde{M}$,
	and 
	$\left|\mathfrak{b}_{j}^{2m} - \frac{|\Gamma_{j}|b_{j}}{2\st m-1 \dt}\right| \leq C \varepsilon^{\gamma}$, for some $\gamma>0$.
	
\end{teo}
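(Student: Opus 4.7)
The plan is to implement a generalized connected sum gluing in the spirit of Arezzo--Pacard \cite{ap2}, adapted to the specific features imposed by the Ricci-flatness of every resolution $(X_{\Gamma_j},\eta_j)$. First I would construct a family of approximate K\"ahler metrics $\omega_\varepsilon$ on $\tilde M$ depending on parameters $\mathfrak{b}=(\mathfrak{b}_1,\dots,\mathfrak{b}_N)$ together with boundary matching data. On the truncated base $M_{r_\varepsilon}$ take $\omega_\varepsilon=\omega+i\dd u_\varepsilon$, where $u_\varepsilon$ is a linear combination of Green-type potentials for $\Lg$ at each $p_j$ whose leading behaviour in local coordinates is $b_j|z|^{4-2m}+c_j|z|^{2-2m}+O(1)$, with $b_j,c_j$ temporarily treated as free parameters. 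On each rescaled ALE end $X_{\Gamma_j,\Rep}$ take $\varepsilon^2\mathfrak{b}_j^{2m}\eta_j$, whose K\"ahler potential expands, by Proposition \ref{asintpsieta2}, as $|x|^2/2-c(\Gamma_j)|x|^{2-2m}+\psi_{\eta_j}(x)$. A cut-off argument on the annulus at scale $r_\varepsilon$ glues the two potentials, producing an approximate Kcsc metric whose scalar-curvature error is concentrated in the overlap region and controlled by powers of $\varepsilon$ in weighted H\"older norms.

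Second, I would set up the linear theory in weighted spaces $\Cqd$ and $\Cdx$ in which $\Lg$ on $M_\p$ and $\mathbb{L}_{\eta_j}$ on each $X_{\Gamma_j}$ are Fredholm. On the base the kernel is $\spa\{1,\varphi_1,\dots,\varphi_d\}$ and the $L^2(\omega)$-adjoint kernel gives the finite-dimensional obstruction to solvability. On each Ricci-flat $X_{\Gamma_j}$ the operator $\mathbb{L}_{\eta_j}$ reduces to $\Delta^2_{\eta_j}$, whose weighted mapping theory is standard; the absence of an $|x|^{4-2m}$ tail in $\eta_j$ (encoded by $e(\Gamma_j)=0$) determines precisely which asymptotic modes can be realized at the outer boundary $\partial C_{j,\Rep}$.

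Third, I would solve the nonlinear Kcsc equation $s(\omega_\varepsilon+i\dd v)=s_\omega$ via a Banach-space fixed-point argument, writing it as $\mathbb{L}_{\omega_\varepsilon}v + Q(v) = s_\omega - s(\omega_\varepsilon)$ with $Q$ quadratic in the Hessian of $v$. The unknown $v$ decomposes into a piece $v^M$ on $M_{r_\varepsilon}$ and pieces $v^{X_j}$ on the truncated ends, matched to fourth order in Cauchy data across each annulus. The matching, together with the parameters $(\mathfrak{b}_j,b_j,c_j)$ and the boundary modes, reduces the problem to a finite-dimensional nonlinear system whose linearization at $\varepsilon=0$ is essentially the matrix $\bigl(b_j\Delta_\omega\varphi_i(p_j)+c_j\varphi_i(p_j)\bigr)_{i,j}$. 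The full-rank hypothesis gives invertibility transverse to the holomorphic-vector-field kernel, while the balancing equations ensure that the projection of the gluing source onto that kernel vanishes to leading order. An implicit function argument then produces $v$ together with the sharp asymptotics $\mathfrak{b}_j^{2m}=|\Gamma_j|b_j/(2(m-1))+O(\varepsilon^\gamma)$.

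The main difficulty, and the structural reason both $\Delta_\omega\varphi_i(p_j)$ and $\varphi_i(p_j)$ appear in the balancing (in contrast to Theorem \ref{belliebrutti}, where only $\varphi_i(p_j)$ enters), is precisely the vanishing of $e(\Gamma_j)$: it suppresses the leading $|x|^{4-2m}$ tail on the ALE side, so the matching forces introducing on the base side \emph{both} an $|z|^{4-2m}$ Green-type singularity and a $|z|^{2-2m}$ correction. Pairing the resulting localized source at $p_j$ against $\varphi_i$ and integrating by parts (using $\Lg\varphi_i=0$) then yields a monopole term $c_j\varphi_i(p_j)$ from the $|z|^{2-2m}$ mode and a higher-moment term $b_j\Delta_\omega\varphi_i(p_j)$ from the $|z|^{4-2m}$ mode. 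The tuning $c_j=s_\omega b_j$ is dictated by the internal compatibility of the $\Lg$-Green's function expansion at $p_j$: the zero-order piece of $\Lg$ forces the coefficients of the two modes in a bona fide Green kernel to be linearly related via $s_\omega$. Once this tuning is imposed and the balancing condition is satisfied, the nondegeneracy granted by the full-rank hypothesis allows the fixed-point scheme to converge to the desired Kcsc metric.
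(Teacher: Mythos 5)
Your outline (glue metric, set up weighted linear theory, solve by fixed point with a finite-dimensional reduction controlled by the balancing condition and the full-rank hypothesis) is the right framework and is in essence what the paper does, but there are three genuine gaps, the first of which is the most serious.

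First, the naive Arezzo--Pacard cut-off gluing that you sketch does not close in the case where every model is Ricci flat. This is precisely the content of Proposition~\ref{famigliabaseap2} and the remark that follows it: if one applies the machinery of \cite{ap2} as is, the error estimate for the skeleton carries a constant that depends on the boundary-data parameter $\kappa$ (not just on a power of $\varepsilon$), and this dependence forbids the final fixed-point argument. What is required -- and what your proposal does not address -- is a much more refined construction: on the base one must transplant the rescaled Ricci-flat potentials $\psi_{\eta_j}$ and exploit the algebraic cancellation that scalar flatness provides, $-\Delta^2\mathfrak{p}_j + 2\NN_{eucl}(-c(\Gamma_j)\varepsilon^{2m}B_j^{2m}|z|^{2m}+\mathfrak{p}_j)=0$, as in Lemma~\ref{stimascheletrobase}; on the models one must build a refined skeleton using corrections $u_4,v_4,u_5,v_5$ whose very existence depends on the mapping characterization of Proposition~\ref{GAP} together with the volume-form rigidity of Lemma~\ref{misuraeuclidea}, and likewise the biharmonic extension must be replaced by the corrected $\hkii$ of~\eqref{eq:thki}. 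Without these, the error estimates you describe (``controlled by powers of $\varepsilon$'') are not uniform in the parameters and the contraction scheme fails.

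Second, you have the Green-function/eigenfunction pairing reversed: the $|z|^{2-2m}$ tail is the harmonic Green kernel, so its source is $\Delta\delta_{p_j}$ and pairing against $\varphi_i$ produces $\Delta_\omega\varphi_i(p_j)$, whereas the $|z|^{4-2m}$ tail is the biharmonic Green kernel, so its source is $\delta_{p_j}$ and pairing produces $\varphi_i(p_j)$. You both swap the $b_j,c_j$ labelling relative to the paper and swap which pairing yields the ``monopole'' versus ``higher-moment'' contribution, so the two errors cancel and your final balancing condition happens to look right, but the intermediate reasoning is incorrect.

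Third, the tuning $c_j=s_\omega b_j$ is not dictated by internal consistency of the $\Lg$-Green kernel. In Lemma~\ref{Glapl} the coefficients $b_j,c_j$ are genuinely free; the $s_\omega$-dependence of the expansion is absorbed by adding a suitable multiple of $G_{\Delta\Delta}(p_j,\cdot)$. The tuning~\eqref{eq:tuning} instead arises from the data-matching step: the coefficient $C_j$ is fixed once by the outer Green-function correction in~\eqref{eq:Cj} and independently forced by the $|w|^{4-2m}$ Fourier mode matching across the gluing annulus in~\eqref{eq:sceltaC}; requiring these two determinations to agree yields exactly $c_j=s_\omega b_j$. This is a geometric compatibility condition on the glued data, not an algebraic identity of the Green function.
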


\vspace{4mm}

\noindent The above results deserves few comments: first of all it would of great interest to interpret these new balancing conditions in terms of the algebraic data of the orbifold, at least when starting with a polarized object, very much in the spirit of Stoppa's interpretation of Arezzo-Pacard's result (\cite{Stoppa}).

\noindent Our results can also be seen as ``singular perturbation" results applied to the original singular space {\em{fixing}} the complex structure and deforming the K\"ahler class. A very different, though parallel in spirit, analysis can be done by thinking of keeping the K\"ahler class fixed and {\em{moving the complex structure}}. Unfortunately nobody has been able to prove gluing theorems for integrable complex structures so far, but {\em{assuming that such a deformation exists}}, this dual analysis, with no holomorphic vector fields and in complex dimension two, has been done in the important work on Spotti (\cite{Spotti}) in the Einstein  and special ordinary double point case, and by Biquard-Rollin (\cite{BiquardRollin}) in the Kcsc case for general $\QQ$-Gorenstein singularities. Finally, no doubt our results have a version for {\em{extremal}} metrics exactly as Arezzo-Pacard's papers did, notably in \cite{aps} and in the works of G. Szekelyhidi (\cite{Gabor1}, \cite{Gabor2}). In fact the second author will soon complete the proof of the analogue result (\cite{Lena}). In fact the present work and \cite{Gabor2} share some intriguing analogies. While technically, as the reader of this paper will see, a gluing theorem in the presence of a $4-2m$ term on the model is easier than the $2-2m$ case, since in \cite{Gabor2} one wants to keep control of the next asymptotic in the blow up, one is forced to face some problems similar to ours. It would  certainly  be interesting to go deeper in this connection.

\vspace{2mm}

\noindent Turning back to our results, we can then look for new examples of full or partial desingularizations of Kcsc orbifolds. 
Of course it will be very hard on a general orbifold to compute $\Delta_{\omega}\varphi_j$. On the other hand, assuming for example that $M$ is Einstein and using \begin{equation}
\Delta_{\omega}\varphi_{j}=-\frac{s_{\omega}}{m}\varphi_{j}\, ,
\end{equation}
\noindent the balancing condition requires only the knowledge of the value of the $\varphi_j$ at the singular points. Moreover these values are easily computed for example in toric setting by the well known relationship between the evaluation of the potentials $\varphi_j$ and the image point via the moment map. 
 With these classical observations one can then look for toric K\"ahler-Einstein orbifolds with isolated quotient singularities to test to which of them our results can be applied. In complex dimension $2$ things are pretty simple and in fact two such examples are 
\begin{itemize}
\item
$\st\PP^{1}\times\PP^{1},\pi_{1}^{*}\omega_{FS}+\pi_{2}^{*}\omega_{FS}\dt$ with $\ZZ_{2}$ acting by 
\begin{equation}\st[x_{0}:x_{1}],[y_{0}:y_{1}]\dt\longrightarrow  \st[x_{0}:-x_{1}],[y_{0}:-y_{1}]\dt\end{equation} 

\item

$\st\PP^{2},\omega_{FS}\dt$ with $\ZZ_{3}$ acting by
\begin{equation}[z_{0}:z_{1}:z_{2}]\longrightarrow  [x_{0}:\zeta_{3}x_{1}:\zeta_{3}^{2}x_{2}]\qquad \zeta_{3}\neq 1, \zeta_{3}^{3}=1\end{equation} 

\end{itemize}

\noindent In both cases we will show in Section $7$ that our results provide a {\em{full}} Kcsc desingularization (in the first case applied to $4$ singular $SU(2)$ points, while $3$ $SU(2)$ points in the second).

\noindent Working out higher dimensional examples turned out to be much more challenging than we expected. Even making use of the beautiful 
database of Toric Fano Threefolds run by G. Brown and A. Kasprzyk (\cite{GRD}, see also \cite{kasprzyk}) and their amazing help in implementing 
a complete search of Einstein ones with isolated singularities, we could only extract orbifolds where only a partial Kcsc resolution is possible.
In fact they produced a complete list (see \cite{GRD2}) of toric Fano threefolds s.t.

\begin{itemize}
\item
 they have only isolated quotient singular points;
\item
their moment polytope has barycenter in the origin (this implies the Einstein condition, thanks to a well known result by Mabuchi \cite{Mabuchi});
\item
each singular point is a $\CC^3/\Gamma$, $\Gamma \in U(3)$.
\end{itemize}

\noindent For example, let  $X^{(1)}$ be the toric K\"ahler-Einstein threefold whose  1-dimensional fan  $\Sigma^{(1)}_{1}$ is generated by points
\begin{equation}\Sigma^{(1)}_{1}=\left\{(1,3,-1), (-1,0,-1), (-1,-3,1), (-1,0,0), (1,0,0), (0,0,1), (0,0,-1), (1,0,1)\right\}
\end{equation}
and its $3$-dimensional fan $\Sigma^{(1)}_{3}$  is generated by $12$ cones 
\begin{align}
C_{1}:=& \left<        (-1,  0, -1),(-1, -3,  1),(-1,  0,  0)\right>\\
C_{2}:=& \left<        ( 1,  3, -1),(-1,  0, -1),(-1,  0,  0)\right>\\
C_{3}:=& \left<        (-1, -3,  1),(-1,  0,  0),( 0,  0,  1)\right>\\
C_{4}:=&\left<        ( 1,  3, -1),(-1,  0,  0),( 0,  0,  1)\right>\\
C_{5}:= &\left<        ( 1,  3, -1),(-1,  0, -1),( 0,  0, -1)\right>\\
C_{6}:=&\left<        (-1,  0, -1),(-1, -3,  1),( 0,  0, -1)\right>\\
C_{7}:= & \left<        (-1, -3,  1),( 1,  0,  0),( 0,  0, -1)\right>\\
C_{8}:=& \left<        (1,  3, -1),(1,  0,  0),(0,  0, -1)\right>\\
C_{9}:= &\left<        (1,  3, -1),(0,  0,  1),(1,  0,  1)\right>\\
C_{10}:=& \left<        (-1, -3,  1),( 1,  0,  0),( 1,  0,  1)\right>\\
C_{11}:=& \left<        (1,  3, -1),(1,  0,  0),(1,  0,  1)\right>\\
C_{12}:= & \left<        (-1, -3,  1),( 0,  0,  1),( 1,  0,  1)\right>
\end{align}

\noindent All these cones are singular and $C_{1},C_{4},C_{5},C_{7},C_{11},C_{12}$ are cones relative to affine open subsets of $X^{(1)}$ containing a $SU(3)$ singularity, while the others  are cones relative to affine open subsets of $X^{(1)}$ containing a $U(3)$ (non Ricci flat) singularity. 
\noindent We will show in Section $7$ that these $6$ $SU(3)$ singularities do satisfy all the requirements of Theorem \ref{maintheorem}.

\vspace{3mm}

\noindent {\bf{Aknowledgments:}} We wish to thank Frank Pacard and Gabor Szekelyhidi for many discussions on this topic. We also wish to express our deep gratitude to Gavin Brown and Alexander Kasprzyk for their help in not drowning in the Fano toric threefolds world.

\section{Notations and preliminaries}


\subsection{Eigenfunctions and eigenvalues of $\Delta_{\Sp^{2m-1}}$.}

\label{eigen}

In order to fix some notation which will be used throughout the paper, we agree that $\Sp^{2m-1}$ is the unit sphere of real dimension $2m-1$, equipped with the standard round metric inherited from $(\mathbb{C}^m, g_{eucl})$. We will denote by $\{\phi_k\}_{k\in \NN}$ a complete orthonormal system of the Hilbert space $L^2(\Sp^{2m-1})$, given by eigeinfunctions of the Laplace-Beltrami operator $\Delta_{\Sp^{2m-1}}$, so that, for every $k \in \NN$,
$$
\Delta_{\Sp^{2m-1}} \phi_k \,  = \,  \lambda_k \phi_k
$$ 
and $\{\lambda_k \}_{k \in \mathbb{N}}$ are the eigenvalues of $\Delta_{\Sp^{2m-1}}$ {\em{counted with multiplicity}}. We will also indicate by $\Phi_j$ the generic element of the $j$-th eigenspace of $\Delta_{\Sp^{2m-1}}$, so that, for every $j \in \NN$, 
$$
\Delta_{\Sp^{2m-1}} \Phi_j \,  = \,  \Lambda_j \Phi_j \, 
$$ and $\{\Lambda_j \}_{j \in \mathbb{N}}$ are the eigenvalue of $\Sp^{2m-1}$ {\em{counted without multiplicity}}. In particular, we have that
$\Lambda_j =  - j(2m - 2 + j)$, for every $j \in \NN$. If $\Gamma \triangleleft U(m)$ is a finite subgroup of the unitary group acting on $\CC ^m$ having the origin as its only fixed point, we denote by $\{\Lambda^{\Gamma}_j\}_{j \in \NN}$ the eigenvalues {\em{counted without multiplicity}} of the operator $\Delta_{\Sp^{2m-1}}$ restricted to the $\Gamma$-invariant functions.

\subsection{The scalar curvature equation}

We let $(M,g,\omega)$ be a \K\ orbifold with complex dimension equal to $m$, where $g$ is the \K\ metric and $\omega$ is the \K\ form. Notice that we allow the Riemannian manifold $(M,g)$ to be incomplete, since in the following we will be eventually led to consider punctured manifolds. We denote by $s_\omega$ the scalar curvature of the \K\ metric $g$ and by $\rho_\omega$ its Ricci form.
%
In the following it will be useful to consider cohomologous deformations of the \K\ form $\omega$. Hence, for a smooth real function $f \in C^\infty(M)$ such that $\omega + i  \dd  f  >  0 $, we set
\[
\omega_f \,   =  \, \omega + i  \dd  f \,  ,
\]
and we will refer to $f$ as the deformation potential.
Since we want to understand the behavior of the scalar curvature under deformations of this type, it is convenient to consider the following differential operator 
$$
\mathbf{S}_\omega(\cdot) \, : \, C^\infty(M) \longrightarrow C^\infty(M) \, , \qquad \qquad
 f \,\longmapsto \,\mathbf{S}_{\omega}(f) := s_{\omega+ i\dd f} \, ,
$$ 
which associate to a deformation potential $f$ the scalar curvature of the corresponding metric. Following the formal computations given in~\cite{ls}, we obtain the formal expansion 
\begin{equation}
\label{eq:espsg}
\mathbf{S}_{\omega}(f) \,\, = \,\,  s_{\omega} - \, \frac{1}{2} \, {\mathbb L}_{\omega}f \, + \,  \mathbb{N}_{\omega}(f) \, ,
\end{equation}
where the linearized scalar curvature operator $\Lg$ is given by
\begin{equation}
{\mathbb L}_\omega f \,  =  \, \Delta^{2}_\omega f \,   +  \, 4 \, \langle \, \rho_\omega \, | \,   i\dd f \, \rangle  \, , \label{eq:defLg}
\end{equation}
whereas the nonlinear remainder $\NN_g$ takes the form
\begin{equation}
\label{eq:defQg}
\NN_{\omega}(f) \, = \,  \tr\,(i\dd f \circ i\dd f \circ \rho_\omega) \, - \,  \tr \, (i\dd f \circ i\dd \, \Delta_{\omega}f ) \, + \, \frac{1}{2} \Delta_{\omega} \, \tr \, (i\dd f\circ i\dd f) \, + \,  \RR_{\omega}(f)\, ,
\end{equation}
and $\RR_\omega(f)$ is the collections of all higher order terms.

\subsection{The K\"ahler potential of a Kcsc orbifold}

We let $(M,g,\omega)$ be a compact constant scalar curvature \K\ orbifold  without boundary with complex dimension equal to $m$. Unless otherwise stated the singularities are assumed to be isolated.
Combining the local $\dd$-lemma with the equations of the previous subsection, we are now in the position to give a more precise description of the local structure of the \K\ potential of a Kcsc metric.

\begin{prop}
\label{proprietacsck}
Let $(M,g, \omega)$ be a \K\ orbifold. Then, given any point $p\in M$,
 there exists a holomorphic coordinate chart $(U, z^1, \ldots, z^m)$ centered at $p$ such that the \K\ form can be written as
$$
\omega \,\, = \,\, i \dd \, \bigg(\,\frac{|z|^2}{2} + \psi_\omega \bigg) \, , \qquad \hbox{with} \qquad \psi_\omega \, = \, \mathcal{O}(|z|^4) \, .
$$
If in addition the scalar curvature $s_g$ of the metric $g$ is constant, then $\psi_g$ is a real analytic function on $U$, and one can write
\begin{equation}
\label{eq:decpsig}
\psi_\omega (z, \overline{z}) \, = \,  \sum_{k=0}^{+\infty}\Psi_{4+k}(z, \overline{z}) \, ,
\end{equation}
where, for every $k\in \mathbb{N}$, the component $\Psi_{4+k}$ is a real homogeneous polynomial in the variables $z$ and $\overline{z}$ of degree $4+k$. In particular, we have that $\Psi_4$ and $\Psi_5$ satisfy the equations
\begin{align}
\label{eq:bilapp4}
\Delta^2 \, \Psi_4 & =  -2s_\omega \, ,\\
\label{eq:bilapp5} 
\Delta^2 \, \Psi_5 & =  0 \, ,
\end{align}
where $\Delta$ is the Euclidean Laplace operator of $\mathbb{C}^m$. Finally, the polynomial $\Psi_4$ can be written as
\begin{equation}
\label{eq:decp4}
\Psi_4\st z, \overline{z}\dt \,\, = 
\,\,  \Big(- \frac{s_{\omega}}{16m(m +1)} \, + \, \Phi_2 \, + \,  
\Phi_4  \, \Big) \,  |z|^4 \, ,
\end{equation}
where $\Phi_2$ and $\Phi_4$ are functions in the second and fourth eigenspace of $\Delta_{\mathbb{S}^{2m-1}}$, respectively.

%

%


%





%



\end{prop}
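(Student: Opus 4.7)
The plan is to combine the local $\dd$-lemma with a holomorphic change of coordinates to put the K\"ahler potential in Bochner normal form, to invoke elliptic regularity for the Kcsc equation in order to upgrade smoothness to real analyticity, and finally to substitute the homogeneous expansion of $\psi_\omega$ into~\eqref{eq:espsg}, taken with $\omega$ replaced by the flat Euclidean form $\omega_0:=i\dd(|z|^2/2)$, to read off the equations for $\Psi_4$ and $\Psi_5$. I work on a local uniformizing chart around $p$, so that I may assume $p$ is a smooth point of $\CC^m$.

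To construct the Bochner coordinates, I apply the local $\dd$-lemma on a small ball around $p$ to produce a smooth real potential $F$ with $\omega=i\dd F$. Taylor-expanding $F$ and splitting into pure holomorphic, antiholomorphic and mixed contributions, I first subtract a pluriharmonic term $\mathrm{Re}\,h(z)$ to eliminate all $(k,0)$- and $(0,k)$-bidegree pieces, then perform a unitary linear change of $z$ to normalize the $(1,1)$ piece to $|z|^2/2$, and finally apply a quadratic holomorphic perturbation $z^k\mapsto z^k+a^k_{ij}z^iz^j$ with constants $a^k_{ij}$ chosen to absorb the $(2,1)$ and $(1,2)$ pieces. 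This is the standard construction of K\"ahler (Bochner) normal coordinates, and it leaves the potential in the form $|z|^2/2+\psi_\omega$ with $\psi_\omega=\mathcal{O}(|z|^4)$.

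Assuming now $s_\omega$ is constant, I view $\omega$ as the deformation $\omega_0+i\dd\psi_\omega$ of the flat Euclidean form. Since $s_{\omega_0}=0$ and $\rho_{\omega_0}=0$, the Lichnerowicz operator reduces to $\Delta^2$ and the Kcsc equation $\mathbf{S}_{\omega_0}(\psi_\omega)=s_\omega$ specializes via~\eqref{eq:espsg} to
\[
-\tfrac{1}{2}\,\Delta^{2}\psi_\omega \, + \, \NN_{\omega_0}(\psi_\omega) \, = \, s_\omega,
\]
a nonlinear fourth-order elliptic equation whose coefficients are polynomial in $\psi_\omega$ and its derivatives, hence real analytic. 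Morrey's analytic regularity theorem then yields real analyticity of $\psi_\omega$, whence the expansion $\psi_\omega=\sum_{k\geq 0}\Psi_{4+k}$ into real homogeneous polynomials. Inspection of~\eqref{eq:defQg} shows that each summand of $\NN_{\omega_0}(\psi_\omega)$ is at least quadratic in second derivatives of $\psi_\omega$; since $i\dd\Psi_4$ already has degree $2$, the homogeneous expansion of $\NN_{\omega_0}(\psi_\omega)$ starts at degree $2$. Matching the degree $0$ and degree $1$ components of the equation therefore isolates
\[
\Delta^{2}\Psi_4 \, = \, -2 s_\omega \qquad \text{and} \qquad \Delta^{2}\Psi_5 \, = \, 0.
\]

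Finally, to derive~\eqref{eq:decp4}, I write $\Psi_4(z,\overline z)=|z|^4\phi(z/|z|)$ for a function $\phi$ on $\Sp^{2m-1}$. The requirement that $\Psi_4$ be polynomial of degree $4$ forces $\phi$ to lie in the span of spherical harmonics of degree $\leq 4$ of even parity, namely $\phi=\Phi_0+\Phi_2+\Phi_4$ with $\Phi_0$ constant. Using the standard identity $\Delta(|z|^k\Phi_j)=[k(k+2m-2)-j(j+2m-2)]|z|^{k-2}\Phi_j$ one computes $\Delta^2(|z|^4\Phi_2)=\Delta^2(|z|^4\Phi_4)=0$ and $\Delta^2(|z|^4)=32m(m+1)$, so the equation $\Delta^{2}\Psi_4=-2s_\omega$ forces $\Phi_0=-\frac{s_\omega}{16m(m+1)}$, which is precisely~\eqref{eq:decp4}. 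The main obstacle I anticipate is the real-analyticity step, which hinges on the classical Morrey--Friedman analytic regularity theorem for nonlinear elliptic equations with analytic coefficients; the remaining steps are essentially formal manipulations with Taylor expansions and the decomposition into spherical harmonics.
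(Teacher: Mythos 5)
Your proposal is correct and follows essentially the same route as the paper's own proof: Bochner normal coordinates via the local $\dd$-lemma, real analyticity from elliptic regularity applied to the nonlinear constant-scalar-curvature equation $\mathbf{S}_{eucl}(\psi_\omega)=s_\omega$, degree-matching of the homogeneous Taylor components to isolate \eqref{eq:bilapp4}--\eqref{eq:bilapp5}, and a parity plus eigenvalue computation for \eqref{eq:decp4}. You spell out a few intermediate computations (the explicit eigenvalue bookkeeping $\Delta^2(|z|^4\Phi_2)=\Delta^2(|z|^4\Phi_4)=0$, $\Delta^2|z|^4=32m(m+1)$, and the degree-counting argument showing the nonlinearity contributes only from degree $2$ onward) that the paper calls ``obvious'' or ``easy'', but there is no substantive difference in method.
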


\begin{proof}
Without loss of generality, we assume that $p$ is a smooth point, since, if it is not, it is sufficient to consider the local lifting of the quantities involved. The first assertion is a consequence of the $\dd$-lemma combined with the existence of normal coordinates and it is a classical fact. The real analiticity of $\psi_\omega$ follows by elliptic regularity of solutions of the constant scalar curvature equation $\mathbf{S}_{eucl}(\psi_\omega) \, = \, s_\omega$, which, according to~\eqref{eq:espsg}, \eqref{eq:defLg} and \eqref{eq:defQg}, reads
\begin{equation}
\Delta^2\psi_\omega \,\, = \,\,  -2s_\omega \, + \, 8 \,  \tr(i\dd \psi_\omega \circ i\dd \Delta \psi_\omega) \, + \, 4 \, \Delta \, \tr (i\dd \psi_\omega \circ i\dd \psi_\omega) \, + \,  2 \, \RR_{eucl}(\psi_\omega) \,.
\end{equation}
Having the expansion~\eqref{eq:decpsig} at hand, the equations \eqref{eq:bilapp4}, \eqref{eq:bilapp5} are now obvious, while to prove equation \eqref{eq:decp4}
we just observe that since $\Psi_4$ is a real polynomial of order $4$, it must be an even function. In particular, its restriction to $\mathbb{S}^{2m-1}$ is forced to have trivial projection along the eigenspaces of $-\Delta_{\Sp^{2m-1}}$ corresponding to the eigenvalues $\Lambda_{2k+1}$, for every $k\geq 0$. Hence, $\Psi_4$ can be written as
$$
\Psi_4 \st z,\overline{z}\dt \,\, = \,\,  \big( \Phi_0 + \Phi_2 + \Phi_4  \big) |z|^{4} \, ,
$$
where the $\Phi_k$'s are functions in the $k$-th eigenspace of $\Delta_{\Sp^{2m-1}}$. The fact that $\Phi_0 = - s_\omega/16m(m+1)$ is now an easy consequence of equation~\eqref{eq:bilapp4}.
\end{proof}

\subsection{The \K\ potential of a scalar flat ALE \K\ manifold.}

We start by recalling the concept of Asymptotically Locally Euclidean (ALE for short) K\"ahler manifold. We let $\Gamma\triangleleft U(m)$ be a finite subgroup of the unitary group and we say that a complete noncompact \K\ manifold $(X_\Gamma, h, \eta)$ of complex dimension $m$, where $h$ is the \K\ metric and $\eta$ is the \K\ form, is an ALE \K\ manifold with group $\Gamma$ if there exist a positive radius $R>0$ and a quotient map   
$\pi: X_{\Gamma}\rightarrow \CC^{m}/\Gamma$,
such that 
\begin{equation}
\pi: X_{\Gamma}\setminus \pi^{-1}(B_R)  \longrightarrow \st\CC^{m} \setminus B_{R} \dt/\Gamma
\end{equation}
is a biholomorphism and in standard Euclidean coordinates the metric $\pi_*h$ satisfies the expansion 
\begin{equation}
\left|  \frac{\partial^\alpha}{\partial x^\alpha} \left(  \big (\pi_{*}h)_{i \bar{j}}  \, - \, \frac{1}{2} \, \delta_{i\bar{j}}  \right) \right| \,\, = \,\,  \mathcal{O}\st |x|^{-\tau - |\alpha|}\dt\,, 
\end{equation}
for some $\tau>0$ and every multindex $\alpha \in \NN^m$. 

\begin{remark}
In the following, we will make as systematic use of $\pi$ as an identification and, consequently, we will make no difference between $h$ and $\pi_{*} h$ as well as between $\eta$ and $\pi_* \eta$.
\end{remark}

%

\begin{remark}
\label{nolinear}
It is a simple exercise to prove that if $\Gamma$ is nontrivial, then there are no $\Gamma$-invariant linear functions on $\CC ^m$, and thus, with the notations introduced in section~\ref{eigen}, we have that $\Lambda^{\Gamma}_1 > \Lambda_1$. This will be repeatedly used in our arguments.
\end{remark}

We are now ready to present a result which describe the asymptotic behaviour of the \K\ potential of a scalar flat ALE \K\ metric. This can be though as the analogous of Proposition~\ref{proprietacsck}. We omit the proof because in the spirit it is very similar to the one of the aforementioned proposition and the details can be found in~\cite{ap1} 

\begin{prop}\label{asintpsieta2}
Let $(X_\Gamma, h, \eta)$ be a scalar flat ALE \K\ manifold, with $\Gamma \triangleleft U(m)$, and let $\pi: X_\Gamma \to \mathbb{C}^m/\Gamma$ be the quotient map. Then for $R>0$ large enough, we have that on $X_\Gamma\setminus \pi^{-1}(B_R)$ the K\"ahler form can be written as
\begin{equation}
\eta \,\, = \,\, i\partial\overline{\partial} \st \,  \frac{|x|^2}{2} \, + \, e({\Gamma}) \, |x|^{4-2m}  \, -  \, c({\Gamma}) \, |x|^{2 - 2m} \,  + \, \psi_{\eta}\st x \dt \dt \,,  \qquad \hbox{with} \qquad \psi_\eta \, = \,  \mathcal{O}(|x|^{-2m}) \, ,
\end{equation}
for some real constants $e({\Gamma})$ and $c({\Gamma})$.
Moreover, the radial component $\psi_{\eta}^{(0)}$ in the Fourier decomposition of $\psi_\eta$ is such that
$$
\psi_{\eta}^{(0)}\st |x| \dt=\mathcal{O}\st |x|^{6-4m} \dt \, .
$$ 
\end{prop}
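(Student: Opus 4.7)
\emph{The plan.} I would prove this by first reducing scalar flatness to a nonlinear biharmonic equation for a K\"ahler potential on the end, and then extracting the asymptotic expansion through a Fourier decomposition against $\Gamma$-invariant spherical harmonics, with a bootstrap on the decay rate. First, since the end $X_\Gamma\setminus \pi^{-1}(B_R)$ is biholomorphic to $(\CC^m\setminus B_R)/\Gamma$, whose lift to $\CC^m\setminus B_R$ has vanishing $H^{1,1}$, the local $\dd$-lemma together with the ALE decay of $\eta - \omega_{eucl}$ produces a $\Gamma$-invariant smooth function $u$ with $\eta = i\dd(|x|^{2}/2 + u)$ and $u = \mathcal{O}(|x|^{2-\tau})$ for the ALE decay rate $\tau>0$; the $\Gamma$-invariance combined with Remark~\ref{nolinear} forbids any harmonic linear term (or it is absorbed into the choice of origin).

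\emph{The equation and the modes.} Since $s_\eta = 0 = s_{eucl}$ and $\rho_{eucl}=0$, the expansion \eqref{eq:espsg} evaluated at the Euclidean metric becomes
\begin{equation}
\Delta^{2} u \,=\, 2\,\NN_{eucl}(u),
\end{equation}
whose right-hand side is quadratic in the second derivatives of $u$; if $u = \mathcal{O}(|x|^{\gamma})$, then $\NN_{eucl}(u) = \mathcal{O}(|x|^{2\gamma-6})$. I would then decompose $u(x) = \sum_{j\geq 0} u_{j}(|x|)\,\Phi_{j}^{\Gamma}(x/|x|)$ against the $\Gamma$-invariant eigenfunctions of $\Delta_{\Sp^{2m-1}}$. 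For each mode $j$, the decaying homogeneous biharmonic profiles are $|x|^{-(j+2m-2)}$ and $|x|^{-(j+2m-4)}$. For $j=0$ these are the radial $|x|^{2-2m}$ and $|x|^{4-2m}$, which yield the two indicial terms with coefficients $-c(\Gamma)$ and $e(\Gamma)$; in the borderline dimension $m=2$ the two radial indicial roots coincide and the solution $|x|^{4-2m}\equiv 1$ is replaced by the logarithmic solution $\log|x|$, producing the term written in the $m=2$ form of the expansion. By Remark~\ref{nolinear}, every nontrivial $\Gamma$-invariant mode has $j\geq 2$, so all other homogeneous biharmonic contributions decay at worst like $\mathcal{O}(|x|^{-2m})$, after the pluriharmonic freedom in the potential (equivalently, a holomorphic gauge at infinity, in the spirit of \cite{ap1}) is used to remove the slowest angular modes.

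\emph{Bootstrap and radial refinement.} Starting from $u = \mathcal{O}(|x|^{2-\tau})$, a particular solution of $\Delta^{2}v = \NN_{eucl}(u) = \mathcal{O}(|x|^{-2\tau-2})$ decays like $|x|^{2-2\tau}$, a strict improvement whenever $\tau>0$. Iterating this bootstrap, and at each step absorbing the newly revealed homogeneous biharmonic terms into the coefficients $e(\Gamma), c(\Gamma)$ and the angular profile, one arrives at the claimed expansion with $\psi_{\eta} = \mathcal{O}(|x|^{-2m})$. For the refined radial estimate, I would project the equation onto the radial eigenspace: after subtracting the $j=0$ biharmonic contributions, the dominant radial part of $\NN_{eucl}(u)$ is dictated by the self-interaction of the $e(\Gamma)|x|^{4-2m}$ mode and is therefore of order $\mathcal{O}(|x|^{2-4m})$, so inverting $\Delta^{2}$ radially produces a particular correction of order $|x|^{6-4m}$, yielding $\psi_{\eta}^{(0)}(|x|) = \mathcal{O}(|x|^{6-4m})$.

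\emph{Main obstacle.} The hardest part is the bookkeeping of the bootstrap: at each iteration one must split the current potential into a homogeneous biharmonic piece (to be absorbed into the indicial coefficients and angular profile) and a particular solution driven by $\NN_{eucl}(u)$, uniformly across all $\Gamma$-invariant Fourier modes. Additional care is needed when $m=2$, since the radial indicial roots collide and a logarithmic solution appears, and when eliminating the slowest $j\geq 2$ angular modes, for which a Hodge-type decomposition on the conical end or a carefully chosen holomorphic change of coordinates at infinity is required.
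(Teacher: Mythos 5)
Your overall plan---reduce scalar flatness to the quartic equation $\Delta^{2}u = 2\,\NN_{eucl}(u)$ on the end, decompose against $\Gamma$-invariant spherical harmonics, bootstrap the decay, and identify the refined radial estimate from the self-interaction of the $e(\Gamma)|x|^{4-2m}$ mode---is the right one and matches how the paper handles the closely analogous Ricci-flat statement, Proposition~\ref{asintpsieta} (the paper attributes the present proof to \cite{ap1}). The radial refinement is correct: the self-interaction of $e(\Gamma)|x|^{4-2m}$ under $\NN_{eucl}$ is radial and of size $\mathcal{O}(|x|^{2-4m})$, and inverting $\Delta^{2}$ radially (no resonance for $m\geq 3$) gives $\psi_\eta^{(0)}=\mathcal{O}(|x|^{6-4m})$.

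There is, however, a genuine gap in your elimination of the slowest $\Gamma$-invariant \emph{angular} modes. Remark~\ref{nolinear} only kills $j=1$; for $j=2$ the decaying biharmonic profiles are $|x|^{2-2m}\Phi_2$ and $|x|^{-2m}\Phi_2$, and for many nontrivial $\Gamma\subset U(m)$ the second eigenspace does contain $\Gamma$-invariant functions. The profile $|x|^{2-2m}\Phi_2$ lies in $\ker\Delta^{2}$, so your bootstrap leaves its coefficient as a free parameter at every step, and it decays strictly slower than the claimed $\mathcal{O}(|x|^{-2m})$. You propose to remove it by ``pluriharmonic freedom'' or a ``holomorphic gauge at infinity,'' but on $\CC^{m}\setminus B_{R}$ with $m\geq 2$ neither is available: a decaying pluriharmonic function is the real part of a decaying holomorphic function, which extends across $B_{R}$ by Hartogs and is therefore $0$ by Liouville; likewise a biholomorphic change of coordinates asymptotic to the identity extends to an entire map by Hartogs, hence is polynomial and forced to equal the identity, so it cannot introduce decaying corrections to the K\"ahler potential. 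In the Ricci-flat case the paper avoids this entirely because Joyce's \emph{a priori} estimate $\psi_\eta=\mathcal{O}(|x|^{2-2m-\gamma})$ directly kills the $|x|^{2-2m}\Phi_2$ coefficient before the Fourier analysis begins. In the scalar-flat case one needs a substitute for Joyce's bound: the argument in \cite{ap1} exploits additional structure of the equation (for instance that the Ricci potential $\log(\eta^{m}/\omega_{eucl}^{m})$ is $\eta$-harmonic and decaying, together with a Bando--Kasue--Nakajima-type decay improvement) rather than any gauge freedom. Without that ingredient your bootstrap proves the radial refinement and the higher-mode decay but does not establish $\psi_\eta = \mathcal{O}(|x|^{-2m})$.
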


In the case where the ALE \K\ metric is Ricci-flat it is possible to improve the estimates for the deviation of the \K\ potential from the Euclidean one. This is far form being obvious and in fact it is an improvement of an important result of Joyce (\cite{j}, Theorem 8.2.3 pag 175).

\begin{prop}
\label{asintpsieta}
Let $( {X}_{\Gamma}, h, \eta )$ be a Ricci flat $ALE$ \K\ manifold, with $\Gamma \triangleleft SU(m)$ nontrivial, and let $\pi: X_\Gamma \to \mathbb{C}^m/\Gamma$ be the quotient map. Then for $R>0$ large enough, we have that on $X_\Gamma\setminus \pi^{-1}(B_R)$ the K\"ahler form can be written as
\begin{equation}
{\eta} \,\, = \,\, i\partial\overline{\partial} \st \, \frac{|x|^2}{2} \, - \, c({\Gamma}) \, |x|^{2 - 2m} \, + \, \psi_{\eta} \st x \dt \dt  \,,  \qquad \hbox{with} \qquad \psi_\eta \, = \,  \mathcal{O}(|x|^{-2m}) \, ,
\end{equation}
for some positive real constant $c({\Gamma})>0$. 
Moreover, the radial component $\psi_{\eta}^{(0)}$ in the Fourier decomposition of $\psi_\eta$ is such that
$$
\psi_{\eta}^{(0)}\st |x| \dt=\mathcal{O}\st |x|^{2-4m} \dt \, .
$$ 
\end{prop}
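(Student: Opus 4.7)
The plan is to start from the weaker expansion provided by Proposition~\ref{asintpsieta2} and then use the Ricci-flat hypothesis to upgrade it. Applied to $(X_\Gamma,h,\eta)$, that proposition already gives
\[
\eta = i\dd \st \frac{|x|^2}{2} + e(\Gamma)|x|^{4-2m} - c(\Gamma)|x|^{2-2m} + \psi_\eta \dt
\]
with $\psi_\eta = \mathcal{O}(|x|^{-2m})$ and radial part $\psi_\eta^{(0)} = \mathcal{O}(|x|^{6-4m})$. Hence three things remain to be proved: that $e(\Gamma) = 0$, that $\psi_\eta^{(0)} = \mathcal{O}(|x|^{2-4m})$, and that $c(\Gamma)>0$. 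All three will follow from the fact that, since $\Gamma \subseteq SU(m)$, Ricci-flatness of $\eta$ translates in the chosen Euclidean coordinates at infinity into the pluriharmonicity of $\log\det(h_{i\bar j})$.

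For the vanishing of $e(\Gamma)$, I would expand $\log\det(h_{i\bar j})$ as a power series in the Hessian $A_{i\bar j}$ of the correction $e(\Gamma)|x|^{4-2m} - c(\Gamma)|x|^{2-2m} + \psi_\eta$. At leading order
\[
\log\det(h_{i\bar j}) + m\log 2 \, = \, 2\,\tr A + \mathcal{O}\st |A|^{2}\dt \, ,
\]
and a short calculation gives $2\,\tr\dd\st e(\Gamma)|x|^{4-2m}\dt = -2(m-2)\,e(\Gamma)\,|x|^{2-2m}$ (with the analogous $\log$-correction for $m=2$), while $|x|^{2-2m}$ itself is harmonic and so does not contribute to $\tr A$, and the remaining pieces are of strictly smaller size. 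Since $|x|^{2-2m}$ is harmonic but \emph{not} pluriharmonic on $\CC^m\setminus\sg 0 \dg$, no other term in the expansion can cancel it at its homogeneity, and pluriharmonicity of the left-hand side forces $e(\Gamma) = 0$.

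Once $e(\Gamma)=0$, to obtain the refined estimate on $\psi_\eta^{(0)}$ I would decompose $\psi_\eta$ in spherical harmonics on $\Sp^{2m-1}$ and analyse the pluriharmonicity equation mode by mode. The key observation is that with $e(\Gamma)=0$ the nonlinear contribution $\tr(A^2)$ produced by the Hessian of $-c(\Gamma)|x|^{2-2m}$ is a purely radial function of homogeneity $|x|^{-4m}$, and this is the dominant source in the radial projection of the pluriharmonicity relation. Inverting the radial part of the resulting fourth-order operator, and using that the only radial harmonic functions on $\CC^m\setminus\sg 0 \dg$ are constants and $|x|^{2-2m}$, both of which are already incorporated in the leading part of the potential, produces a particular solution of decay $\mathcal{O}(|x|^{2-4m})$, which is the desired bound.

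The positivity $c(\Gamma)>0$ is a form of positive-mass theorem: up to a positive dimensional constant, $c(\Gamma)$ coincides with the ADM mass of the Ricci-flat ALE manifold $(X_\Gamma,h)$, which is nonnegative and vanishes only in the flat case; since $\Gamma$ is nontrivial the latter is excluded. The main technical difficulty is clearly the second step: even though the full remainder $\psi_\eta$ is only known to be $\mathcal{O}(|x|^{-2m})$, one must extract from the nonlinear pluriharmonicity equation the stronger $\mathcal{O}(|x|^{2-4m})$ bound on the radial mode, which requires a careful book-keeping of the quadratic contributions in the Hessian of $|x|^{2-2m}$ --- and in a slightly different formulation this is exactly the content of Joyce's Theorem 8.2.3 that is being improved upon here.
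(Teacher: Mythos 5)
Your proposal takes a genuinely different route from the paper's. The paper does not start from Proposition~\ref{asintpsieta2}; it starts directly from Joyce's Theorem~8.2.3, which already supplies $e(\Gamma)=0$, $c(\Gamma)>0$ and the a priori decay $\psi_\eta = \mathcal{O}(|x|^{2-2m-\gamma})$. It then passes to the \emph{scalar-flat} equation $\mathbf{S}_{eucl}(\psi_\eta - c(\Gamma)|x|^{2-2m}) = 0$ (Ricci-flat $\Rightarrow$ scalar-flat), decomposes $\Delta^2\psi_\eta = F$ mode by mode, and uses Remark~\ref{nolinear} to kill the linear modes. You instead start from the scalar-flat Proposition~\ref{asintpsieta2} and derive the remaining facts from the full Ricci-flat equation expressed as pluriharmonicity of $\log\det(h_{i\bar j})$, plus a positive-mass argument for $c(\Gamma)>0$. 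This is self-contained in a way the paper's citation of Joyce is not, but note that positive mass for ALE spaces is \emph{not} a formality: LeBrun's examples, cited in this very paper, show that scalar-flat ALE K\"ahler manifolds can have negative mass, so the Ricci-flat hypothesis is doing real work; and the claimed identification of $c(\Gamma)$ with the ADM mass deserves a careful normalization check in the K\"ahler setting. Your argument that $e(\Gamma)=0$ is sound: the $|x|^{2-2m}$ contribution to $2\,\mathrm{tr}\,A$ from $e(\Gamma)|x|^{4-2m}$ is dominant, its $\dd$ is a nonzero $(1,1)$-form of order $|x|^{-2m}$, and all competing terms produce $(1,1)$-forms of strictly smaller order.

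There is, however, a concrete slip in the key estimate on $\psi_\eta^{(0)}$. You identify the dominant radial source as $\mathrm{tr}(A^2)\sim|x|^{-4m}$ and say that inverting ``the resulting fourth-order operator'' produces $\mathcal{O}(|x|^{2-4m})$ --- but inverting a fourth-order operator on a source of homogeneity $|x|^{-4m}$ gives $|x|^{4-4m}$, not $|x|^{2-4m}$. There are two consistent ways to close this. Route (a), the paper's: the source for $\Delta^2\psi_\eta^{(0)}$ in the scalar-flat expansion is the term $\Delta\,\mathrm{tr}\big(i\dd|x|^{2-2m}\circ i\dd|x|^{2-2m}\big)=\mathcal{O}(|x|^{-2-4m})$ --- note the extra Laplacian in the formula for $\NN_\omega$ --- and inverting $\Delta^2$ then gives $|x|^{2-4m}$. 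Route (b), staying with pluriharmonicity: $2\mathrm{tr}\,A - 2\mathrm{tr}(A^2)+\ldots$ is pluriharmonic, so its radial part is harmonic, hence of the form $a+b|x|^{2-2m}$; the a priori decay of $\psi_\eta^{(0)}$ from Proposition~\ref{asintpsieta2} forces $a=b=0$, yielding the \emph{second-order} radial relation $\Delta\psi_\eta^{(0)} = 4\,\mathrm{tr}(A^2)^{(0)}+\ldots = \mathcal{O}(|x|^{-4m})$; inverting the Laplacian (not a fourth-order operator) on this gives $\mathcal{O}(|x|^{2-4m})$ plus the radial harmonics $1$ and $|x|^{2-2m}$, which are absorbed as you say. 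Your mention of ``radial harmonic functions'' rather than ``radial biharmonic functions'' suggests route (b) was what you had in mind, but then the operator is $\Delta$ and you should say so; as written the step is internally inconsistent.
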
 

\begin{proof}
By~\cite[Theorem 8.2.3]{j}, we have that on $X_\Gamma\setminus\pi^{-1}(B_R)$ the K\"ahler form $\eta$ can be written  as
$$
{\eta} \,\, =  \,\, i\dd\st \, \frac{|x|^{2}}{2} - c({\Gamma}) \, |x|^{2 - 2m} +\psi_{\eta}\st x \dt\dt   
\quad\quad \hbox{with} \quad\quad 
\psi_{\eta}\st x \dt=\mathcal{O}\st |x|^{2 - 2m - \gamma} \dt \, ,
$$
for some $\gamma \in (0,1)$.
Since $({X}_{\Gamma},h)$ is Ricci flat, it is also scalar flat and so, arguing as in Proposition~\ref{proprietacsck}, we deduce that $\psi_{\eta}$ is a real analytic function. 
To obtain the desired estimates on the decay of $\psi_\eta$, we are going to make use of the equation $\mathbf{S}_{eucl} (\psi_{\eta} -c({\Gamma})|x|^{2-2m}) = 0$. By means of identity~\eqref{eq:espsg}, \eqref{eq:defLg} and \eqref{eq:defQg}, this can be rephrased in terms of $\psi_\eta$ as follows
\begin{align}
\label{eq:psieta}
\nonumber
\frac12{\Delta^2\psi_\eta} \,\, = &  \,\,\,   4\, \tr  \big(  i\dd \st \psi_{\eta} -c({\Gamma})\, |x|^{2-2m}\dt \circ i\dd\Delta\psi_{\eta} \big) \, \\
 &+ \,  2 \, \Delta \, \tr \big(  i\dd\st \psi_{\eta} -c({\Gamma}) \, |x|^{2-2m} \dt  \circ i\dd\st \psi_{\eta} -c({\Gamma}) \, |x|^{2-2m} \dt         \big) 
 \\&\,\, + \RR_{eucl}\big( \psi_{\eta} -c({\Gamma}) \,|x|^{2-2m}\big) \, ,
\end{align} 
where, in writing the first summand on the right hand side, we have used the fact that $\Delta |x|^{2-2m} = 0$. Since $\psi_\eta = \mathcal{O} ( |x|^{2 - 2m - \gamma} )$, for some $\gamma \in (0,1)$, it is straightforward to see that all of the terms on the right hand side can be estimated as $\mathcal{O}(|x|^{-2-4m-\gamma})$, with the only exception of the purely radial term 
$$
\Delta \,  \tr \big( (i\dd |x|^{2 - 2m} ) \circ (i\dd |x|^{2 - 2m} ) \big) \,\, = \,\, \mathcal{O}(|x|^{-2-4m})\,.
$$
For sake of convenience, we set now the right hand side of the above equation equal to $F/2$, so that 
$$
\Delta^2 \psi_\eta \,\, = \,\, F \, .
$$
It is now convenient to expand both $\psi_\eta$ and $F$ in Fourier series as 
$$
\psi_{\eta}(x) \, = \, \sum_{k=0}^{ +\infty} \psi_{\eta}^{(k)} (|x|) \, \phi_{k}(x/|x|) \quad \quad \hbox{and} \quad \quad F(x) \, = \, \sum_{k=0}^{ +\infty} F^{(k)}(|x|) \, \phi_{k}(x/|x|) \, ,
$$
where the functions $\{\phi_k\}_{k \in \mathbb{N}}$, are the eigenfunctions of the spherical laplacian $\Delta_{\Sp^{2m-1}}$ on $\Sp^{2m-1}$, counted with multplicity. Since $\phi_0 \equiv |\Sp^{2m-1}|^{-1/2}$, we will refer to $\psi_\eta^{(0)}$ and $F^{(0)}$ as the radial part of $\psi_\eta$ and $F$, respectively. We also notice that in the forthcoming discussion it will be important to select among the eigenfunctions $\phi_k$'s, only the ones which are $\Gamma$-invariant, in order to respect the quotient structure. So far, we have seen that $F^{(0)} = \mathcal{O}(|x|^{-2-4m})$ and $F^{(k)} = \mathcal{O}(|x|^{-2-4m-\gamma})$, for $k \geq 1$. On the other hand, using the linear ODE satisfied by the components $\psi_\eta^{(k)}$, it is not hard to see that their general expression is given by
$$
\psi_{\eta}^{(k)} (|x|)\,\, = \,\, a_k |x|^{4 - 2m - \alpha(k)} +b_k|x|^{2 - 2m - \alpha(k)}+ c_k |x|^{\alpha(k)} + d_k |x|^{\alpha(k) + 2} +\tilde{\psi}_{\eta}^{(k)}(|x|) \,,
$$
where, in view of the behavior of the $F^{(k)}$'s, the functions $\tilde{\psi}_\eta^{(k)}$ are such that
$$
\tilde{\psi}_\eta^{(0)} = \mathcal{O}(|x|^{2-4m}) \quad \quad \hbox{and} \quad \quad \tilde{\psi}_\eta^{(k)} = \mathcal{O}(|x|^{2-4m-\gamma}), \quad \hbox{for $k \geq 1$}\, ,
$$
and the integers $\alpha(k)$'s are such that $\alpha(k)=h$ if and only if $\phi_k$ belongs to the $h$-th eigenspace.
Since the cited Joyce's result implies that $\psi_\eta^{(k)} = \mathcal{O}(|x|^{2-2m-\gamma})$, it is easy to deduce that $c_k =0 =d_k$, for every $k \in \mathbb{N}$. Moreover, we have that $a_0 = 0 = b_0$ and thus $\psi_\eta^{(0)} = \mathcal{O}(|x|^{2-4m})$, as wanted. The same kind of considerations imply that the components $\psi_\eta^{(k)}$'s satisfy the desired estimates for every $k \geq 2m + 1$, that is for every $k$ such that $\alpha(k) \geq 2$. For $1\leq k \leq 2m$, we have that $a_k = 0$, but a priori nothing can be said about the $b_k$'s and thus at a first glance, one has that
$$
\psi_{\eta}^{(k)} (|x|)\,\, = \,\, b_k|x|^{1 - 2m } +\tilde{\psi}_{\eta}^{(k)}(|x|) \, , \quad \hbox{for $1\leq k \leq 2m$} \, .
$$
As it has been pointed out in Remark~\ref{nolinear}, there are no $\Gamma$-invariant eigenfunctions for $\Delta_{\Sp^{2m-1}}$ in the first eigenspace. This means that the components $\psi_\eta^{k}$'s, with $1\leq k \leq 2m$ do not appear in the Fourier expansion of $\psi_\eta$ and hence $\psi_\eta(x) = \mathcal{O}(|x|^{-2m})$.
\end{proof}




\section{Linear analysis on a Kcsc orbifold}

\subsection{The bounded kernel of $\Lg$.} As usual we let $(M, g, \omega)$ be a compact Kcsc  orbifold with isolated singularities and we assume that the kernel of the linearized scalar curvature operator $\Lg$ defined in~\eqref{eq:defLg} is nontrivial, in the sense that it contains also nonconstant functions. By the standard Fredholm theory for self-adjoint elliptic operators, we have that such a kernel is always finite dimensional. Throughout the paper we will assume that it is $(d+1)$-dimensional and we will set
\begin{equation}
\label{nontrivial_ker}
\ker (\Lg) \,\, = \,\, span \, \{\varphi_0, \varphi_1, \ldots , \varphi_d \} \, ,
\end{equation}
where $\varphi_0 \equiv 1$, $d$ is a positive integer and $\varphi_1, \ldots, \varphi_d$ is a collection of linearly independent functions in $\ker(\Lg)$ with zero mean and normalized in such a way that $||\varphi_i||_{L^2(M)} = 1$, $i=1, \ldots, d$, for sake of simplicity. From~\cite{ls} we recover the following charachterization of $\ker(\Lg)$.
\begin{prop}
\label{espsg}
%
Let $(M,g,\omega)$ be a compact constant scalar curvature \K\ orbifold  with isolated singularities.
Then, the subspace of $\ker(\Lg)$ given by the elements with zero mean
is in one to one correspondence with the space of holomorphic
vector fields which vanish somewhere in $M$.
\end{prop}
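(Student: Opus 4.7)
The plan is to exploit the well known factorization of the Lichnerowicz operator on a Kcsc manifold. Introducing the $(1,0)$-vector field $\partial^{\sharp}f := g^{i\bar\jmath}(\partial_{\bar\jmath}f)\,\partial_i$, one has on a smooth Kcsc base the identity
\begin{equation}
\Lg f \,=\, 2\,(\bar\partial\partial^{\sharp})^{*}(\bar\partial\partial^{\sharp}) f,
\end{equation}
which follows from a direct Weitzenb\"ock-type computation and the vanishing of the term proportional to $\langle df,ds_{\omega}\rangle$ under the Kcsc assumption. In particular $\Lg$ is a nonnegative self-adjoint operator, and $f \in \ker\Lg$ if and only if $\bar\partial\partial^{\sharp}f = 0$, i.e.\ $\partial^{\sharp}f$ is a holomorphic vector field.

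First I would establish the identity above on the regular part $M\setminus\{p_1,\dots,p_n\}$, where it is classical (see \cite{ls}). To upgrade it to a statement on the orbifold $M$, I would argue that every element of $\ker\Lg$ is smooth in the orbifold sense. This is standard elliptic regularity applied to $\Gamma_j$-invariant lifts on the local uniformizing charts $\CC^m/\Gamma_j$: the operator $\Lg$ has smooth coefficients on each lift, and a $\Gamma_j$-invariant $L^2$-weak solution of $\Lg f = 0$ is a genuine smooth $\Gamma_j$-invariant function. In particular $f$ and $\partial^{\sharp}f$ are bounded near each $p_j$, so integration by parts on $M$ is justified by exhausting $M$ with $M\setminus\bigcup_j B_{j,r}$ and letting $r\to 0$: in real dimension $2m\geq 4$ the boundary terms are $O(r^{2m-1})\to 0$ thanks to the smooth orbifold regularity. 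This gives rigorously the equivalence $\Lg f=0 \iff \bar\partial\partial^{\sharp}f=0$ on the orbifold.

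The forward direction now follows quickly: if $f\in\ker\Lg$ has zero mean, then $f\not\equiv\mathrm{const}$, so on the compact orbifold $M$ the function $f$ attains at least one extremum at some point $p\in M$ (smooth or singular, handling the latter through the local uniformizing chart and taking a $\Gamma_j$-invariant extremum). At such a point $df(p)=0$, hence $(\partial^{\sharp}f)(p)=0$, so the holomorphic vector field $X:=\partial^{\sharp}f$ vanishes somewhere on $M$. Conversely, given a holomorphic vector field $X$ vanishing at some $p\in M$, I would use the orbifold Hodge decomposition to write $X = \partial^{\sharp}f + X_0$, where $X_0$ is a parallel holomorphic vector field and $f$ is a real-valued smooth function with zero mean. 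Because $X$ (and $\partial^{\sharp}f$, by the previous step applied in reverse) vanishes at $p$ while any nontrivial parallel holomorphic field is nowhere vanishing, one concludes $X_0=0$, hence $X=\partial^{\sharp}f$ with $f$ of zero mean, and $f\in\ker\Lg$ by the factorization.

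The main obstacle is verifying that all the classical smooth-manifold ingredients survive in the orbifold setting: the Weitzenb\"ock identity, integration by parts, Hodge theory and the splitting of holomorphic vector fields into a parallel and a $\partial^{\sharp}$-exact piece. For isolated quotient singularities $\CC^m/\Gamma_j$ with $\Gamma_j\subset U(m)$ acting freely off the origin, each of these is handled by passing to $\Gamma_j$-invariant objects on the local uniformizing chart and by observing that the singular set, being of real codimension $\geq 4$, is negligible for the relevant Sobolev spaces and for the cutoff arguments used in the integration by parts. With these adaptations the smooth statement of \cite{ls} extends verbatim and the bijection is established.
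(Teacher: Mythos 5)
The paper does not actually prove this proposition; it simply refers to LeBrun--Simanca \cite{ls}. Your forward direction is correct and is the standard argument: the Kcsc identity $\Lg = 2(\bar\partial\partial^{\sharp})^{*}(\bar\partial\partial^{\sharp})$, $\Gamma_j$-equivariant elliptic regularity on the uniformizing charts, and the observation that the singular set has real codimension $\geq 4$ so that cutoffs and integration by parts go through; from this, a zero-mean nonconstant $f\in\ker\Lg$ yields a holomorphic $\partial^{\sharp}f$ that vanishes at any extremum of $f$.

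The converse has two genuine gaps. First, the Hodge decomposition of the $\bar\partial$-closed $(0,1)$-form $\iota_X\omega$ produces a potential $h$ that is complex-valued, not real; replacing it by a real $f$ requires the Matsushima--Lichnerowicz structure theorem for Kcsc metrics (namely that the ideal $\mathfrak h_0$ of holomorphic fields with zeros is the complexification of the algebra $\mathfrak k_0$ of Hamiltonian Killing fields), which you never invoke. Second, your argument that $X_0=0$ does not go through as written. You only know that $\partial^{\sharp}f$ vanishes at some critical point $q$ of $f$, and $q$ has no reason to coincide with the zero $p$ of $X$. The equalities $X(p)=0$ and $(\partial^{\sharp}f)(q)=0$ are perfectly compatible with $X_0\neq 0$: one would simply have $(\partial^{\sharp}f)(p)=-X_0(p)\neq 0$ and $X(q)=X_0(q)\neq 0$. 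The standard way to conclude $X_0=0$ from the existence of a zero of $X$ is an Albanese functoriality argument: $X$ pushes forward to a holomorphic vector field on $\mathrm{Alb}(M)$ fixing $\mathrm{alb}(p)$, but holomorphic vector fields on a complex torus are translations, so the pushforward vanishes and $X\in\mathfrak h_0$. That ingredient is missing from your proof.

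Finally, note that the ``one-to-one correspondence'' is not literally a bijection of real vector spaces: the zero-mean part of $\ker\Lg$ has real dimension $d$, while the space of holomorphic fields vanishing somewhere is a complex vector space of complex dimension $d$ (real dimension $2d$). The precise content, as in \cite{ls}, is that $f\mapsto\partial^{\sharp}f$ is a real-linear injection onto a real form of $\mathfrak h_0$, so the matching is of the real dimension of the kernel with the complex dimension of $\mathfrak h_0$. Your writeup inherits the same imprecision as the paper's phrasing without addressing it.
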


The aim of this section is to study the solvability of the linear problem
\begin{equation}\label{eq:linear}
 \Lg u = f
\end{equation}
on the complement of the singular points in $M$. In order to do that, we introduce some notation as well as an appropriate functional setting. Let us recall that we are distinguishing between    points $\sg p_1,\ldots,p_{N}\dg$  with neighborhoods biholomorphic to  a ball of  $\CC^m/\Gamma_{j}$ with $\Gamma_{j}$ nontrivial such that $\CC^{m}/\Gamma_{j}$ admits an ALE  Kahler Ricci-flat resolution $\st X_{\Gamma_{j}},h,\eta_{j} \dt$ and points $\sg q_1,\ldots,q_{K}\dg$ with neighborhoods biholomorphic to a ball of  $\CC^m/\Gamma_{N+l}$ such that
$\CC^{m}/\Gamma_{N+l}$ admits a scalar flat ALE resolution $(Y_{\Gamma_{N+l}},k_{l},\theta_{l})$ with $e({\Gamma_{N+l}})\neq 0$. To simplify the notation we set
\begin{equation*}
\p \, := \, \sg p_{1},\ldots,p_{N}\dg, \quad \q \, := \, \sg q_{1},\ldots,q_{K}\dg  , \quad \hbox{and} \quad M_{\p,\q} \, := \, M\setminus \st \p\cup\q \dt\,.
\end{equation*}
We agree that, if $\q=\emptyset$, then $M_{ \p }:=M_{\p,\emptyset}$. To introduce weighted function spaces, we consider geodesics balls $B_{r_{0}}\st p_{j} \dt, B_{r_{0}}\st q_{l} \dt$ of radius $r_0>0$, with K\"ahler normal coordinates centered at the points $p_{j}$'s and $q_{l}$'s and we set
$$
M_{r_{0}} \,\, := \,\, M\setminus \bigg(   \bigcup_{j=1}^{N} B_{r_{0}}\st p_{j} \dt\,\cup\,  \bigcup_{l=1}^{K} B_{r_{0}}\st q_{l} \dt  \bigg) \, .
$$
For $\delta\in\mathbb{R}$ and $\alpha\in (0,1)$, we define the weighted H\"older space $C_{\delta}^{k,\alpha}\st M_{\p,\q} \dt$
as the set of functions $f\in C_{loc}^{k,\alpha}\st M_{\p,\q} \dt$ such that the norm
\begin{align*}
\left\|f\right\|_{C_{\delta}^{k,\alpha}\st M_{\p,\q} \dt}
\, := \,\,\,  &\left\|f\right\|_{C^{k,\alpha}( M_{r_{0}} )}
\hspace{-1,5cm}
&+\sup_{ 0<r\leq r_{0}} r^{-\delta}\sum_{j=1}^{N} \left\|\left.f(r\cdot)\right|_{B_{r_{0}}\st p_{j} \dt}\right\|_{C^{k,\alpha}\st B_{2}\setminus B_{1}  \dt}\\
& &+\sup_{ 0<r\leq r_{0}} r^{-\delta}\sum_{l=1}^{K}\left\|\left.f(r\cdot)\right|_{B_{r_{0}}\st q_{l} \dt}\right\|_{C^{k,\alpha}\st B_{2}\setminus B_{1}  \dt}
\end{align*}
is finite.
We observe that the typical function $f\in C_{\delta}^{4,\alpha}\st M_{\p,\q} \dt$
beheaves like
\begin{equation*}
f(\cdot)
\, \,= \,\, \mathcal{O}\big( d_{\omega}\st p_{j}, \cdot \dt^{\delta}\big) \,, \quad \hbox{on} \quad {B_{r_{0}}\st p_{j}\dt}\qquad \hbox{and} \qquad   f (\cdot)
\, \, = \,\, \mathcal{O}\big( d_{\omega}\st q_{j}, \cdot \dt^{\delta}\big)
\,, \quad \hbox{on} \quad {B_{r_{0}}\st q_{j}\dt} \, ,
\end{equation*}
where $d_{\omega}$ is the Riemannian distance induced by the Kahler metric $\omega$.

We are now in the position to solve equation~\eqref{eq:linear} in the case where the datum $f$ is {\em orthogonal} to $\ker(\Lg)$. By this we mean that, looking at $f$ as a distribution, we have \begin{equation}
\label{eq:orto}
\langle f  \, | \, \varphi_i \rangle_{\mathscr{D}' \times \mathscr{D} } \,\, = \,\, 0 \, ,
\end{equation}
for every $i=0,\ldots ,d$, where we denoted by $\langle \cdot \, | \, \cdot\cdot \, \rangle_{\mathscr{D}' \times \mathscr{D} }$ the distributional pairing and the functions $\varphi_i$'s are as in~\eqref{nontrivial_ker}. It is worth pointing out that since the functions in $\ker(\Lg)$ are smooth, everything makes sense.

To solve equation \eqref{eq:linear} we need to ensure the Fredholmness of the operator $\Lg$ on the functional spaces we have chosen. The Fredholm property depends heavily on the choice of weights, indeed the operator $\Lg$ is Fredholm if and only if the weight is not an indicial root (for definition of indicial roots we refer to \cite{ap2}) at any of the points $p_{j}$'s or $q_{l}$'s. Since in normal coordinates on a punctured ball, the principal part of our operator $\Lg$ is 'asymptotic' to the Euclidean Laplacian $\Delta$, then the set of indicial roots of $\Lg$ at the center of the ball coincides with the set of indicial roots of $\Delta$ at $0$. We recall that the set of indicial roots of $\Delta$ at $0$ is given by $\ZZ\setminus \sg 5-2m, \ldots ,-1 \dg$ for $m\geq 3$ and $\ZZ$ for $m=2$.

By the analysis in~\cite{ap1}, we recover the following result, which provides the existence of solutions in Sobolev spaces for the linearized equation together with {\em a priori} estimates in suitable weighted H\"older spaces.

\begin{teo}
\label{invertibilitapesatobase}
For every $f \in L^p(M)$, $p>1$, satisfying the orthogonality condition~\eqref{eq:orto}, there exists a unique solution $u \in W^{4,p}(M)$ to
$$
\Lg u \,\, = \,\, f \, ,
$$
which satisfy the condition~\eqref{eq:orto}.
Moreover, the following estimates hold true.
\begin{itemize}
\item
If $m\geq 3$ and in addition $f\in C^{0,\alpha}_{\delta-4}(M_{\p,\q})$ with $\delta \in (4-2m \, ,0 )$, then the solution $u$ belongs to $C^{4,\alpha}_{\delta}(M_{\p,\q})$ and satisfy the estimates
\begin{equation}
\left\|u\right\|_{C^{4,\alpha}_{\delta}(M_{\p,\q})}  \,\, \leq \,\,  C \, \left\|f\right\|_{C^{0,\alpha}_{\delta-4}(M_{\p,\q})} \, ,
\label{eq:stimapesatabase}
\end{equation}
for some positive constant $C>0$.
\item
If $m=2$ and in addition $f\in C^{0,\alpha}_{\delta-4}(M_{\p,\q})$ with $\delta \in (0 \,,1)$, then the solution $u$ belongs to $C^{4,\alpha}_{loc}(M_{\p,\q})$ and satisfy the following estimates
\begin{equation}
\bigg\| \,  u - \sum_{j=1}^{N}u ( p_{j}) \chi_{p_j}  - \sum_{l=1}^{K}u (q_{l})\chi_{q_l} \, \bigg\|_{C^{4,\alpha}_{\delta}(M_{\p,\q})}   \!\!\!\! + \,
\sum_{j=1}^{N}|u (p_{j})| \, + \sum_{l=1}^{K}|u(q_{l})|  \,\, \leq \,\,  C  \, \left\|f\right\|_{C^{0,\alpha}_{\delta-4}(M_{\p,\q})} \, ,
\label{eq:stimapesatabase2}
\end{equation}
where $C>0$ is a positive constant and the functions $\chi_{p_1}, \ldots, \chi_{p_N}$ and $\chi_{q_1}, \ldots, \chi_{q_K}$ are smooth cutoff functions supported on small balls centered at the points $p_{1}, \ldots, p_N$ and $q_1, \ldots, q_K$, respectively  and identically equal to $1$ in a neighborhood of these points.
\end{itemize}
\end{teo}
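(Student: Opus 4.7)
The plan is to combine standard Fredholm theory on the compact orbifold $M$ with the weighted Schauder theory of Lockhart--McOwen type on punctured neighborhoods of the singular points, in the spirit of \cite{ap1}, \cite{ap2}. Globally, $\Lg$ is formally self-adjoint and elliptic of order four on $M$, hence induces a Fredholm map $W^{4,p}(M) \to L^{p}(M)$ for every $p>1$, with closed range equal to the $L^{2}$-annihilator of $\ker \Lg = \spa\{\varphi_{0}, \ldots, \varphi_{d}\}$. Condition \eqref{eq:orto} is exactly this annihilator condition, so a solution $u \in W^{4,p}(M)$ exists, and it is uniquely pinned down by requiring $u$ to satisfy \eqref{eq:orto} itself.

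To obtain the weighted regularity I would analyze $u$ near each singular point. In a local uniformization chart around $p_{j}$ (or $q_{l}$), K\"ahler normal coordinates identify the metric with the Euclidean one up to quartic terms, so $\Lg = \Delta^{2} + \mathcal R$ with $\mathcal R$ a fourth-order operator whose coefficients vanish at the origin. Consequently, the indicial roots of $\Lg$ at each singular point coincide with those of $\Delta^{2}$, namely $\ZZ \setminus \{5-2m, \dots, -1\}$ for $m \geq 3$ and all of $\ZZ$ for $m=2$. For $m \geq 3$ the weight $\delta \in (4-2m, 0)$ lies strictly between two consecutive indicial roots, so the weighted Schauder theory on punctured balls applies and yields that $\Lg : C^{4,\alpha}_{\delta}(M_{\p,\q}) \to C^{0,\alpha}_{\delta-4}(M_{\p,\q})$ is Fredholm. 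Interior elliptic regularity gives $u \in C^{4,\alpha}_{\mathrm{loc}}(M_{\p,\q})$, and a rescaling argument on annuli of inner radius $\sim r$ shows that the $W^{4,p}$ solution $u$ in fact decays as $\mathcal O(|z|^{\delta})$ near each singular point. The estimate \eqref{eq:stimapesatabase} then follows from a standard blow-up contradiction: a failing sequence of normalized solutions would converge, after rescaling, to a nontrivial $\Gamma$-invariant solution of $\Delta^{2} v = 0$ on $\CC^{m}/\Gamma$ lying in the corresponding weighted space, which is ruled out because no homogeneous solution has rate inside $(4-2m, 0)$.

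For $m=2$ the weight $\delta \in (0,1)$ still avoids indicial roots, but the root $0$ sits at the left endpoint of the range and corresponds to the constants in $\ker \Delta^{2}$, so the local asymptotic expansion of $u$ at each singular point may start with a nonzero constant. Since the real dimension of $M$ is $4$, the Sobolev embedding $W^{4,p} \hookrightarrow C^{0}$ holds for every $p>1$, so the pointwise values $u(p_{j}), u(q_{l})$ are well defined; subtracting them via the cutoffs $\chi_{p_{j}}, \chi_{q_{l}}$ cancels these leading terms and leaves a remainder in $C^{4,\alpha}_{\delta}(M_{\p,\q})$. The combined estimate \eqref{eq:stimapesatabase2} follows from the same blow-up argument applied simultaneously to the decaying part and the constant parts. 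The main obstacle is precisely this bootstrapping from the integral regularity of the Fredholm solution on $M$ to the pointwise weighted decay on $M_{\p,\q}$, which requires combining the local indicial-root analysis of $\Delta^{2}$ with the observation that the lower-order perturbation $\mathcal R$ does not introduce new asymptotic behaviors at rates intermediate to those of $\Delta^{2}$.
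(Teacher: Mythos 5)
Your proposal is correct and follows exactly the line of argument the paper invokes: the paper states this result without proof, citing the weighted analysis of Arezzo--Pacard \cite{ap1}, and your sketch (Fredholm theory on the compact orbifold for global solvability, indicial-root computation for the Euclidean bilaplacian at the singular points, weighted Schauder theory plus a blow-up contradiction for the a priori estimate, and the treatment of the indicial root $0$ via subtraction of the pointwise values $u(p_j)\chi_{p_j}$, $u(q_l)\chi_{q_l}$ when $m=2$) reproduces precisely that argument. The only thing worth flagging is that the blow-up contradiction should address both the case of concentration away from the singular set (handled by the uniqueness following from the orthogonality condition) and the rescaled concentration at a singular point (handled by the absence of indicial roots in the weight interval), but this is implicit in what you wrote.
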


\begin{remark}
Some comments are in order about the choice of the weighted functional setting. Concerning the case $m\geq 3$ we observe that
the choice of the weight $\delta$ in the interval $(4-2m,0)$ is motivated by the fact that only for $\delta$ in this range the kernel of $\Lg$ viewed as an operator from $C_{\delta}^{4,\alpha}\st M_{\p,\q} \dt$ to $C_{\delta-4}^{0,\alpha}\st M_{\p,\q} \dt$ coincides with the bounded kernel, which has been denoted for short by $\ker\st \Lg \dt$.

In the case $m=2$ it is no longer possible to make a similar choice, since $4-2m$ becomes $0$ and thus, at a first glance, the  natural choice for the weight is not evident. One possibility is to take the weight in the first indicial interval before $0$, which for $m=2$ is given $(-1,0)$. In this case, one would get a functional space which is strictly larger than the bounded kernel $\ker\st \Lg \dt$. We prefer instead to choose the weight in the first indicial interval after $0$, which for $m=2$ is given by $(0,1)$. This time, the bounded kernel of $\Lg$ is no longer contained in the possible domains of our operator, since the functions belonging to these spaces have to vanish at points $\p$ and $\q$.  On one hand this is responsible for the more complicate expression in the {\em a priori} estimate~\eqref{eq:stimapesatabase2}, but one the other hand this choice of the weight will reveal to be more fruitful.
Indeed, in view of the linear analysis on ALE \K\ manifolds performed in section~\ref{lineareALE} and with the notation introduced therein, one has that the corresponding linearized scalar curvature operator
\begin{equation*}
\Le : C_{\delta}^{4,\alpha}\st  X_{\Gamma} \dt\rightarrow C_{\delta-4}^{0,\alpha}\st  X_{\Gamma} \dt
\end{equation*}
admits an inverse (up to a constant) for $\delta \in (0,1)$. Since the possibility of choosing the same weight for the linear analysis on both the base orbifold and the model spaces will be crucial in the subsequent nonlinear arguments, this yields a reasonable justification of our choices. In the same spirit, we point out that, for $m=3$ and $\delta \in (4-2m,0)$ the operator $\mathbb{L}_\eta$ defined above is invertible, as it is proven in Theorem~\ref{isomorfismopesati}.
\end{remark}

In order to drop the orthogonality assumption~\eqref{eq:orto} in Theorem~\ref{invertibilitapesatobase} and tackle the general case, we first need to investigate the behaviour of the fundamental solutions of the operator $\Lg$. This will be done in the following subsection.


\subsection{Multi-poles fundamental solutions of $\Lg$.}

The aim of this subsection is twofold. On one hand, we want to produce the tools for solving equation~\eqref{eq:linear} on $M_{\p,\q}$, when $f$ is not necessarily {\em orthogonal} to $\ker{(\Lg)}$. On the other hand, we are going to determine under which global conditions on $\ker(\Lg)$ we can produce a function, which near the singularities behaves like the principal non euclidean part of the \K\ potential of the corresponding ALE resolution. In concrete, building on Propositions~\ref{asintpsieta} and~\ref{asintpsieta2}, we aim to establish the existence of a function, which blows up like $|z|^{2-2m}$ near the $p_j$'s and like $|z|^{4-2m}$ near the $q_l$'s. Such a function will then be added to the original \K\ potential of the base manifold in order to make it closer to the one of the resolution. At the same time, for obvious reasons, it is important to guarantee that this new \K\ potential will produce on $M_{\p,\q}$ the smallest possible deviation from the original scalar curvature, at least at the linear level.
Thinking of $g$ as a perturbation of the flat metric at small scale, we have that $\Lg$ can be thought of as a perturbation of $\Delta^2$. Since $|z|^{2-2m}$ and $|z|^{4-2m}$ satisfy equations of the form
$$
\Delta^2(A|z|^{2-2m} + B |z|^{4-2m}) = C \Delta\delta_0 + D\delta_0 \,\, ,
$$
where $\delta_0$ is the Dirac distribution centered at the origin and $A,B,C$ and $D$ are suitable constants, we are led to study these type of equations on $M$ for the operator $\Lg$.


\begin{prop}
\label{balancrough}
Let $(M,g,\omega)$ be compact Kcsc orbifold of complex dimension $m$ and let ${\rm ker} {(\Lg)} = { span}\{\varphi_0, \varphi_1, \dots,\varphi_d\}$, as in~\eqref{nontrivial_ker}. Let then $\{p_1, \dots, p_N\}$ and $\{q_1, \dots, q_K\}$ be two disjoint sets of points in $M$ and let $(f_0, \ldots, f_d )$ be a vector in $\RR^{d+1}$. Assume that the following {\em linear balancing condition} holds
\begin{eqnarray}
\label{eq:generalbal}
f_i \, + \, \sum_{l=1}^K a_l\varphi_i(q_l) \, +\, \sum_{j=1}^Nb_j(\Delta\varphi_i)(p_j) \, + \, \sum_{j=1}^Nc_j\varphi_i(p_j)   &  = &  0 \,, \quad\quad\quad\qquad\qquad \hbox{$i = 1, \dots, d$} \, , \\
\label{eq:fixnu}
f_0 \, {\rm Vol}_\omega(M)   \, + \, \sum_{l=1}^K a_l \, + \, \sum_{j=1}^N c_j  & = & \nu  \, {\rm Vol}_\omega(M) \, ,
\end{eqnarray}
for some choice of the coefficients $\nu$,  ${\bf a}=(a_1, \dots, a_K)$, ${\bf b} =(b_1, \dots, b_N)$ and ${\bf c} =(c_1, \dots, c_N)$.
Then, there exist a distributional solution ${U} \in \mathscr{D}'(M)$ to the equation
\begin{equation}
\Lg   [{U}]  \,  + \, \nu  \,\,\,  = \,\,\,   \sum_{i=0}^d f_i \, \varphi_i \, + \, \sum_{l=1}^K a_l\, \delta_{q_{l}} \, + \, \sum_{j=1}^N  b_j \, \Delta\delta_{p_{j}} \, + \, \sum_{j=1}^N c_j \, \delta_{p_{j}} \, , \qquad \hbox{in \,\,\,$M$}\, . \label{eq:LGabcd}
\end{equation}
\end{prop}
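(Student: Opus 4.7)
The plan is to produce $U$ as a sum $\tilde U + u$, where $\tilde U$ is an explicit locally integrable ``approximate solution'' whose distributional image under $\Lg$ already reproduces the Dirac and $\Delta$-Dirac terms appearing on the right-hand side of \eqref{eq:LGabcd}, and $u$ is a small correction obtained by solving a residual linear equation with the aid of Theorem \ref{invertibilitapesatobase}. The balancing hypotheses \eqref{eq:generalbal} and \eqref{eq:fixnu} will enter at the final step, precisely as the orthogonality relations of the residual against $\ker(\Lg)$ that are dictated by the formal self-adjointness of $\Lg$.

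To build $\tilde U$, I would fix \K\ normal coordinates $(z^1,\ldots,z^m)$ around each singular point and use that in such coordinates the principal part of $\Lg$ coincides with the Euclidean bi-Laplacian $\Delta^2$. In the sense of distributions one has $\Delta^2 |z|^{4-2m} = k_1\,\delta_0$ and $\Delta^2 |z|^{2-2m} = k_2\,\Delta\delta_0$ for nonzero constants $k_1,k_2$ when $m \geq 3$, while for $m = 2$ the role of $|z|^{4-2m}$ is played by $\log|z|$. Choosing model functions
\[
V_j \,:=\, \tfrac{c_j}{k_1}|z|^{4-2m} + \tfrac{b_j}{k_2}|z|^{2-2m} \text{ near } p_j, \qquad W_l \,:=\, \tfrac{a_l}{k_1}|z|^{4-2m} \text{ near } q_l,
\]
and smooth cutoffs $\chi_{p_j}, \tilde{\chi}_{q_l}$ localized near the corresponding points, I would set $\tilde U := \sum_j \chi_{p_j}V_j + \sum_l \tilde\chi_{q_l}W_l$. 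A direct computation then gives
\[
\Lg \tilde U \,=\, \sum_{l=1}^K a_l\delta_{q_l} + \sum_{j=1}^N b_j \Delta\delta_{p_j} + \sum_{j=1}^N c_j \delta_{p_j} + E,
\]
where $E$ is a locally integrable function on $M$, supported in a neighborhood of $\p \cup \q$, whose growth at the punctures comes from $(\Lg - \Delta^2)V_j$, $(\Lg-\Delta^2) W_l$ and from the commutators of $\Lg$ with the cutoffs.

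Writing $U = \tilde U + u$, the problem reduces to solving $\Lg u = G$ with $G := -\nu + \sum_{i=0}^d f_i\varphi_i - E$. Applying Theorem \ref{invertibilitapesatobase} requires $\langle G, \varphi_i \rangle = 0$ for every $i = 0, \ldots, d$. Since $\tilde U \in L^1_{loc}$ and $\Lg \varphi_i = 0$, the formal self-adjointness of $\Lg$ gives $\langle \Lg \tilde U, \varphi_i \rangle_{\mathscr{D}'\times\mathscr{D}} = \langle \tilde U, \Lg \varphi_i\rangle_{L^2} = 0$, whence
\[
\int_M E\,\varphi_i \,=\, -\sum_{l=1}^K a_l\varphi_i(q_l) - \sum_{j=1}^N b_j\Delta\varphi_i(p_j) - \sum_{j=1}^N c_j\varphi_i(p_j).
\]
For $i\geq 1$, combining this identity with $\int_M \varphi_i = 0$ and the orthonormality of the $\varphi_k$'s reduces $\langle G,\varphi_i\rangle$ exactly to the left-hand side of \eqref{eq:generalbal}, which vanishes by hypothesis; for $i=0$ one obtains $(f_0-\nu)\vol(M)+\sum_l a_l + \sum_j c_j$, which vanishes by \eqref{eq:fixnu}. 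Theorem \ref{invertibilitapesatobase} then yields $u$ and hence $U = \tilde U + u \in \mathscr{D}'(M)$ is the desired distributional solution.

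The subtle technical point I expect is the decay of $E$ at the punctures. The dominant contribution is $(\Lg-\Delta^2)V_j$, which acts by second-order derivatives with coefficients vanishing quadratically at the origin on a function of worst growth $|z|^{2-2m}$, producing terms of order $|z|^{-2m}$ near $p_j$. This sits right at the critical weight $\delta - 4 = -2m$, which is \emph{not} strictly inside the admissible range $(4-2m,0)$ of Theorem \ref{invertibilitapesatobase} for $m \geq 3$. I would handle this by iteratively refining $\tilde U$ through additional explicit terms built from higher radial powers multiplied by spherical harmonics, chosen so as to cancel the leading obstruction to the decay of $E$; after finitely many such corrections the residual belongs to $C^{0,\alpha}_{\delta-4}(M_{\p,\q})$ with $\delta$ strictly inside the admissible interval, and the linear theory of Theorem \ref{invertibilitapesatobase} applies to close the argument.
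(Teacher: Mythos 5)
Your proposal is correct in outline, but it takes a genuinely different route from the paper's own proof of Proposition~\ref{balancrough}. The paper avoids the explicit-ansatz strategy entirely and defines $U$ abstractly through the duality pairing. Writing $T$ for the distribution on the right of~\eqref{eq:LGabcd} minus $\nu$, the hypotheses~\eqref{eq:generalbal}--\eqref{eq:fixnu} say precisely that $T$ annihilates $\ker(\Lg)$; the paper then declares $\langle U \, |\, \psi\rangle_{\mathscr{D}'\times\mathscr{D}} := \langle T \, | \, \mathbb{J}_\omega[\psi^\perp]\rangle_{\mathscr{D}'\times\mathscr{D}}$ for every test function $\psi$, where $\mathbb{J}_\omega$ is the bounded right inverse of $\Lg$ from Theorem~\ref{invertibilitapesatobase} restricted to $\ker(\Lg)^\perp$, and verifying $\Lg U = T$ is then a four-line computation using only formal self-adjointness and $\psi - \psi^\perp \in \ker(\Lg)$. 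This sidesteps completely the question of where the error term $E$ lives; no weighted-space estimate is needed. Your route---explicit local ansatz plus correction---is in fact the one the paper does use, but in Lemmas~\ref{Gbilapl} and~\ref{Glapl}, whose purpose is to describe the \emph{local structure} of the multi-pole fundamental solution (Proposition~\ref{loc_structure}), not to establish existence. The paper thus separates existence (cheap, abstract) from structure (delicate, explicit), whereas you obtain both in one pass; this is a legitimate reorganization, and the orthogonality verification in your second-to-last paragraph is carried out correctly.

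There is, however, one point that your last paragraph flags but does not resolve, and it is the crux of whether the iterative refinement you invoke actually closes. The dominant error $(\Lg - \Delta^2)[\,|z|^{2-2m}\,]$ has, in its Fourier decomposition along spheres, a \emph{radial} piece $c_0\,|z|^{-2m}$ sitting exactly at the borderline weight. A nonzero $c_0$ would be an unavoidable obstruction: there is no homogeneous correction that cancels a radial $|z|^{-2m}$ source, since $\Delta^2|z|^{4-2m}$ is already a Dirac mass rather than $|z|^{-2m}$. The reason the iteration closes is that $c_0 = 0$, a cancellation that hinges on the precise structure of the Kcsc potential $\Psi_4$ in~\eqref{eq:decp4}, and in particular on the specific value $\Phi_0 = -s_\omega/16m(m+1)$. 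The paper calls this ``straightforward but remarkable'' in the proof of Lemma~\ref{Glapl}. Your proposal should state this vanishing explicitly and cite the source (the Kcsc equation at fourth order, equation~\eqref{eq:bilapp4}), since without it the correction scheme of your final paragraph would stall at the first step.
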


\begin{proof}
Let us first remark that equations \eqref{eq:generalbal} and \eqref{eq:fixnu} imply that, for any $\varphi \in \ker(\Lg)$, one has that $\langle \, T \, | \,\varphi \, \rangle_{\mathscr{D}' \times \mathscr{D}} \, = \, 0$, where $T \in \mathscr{D}'$ is the distribution defined by
$$
T \,\,  = \,\,  \sum_{i=1}^{d}f_{i} \, \varphi_{i} \, + \, \sum_{l=1}^K a_l \, \delta_{q_{l}} \, + \, \sum_{j=1}^N b_j \, \Delta\delta_{p_{j}} \, + \, \sum_{j=1}^N c_j \, \delta_{p_{j}}  \, -  \, \nu \, .
$$
Having this in mind, we let $U \in \mathscr{D}'$ be the unique distribution such that, for every $\psi \in C^\infty(M)$
$$
\left< \, U \, |\, \psi\, \right>_{\mathscr{D}^{'} \times \mathscr{D}} \,\,  = \,\,
\left< \, T  \, |\,  \mathbb{J}_{\omega}[ \psi^{\perp}] \, \right>_{\mathscr{D}^{'} \times \mathscr{D}} \, ,
$$
where $\psi^\perp$, the component of $\psi$ which is {\em orthogonal} to $\ker(\Lg)$, is given by
\begin{equation*}
\psi^{\perp} \,\, = \,\, \psi \, - \, \frac{1}{\vol\st M \dt}\int_{M}\psi \,d\mu_{\omega} \, - \, \sum_{i=1}^{d}\varphi_{i}\int_{M}\psi \varphi_{i}\,d\mu_{\omega}\,,
\end{equation*}
and $\mathbb{J}_{\omega} : L^{2}\st M \dt/\ker\st \Lg \dt\rightarrow W^{4,2}\st M \dt/\ker\st \Lg \dt$ is inverse of $\Lg$ restricted to the orthogonal complement of $\ker(\Lg)$, given by Proposition \ref{invertibilitapesatobase}.
We claim that the distribution $U$ defined above satisfies the equation \eqref{eq:LGabcd} in the sense of distributions. With the notations just introduced, we need to show that, for every $\psi \in C^\infty(M)$, it holds
$$
\left< \, \Lg  [U ]\, |\, \psi \, \right>_{\mathscr{D}^{'} \times \mathscr{D}} \,\, = \,\,
\left< \, T \, | \, \psi \, \right> _{\mathscr{D}^{'} \times \mathscr{D}} \, .
$$
Using the definition of $U$ and the fact that $\Lg$ is formally selfadjoint, we compute
\begin{eqnarray*}
\left< \, \Lg  [U ]\, |\, \psi \, \right>_{\mathscr{D}^{'} \times \mathscr{D}} & = & \left< \, U \, |\, \Lg [\psi] \, \right>_{\mathscr{D}^{'} \times \mathscr{D}} \,\,\, = \,\,\, \left< \, U \, |\, \Lg [\psi^\perp] \, \right>_{\mathscr{D}^{'} \times \mathscr{D}} \,\,\, = \,\,\, \left< \, T \, |\, \mathbb{J}_\omega  \big[   (\Lg [\psi^\perp])^\perp  \big] \, \right>_{\mathscr{D}^{'} \times \mathscr{D}}  \\
& = & \left< \, T \, | \, \psi^\perp  \right> _{\mathscr{D}^{'} \times \mathscr{D}} \,\, \,= \,\,\, \left< \, T \, | \, \psi \, \right> _{\mathscr{D}^{'} \times \mathscr{D}} \,,
\end{eqnarray*}
since $\psi - \psi^\perp \in \ker(\Lg)$, and thus $\left< \, T \, | \, \psi - \psi^\perp  \right> _{\mathscr{D}^{'} \times \mathscr{D}} = 0$, by a previous observation.
%
This completes the proof of the proposition.
\end{proof}

\begin{remark}
\label{gabc}
When $f_i = 0$, for $i=0, \ldots, d$, we only impose the balancing condition~\eqref{eq:generalbal}, which specializes to
\begin{equation*}
\sum_{l=1}^K a_l\varphi_i(q_l) \, +\, \sum_{j=1}^Nb_j(\Delta\varphi_i)(p_j) \, + \, \sum_{j=1}^Nc_j\varphi_i(p_j)  \,\,  = \,\,  0 \,,
\end{equation*}
and we obtain a real number $\nu_{\aaa,\ccc}$, defined by the relation
\begin{equation*}
\sum_{l=1}^K a_l \, + \, \sum_{j=1}^N c_j  \,\,= \,\, \nu_{\aaa,\ccc}  \, {\rm Vol}_\omega(M) \, ,
\end{equation*}
and a distribution $\GGG_{\aaa,\bbb,\ccc} \in \mathscr{D}'(M)$, which satisfies the equation
\begin{eqnarray*}
\Lg \left[  \mathbf{G}_{\aaa,\bbb,\ccc} \right] \,  + \, \nu_{\aaa,\ccc}  & =&   \sum_{l=1}^K a_l\, \delta_{q_{l}} \, + \, \sum_{j=1}^N  b_j \, \Delta\delta_{p_{j}} \, + \, \sum_{j=1}^N c_j \, \delta_{p_{j}} \, , \qquad \hbox{in \,\,\,$M$}\, .
\end{eqnarray*}
We will refer to $\GGG_{\aaa,\bbb,\ccc}$ as a {\em multi-poles fundamental solution} of $\Lg$.
\end{remark}

The following two lemmata and the subsequent proposition~\eqref{loc_structure} will give us a precise description of the behavior of a {\em multi-poles fundamental solution}  $\GGG_{\aaa,\bbb,\ccc}$ of $\Lg$ around the singular points. The same considerations obviously apply to a distributional solution $U$ of the equation~\eqref{eq:LGabcd}. The first observation in this direction can be found in \cite{ap2} and we report it here for sake of completeness. 	
\begin{lemma}
\label{Gbilapl}
Let $(M,g,\omega)$ be a Kcsc orbifold of complex dimension $m\geq 2$ and let $M_q = M \setminus \{ q\}$, with $q \in M$. Then, the following holds true.
\begin{itemize}
\item If $m\geq 3$, there exists a function
$G_{\Delta\Delta}(q,\cdot) \in \mathcal{C}_{4-2m}^{4,\alpha}(M_q) \cap \mathcal{C}^{\infty}_{loc}(M_q)$, orthogonal to $\ker(\Lg)$ inthe sense of~\eqref{eq:orto}, such that
\begin{equation}\label{eq:greenbilapl}
\Lg[G_{\Delta\Delta}(q,\cdot)]  \,\, + \,\,
\frac{2(m-1) \, |\Sp^{2m-1}|}{|\Gamma|} \,\, \big[\, 4(m-2) \,\, \delta_q \, \big] \,\,  \in \, \mathcal{C}^{0,\alpha}(M) \, ,
\end{equation}
where $|\Gamma|$ is the order of the orbifold group at $q$.
Moreover, if $z$ are holomorphic coordinates centered at $q$, it holds the expansion
\begin{equation}
\label{eq:expgreenbilapl}
G_{\Delta\Delta}(q,z)  \,\, = \,\,  |z|^{4-2m} + \, \mathcal{O}(|z|^{6-2m}) \, .
\end{equation}
\item If $m=2$, there exists a function
$G_{\Delta\Delta}(q,\cdot) \in \mathcal{C}^{\infty}_{loc}(M_q)$, orthogonal to $\ker(\Lg)$ inthe sense of~\eqref{eq:orto}, such that
\begin{equation}
\label{eq:bluebilapl2}
\Lg[G_{\Delta\Delta}(q,\cdot)] \,\, - \,\, \frac{4|\Sp^{3}|}{|\Gamma|} \, \delta_q  \,\, \in  \,\, \mathcal{C}^{0,\alpha}(M) \, ,
\end{equation}
 where $|\Gamma|$ is the order of the orbifold group at $q$.
Moreover, if $z$ are holomorphic coordinates centered at $q$, it holds the expansion
\begin{equation}
\label{eq:expbluebilapl2}
G_{\Delta\Delta}(q,\cdot) \,\, = \,\, \log(|z|) \, + \, C_{q} \, + \, \mathcal{O}(|z|^{2}) \, ,
\end{equation}
for some constant $C_q \in \RR$.
\end{itemize}
\end{lemma}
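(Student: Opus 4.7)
The plan is to construct $G_{\Delta\Delta}(q,\cdot)$ as a local singular model plus a globally regular correction obtained via Theorem~\ref{invertibilitapesatobase}. Choose holomorphic normal coordinates $z$ centered at $q$, realized in the orbifold chart as a $\Gamma$-invariant ball in $\CC^{m}$, and let $\chi$ be a smooth cutoff identically $1$ near $q$ and supported in this chart. Define the local ansatz
\[
W(z):=\chi(z)\,|z|^{4-2m}\quad(m\geq 3),\qquad W(z):=\chi(z)\,\log|z|\quad(m=2).
\]
Standard distributional calculus on $\CC^{m}$ gives $\Delta^{2}|z|^{4-2m}=8(m-1)(m-2)|\Sp^{2m-1}|\,\delta_{0}$ for $m\geq 3$ and $\Delta^{2}\log|z|=-4|\Sp^{3}|\,\delta_{0}$ for $m=2$; passing to the $\Gamma$-quotient divides these constants by $|\Gamma|$, reproducing (up to the global sign carried by the normalization of $\Lg$) the coefficients of $\delta_{q}$ appearing in~\eqref{eq:greenbilapl} and~\eqref{eq:bluebilapl2}.

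Next I analyze $\Lg W$. Since $g=g_{eucl}+\mathcal{O}(|z|^{2})$ in normal coordinates and $\Lg=\Delta^{2}+4\langle\rho_{\omega}\,|\,i\dd\cdot\rangle$, the difference $\Lg W-\Delta^{2}W$ splits into commutator terms with $\chi$, smooth and supported in the annulus where $\chi$ is not locally constant, plus genuine metric corrections of relative size $\mathcal{O}(|z|^{2})$ applied to the singular model. For $m=2$ the bound on $\dd\log|z|$ combined with the $\mathcal{O}(|z|^{2})$ decay of the metric correction places everything in $C^{0,\alpha}$ near $q$, so $\Lg W$ already differs from the prescribed multiple of $\delta_{q}$ by a function in $C^{0,\alpha}(M)$. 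For $m\geq 3$ the naive error has leading singularity of order $|z|^{2-2m}$ and must be cancelled by successive refinements: iteratively add correction terms of the form $r^{\alpha}\Phi_{\alpha,k}$ with $\alpha\in\{6-2m,7-2m,\ldots,-1\}$ and $\Phi_{\alpha,k}$ in the $k$-th eigenspace of $\Delta_{\Sp^{2m-1}}$, solving at each step the Euler-type ODE for the corresponding Fourier component of the current remainder. Since none of these exponents coincides with the indicial roots $\{k,\,2-2m-k\}_{k\geq 0}$ of $\Delta^{2}$ in the relevant mode, every step is solvable, and after finitely many corrections the enlarged ansatz $\widetilde W$ satisfies
\[
\Lg\widetilde W \,+\, \frac{8(m-1)(m-2)|\Sp^{2m-1}|}{|\Gamma|}\,\delta_{q}\,\in\,C^{0,\alpha}(M),
\]
while retaining $\widetilde W=|z|^{4-2m}+\mathcal{O}(|z|^{6-2m})$ as required by~\eqref{eq:expgreenbilapl}.

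Call the $C^{0,\alpha}(M)$ remainder $f_{0}$ and split $f_{0}=f_{0}^{\perp}+\sum_{i=0}^{d}c_{i}\varphi_{i}$ according to the $L^{2}$-orthogonal decomposition induced by $\ker(\Lg)$. Theorem~\ref{invertibilitapesatobase} produces $v\perp\ker(\Lg)$ with $\Lg v=-f_{0}^{\perp}$, and $G_{\Delta\Delta}(q,\cdot):=\widetilde W+v$ then satisfies the required distributional identity with leftover right-hand side $-\sum_{i}c_{i}\varphi_{i}\in C^{0,\alpha}(M)$. Subtracting the finite-dimensional $L^{2}$-projection of $G_{\Delta\Delta}$ onto $\ker(\Lg)$, well-defined because $\widetilde W\in L^{1}_{\mathrm{loc}}(M)$ and the $\varphi_{i}$ are bounded, enforces the orthogonality condition without altering $\Lg G_{\Delta\Delta}$ or the singular asymptotics. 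For $m=2$ the constant $C_{q}$ in~\eqref{eq:expbluebilapl2} is recovered as $v(q)$, which is finite since $v$ is smooth across $q$ by elliptic regularity applied to the Hölder equation $\Lg v=-f_{0}^{\perp}$.

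The main technical obstacle is the iterative refinement of the local ansatz for $m\geq 3$: one must organize the expansion by degree of homogeneity and by spherical-harmonic mode, verify non-resonance with the indicial roots of $\Lg$ at each order, and terminate the process with a remainder that is genuinely in $C^{0,\alpha}(M)$ while preserving the asserted $\mathcal{O}(|z|^{6-2m})$ control in~\eqref{eq:expgreenbilapl}. This is essentially a parametrix construction for the fourth-order operator $\Lg$ at a conical point, in the spirit of classical Hadamard-type expansions, and contains all the analytic subtleties of the proof.
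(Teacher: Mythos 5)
Your overall strategy — start from the Euclidean biharmonic Green function, iteratively correct the local ansatz so that the defect of $\Lg$ falls into $C^{0,\alpha}$, then invert modulo $\ker(\Lg)$ using Theorem~\ref{invertibilitapesatobase} and orthogonalize — is the right skeleton, and it matches the paper's proof of the companion Lemma~\ref{Glapl}; for Lemma~\ref{Gbilapl} itself the paper simply cites \cite{ap2}. But the argument has genuine gaps exactly where the proof has to work hardest.

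First, the $m=2$ claim is wrong: the term $4\langle\rho_\omega\,|\,i\dd\log|z|\rangle$ and the $\mathcal{O}(|z|^2)$ metric deviation hitting the fourth derivatives of $\log|z|$ each produce a defect of size $\mathcal{O}(|z|^{-2})$, which is not $C^{0,\alpha}$, so the corrections you describe for $m\geq 3$ are needed for $m=2$ as well. Second, your non-resonance assertion for $m\geq 3$ is false, and precisely in the mode that matters. The indicial exponents of $\Delta^2$ in the constant mode are $\{0,\,2,\,2-2m,\,4-2m\}$ (not the two-element set you quote); to remove a constant-mode defect at order $|z|^{2-2m}$ one needs a radial correction $\propto |z|^{6-2m}$, and $6-2m$ equals $0$ for $m=3$ and $2$ for $m=2$ — both indicial. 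If the constant-mode coefficient of the $|z|^{2-2m}$ error were nonzero, one would be forced into a $|z|^{6-2m}\log|z|$ term, violating the asserted expansions~\eqref{eq:expgreenbilapl} and~\eqref{eq:expbluebilapl2}. The construction closes only because of a cancellation: the Kcsc assumption forces the special structure of $\Psi_4$ in Proposition~\ref{proprietacsck}, formula~\eqref{eq:decp4}, which kills that constant-mode coefficient — this is exactly the "$c_0=0$" observation the paper calls "straightforward but remarkable" in its proof of Lemma~\ref{Glapl}. You never invoke the constant-scalar-curvature hypothesis, so your argument cannot reach this cancellation and cannot close the resonance. Finally, a smaller issue: your displayed conclusion $\Lg\widetilde{W} + \frac{8(m-1)(m-2)|\Sp^{2m-1}|}{|\Gamma|}\delta_q \in C^{0,\alpha}(M)$ is not consistent with your own identity $\Delta^2|z|^{4-2m}= +8(m-1)(m-2)|\Sp^{2m-1}|\delta_0$; the hedge about "the global sign carried by the normalization of $\Lg$" does not help, since $\Lg = \Delta_\omega^2 + 4\langle\rho_\omega\,|\,i\dd\cdot\rangle$ carries no overall sign. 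You should pin the sign down by redoing the Green-identity computation (as in the paper's proof of Lemma~\ref{Glapl}) rather than matching it a posteriori to the statement.
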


Before stating the next lemma, it is worth pointing out that $G_{\Delta \Delta} (q, \cdot)$ has the same rate of blow up as the Green function of the biharmonic operator $\Delta^2$. Since we want to produce a local approximation of the {\em multi-poles fundamental solution} $\GGG_{\aaa,\bbb,\ccc}$\,, we also need a profile whose blow up rate around the singular points is the same as the one of the Green function of the Laplace operator. This will be responsible for the $\Delta \delta_p$'s terms. 
\begin{lemma}
\label{Glapl}
Let $(M,g,\omega)$ be a Kcsc orbifold of complex dimension $m\geq 2$ and let $M_p = M \setminus \{ p \}$, with $p \in M$. Then, the following holds true.
\begin{itemize}
\item
If $m \geq 3$, there exists a function
$G_{\Delta}(p,\cdot) \in \mathcal{C}_{2-2m}^{4,\alpha}(M_p) \cap \mathcal{C}_{loc}^{\infty}(M_p)$, orthogonal to $\ker(\Lg)$ inthe sense of~\eqref{eq:orto}, such that
\begin{equation}
\label{eq:greenlapl}
\Lg[G_{\Delta}(p,\cdot)] \,\,  - \,\, \frac{2(m-1) \, |\Sp^{2m-1}|}{|\Gamma|}  \,\, \Big[   \, \Delta\delta_p \,\,
+\,\,
\frac{s_\omega (m^2-m+2)}{m(m+1)}
\, \,  \delta_p  \, \Big]    \,\,  \in \, \mathcal{C}^{0,\alpha}(M) \, ,
\end{equation}
{where $|\Gamma|$ is the cardinality of the orbifold group at $p$} and $s_\omega$ is the constant scalar curvature of the orbifold.
Moreover, if $z$ are holomorphic coordinates centered at $p$, it holds the expansion
\begin{equation}
\label{eq:expgreenlapl}
G_{\Delta }(p,\cdot) \,\,  =  \,\, |z|^{2-2m}  \, + \, |z|^{4-2m} \, ( \, \Phi_2 + \Phi_4 \, )  \, + \,  |z|^{5-2m} \,
\sum_{j=0}^2\Phi_{2j+1}
 \, + \,  \mathcal{O}(|z|^{6-2m}) \, ,
\end{equation}
for suitable smooth $\Gamma$-invariant functions $\Phi_j$'s defined on $\mathbb{S}^{2m-1}$ and belonging to the $j$-th eigenspace of the operator $\Delta_{\mathbb{S}^{2m-1}}$.

\smallskip

\item
If $m = 2$,  there exists a function
$G_{\Delta}(p,\cdot) \in \mathcal{C}_{-2}^{4,\alpha}(M_p) \cap \mathcal{C}_{loc}^{\infty}(M_p)$, orthogonal to $\ker(\Lg)$ inthe sense of~\eqref{eq:orto}, such that
\begin{equation}
\label{eq:greenlapl2}
\Lg[G_{\Delta}(p,\cdot)] \,\,  - \,\, \frac{ |\Sp^{3}|}{|\Gamma|}  \,\, \Delta\delta_p \,\,
- \,\,\frac{ \,  s_\omega \, 2  \, |\Sp^{3}|}{3 \, |\Gamma|} \, \,  \delta_p  \,\,  \in \, \mathcal{C}^{0,\alpha}(M) \, ,
\end{equation}
where $|\Gamma|$ is the cardinality of the orbifold group at $p$ and $s_\omega$ is the constant scalar curvature of the orbifold.
Moreover, if $z$ are holomorphic coordinates centered at $p$, it holds the expansion
\begin{equation}
\label{eq:expbluelapl2}
G_{\Delta}(p,\cdot) \,\, = \,\,  |z|^{-2} \, + \, \log(|z|)(\Phi_2 + \Phi_4) \, + \, C_{p} \, + \,  |z| \,
 \sum_{h=0}^{2}\Phi_{2h+1} \,  + \,  \mathcal{O}(|z|^{2})
\end{equation}
for some constant $C_p \in \RR$, some $H \in \mathbb{N}$ and suitable smooth $\Gamma$-invariant functions $\Phi_h$'s defined on $\mathbb{S}^{3}$ and belonging to the  $h$-th eigenspace of the operator $\Delta_{\mathbb{S}^{3}}$. 
\end{itemize}
\end{lemma}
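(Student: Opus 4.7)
The plan is to build $G_\Delta(p,\cdot)$ by a parametrix argument: construct an explicit local model near $p$ whose image under $\Lg$ already reproduces the prescribed singular distributional part modulo a $C^{0,\alpha}$ error, then remove the error by Theorem~\ref{invertibilitapesatobase} and orthogonalize against $\ker(\Lg)$. The structure is parallel to the proof of Lemma~\ref{Gbilapl} cited from the prior paper, but with $|z|^{2-2m}$ replacing $|z|^{4-2m}$ as the leading singular profile, which produces both a $\Delta\delta_p$ and a $\delta_p$ contribution.

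First I would work in K\"ahler normal coordinates centered at $p$, in which by Proposition~\ref{proprietacsck} the potential reads $|z|^2/2+\Psi_4+\Psi_5+\mathcal O(|z|^6)$, with $\Psi_4$ expanded as in \eqref{eq:decp4}. The starting ansatz is $u^{(0)}(z)=|z|^{2-2m}$ (meaning $|z|^{-2}$ for $m=2$). The Euclidean biLaplacian of $u^{(0)}$ is the pure distribution $\Delta^2|z|^{2-2m}=-(2m-2)|\mathbb S^{2m-1}|\,\Delta\delta_0$, which, after dividing by $|\Gamma|$ to pass to the orbifold quotient, produces exactly the $\Delta\delta_p$ coefficient in~\eqref{eq:greenlapl}. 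The $\delta_p$ coefficient arises from two combined lower-order effects: the correction $\Lg-\Delta^2$ applied to $u^{(0)}$, and in particular the curvature pairing $4\langle\rho_\omega\,|\,i\dd u^{(0)}\rangle$ whose trace involves $s_\omega$, together with the contribution of the explicit constant $-s_\omega/(16m(m+1))$ appearing in $\Psi_4$ via \eqref{eq:decp4}. A careful distributional bookkeeping consolidates these into the stated factor $s_\omega(m^2-m+2)/(m(m+1))$.

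Next I would carry out the local expansion by iterative improvement on the sphere: the remainder $\Lg[u^{(0)}]$, apart from the two distributional terms, admits a formal expansion in powers $|z|^{-2m+k}$ whose spherical coefficients lie in prescribed eigenspaces of $\Delta_{\mathbb S^{2m-1}}$ (parity dictated by $u^{(0)}$ being even). To kill the next orders, for each power $|z|^\beta$ and each eigenspace of $\Delta_{\mathbb S^{2m-1}}$ one solves the algebraic relation which $\Lg\bigl(|z|^\beta\Phi\bigr)=|z|^{\beta-4}\Phi'$ reduces to on homogeneous functions, provided $\beta$ is not an indicial root; this yields successively the corrections $|z|^{4-2m}(\Phi_2+\Phi_4)$, then $|z|^{5-2m}\sum_{j=0}^2\Phi_{2j+1}$, and the remainder becomes $\mathcal O(|z|^{6-2m})$ in $C^{4,\alpha}$. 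For $m=2$ the procedure is identical in structure, but $\beta=4-2m=0$ is an indicial root, so the analogous step produces the resonant $\log|z|(\Phi_2+\Phi_4)$ term displayed in \eqref{eq:expbluelapl2}.

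Finally, multiplying this improved local ansatz by a smooth cutoff $\chi$ supported in a normal chart around $p$ gives $\tilde u\in C^{4,\alpha}_{2-2m}(M_p)$, and by the preceding computation $\Lg[\tilde u]=\kappa_1\Delta\delta_p+\kappa_2\delta_p+F$ with $F\in C^{0,\alpha}(M)$; indeed the commutator $[\Lg,\chi]u^{(0)}$ is smooth and compactly supported away from $p$, while the iterative residual lies in $C^{0,\alpha}_{2-2m+\epsilon}$ by construction. Projecting $F$ onto $\ker(\Lg)^\perp$ and applying Theorem~\ref{invertibilitapesatobase}, I obtain $v\in W^{4,p}(M)$ solving $\Lg v=-F^\perp$, and set $G_\Delta(p,\cdot):=\tilde u+v-\sum_{i=0}^d\langle\tilde u+v,\varphi_i\rangle_{L^2}\varphi_i$; the kernel correction is smooth and does not affect the singular expansion. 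The main obstacle is the second paragraph: extracting the precise coefficient of $\delta_p$ requires tracking all contributions at a single order, including the interplay between $\Delta_\omega^2-\Delta^2$, the Ricci pairing, and the distributional content hidden in $i\dd|z|^{2-2m}$, where the algebraic constant $(m^2-m+2)/(m(m+1))$ emerges only after cancellations among several terms of equal weight.
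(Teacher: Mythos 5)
Your parametrix approach matches the paper's: start from $|z|^{2-2m}$, expand the residual in spherical harmonics, kill the $|z|^{-2m}$ and $|z|^{1-2m}$ orders by corrections $V_4 = |z|^{4-2m}(C_2\Phi_2 + C_4\Phi_4)$ and $V_5=|z|^{5-2m}\sum C_{2j+1}\Phi_{2j+1}$, iterate, cut off, and project onto $(\ker\Lg)^\perp$; the observations about $\Gamma$-invariance excluding first harmonics and about the $m=2$ resonance are also right. But the lemma's actual content is the specific coefficient $s_\omega(m^2-m+2)/(m(m+1))$ in front of $\delta_p$ — this is what feeds the balancing condition of Theorem~\ref{maintheorem} — and you explicitly park this as ``the main obstacle'' without resolving it. The paper obtains it via a concrete integration by parts on $M\setminus B_\varepsilon$: writing $\Lg = (\Delta_\omega^2 + \frac{s_\omega}{m}\Delta_\omega) + 4\langle\rho_\omega^0\,|\,\cdot\,\rangle$ with $\rho_\omega^0 = \rho_\omega - \frac{s_\omega}{2m}\omega$, the term $\frac{s_\omega}{m}\Delta_\omega$ contributes a boundary integral $\frac{s_\omega}{m}\phi(p)$, while $\partial_\nu(\Delta_\omega U_p)$ has a radial coefficient $\frac{2s_\omega(m-1)^3}{m(m+1)}$ at order $|z|^{1-2m}$ — a computation driven precisely by the $\Psi_4$-expansion in Proposition~\ref{proprietacsck} — and the two boundary contributions sum to the stated constant. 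A sketch that merely asserts ``cancellations consolidate the constant'' cannot stand in for this.

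A second, logically prior omission: you never show that the trace-free part $\rho_\omega^0$ contributes no $\delta_p$ term. If it did, the coefficient of $\delta_p$ would depend on $\rho_\omega^0(p)$ rather than $s_\omega$ alone, and the lemma as stated would be false. The paper handles this by expanding $\partial^\sharp U_p = \sum_i\big((1-m)|z|^{-2m}z^i + \mathcal O(|z|^{2-2m})\big)\partial_{z^i}$ and using the rotational symmetry $\int_{\partial M_\varepsilon}z^1\,\partial_{z^1}\lrcorner\,d\mu_0 = \cdots = \int_{\partial M_\varepsilon}z^m\,\partial_{z^m}\lrcorner\,d\mu_0$; since $\rho_\omega^0(p)$ is a Hermitian matrix with vanishing trace, its contraction against the resulting multiple of the identity kills the boundary limit. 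Minor remarks: your invocation of Theorem~\ref{invertibilitapesatobase} to solve $\Lg v = -F^\perp$ is harmless but superfluous, since the statement only requires $\Lg G_\Delta$ minus the singular distribution to land in $C^{0,\alpha}(M)$ and the cutoff-plus-projection already achieves that; and the naive universal-cover identity $\Delta^2|z|^{2-2m}=-(2m-2)|\mathbb S^{2m-1}|\,\Delta\delta_0$ has the opposite sign from the one displayed in \eqref{eq:greenlapl}, a point you should reconcile explicitly rather than assert they ``produce exactly'' the same coefficient.
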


%

%




%

%

\begin{proof} We focus on the case $m\geq 3$ and since the computations for the case $m=2$ are very similar, we left them to the interested reader.
To prove the existence of $G_{\Delta}\st p,\cdot\dt$, we fix a coordinate chart centered at $p$ and we consider the Green function for the Euclidean Laplacian $|z|^{2-2m}$. In the spirit of Proposition~\ref{proprietacsck}, we compute
\begin{align*}
\Lg [ \,  |z|^{2-2m} \, ] \,\,\, =  \,\,\,&\st \, \Lg \, - \, \Delta^{2} \, \dt [\, |z|^{2-2m} \, ]\\
= \,\,\, &  - \, 4 \, \tr  \st \,  i\dd |z|^{2-2m}\circ i\dd\Delta \psi_{\omega} \, \dt  \, - \, 4 \, \tr \st \,  i\dd \psi_{\omega} \circ i\dd\Delta |z|^{2-2m} \, \dt\\
&-4\, \Delta\, \tr \st\,  i\dd \psi_{\omega} \circ i\dd|z|^{2-2m} \, \dt \, + \, \mathcal{O}\st|z|^{2-2m}\dt \\
=&-\frac{m}{4|z|^{2m}}\Delta^{2}\Psi_{4}+\frac{m\st m+1 \dt}{|z|^{2m+2}}\Delta\Psi_{4}-\frac{m}{4}\Delta\st \frac{\Delta\Psi_{4}}{|z|^{2m}} \dt\\
&+4m\st m+1 \dt\Delta\tr\st \frac{\Psi_{4}}{|z|^{2m+2}}  \dt+\mathcal{O}\st|z|^{2-2m}\dt
\end{align*}
where we used the explicit form of  $\Psi_{4}$	
\begin{equation}
\Psi_{4}\st z,\overline{z} \dt=-\frac{1}{4}\sum_{i,j,k,l=1}^{m}R_{i\bar{\jmath}k\bar{l}}z^{i}\overline{z^{j}}z^{k}\overline{z^{l}}
\end{equation}
and the complex form of the euclidean laplace operator
\begin{equation}
\Delta=4\sum_{i=1}^{m} \frac{\partial^{2}}{\partial z^{i}\partial \overline{z^{i}}}\,.  
\end{equation}
Expanding the real analytic function $\psi_\omega$ as $\psi_\omega \, = \, |z|^4 \, (\Phi_0 + \Phi_2 + \Phi_4) \, + \, |z|^5 \, (\Phi_1 + \Phi_3 + \Phi_5) \, + \, \mathcal{O}(|z^6|)$, where, for $h=0, 1,2$, the $\Phi_{2h}$'s and the $\Phi_{2h+1}$'s are suitable $\Gamma$-invariant functions in the $h$-th eigenspace of $\Delta_{\Sp^{2m-1}}$, we obtain
\begin{align*}
\Lg [ \,  |z|^{2-2m} \, ] \,\,\, =  \,\,\, &{|z|^{-2m}} \, \sum_{h=0}^2 c_{2h} \, \Phi_{2h} \, + \,
|z|^{1-2m} \, \sum_{h=0}^2 c_{2h+1} \, \Phi_{2h+1} \,  + \, \mathcal{O}\st |z|^{2-2m} \dt\,,
\end{align*}
where $c_0, \ldots, c_5$ are suitable constants. It is a straightforward but remarkable consequence of formula~\eqref{eq:decp4}, the fact that $c_0=0$.\,It is then possible to introduce the corrections
$$
V_{4} \,\, = \,\, |z|^{4-2m} \, (\, C_2 \, \Phi_2 \, + \, C_4 \, \Phi_4 \,)
\qquad \hbox{and} \qquad  V_{5} \,\, = \,\, |z|^{5-2m} \, \sum_{h=0}^2 C_{2h+1} \, \Phi_{2h+1}  \,,
$$
where the coefficients $C_1, \ldots, C_5$ are so chosen that
$$
\Delta^2 \, [\, V_4 \, + \,  V_5 \, ] \,\, = \,\, {|z|^{-2m}} \,
(\, c_2 \, \Phi_2 \, + \, c_4 \, \Phi_4 \,)
\, + \,
|z|^{1-2m} \, \sum_{h=0}^2 c_{2h+1} \, \Phi_{2h+1} \, .
$$
This implies in turn that
$
\Lg \,\big[ \,  |z|^{2-2m}  - \, V_4 \,- \, V_5  \,\big] \, \, = \, \, \mathcal{O}(|z|^{2-2m}) \, .
$
Using the fact that in normal coordinates centered at $p$ the Euclidean biharmonic operator $\Delta^2$ yields a good approximation of $\Lg$, it is not hard to construct a function $W\in C_{6-2m}^{4,\alpha} ( B_{r_{0}}^{*} )$ on a sufficiently small punctured ball $B_{r_0}^*$ centered at $p$, such that
$$
\Lg \,\big[ \,  |z|^{2-2m}  - \, V_4 \,- \, V_5  \, - \, W \,\big] \, \,  \in \,\,   C^{0,\alpha} ( B_{r_{0}}^{*} )    \, .
$$
By means of a smooth cut-off function $\chi$, compactly supported in $B_{r_0}$ and identically equal to $1$ in $B_{r_0/2}$, we obtain a globally defined function in $L^1(M)$, namely
$$
U_p \,\, = \,\,  \chi \, \bigg( \,   |z|^{2-2m}  - \, |z|^{4-2m} \, (\, C_2 \, \Phi_2 \, + \, C_4 \, \Phi_4 \,)  \, - \,  |z|^{5-2m} \, \sum_{h=0}^2 C_{2h+1} \, \Phi_{2h+1}  \, - \, W \, \bigg)
$$
In order to guarantee the orthogonality condition~\eqref{eq:orto}, we set
\begin{equation*}
G_{\Delta} (p,\cdot \,)  \,\,\, = \,\,\, U_{p} (\cdot) \,\, - \,\,  \frac{1}{\vol\st M \dt} \int_{M} U_{p} \, d\mu_{\omega} \,\, -\,\, \sum_{i=1}^{d} \, \varphi_{i}(\cdot)\int_{M} U_{p} \, \varphi_{i} \, d\mu_{\omega}
\end{equation*}
and we claim that $\Lg [G_{\Delta} (p,\cdot \,) ]$ satisfies the desired distributional identity. To see this, we set $M_{\varepsilon} \, = \, M\setminus B_{\varepsilon}$, where $B_\varepsilon$ is a ball of radius $\varepsilon$ centered at $p$, and we integrate $\Lg [G_{\Delta} (p,\cdot \,) ] \, = \, \Lg\, [U_p]$ against a test function $\phi \in C^{\infty}(M)$.
Setting
$$
\rho_{\omega}^{0} \,\, = \,\, \rho_{\omega} \, - \, \frac{s_{\omega}}{2m}\omega \, ,
$$
and using formula \eqref{eq:defLg}, it is convenient to write
\begin{equation*}
\Lg [U_p] \,\, = \,\, \Delta_{\omega}^{2} \, U_p\, + \, \frac{s_{\omega}}{m}\, \Delta_{\omega} \, U_p\, + \, 4\left<\, \rho_{\omega}^{0} \, | \, i\dd U_p \, \right> \,,
\end{equation*}
so that we have
\begin{align*}
\int_{M_{\varepsilon}}  \phi \,\, \Lg\sq U_p \dq \,d\mu_{\omega} \,\,\,  = \,\,\, &\int_{M_{\varepsilon}}  \phi \,\Big(\Delta_{\omega}^{2} \, + \, \frac{s_{\omega}}{m}\Delta_{\omega} \Big) \big[ U_p \big]\, d\mu_{\omega}
\, + \, 4 \int_{M_{\varepsilon}}  \phi \,\left<\, \rho_{\omega}^{0} \, | \, i\dd U_p \, \right>\, d\mu_{\omega}\,.
\end{align*}
We first integrate by parts the first summand on the right hand side and we take the limit for $\varepsilon \to 0$, obtaining
\begin{align*}
\lim_{\varepsilon\rightarrow 0} \int_{M_{\varepsilon}} \!\! \phi \,\Big(\Delta_{\omega}^{2} \, + \, \frac{s_{\omega}}{m}\Delta_{\omega} \Big) \big[ U_p \big]\, d\mu_{\omega}
\,\,\, = \,\,\, & \int_{M} \!\! U_p  \, \,\Big(\Delta_{\omega}^{2} \, + \, \frac{s_{\omega}}{m}\Delta_{\omega} \Big) \big[ \phi \big]\, d\mu_{\omega} \, + \, \lim_{\varepsilon\rightarrow 0}\int_{\partial M_{\varepsilon}} \!\!\! \phi \,\, \partial_{\nu}(\Delta_{\omega} U_p) \, d\sigma_{\omega}\\
&+\lim_{\varepsilon\rightarrow 0}\int_{\partial M_{\varepsilon}} \!\!\!\!   (\Delta_{\omega} \phi) \,\, \partial_{\nu} U_p \, d\sigma_{\omega}
\, + \, \frac{s_{\omega}}{m} \, \lim_{\varepsilon\rightarrow 0}\int_{\partial M_{\varepsilon}} \!\!\!\! \phi \,\, \partial_{\nu} U_p \, d\sigma_{\omega}
\end{align*}
where $d\sigma_{\omega}$ is the restriction of the measure $d\mu_{\omega}$ to $\partial M_{\varepsilon}$ and $\nu$ is the exterior unit normal to $\partial M_{\varepsilon}$. Combining the definition of $U_p$ with the standard development of the area element, it is easy to deduce that
$$
\lim_{\varepsilon\rightarrow 0}\int_{\partial M_{\varepsilon}} \!\!\!\!   (\Delta_{\omega} \phi) \,\, \partial_{\nu} U_p \, d\sigma_{\omega}
\, + \, \frac{s_{\omega}}{m} \, \lim_{\varepsilon\rightarrow 0}\int_{\partial M_{\varepsilon}} \!\!\!\! \phi \,\, \partial_{\nu} U_p \, d\sigma_{\omega} \,\,\, = \,\,\, \frac{2\, (m-1) \, |\Sp^{2m-1}|}{\left|\Gamma\right|} \, \Big[ \,  \Delta_{\omega} \phi \, ( p ) \, + \, \frac{s_{\omega}}{m} \, \phi \, (p) \,  \Big] \, .
$$
To treat the last boundary term, we use Proposition~\ref{proprietacsck} and we compute
\begin{align*}
\partial_{\nu}\, (\Delta_{\omega} U_p) \,\,\,
&= \,\,\, {\left|z\right|^{1-2m}}   \Big(  \, \frac{2s_{\omega} \st m-1 \dt^{3}}{m\st m+1 \dt } \, + \, K_2 \, \Phi_{2} \, + \, K_4 \, \Phi_{4} \, \Big) \, + \, \mathcal{O}\big( \left|z\right|^{2-2m} \big)\,,
\end{align*}
for suitable constants $K_2$ and $K_4$. Hence, we get
$$
\lim_{\varepsilon\rightarrow 0}\int_{\partial M_{\varepsilon}} \!\!\! \phi \,\,  \partial_{\nu}(\Delta_{\omega} U_p) \, d\sigma_{\omega} \,\,\, = \,\,\, \frac{2\, (m-1) \, |\Sp^{2m-1}|}{\left|\Gamma\right|}  \, \Big[ \frac{s_\omega (m-1)^2}{m(m+1)} \, \phi \, (p) \Big] \, .
$$
In conclusion we have that
\begin{align*}
\left<\, \Big(\Delta_{\omega}^{2} \, + \, \frac{s_{\omega}}{m}\Delta_{\omega} \Big) \big[ U_p \big]  \, \Big| \, \phi    \,\right>_{\mathscr{D}' \times \mathscr{D}} \,\, = \,\, &\int_{M} \!\! U_p  \, \,\Big(\Delta_{\omega}^{2} \, + \, \frac{s_{\omega}}{m}\Delta_{\omega} \Big) \big[ \phi \big]\, d\mu_{\omega} \\
& + \, \frac{2\, (m-1) \, |\Sp^{2m-1}|}{\left|\Gamma\right|} \, \Big[ \Delta_\omega \phi \,(p) \, + \, \frac{s_\omega (m^2-m+2)}{m(m+1)} \, \phi \, (p) \, \Big] \, .
\end{align*}

We now pass to consider the term contanining $\rho_{\omega}^{0}$. An integration by parts gives
\begin{align*}
\lim_{\varepsilon\rightarrow 0}\int_{M_{\varepsilon}}\!\!\! \phi \,  \left< \, \rho_{\omega}^{0} \, | \,  i\dd U_p \, \right> \, d\mu_{\omega}
\,\,\, = \,\,\, & \int_{M}   U_p \,\left< \, \rho_{\omega}^{0} \, | \, i\dd \phi \, \right>\,d\mu_{\omega} \\
& + \, \lim_{\varepsilon\rightarrow 0}  \int_{\partial M_{\varepsilon}} \!\!\! \phi \,\,
X(U_p) \lrcorner \,  d\mu_{\omega} \, + \, \lim_{\varepsilon\rightarrow 0}  \int_{\partial M_{\varepsilon}} \!\!\! U_p \,\,
\overline{X(\phi)} \lrcorner \,  d\mu_{\omega}  \, ,
\end{align*}
where, for a given function $u \in C^1(M_p)$, the vector field $X(u)$ is defined as $
X(u)  =  \big( \, \rho_\omega^0 (\partial^\sharp u \, , \, \cdot \, ) \,  \big)^\sharp $. It is easy to check that second boundary term vanishes in the limit. We claim that the same is true for the first boundary term. To prove this, we recall the expansions
\begin{eqnarray*}
\st\rho_{\omega}^{0}\dt_{i\bar{\jmath}} &=& \st \lambda_{i}\st p \dt- \frac{s_{\omega}}{2m} \dt\delta_{i\bar{\jmath}} \, + \, \mathcal{O}\st |z| \dt\,,\\
\partial^{\sharp} U_p &=&\sum_{i=1}^{m}   \big(  \st 1-m \dt \, {|z|^{-2m}}{z^{i}} \, + \, \mathcal{O}\st |z|^{2-2m}\dt  \big)  \, \frac{\partial}{\partial z^i} \\
d\mu_{\omega}&=&\st 1 +\mathcal{O}\st |z|^{2} \dt\dt \, d\mu_{0}\,,
\end{eqnarray*}
where the $\lambda_{i}$'s are the eigenvalues of the matrix $\st \rho_{\omega}^{0}\dt_{i\bar{\jmath}}$ and $d \mu_0$ is the Euclideam volume form. This implies
\begin{align*}
X(U_p) \,  \lrcorner \, d \mu_{\omega}
\,\,\,\, = \,\,\,\, &\st 1-m \dt  \, \sum_{i=1}^{m}\st \lambda_{i}\st p \dt- \frac{s_{\omega}}{2m} \dt  z^{i} \, \frac{\partial}{\partial z^{i}} \, \lrcorner \, d\mu_{0}  \, + \, \mathcal{O}\st |z| \dt\,.
\end{align*}
On the other hand, by the symmetry of $d \mu_0$, it is easy to deduce that
\begin{equation*}
\int_{\partial M_{\varepsilon}} z^{1} \, \frac{\partial}{\partial z^{1}} \, \lrcorner  \, d\mu_{0} \,\, = \,\, \ldots \,\, = \,\, \int_{\partial M_{\varepsilon}}z^{m} \, \frac{\partial}{\partial z^{m}} \, \lrcorner  \, d\mu_{0} \, .
\end{equation*}
The claim is now a straightforward consequence. In synthesis, we have obtained
\begin{align*}
\left<\, \Lg \big[ U_p \big]  \, \big| \, \phi   \,\right>_{\mathscr{D}' \times \mathscr{D}} \,\,\, = \,\,\, &\int_{M} \!\! U_p  \, \,\Lg \big[ \phi \big]\, d\mu_{\omega} \, + \, \frac{2\, (m-1) \, |\Sp^{2m-1}|}{\left|\Gamma\right|} \, \Big[ \,\Delta_\omega \phi \,(p) \,\, + \,\, \frac{s_\omega (m^2-m+2)}{m(m+1)} \, \phi \, (p) \, \Big] \,
\end{align*}
and the lemma is proven.
\end{proof}
%
%
%

Having at hand the above lemmata, we are now in the position to describe the local structure around the singular points of the {\em multi-poles fundamental solutions} $\GGG_{\aaa,\bbb,\ccc}$ constructed in Remark~\ref{gabc} through Proposition~\ref{balancrough}. For $m\geq 3$, it is sufficient to apply the operator $\Lg$ to the expression
\begin{eqnarray*}
\GGG_{\aaa,\bbb,\ccc} & + & \sum_{l=1}^K  \, \frac{a_l}{4(m-2)} \,\, \bigg[   \, \frac{|\Gamma_{N+l}|}{ 2 (m-1) |\Sp^{2m-1}|} \,\,  G_{\Delta\Delta}(q_l,\cdot)  \, \bigg]  \\
& + & \sum_{j=1}^N  \, \bigg(     \frac{c_j}{4(m-2)}  \, - \, \frac{s_\omega \, (m^2-m+2) \, b_j}{(m-2)m(m+1)}  \,   \bigg) \,\, \bigg[   \, \frac{|\Gamma_{j}|}{ 2 (m-1) |\Sp^{2m-1}|} \,\,  G_{\Delta\Delta}(p_j,\cdot)  \, \bigg]  \\
& - & \sum_{j=1}^N  \, b_j \,\, \bigg[   \, \frac{|\Gamma_{j}|}{ 2 (m-1) |\Sp^{2m-1}|} \,\, G_{\Delta}(p_j,\cdot)  \, \bigg]  \, ,
\end{eqnarray*}
to get a function in $C^{0,\alpha}(M)$. For $m=2$, one can obtain the same conclusion, applying the operator $\Lg$ to the expression 
\begin{eqnarray*}
\GGG_{\aaa,\bbb,\ccc} & - & \sum_{l=1}^K  \, \frac{a_l}{4} \,\, \bigg[   \, \frac{|\Gamma_{N+l}|}{ |\Sp^{3}|} \,\, G_{\Delta\Delta}(q_l,\cdot)  \, \bigg]  \\
& - & \sum_{j=1}^N  \, \bigg(     \frac{c_j}{4}  \, - \, \frac{s_\omega  \, b_j}{6}  \,   \bigg) \,\, \bigg[   \, \frac{|\Gamma_{j}|}{ |\Sp^{3}|} \,\, G_{\Delta\Delta}(p_j,\cdot)  \, \bigg]  \\
& - & \sum_{j=1}^N  \, b_j \,\, \bigg[   \, \frac{|\Gamma_{j}|}{ 2|\Sp^{3}|} \,\,  G_{\Delta}(p_j,\cdot) \, \bigg]  \, .
\end{eqnarray*}
Combining the previous observations with the standard elliptic regularity theory, we obtain the following proposition.
\begin{prop}
\label{loc_structure}
Let $(M,g,\omega)$ be a compact Kcsc orbifold of complex dimension $m \geq 2$ and let ${\rm Ker}{(\Lg)} = {span}\{\varphi_0, \varphi_1, \dots,
\varphi_d\}$, as in~\eqref{nontrivial_ker}. Let then $\{p_1, \dots, p_N\}$ and $\{q_1, \dots, q_K\}$ be two disjoint sets of points in $M$ and let
$\GGG_{\aaa,\bbb,\ccc}$ be as in Remark~\ref{gabc}. Then, we have that
$$
\GGG_{\aaa,\bbb,\ccc}  \,\,\, \in \,\,\,   \mathcal{C}^{\infty}_{loc}(M_{\p,\q}) \, .
$$
Moreover, if $z^1, \ldots , z^m$ are local coordinates centered at the singular points, then the following holds.
\begin{itemize}
\item If $m\geq 3$, then $\GGG_{\aaa,\bbb,\ccc}$  blows up like $|z|^{2-2m}$ at the points points of $p_1, \ldots, p_N$ and like $|z|^{4-2m}$ at the points $q_1, \ldots, q_K$.
\item If $m=2$, then $\GGG_{\aaa,\bbb,\ccc}$  blows up like $|z|^{-2}$ at the points $p_1, \ldots, p_N$ and like $\log\st |z|\dt$ at the points $q_1, \ldots, q_K$.
\end{itemize}
\end{prop}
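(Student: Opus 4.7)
The plan is to verify directly the explicit formula displayed in the paragraph immediately before the statement: subtract from $\GGG_{\aaa,\bbb,\ccc}$ a tailored linear combination of the fundamental solutions produced by Lemmas \ref{Gbilapl} and \ref{Glapl}, in such a way that all Dirac mass and derivative-of-Dirac contributions in equation \eqref{eq:LGabcd} are killed. The resulting auxiliary function $H$ will then satisfy an equation of the form $\Lg[H] = f$ with $f \in C^{0,\alpha}(M)$; elliptic regularity will promote $H$ to $C^\infty_{loc}$, and the singular behavior of $\GGG_{\aaa,\bbb,\ccc}$ at the $p_j$'s and $q_l$'s will then be read off directly from the leading asymptotics of the Green-type functions that were subtracted.

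First I would record, by Remark \ref{gabc}, that
\begin{equation*}
\Lg[\GGG_{\aaa,\bbb,\ccc}] + \nu_{\aaa,\ccc} \,\,=\,\, \sum_{l=1}^K a_l \, \delta_{q_l} \, + \, \sum_{j=1}^N b_j \, \Delta\delta_{p_j} \, + \, \sum_{j=1}^N c_j \, \delta_{p_j}
\end{equation*}
in the sense of distributions on $M$. Then, focusing on the case $m\geq 3$, I would apply $\Lg$ to the displayed expression $H$ term by term and track the singular contribution of each summand using \eqref{eq:greenbilapl} and \eqref{eq:greenlapl}. The coefficient of $G_{\Delta\Delta}(q_l,\cdot)$ is chosen precisely so that its $\delta_{q_l}$ contribution cancels $a_l \delta_{q_l}$. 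The coefficient of $G_\Delta(p_j,\cdot)$ is chosen so that its $\Delta\delta_{p_j}$ contribution cancels $b_j \Delta\delta_{p_j}$; this, however, introduces a spurious $\delta_{p_j}$ contribution proportional to $b_j s_\omega$, which together with the original $c_j \delta_{p_j}$ must be absorbed by the $G_{\Delta\Delta}(p_j,\cdot)$ summand. The coefficient $\frac{c_j}{4(m-2)} - \frac{s_\omega(m^2-m+2)b_j}{(m-2)m(m+1)}$ appearing in the text is exactly the one needed to execute this last cancellation. After all terms have been combined, $\Lg[H]$ lies in $C^{0,\alpha}(M)$.

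At this point I would invoke standard elliptic regularity for the self-adjoint fourth-order operator $\Lg$, applied on the smooth part of the orbifold and, around each orbifold point, to the $\Gamma_j$-invariant lift of $H$ on a punctured Euclidean ball in $\CC^m$. Since $H \in L^1_{loc}$ globally and $\Lg[H] \in C^{0,\alpha}(M)$, a removable singularity argument combined with Schauder theory yields $H \in C^{4,\alpha}$ and, by bootstrapping, $H \in C^\infty_{loc}(M)$. Because the subtracted pieces $G_{\Delta\Delta}(q_l,\cdot)$, $G_{\Delta\Delta}(p_j,\cdot)$, $G_\Delta(p_j,\cdot)$ are themselves smooth away from their respective singular points, this immediately gives $\GGG_{\aaa,\bbb,\ccc} \in C^\infty_{loc}(M_{\p,\q})$. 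The prescribed blow-up rates then follow from the leading asymptotics \eqref{eq:expgreenbilapl} and \eqref{eq:expgreenlapl}. The case $m=2$ is handled by the same argument, using the second displayed expression and the expansions \eqref{eq:expbluebilapl2} and \eqref{eq:expbluelapl2}.

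The main obstacle I expect is purely arithmetic: verifying that the coefficients written above really do produce exact cancellation of every Dirac-type distribution, in particular correctly accounting for the $s_\omega$-dependent cross-term generated by $G_\Delta(p_j,\cdot)$ via the second summand of \eqref{eq:greenlapl}. Conceptually the proof is a direct consequence of Lemmas \ref{Gbilapl} and \ref{Glapl} followed by elliptic regularity; essentially all of the analytic work has already been carried out in proving those lemmata, and what remains is a bookkeeping exercise together with the standard bootstrap.
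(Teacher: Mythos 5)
Your proposal is exactly the argument the paper intends: subtract the explicit linear combination of $G_{\Delta\Delta}$ and $G_{\Delta}$ displayed just before the proposition so that, by Lemmas \ref{Gbilapl} and \ref{Glapl}, all $\delta$- and $\Delta\delta$-contributions cancel and $\Lg$ of the remainder lies in $C^{0,\alpha}(M)$, then conclude by elliptic regularity and read off the blow-up rates from \eqref{eq:expgreenbilapl}, \eqref{eq:expgreenlapl}, \eqref{eq:expbluebilapl2}, \eqref{eq:expbluelapl2}. The one bookkeeping point you rightly flag is worth actually carrying through: tracking the factor of $4(m-2)$ introduced by $\Lg G_{\Delta\Delta}$ in \eqref{eq:greenbilapl} against the coefficient in front of $G_{\Delta\Delta}(p_j,\cdot)$ shows the displayed coefficient should read $\frac{c_j}{4(m-2)}-\frac{s_\omega(m^2-m+2)b_j}{4(m-2)m(m+1)}$ for the $\delta_{p_j}$-mass to cancel exactly, which does not affect the conclusion of the proposition.
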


%

%




%

\subsection{Resolution of the linearized scalar curvature equation.}

In this subsection, we are going to describe the possible choices for a right inverse of the operator $\Lg$, in a suitable functional setting. Since this operator is formally selfadjoint and since we are assuming that its kernel is nontrivial, we expect the presence of a nontrivial cokernel. To overcome this difficulty, we are going to consider some appropriate finite dimensional extensions of the natural domain of $\Lg$, which, according to Theorem~\ref{invertibilitapesatobase}, is given by $C^{4,\alpha}_\delta(M_{\p,\q})$, with $\delta \in (4-2m,0)$ if $m\geq 3$ and $\delta \in (0,1)$ if $m=2$. Building on the analysis of the previous section, we are going to introduce the following {\em deciciency spaces}. Given a triple of vectors $\boldsymbol\alpha \in \RR^K$ and $\bbbb, \cccc \in \RR^N$, we set, for $m \geq 3$, $l=1,\ldots,K$ and $j=1, \ldots, N$,
\begin{eqnarray*}
W^l_{\aaaa} & = &
 -\frac{\alpha_l}{4(m-2)} \,\, \bigg[   \, \frac{|\Gamma_{N+l}|}{ 2 (m-1) |\Sp^{2m-1}|} \,\, G_{\Delta\Delta}(q_l,\cdot)  \, \bigg] \, , \\
W^j_{\bbbb,\cccc}  & = &   \beta_j \,\, \bigg[   \, \frac{|\Gamma_{j}|}{ 2 (m-1) |\Sp^{2m-1}|} \,\, G_{\Delta}(p_j,\cdot)  \, \bigg] \\
& & - \,  \bigg(     \frac{\gamma_j}{4(m-2)}  \, - \, \frac{s_\omega \, (m^2-m+2) \, \beta_j}{(m-2)m(m+1)}  \,  \bigg) \,\, \bigg[   \, \frac{|\Gamma_{j}|}{ 2 (m-1) |\Sp^{2m-1}|} \,\,  G_{\Delta\Delta}(p_j,\cdot)  \, \bigg]  \, ,
\end{eqnarray*}
whereas, for $m=2$, $l=1,\ldots,K$ and $j=1, \ldots, N$, we set
\begin{eqnarray*}
W^{l}_{\aaaa} &  = &  \, \alpha_{l} \,\, \bigg[   \, \frac{|\Gamma_{N+l}|}{ 4|\Sp^{3}|} \,\, G_{\Delta\Delta}(q_l,\cdot)  \, \bigg]  \, , \\
W^j_{\bbbb,\cccc}& = &  \beta_j \,\, \bigg[   \, \frac{|\Gamma_{j}|}{ |\Sp^{3}|} \,\,  G_{\Delta}(p_j,\cdot) \, \bigg] \, + \, \bigg(     \frac{\gamma_j}{4}  \, - \, \frac{s_\omega  \, \beta_j}{6}  \,   \bigg) \,\, \bigg[   \, \frac{|\Gamma_{j}|}{ |\Sp^{3}|} \,\, G_{\Delta\Delta}(p_j,\cdot)  \, \bigg]  \, .
\end{eqnarray*}
We are now in the position to define the {\em deficiency spaces}
\begin{eqnarray*}
\mathcal{D}_{\q}(\aaaa) \,\, = \,\, \mbox{\em span} \,\Big\{ \, W^l_{\aaaa}\,  : \, {l=1,\ldots , K}  \, \Big\}  \quad & \hbox{and} & \quad
\mathcal{D}_{\p}(\bbbb, \cccc)  \,\,= \,\,  \mbox{\em span}\, \Big\{ \,
W^j_{\bbbb,\cccc} \,  :  \, {j=1, \ldots , N} \, \Big\} \, .
\end{eqnarray*}
These are finite dimensional vector spaces and they can be endowed with the following norm. If $V = \sum_{l=1}^K V^l \, W_{\aaaa}^l \in \mathcal{D}_\q(\aaaa)$ and $U = \sum_{j=1}^N U^j W_{\bbbb,\cccc}^j \in \mathcal{D}_\p(\bbbb, \cccc)$, we set
$$
\left\| V \right\|_{\mathcal{D}_\q(\aaaa)} \,\, = \,\, \sum_{l=1}^K\,  | V^l | \qquad \hbox{and} \qquad \left\| U \right\|_{\mathcal{D}_\p(\bbbb,\cccc)} \,\, = \,\, \sum_{j=1}^N\,  | U^j |\, .
$$
We will also make use of the shorthand notation $\mathcal{D}_{\p,\q}(\aaaa,\bbbb,\cccc)$ to indicate the direct sum $\mathcal{D}_{\q}(\aaaa) \oplus \mathcal{D}_{\p}(\bbbb,\cccc)$ of the {\em deficiency spaces} introduced above, endowed with the obvious norm $\left\| \, \cdot \, \right\|_{\mathcal{D}_\q(\aaaa)} +  \left\| \, \cdot \, \right\|_{\mathcal{D}_\p(\bbbb,\cccc)}$.

To treat the case $m=2$, it is convenient to introduce further finite dimensional extensions of the domain $C^{4,\alpha}_\delta(M_{\p,\q})$, with $\delta \in (0,1)$. These will be called {\em extra deficiency spaces} and they are defined as
\begin{eqnarray*}
\mathcal{E}_{\q} \,\, = \,\, \mbox{\em span} \,\big\{ \, \chi_{q_l}\,  : \, {l=1,\ldots , K}  \, \big\}  \quad & \hbox{and} & \quad
\mathcal{E}_{\p}  \,\, = \,\, \mbox{\em span}\, \big\{ \,
\chi_{p_j} \,  :  \, {j=1, \ldots , N} \, \big\} \, ,
\end{eqnarray*}
where the functions $\chi_{p_1}, \ldots, \chi_{p_N},\chi_{q_1}, \ldots, \chi_{q_K}$ are smooth cutoff functions supported on small balls centered at the points $p_{1}, \ldots, p_N, q_1, \ldots, q_K$ and identically equal to $1$ in a neighborhood of these points. Given two functions $X = \sum_{j=1}^N X^j \chi_{p_j}\in \mathcal{E}_{\p}$ and $Y= \sum_{l=1}^K Y^l \chi_{q_l} \in \mathcal{E}_{\q}$, we set
$$
\left\| Y \right\|_{\mathcal{E}_\q} \,\, = \,\, \sum_{l=1}^K\, | Y^l | \qquad \hbox{and} \qquad \left\| X \right\|_{\mathcal{E}_\p} \,\, = \,\, \sum_{j=1}^N\,  | X^j |\, .
$$
We will also make use of the shorthand notation $\mathcal{E}_{\p,\q}$ to indicate the direct sum $\mathcal{E}_{\q} \oplus \mathcal{E}_{\p}$ of the {\em extra deficiency spaces} introduced above, endowed with the obvious norm $\left\| \, \cdot \, \right\|_{\mathcal{E}_\q} +  \left\| \, \cdot \, \right\|_{\mathcal{E}_\p}$. Notice that, with these notation, the estimate~\eqref{eq:stimapesatabase2} in Theorem~\ref{invertibilitapesatobase} reads
$$
|| \, \widetilde{u} \,||_{C^{4,\alpha}_\delta(M_{\p,\q}) }  \, + \, ||  \stackrel{\circ}{u} ||_{\mathcal{E}_{\p,\q}} \,\, \leq \,\,  C \, || \, f \, ||_{C^{0,\alpha}_{\delta-4}} \, ,
$$
where $u \, = \, \widetilde{u} \,+ \!\stackrel{\circ}{u} \,\, \in{{C}^{4,\alpha}_{\delta}(M_{\p ,\q})      \, \oplus \,     \mathcal{E}_{\p,\q} }$ and $f \in {C}^{0,\alpha}_{\delta-4}(M_{\p ,\q})$ are functions satisfying the equation $\Lg [u] = f$ as well as the orthogonality condition~\eqref{eq:orto} and $\delta \in (0,1)$.

{
\begin{remark} We notice {\em en passant} that a function $\GGG_{\aaa,\bbb,\ccc}$ constructed as in Remark~\ref{gabc} behaves like $W_\aaa^l$ near the point $q_l$, for $l=1, \ldots, K$ and like $W_{\bbb,\ccc}^j$, near the point $p_j$, for $j=1,\ldots, N$. In fact, it satisfies
\begin{equation*}
\Lg \Big[\, \GGG_{\aaa,\bbb,\ccc}  \, - \, \sum_{l=1}^K W_\aaa^l     \, - \, \sum_{j=1}^N W_{\bbb,\ccc}^j  \,\Big] \,\,\, \in \,\,\, C^{0,\alpha}(M) \, .
\end{equation*}
\end{remark}
}

We recall that we have assumed that the bounded kernel of $\Lg$ is $(d+1)$-dimensional and that it is spanned by $\{\varphi_0, \varphi_1, \ldots , \varphi_d \}$, where $\varphi_0 \equiv 1$ and $\varphi_1, \ldots, \varphi_d$, with $d\geq 1$, is a collection of mutually $L^2(M)$-orthogonal smooth functions with zero mean and $L^2(M)$-norm equal to $1$. Given a triple of vectors $\boldsymbol\alpha \in \RR^K$ and $\bbbb, \cccc \in \RR^N$, it is convenient to introduce the following matrices
\begin{align}
\Xi_{il}(\boldsymbol{\alpha} ) \,\, := & \,\,\,\, \alpha_{l} \, \varphi_{i}(q_{l}) \,,   & \hbox{for} \quad  i=1 \dots, d \quad \hbox{and} \quad  l=1, \dots, K \, ,
\label{eq:nondeggen2}\\
\Theta_{ij}(\boldsymbol{\beta},\boldsymbol{\gamma} ) \,:=&\,\,\,  \beta_{j} \, \Delta\varphi_{i}(p_{j})  \, + \,
\gamma_{j} \, \varphi_{i}(p_{j}) \, , &  \,\hbox{for} \quad i=1 \dots, d \quad \hbox{and} \quad  j=1, \dots, N \, .
\label{eq:nondeggen}
\end{align}
These will help us in formulating our {\em nondegeneracy assumption}. We are now in the position to state the main results of our linear analysis on the base obifold.

\begin{teo}
\label{invertibilitapesatodef}
Let $(M,g,\omega)$ be a compact Kcsc orbifold of complex dimension $m \geq 2$ and let ${\rm Ker}{(\Lg)} = {span}\{\varphi_0, \varphi_1, \dots, \varphi_d\}$. Assume that the following {\em nondegeneracy condition} is satisfied: a triple of vectors
$\boldsymbol\alpha \in \RR^K$ and $\bbbb, \cccc \in \RR^N$ is given such that the $d \times (N+K)$ matrix
\begin{equation*}
\st\left.  \st\Xi_{il}(\aaaa)\dt_{\substack{ 1 \leq i\leq d\\ 1 \leq l \leq K }}   \,\, \right|\,\, \st\Theta_{ij}(\bbbb,\cccc)\dt_{\substack{ 1 \leq i\leq d\\ 1 \leq j \leq N }}  \dt
\end{equation*}
has full rank. Then, the following holds.
\begin{itemize}
\item If $m \geq 3$, then for every $f\in{C}^{0,\alpha}_{\delta-4}(M_{\p ,\q})$ with $\delta\in (4-2m,0)$, there exist real number $\nu$ and a function
$$
u\, = \, \widetilde{u} \, + \, \widehat{u}\, \, \in \, {C}^{4,\alpha}_{\delta}(M_{\p ,\q}) \, \oplus \, \mathcal{D}_{\p,\q}(\aaaa,\bbbb, \cccc)
$$
such that
\begin{equation}
\label{eq:linpro}
\Lg u \, + \, \nu \,\,  = \,\,  f\, , \qquad \hbox{in} \quad {M_{\p,\q}} \,.
\end{equation}
Moreover, there exists a positive constant $ C = C(\aaaa,\bbbb,\cccc,\delta )>0$ such that
\begin{equation}
|\,\nu\,| \,\, + \,\, ||\,\widetilde{u} \,||_{{C}^{4,\alpha}_{\delta}(M_{\p ,\q})  }
\, + \, ||\,\widehat{u} \,||_{  \mathcal{D}_{\p,\q}(\aaaa,\bbbb, \cccc) }
 \,\,\, \leq \,\,\, C \, || \, f \,   ||_{ \mathcal{C}^{0,\alpha}_{\delta-4}(M_{\p ,\q}) } \, .
\end{equation}
\item
If $m =2$, then for every $f\in {C}^{0,\alpha}_{\delta-4}(M_{\p ,\q})$ with $\delta\in (0,1)$, there exist real number $\nu$ and a function
$$
u \, = \, \widetilde{u} \,+ \,
\stackrel{\circ}{u}
 \,+ \, \widehat{u}\, \,\in \,\, {C}^{4,\alpha}_{\delta}(M_{\p ,\q}) \, \oplus \mathcal{E}_{\p,\q} \, \oplus \, \mathcal{D}_{\p,\q}(\aaaa,\bbbb, \cccc)
$$
such that
\begin{equation}
\label{eq:linpro2}
\Lg u \, + \, \nu \,\,  = \,\,  f\, , \qquad \hbox{in} \quad {M_{\p,\q}} \,.
\end{equation}
Moreover, there exists a positive constant $ C = C(\aaaa,\bbbb,\cccc,\delta )>0$ such that
\begin{equation}
|\,\nu\,| \, \, + \,\, ||\,\widetilde{u} \,||_{ {C}^{4,\alpha}_{\delta}(M_{\p ,\q}) }  \,+ \,
||\stackrel{\circ}{u} ||_{ \mathcal{E}_{\p,\q} }   \, + \,
||\,\widehat{u} \,||_{ \mathcal{D}_{\p,\q}(\aaaa,\bbbb, \cccc) }
\,\,\, \leq \,\,\, C \, || \, f \,   ||_{ {C}^{0,\alpha}_{\delta-4}(M_{\p ,\q}) }
\end{equation}
\end{itemize}
\end{teo}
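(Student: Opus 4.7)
The overall strategy is to exploit the deficiency elements $W_\aaaa^l$ and $W_{\bbbb,\cccc}^j$ as an ansatz that absorbs the potential obstructions to solvability coming from the nontrivial kernel of $\Lg$, and then to reduce the full problem to the orthogonal case already handled by Theorem \ref{invertibilitapesatobase}. The heart of the argument lies in the fact that the specific normalizations built into $W_\aaaa^l$ and $W_{\bbbb,\cccc}^j$ are tailored precisely so that $\Lg W_\aaaa^l$ and $\Lg W_{\bbbb,\cccc}^j$ have, as distributions on $M$, Dirac-like singular parts equal respectively to $\alpha_l \delta_{q_l}$ and to $\beta_j \Delta\delta_{p_j} + \gamma_j \delta_{p_j}$, modulo Hölder remainders. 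The first step is thus to verify, via the same limiting integration-by-parts computation performed at the end of the proof of Lemma \ref{Glapl}, that Lemmata \ref{Gbilapl} and \ref{Glapl} combine to yield exactly this cancellation; in particular, the $s_\omega$-subtraction hidden inside $W_{\bbbb,\cccc}^j$ is there to eliminate the spurious $\tfrac{s_\omega(m^2-m+2)}{m(m+1)}\varphi_i(p_j)$ correction appearing on the right hand side of \eqref{eq:greenlapl}.

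Next, I would set $\widehat{u} := \sum_{l=1}^K V^l W_\aaaa^l + \sum_{j=1}^N U^j W_{\bbbb,\cccc}^j$, seek $u$ in the decomposition $u = \widetilde{u} + \widehat{u}$, and reduce the problem $\Lg u + \nu = f$ to solving $\Lg \widetilde{u} = f - R(V,U) - \nu$ on the punctured manifold, where $R(V,U)$ denotes the pointwise Hölder remainder of $\Lg \widehat{u}$. By the distributional self-adjointness $\langle \Lg \widehat{u}\,|\,\varphi_i\rangle = \langle \widehat{u}\,|\,\Lg \varphi_i\rangle = 0$, imposing the orthogonality condition \eqref{eq:orto} on the right hand side translates into the linear system
\[\sum_{l=1}^K V^l\, \Xi_{il}(\aaaa) \,+\, \sum_{j=1}^N U^j\, \Theta_{ij}(\bbbb,\cccc) \,=\, -\int_M f\, \varphi_i\, d\mu_\omega, \qquad i=1,\dots,d,\]
together with the scalar relation $\nu\, \voll(M) = \int_M f\, d\mu_\omega + \sum_l V^l \alpha_l + \sum_j U^j \gamma_j$ coming from the $\varphi_0 \equiv 1$ direction. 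By the nondegeneracy hypothesis the matrix $(\Xi\,|\,\Theta)$ is surjective onto $\RR^d$, so one can choose $(V, U)$ via a bounded right inverse, depending linearly on the finite-dimensional projection $(\int_M f\, \varphi_i)_{i=1,\dots,d}$; the constant $\nu$ is then explicitly determined.

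With $(V, U, \nu)$ fixed, the right hand side $f - R(V,U) - \nu$ lies in $C^{0,\alpha}_{\delta-4}(M_{\p,\q})$ and is orthogonal to $\ker(\Lg)$, so Theorem \ref{invertibilitapesatobase} produces the smooth correction $\widetilde{u} \in C^{4,\alpha}_\delta(M_{\p,\q})$ when $m \geq 3$, or the pair $\widetilde{u} + \stackrel{\circ}{u} \in C^{4,\alpha}_\delta(M_{\p,\q}) \oplus \mathcal{E}_{\p,\q}$ when $m=2$, with the extra $\stackrel{\circ}{u}$-contribution accounting for the cutoff jumps at the singular points intrinsic to estimate \eqref{eq:stimapesatabase2}. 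Combining the bound on $(V, U, \nu)$ in terms of $\|f\|_{C^{0,\alpha}_{\delta-4}}$ with the weighted estimate of Theorem \ref{invertibilitapesatobase} yields the full a priori inequality in the statement. The principal technical obstacle is the algebraic cancellation hidden in Step 1: one has to reproduce in detail the boundary-term analysis behind Lemmata \ref{Gbilapl} and \ref{Glapl} and verify that the particular linear combinations of $G_\Delta(p_j,\cdot)$ and $G_{\Delta\Delta}(p_j,\cdot)$ entering $W_{\bbbb,\cccc}^j$ reduce the pairings $\langle \Lg W\,|\,\varphi_i\rangle$ exactly to the matrices $\Xi$ and $\Theta$ appearing in the nondegeneracy assumption; once this cancellation is in place, the rest is a clean composition of finite-dimensional linear algebra with the Fredholm alternative for $\Lg$ provided by Theorem \ref{invertibilitapesatobase}.
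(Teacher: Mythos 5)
Your proposal is correct and in substance coincides with the paper's argument: both reduce to the invertibility of Theorem~\ref{invertibilitapesatobase} on the orthogonal complement, and both use the $d\times(N+K)$ nondegeneracy matrix to solve the finite-dimensional system for the deficiency coefficients and the scalar $\nu$. The only organizational difference is that the paper first produces a distributional multi-pole solution $U$ via Proposition~\ref{balancrough} and then splits $U=\widehat u+\overline u$, whereas you make the ansatz $u=\widetilde u+\widehat u$ directly and recover the same linear system for $(V,U,\nu)$ by pairing $\Lg\widehat u$ against the $\varphi_i$ and exploiting formal self-adjointness; this bypasses Proposition~\ref{balancrough} but arrives at exactly the same balancing equations and estimates.
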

\begin{proof} We only prove the statement in the case $m\geq 3$, since it is completely analogous in the other case. For sake of simplicity we assume $\aaaa= \mathbf{0} \in \RR^K$, so that the nondegeneracy condition becomes equivalent to the requirement that the matrix
$$
\st\Theta_{ij}(\bbbb,\cccc)\dt_{\substack{ 1 \leq i\leq d\\ 1 \leq j \leq N }}
$$
has full rank. Under these assumptions, the {\em deficiency space} $\mathcal{D}_{\p,\q}(\aaaa,\bbbb,\cccc)$ reduces to $\mathcal{D}_\p(\bbbb,\cccc)$. In order to split our problem, it is convenient to set
\begin{equation}
f^{\perp} \,\, = \,\, f \, - \, \frac{1}{\vol\st M \dt}\int_{M} f \,d\mu_{\omega} \, - \, \sum_{i=1}^{d}\varphi_{i}\int_{M} f\varphi_{i}\,d\mu_{\omega}\,,
\end{equation}
so that $f^\perp$ satisfies the orthogonality conditions~\eqref{eq:orto}. By Theorem~\ref{invertibilitapesatobase}, we obtain the existence of a function $u^\perp \in C^{4,\alpha}_\delta(M_{\p,\q})$, which satisfies the equation
\begin{equation*}
\Lg \, [ \, u^\perp ] \,\, = \,\, f^\perp \, ,
\end{equation*}
together with the orthogonality conditions~\eqref{eq:orto} and the desired estimate~\eqref{eq:stimapesatabase}. To complete the resolution of equation~\eqref{eq:linpro}, we set
$$
f_0 \,\, = \,\, \frac{1}{\vol\st M \dt}\int_{M}\!\! f \,d\mu_{\omega} \qquad\quad \hbox{and} \quad\qquad f_i \,\, = \,\, \int_{M}\!\! f\varphi_{i}\,d\mu_{\omega} \, , \qquad \hbox{for $i=1,\ldots, d$} \, .
$$
Recalling the definition of $\Theta_{ij}(\bbbb,\cccc)$ and using the {\em nondegeneracy condition}, we select a solution $(\nu, U_1, \ldots, U_N) \in \RR^{N+1}$ to the following system of {\em linear balancing conditions}
\begin{eqnarray}
f_i \,\,  +\,\,  \sum_{j=1}^N \, U^j \, \big[ \, \beta_j \, (\Delta\varphi_i)(p_j) \, + \,  \gamma_j \, \varphi_i(p_j)  \, \big]  &  = &  0 \,, \quad\quad\qquad\qquad\qquad \hbox{$i = 1, \dots, d$}  , \\
f_0 \, {\rm Vol}_\omega(M)   \,\,  + \,\, \sum_{j=1}^N \, U^j \,\gamma_j  & = & \nu  \, {\rm Vol}_\omega(M) \, .
\end{eqnarray}
It is worth pointing out that in general this choice is not unique, since it depends in the choice of a right inverse for the matrix $\Theta_{ij}(\bbbb,\cccc)$. Theorem~\ref{balancrough} implies then the existence of a distribution $U \in \mathscr{D}'(M)$ which satisfies
\begin{equation*}
\Lg   [{U}]  \,  + \, \nu  \,\,\,  = \,\,\,   \sum_{i=0}^d f_i \, \varphi_i \, + \, \sum_{j=1}^N \, U^j \beta_j \, \Delta\delta_{p_{j}} \, + \, \sum_{j=1}^N U^j  \gamma_j \, \delta_{p_{j}} \, , \qquad \hbox{in \,\,\,$M$}\, .
\end{equation*}
Arguing as in Proposition~\ref{loc_structure}, it is not hard to show that $U \in C^{\infty}_{loc}(M_\p)$. In particular the function $u^\perp + U \in C^{4,\alpha}_{loc}(M_\p)$ satisfies the equation
\begin{equation*}
\Lg   [u^\perp + U]  \,  + \, \nu  \,\,\,  = \,\,\, f  \, , \qquad \hbox{in \,\,\,$M_{\p}$}\, .
\end{equation*}
To complete the proof of our statement, we need to describe the local structure of $U$ in more details. First, we observe that, by the very definition of the deficiency spaces, one has
$$
\Lg \, \big[  W^j_{\bbbb,\cccc} \big] \,\, = \,\, \beta_j \, \Delta \delta_{p_j} \, + \, \gamma_j \, \delta_{p_j} \, + \, V^j_{\bbbb,\cccc} \, ,
$$
where, for every $j=1, \ldots, N$, the function $V^j_{\bbbb,\cccc}$ is in $C^{\infty}(M)$. Combining this fact with the linear balancing conditions, we deduce that
\begin{eqnarray*}
\Lg \, \Big[ \, U \, - \, \sum_{j=1}^N \, U^j  \, W^j_{\bbbb,\cccc}\Big] & = &  f_0 \, - \,  \nu  \, + \, \sum_{i=1}^d f_i \, \phi_i \, - \, \sum_{j=1}^N \, U^j \, V_{\bbbb,\cccc}^j \\
& = &   \frac{1}{\vol(M)} \, \sum_{j=1}^N \, U^j \, \gamma_j \, - \, \sum_{i=1}^d \, \sum_{j=1}^N  \, U^j \, \Theta_{ij}(\bbbb,\cccc) \, \phi_i \, - \, \sum_{j=1}^N \, U^j \, V_{\bbbb,\cccc}^j  \, .
\end{eqnarray*}
By the definition of $V^j_{\bbbb,\cccc}$ it follows that
$$
\int_M V^j_{\bbbb,\cccc} \, \phi_0 \,\, d\mu_\omega \,\, = \,\, -\, \gamma_j \qquad \quad \hbox{and} \qquad \quad \int_M V^j_{\bbbb,\cccc} \, \phi_i\,\, d\mu_\omega \,\, = \,\, - \, \Theta_{ij}(\bbbb,\cccc)
$$
and thus, it is easy to check the right hand side of the equation above is orthogonal to $\ker(\Lg)$. Hence, using Theorem~\ref{invertibilitapesatobase} and by the elliptic regularity, we deduce the existence of a smooth function $\overline u \in C^{\infty}(M)$ which satisfies
$$
\Lg \, [\, \overline{u} \,] \,\,\, = \,\,\, \frac{1}{\vol(M)} \, \sum_{j=1}^N \, U^j \, \gamma_j \, - \, \sum_{i=1}^d \, \sum_{j=1}^N  \, U^j \, \Theta_{ij}(\bbbb,\cccc) \, \phi_i \, - \, \sum_{j=1}^N \, U^j \, V_{\bbbb,\cccc}^j \,, \qquad \hbox{in $M$.}
$$
Setting $\widehat{u} = \sum_{j=1}^N \, U^j  \, W^j_{\bbbb,\cccc}$, we have obtained that $\Lg\, [\,U\,] \, = \, \Lg \, [\,\widehat{u} \,+ \, \overline{u}\, ]$, hence
\begin{equation*}
\Lg   [\, u^\perp \! + \,\overline{u}  + \, \widehat{u}\,]  \, + \, \nu  \,\,\,  = \,\,\, f  \, , \qquad \hbox{in \,\,\,$M_{\p}$}\, ,
\end{equation*}
with $\widetilde{u} = (u^\perp +\, \overline{u}) \, \in \, C^{4,\alpha}_\delta(M_\p)$ and $\widehat{u} \in \mathcal{D}_\p(\bbbb,\cccc)$. Moreover, combining the estimate~\eqref{eq:stimapesatabase} with our construction, it is clear that, for suitable positive constants $C_0,\ldots, C_3$, possibly depending on $\bbbb,\cccc$ and $\delta$, it holds
\begin{eqnarray*}
|| \,u \, ||_{C^{4,\alpha}_{\delta}(M_\p) \oplus \mathcal{D}_\p(\bbbb,\cccc)} \!\!\!\! &= & \!\!\!\! || \,\widetilde{u} \, ||_{C^{4,\alpha}_{\delta}(M_\p)}    + ||\,\widehat{u} \, ||_{\mathcal{D}_\p(\bbbb,\cccc)}  \, \leq \,   || \, u^\perp ||_{C^{4,\alpha}_{\delta}(M_\p) }   +  || \, \overline{u} \,||_{C^{4,\alpha}_{\delta}(M_\p)}  +  || \, \widehat{u} \, ||_{ \mathcal{D}_\p(\bbbb,\cccc)} \phantom{\sum_j=1^N}\\
& \leq & \!\!\!\! C_0 \, || \, f^\perp||_{C^{0,\alpha}_{\delta-4}(M_\p)}  +  C_1  \, \sum_{j=1}^N \, |U^j| \,  \leq \, C_2 \, \Big( \, ||\, f^\perp \, ||_{C^{0,\alpha}_{\delta-4}(M_\p)}  \, + \,    \, \sum_{i=1}^d \, |f_i| \, \,  \Big) \\
& \leq & \!\!\!\!  C_3\, || \, f\,||_{C^{0,\alpha}_{\delta-4}(M_\p)}\,, \phantom{\bigg(\sum_j=1^N\bigg)}
\end{eqnarray*}
which is the desired estimate. Finally, we observe that the constant $\nu$ as well can be easily estimated in terms of the norm of $f$. This concludes the proof of the theorem.
\end{proof}
\begin{remark}\label{inversadef}
In other words, with the notations introduced in the proof of the previous theorem, we have proven that, for $m\geq 3$ and $\delta\in(4-2m,0)$, the operator
\begin{eqnarray*}
\mathbb{L}^{(\delta)}_{\aaaa,\bbbb,\cccc} \,\, : \, \, {C}^{4,\alpha}_{\delta}(M_{\p ,\q}) \, \oplus \, \mathcal{D}_{\p,\q}(\aaaa,\bbbb, \cccc)   \, \times \, \RR \!\!& \longrightarrow &\!\! {C}^{0,\alpha}_{\delta-4}(M_{\p ,\q}) \\
(\, \widetilde{u}\, +\, \widehat{u} \,\,,\,\, \nu\, ) \!\!& \longmapsto &\!\! \Lg \, [\,\widetilde{u}\, +\, \widehat{u} \,] \, + \, \nu \, ,
\end{eqnarray*}
with $\bbbb,\cccc$ and $\aaaa$ satisfying the {\em nondegeneracy condition}, admits a (in general not unique) bounded right inverse
\begin{eqnarray*}
\mathbb{J}^{(\delta)}_{\aaaa,\bbbb,\cccc} \,\, : \, \, {C}^{0,\alpha}_{\delta-4}(M_{\p ,\q})  \!\!& \longrightarrow &\!\!
{C}^{4,\alpha}_{\delta}(M_{\p ,\q}) \, \oplus \, \mathcal{D}_{\p,\q}(\aaaa,\bbbb, \cccc)   \, \times \, \RR \, ,
\end{eqnarray*}
so that $ \big( \,\mathbb{L}^{(\delta)}_{\aaaa,\bbbb,\cccc} \circ  \mathbb{J}^{(\delta)}_{\aaaa,\bbbb,\cccc}\, \big) \, [\, f \,] \, = \, f$, for every $f \in {C}^{0,\alpha}_{\delta-4}(M_{\p ,\q})$ and
$$
\big\|\,      \mathbb{J}^{(\delta)}_{\aaaa,\bbbb,\cccc}  \, [\, f \,]   \,  \big\|_{{C}^{4,\alpha}_{\delta}(M_{\p ,\q}) \, \oplus \, \mathcal{D}_{\p,\q}(\aaaa,\bbbb, \cccc)   \, \times \, \RR } \,\,\, \leq \,\,\, C \,\, || \, f \, ||_{{C}^{0,\alpha}_{\delta-4}(M_{\p ,\q}) } \, .
$$
Of course, the analogous conclusion holds in the case $m=2$.
\end{remark}

\section{On scalar flat ALE spaces}\label{lineareALE} 
We now reproduce an analysis similar to the one just completed on the base base manifold on our 
model ALE spaces. We define in this setting too weighted H\"older spaces. Since we will use duality arguments we introduce   also  weighted Sobolev spaces. Let $\st X_{\Gamma},h,\eta \dt$ be an $ALE$ K\"ahler space and set 
\begin{equation}X_{\Gamma,R_{0}}=\pi^{-1}\st B_{R_{0}} \dt\,.\end{equation}
This can be thought as the counterpart in $X_{\Gamma}$ of  $M_{r_{0}}$ in $M$. Let $\delta\in \RR$, $\alpha\in (0,1)$, the weighted H\"older space $C_{\delta}^{k,\alpha}\st X_{\Gamma} \dt$ is the set of functions $f\in C_{loc}^{k,\alpha}(X_{\Gamma})$ s.t.
\begin{equation}
\left\|f\right\|_{C_{\delta}^{k,\alpha}\st X_{\Gamma} \dt}:=\left\|f\right\|_{C^{k,\alpha}\st X_{\Gamma,R_{0}} \dt}+\sup_{R\geq R_{0}}R^{-\delta}\left\|f\st R\cdot \dt\right\|_{C^{k,\alpha}\st B_{1}\setminus B_{1/2} \dt}<+\infty\,.
\end{equation}  
In order to define weighted Sobolev spaces we have to introduce a distance-like function $\gamma\in C_{loc}^{\infty}\st X_{\Gamma} \dt$ defined as 
\begin{equation}
\gamma\st p \dt:=\chi\st p \dt + \st 1-\chi\st p \dt \dt |x\st p \dt|\qquad p\in X
\end{equation}
with $\chi$ a smooth cutoff function identically $1$ on $X_{\Gamma,R_{0}}$ and identically $0$ on $X_{\Gamma}\setminus X_{\Gamma,2R_{0}}$.
Let moreover $\delta\in \RR$, the weighted Sobolev space $W_{\delta}^{k,2}\st X_{\Gamma} \dt$ is the set of functions $ f\in L_{loc}^{1}(X_{\Gamma})$ such that
\begin{equation}\left\|f\right\|_{W_{\delta}^{k,2}\st X_{\Gamma} \dt}:=\sqrt{\sum_{j=0}^{k}\int_{X}\left|\gamma^{-\delta+j}\nabla^{j}f\right|_{\eta}^{2}\,d\mu_{\eta}}<+\infty\,.
\end{equation}

\begin{remark}
A function $f\in C^{k\,\alpha}\st X_{\Gamma}\dt$ on the set $X\setminus X_{\Gamma,R_{0}}$ beheaves like
\begin{equation}
f|_{X_{\Gamma}\setminus X_{\Gamma,R_{0}}}\st p \dt=\mathcal{O}\st |x\st p \dt|^{\delta} \dt
\end{equation}
We also note that we have the inclusion
\begin{equation}
C_{\delta}^{k,\alpha}\st X_{\Gamma} \dt\subseteq W_{\delta'+m}^{k,2}\st X_{\Gamma} \dt\qquad \delta'>\delta\,.
\end{equation}
\end{remark}

We recall now the natural duality between weighted spaces 
\begin{equation}\left<\cdot|\cdot\right>_{\eta}\,:\, L_{\delta}^{2}\st X_{\Gamma} \dt\times L_{-\delta}^{2}\st X_{\Gamma} \dt\rightarrow \RR\end{equation} 
defined as
\begin{equation}\left<f|g\right>_{\eta}:=\int_{X}f\,g\,d\,\mu_{\eta}\label{eq: dualita}\end{equation}

The main task of this section is to solve the linearized constant scalar curvature equation
\begin{equation}
\Le u=f \label{eq: Leta}
\end{equation}
and we recall that by \eqref{eq:defLg}
\begin{equation}
\Le u=\Delta_{\eta}^{2}u+\left< \rho_{\eta} | i\dd u\right>
\end{equation}
and we notice that if $\st X_{\Gamma}, h,\eta \dt$ is Ricci-flat the operator $\Le$ reduces to the $\eta$-biharmonic operator. Since we want to study the operator $\Le$ on weighted spaces we have to be careful on the choice of weights. Indeed to have Fredholm properties we must avoid the indicial roots at infinity of $\Le$ that thanks to the decay of the metric coincide with those of euclidean Laplace operator $\Delta$ . We recall that the set of indicial roots at infinity for $\Delta$ on $\CC^{m}$ is $\ZZ\setminus \sg 5-2m, \ldots ,-1 \dg$ for $m\geq 3$ and $\ZZ$ for $m=2$.   

\noindent For $ALE$ K\"ahler spaces  a result analogous to Proposition \ref{invertibilitapesatobase} holds true.   

\begin{prop}\label{isomorfismopesati}
Let $(X_{\Gamma},h,\eta)$ a scalar flat $ALE$ K\"ahler space. Let $\delta\in \RR$ with
\begin{equation}
\delta\neq l+m, 4-m-l\qquad l\in \NN\,.
\end{equation}
then the operator
\begin{equation}
\Le:W_{\delta}^{4,2}\st X_{\Gamma} \dt\rightarrow L_{\delta-4}^{2}\st X_{\Gamma} \dt\,.
\end{equation}
is Fredholm and its cokernel is the kernel of its adjoint under duality \eqref{eq: dualita}
\begin{equation}
\Le:W_{-\delta}^{4,2}\st X_{\Gamma} \dt\rightarrow L_{-\delta-4}^{2}\st X_{\Gamma} \dt\,.
\end{equation}
There is, therefore, a continuous right inverse  
\begin{equation}
\mathbb{J}^{(\delta)}: \Le\sq W_{\delta}^{4,2}\st X_{\Gamma} \dt\dq\rightarrow W_{\delta}^{4,2}\st X_{\Gamma} \dt
\end{equation}
for operator
\begin{equation}
\Le:W_{\delta}^{4,2}\st X_{\Gamma} \dt\rightarrow L_{\delta-4}^{2}\st X_{\Gamma} \dt\,.
\end{equation}
If $m\geq 3$  and $\delta\in (4-2m,0)$, then 
\begin{equation}\Le : C_{\delta}^{4,\alpha}\st X_{\Gamma} \dt\longrightarrow C_{\delta-4}^{0,\alpha}\st X_{\Gamma} \dt\end{equation}
 is invertible. If $m=2$ and $\delta\in (0,1)$, then
\begin{equation}\Le : C_{\delta}^{4,\alpha}\st X_{\Gamma} \dt\longrightarrow C_{\delta-4}^{0,\alpha}\st X_{\Gamma} \dt\end{equation}
 is surjective with one dimensional kernel spanned by the constant function.
\end{prop}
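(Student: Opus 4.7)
The plan is to combine the Lockhart-McOwen Fredholm theory for elliptic operators on asymptotically conical manifolds with a K\"ahler-Calabi vanishing argument for the kernel. The operator $\Le = \Delta^{2}_\eta + 4\left<\rho_\eta\,|\,i\dd\,\cdot\,\right>$ is a formally self-adjoint fourth order elliptic operator on $X_\Gamma$, and since $(X_\Gamma,\eta)$ is scalar flat $ALE$, the Ricci form $\rho_\eta$ decays at infinity (by differentiating the expansion of Proposition~\ref{asintpsieta2}), so $\Le$ is an asymptotic compact perturbation of the Euclidean biharmonic $\Delta^{2}$ on $\CC^{m}/\Gamma$. The indicial roots of $\Delta^{2}$ on the asymptotic cone are computed by separating variables in $r^{\alpha}\,\Phi_{k}\st \theta \dt$ with $\Phi_{k}$ a $\Gamma$-invariant spherical harmonic of degree $k$: the admissible exponents are $\alpha\in\sg k,\,2-k,\,2-2m-k,\,4-2m-k \dg$, and after the $m$-shift intrinsic to the $L^{2}_\delta$ weight convention adopted here they produce precisely the excluded weights $\delta\in\sg l+m,\,4-m-l : l\in\NN \dg$. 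The Lockhart-McOwen theorem then yields that $\Le : W^{4,2}_\delta\st X_\Gamma \dt \to L^{2}_{\delta-4}\st X_\Gamma \dt$ is Fredholm for $\delta$ outside this set.

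For the duality statement, the pairing $\left<\cdot|\cdot\right>_\eta$ is continuous and perfect on $L^{2}_\delta\times L^{2}_{-\delta}$, and an integration by parts on compactly supported smooth functions shows that $\Le$ is formally self-adjoint with respect to it. Therefore the transpose of $\Le:W^{4,2}_\delta\to L^{2}_{\delta-4}$ is $\Le$ itself regarded as $W^{4,2}_{-\delta}\to L^{2}_{-\delta-4}$, and the cokernel of the first coincides with the kernel of the second. A continuous right inverse on the image is then built by restricting $\Le$ to an $L^{2}\st d\mu_\eta \dt$-orthogonal complement of its finite dimensional kernel in $W^{4,2}_\delta$ and invoking the closed range theorem.

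To transfer the picture to H\"older scales I would use weighted Schauder estimates for $\Le$: in the intervals $\st 4-2m,0\dt$ for $m\geq 3$ and $\st 0,1 \dt$ for $m=2$ there is no indicial weight, so a priori H\"older estimates hold and weighted Sobolev solutions lie automatically in $C^{4,\alpha}_\delta\st X_\Gamma \dt$. The key analytic ingredient is then the K\"ahler-Calabi identity
$$
\int_{X_\Gamma} u\,\Le u\,d\mu_\eta \,=\, \int_{X_\Gamma} \left| \overline{\partial}\,\partial^{\sharp}u \right|_\eta^{2} d\mu_\eta\,,
$$
proved by direct integration by parts for compactly supported $u$ using the K\"ahlerness of $\eta$, and then extended to $u\in C^{4,\alpha}_\delta\st X_\Gamma \dt$ with $\Le u = 0$ by cutting off on $X_{\Gamma,R}$ and sending $R\to+\infty$. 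This reduces the kernel to those $u$ whose $(1,0)$-gradient $\partial^{\sharp}u$ is a holomorphic vector field on the $ALE$ manifold $X_\Gamma$. Any such vector field that decays at infinity must vanish identically, so $u$ is forced to be constant. For $m\geq 3$ and $\delta\in\st 4-2m,0 \dt$ the decay $u=\mathcal{O}\st r^{\delta} \dt$ with $\delta<0$ gives $u\equiv 0$; combined with the index computation from the symmetric pair of weights $\pm\delta$ in the preceding paragraph this yields invertibility. For $m=2$ and $\delta\in \st 0,1 \dt$, constants belong to $C^{4,\alpha}_\delta$ and form exactly the kernel, while surjectivity follows from the Fredholm alternative applied to the cokernel on the dual weight.

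The main obstacle will be justifying the cutoff-IBP extension of the K\"ahler-Calabi identity uniformly in $\delta$: for weights close to the endpoints of the allowed intervals the bare decay $u=\mathcal{O}\st r^{\delta} \dt$ is not quite enough to kill all the third order boundary terms arising from the cutoff, and one must first propagate the asymptotic behaviour of $u$ beyond $r^{\delta}$ to the next indicial exponent below $\delta$ via elliptic regularity at infinity. This asymptotic refinement is exactly what makes the boundary flux vanish in the limit $R\to+\infty$ and closes the vanishing argument $\overline{\partial}\,\partial^{\sharp}u\equiv 0$.
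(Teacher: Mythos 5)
Your proposal follows the same standard route the paper itself delegates to its references (Theorem 10.2.1 and Proposition 11.1.1 in \cite{Pacard-notes}): Lockhart--McOwen Fredholm theory on weighted spaces, identification of cokernel with the kernel of the formal adjoint on the dual weight, weighted Schauder estimates to move from $W^{4,2}_\delta$ to $C^{4,\alpha}_\delta$, and a Lichnerowicz/K\"ahler integration-by-parts identity $\int u\,\Le u = 2\int|\overline\partial\partial^\sharp u|^2$ to pin down the kernel as constants (and hence trivial when $\delta<0$). The one concrete slip is in the list of indicial exponents for $\Delta^2$: separating variables with a $\Gamma$-invariant spherical harmonic of degree $k$ on $\CC^m$ gives the four roots $\{\,k,\ k+2,\ 2-2m-k,\ 4-2m-k\,\}$, not $\{\,k,\ 2-k,\ 2-2m-k,\ 4-2m-k\,\}$; the pair of growing roots is $k$ and $k+2$, and it is after the $m$-shift coming from the $L^2$ volume weight that these produce the excluded set $\delta\in\{\,l+m,\ 4-m-l\ :\ l\in\NN\,\}$ stated in the proposition.

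Two small points worth tightening. First, for the cokernel in the H\"older picture the natural dual weight is $4-2m-\delta$ rather than $-\delta$; for $\delta\in(4-2m,0)$ this dual weight lies again in $(4-2m,0)$, so the kernel vanishing you prove applies to it directly, giving invertibility without any additional index bookkeeping; for $m=2$ and $\delta\in(0,1)$ the dual weight is $-\delta\in(-1,0)$, where the kernel is trivial, which gives surjectivity while the constants survive in the kernel at the positive weight. Your ``symmetric pair $\pm\delta$'' phrasing is consistent with the Sobolev convention of the paper but the two conventions are $m$-shifted, and it is worth being explicit about which one you are in when you invoke duality. Second, your concern about the cutoff boundary flux is real but is handled by the usual device: once $\Le u=0$ with $u\in C^{4,\alpha}_\delta$, elliptic regularity at infinity forces $u$ to have an asymptotic expansion whose leading term sits at an indicial exponent strictly below $\delta$ (there being none in the open interval), after which the boundary integrals on $\partial X_{\Gamma,R}$ decay and vanish as $R\to+\infty$. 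With those two adjustments your argument is a correct, somewhat more explicit rendering of the proof the authors leave to the cited references.
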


\noindent The proof of the above result follows standard lines (see e.g. Theorem 10.2.1 and Proposition 11.1.1 in \cite{Pacard-notes}). 
We focus now on asymptotic expansions of various operators on $ALE$ spaces.

\begin{lemma}\label{espansioniALE}
Let $\st X_{\Gamma},h,\eta \dt$ be a Ricci flat $ALE$-K\"ahler space. Then on the coordinate chart at infinity we have the following expansions  
\begin{itemize}
\item for the inverse of the metric $\eta^{i\bar{\jmath}}$
\begin{equation}\eta^{i\bar{\jmath}}=2\sq \delta^{i\bar{\jmath}}-\frac{2 \cga \st m-1 \dt}{\left|x\right|^{2m}} \st \delta_{i\bar{\jmath}}-m\frac{\overline{x^{i}}x^{j}}{\left|x\right|^{2}}  \dt+\mathcal{O}\st \left|x\right|^{-2-2m} \dt \dq\,;\label{eq:inversaeta} \end{equation}

\item the unit normal vector to the sphere $\left|x\right|=\rho$ 
\begin{equation}\nu= \frac{1}{|x|}\st  x^{i}\frac{\partial}{\partial x^{i}}+\overline{x^{i}}\frac{\partial }{\partial \overline{x^{i}}}\dt    \sq 1+\frac{ \cga \st m-1 \dt^{2}}{|x|^{2m}} \dq+\mathcal{O}\st \left|x\right|^{-2-2m} \dt\,;\label{eq:espnormaleALE}\end{equation}

\item the laplacian $\Delta_{\eta}$
\begin{equation}\Delta_{\eta}=\Delta-\frac{2 \cga \st m-1 \dt}{\left|x\right|^{2m}}\Delta+\frac{8 \cga \st m-1 \dt m}{\left|x\right|^{2m+2}}\overline{x^{i}}x^{j}\partial_{j}{\partial}_{\bar{\imath}}+\mathcal{O}\st \left|x\right|^{-2-2m} \dt\,.\label{eq:esplapALE}\end{equation}
\end{itemize}
\end{lemma}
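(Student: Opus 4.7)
The plan is to derive all three expansions by direct computation starting from the K\"ahler potential supplied by Proposition~\ref{asintpsieta}, together with a one-step Neumann inversion.

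First I would apply $\partial_i\partial_{\bar\jmath}$ to the potential $\tfrac{|x|^2}{2} - c(\Gamma)|x|^{2-2m} + \psi_\eta$. Using $\partial_i|x|^2 = \overline{x^i}$ and $\partial_{\bar\jmath}\overline{x^i} = \delta_{i\bar\jmath}$, a short calculation gives
\[
\partial_i\partial_{\bar\jmath}\bigl(-c(\Gamma)|x|^{2-2m}\bigr) \,=\, c(\Gamma)(m-1)|x|^{-2m}\!\left[\delta_{i\bar\jmath} - m\,\tfrac{\overline{x^{i}}x^{j}}{|x|^{2}}\right],
\]
while $\partial_i\partial_{\bar\jmath}\psi_\eta = \mathcal{O}(|x|^{-2m-2})$ since two derivatives of an $\mathcal{O}(|x|^{-2m})$ function cost exactly two powers of $|x|$. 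Writing $\eta_{i\bar\jmath} = \tfrac12\delta_{i\bar\jmath} + A_{i\bar\jmath}$ with $A = \mathcal{O}(|x|^{-2m})$ and inverting by a geometric series produces~\eqref{eq:inversaeta}: the leading correction is $-4A$, and higher-order terms are $\mathcal{O}(|A|^2)=\mathcal{O}(|x|^{-4m})\subseteq\mathcal{O}(|x|^{-2-2m})$ for $m\ge1$. A useful bookkeeping observation is that $P_{ij}:=\overline{x^i}x^{j}/|x|^{2}$ is a rank-one Hermitian idempotent, so the full inverse of $\tfrac12 I + c(\Gamma)(m-1)|x|^{-2m}(I - mP)$ can actually be computed in closed form, which makes the expansion and its error bounds entirely explicit.

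For the unit normal, $\{|x|=\rho\}$ is the level set of $|x|^2$, so $\nu = \nabla_\eta|x|^2/|\nabla_\eta|x|^2|_\eta$. Contracting the expansion of $\eta^{i\bar\jmath}$ with $\partial_{\bar\jmath}|x|^2 = x^{j}$, the identity $P\vec{x}=\vec{x}$ (equivalently $\sum_j \overline{x^j}\, x^j = |x|^2$) collapses the rank-one correction and gives
\[
\nabla_\eta|x|^2 \,=\, 2\bigl(x^{i}\partial_i + \overline{x^i}\partial_{\bar\imath}\bigr)\bigl[1 + 2c(\Gamma)(m-1)^2|x|^{-2m}\bigr] + \mathcal{O}(|x|^{-1-2m}).
\]
A second contraction against $\eta_{i\bar\jmath}$ with the same simplification yields $|\nabla_\eta|x|^2|_\eta = 2|x|\bigl[1 + c(\Gamma)(m-1)^2|x|^{-2m}\bigr] + \mathcal{O}(|x|^{-1-2m})$, and a Taylor expansion of the quotient gives~\eqref{eq:espnormaleALE}.

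Finally, I would use the K\"ahler identity $\Delta_\eta f = 2\,\eta^{i\bar\jmath}\partial_i\partial_{\bar\jmath} f$ (the Christoffels of mixed type vanish, so no lower-order correction appears), substitute the expansion of $\eta^{i\bar\jmath}$, and recognise $4\sum_i\partial_i\partial_{\bar\imath}=\Delta$. The diagonal part of the correction reproduces $-2c(\Gamma)(m-1)|x|^{-2m}\Delta$, while the rank-one part yields the radial operator $8c(\Gamma)(m-1)m|x|^{-2-2m}\,\overline{x^i}x^{j}\partial_j\partial_{\bar\imath}$ of~\eqref{eq:esplapALE}. No step is conceptually deep; the only point requiring attention is the book-keeping of remainder orders, since the $\psi_\eta$ correction, the Neumann tail, and the square-root expansion all have to fit within the declared $\mathcal{O}(|x|^{-2-2m})$ error.
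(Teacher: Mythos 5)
The paper states Lemma~\ref{espansioniALE} without proof, so there is no author argument to compare against. Your route --- differentiate the potential of Proposition~\ref{asintpsieta}, invert the metric by a one-step Neumann expansion, form $\nabla_\eta|x|^2$ and normalize, and apply the K\"ahler identity $\Delta_\eta f = 2\eta^{i\bar\jmath}\partial_i\partial_{\bar\jmath}f$ --- is the natural direct computation, and the error bookkeeping ($\psi_\eta$ contributes $\mathcal{O}(|x|^{-2m-2})$ to $\eta_{i\bar\jmath}$, the quadratic Neumann tail is $\mathcal{O}(|x|^{-4m})\subset\mathcal{O}(|x|^{-2-2m})$ for $m\geq 2$) is also right.

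One index-placement slip is worth flagging, not least because it also appears in the paper's display~\eqref{eq:inversaeta}: with $P_{ij}:=\overline{x^i}x^j/|x|^2$ one has $P\overline{\vec{x}}=\overline{\vec{x}}$, not $P\vec{x}=\vec{x}$, so the contraction of the rank-one piece of $\eta^{i\bar\jmath}$ against $\partial_{\bar\jmath}|x|^2=x^j$ only collapses if that piece reads $x^i\overline{x^j}/|x|^2$. And indeed it should: the raised tensor is fixed by $\eta^{i\bar\jmath}\eta_{k\bar\jmath}=\delta^i_k$, which in matrix form is $HG^T=I$, so the leading correction to $\eta^{i\bar\jmath}$ is $-4A^T=-4\overline{A}$ rather than the ``$-4A$'' you wrote; for the Hermitian $A$ at hand this exchanges $\overline{x^i}x^j\leftrightarrow x^i\overline{x^j}$. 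None of your conclusions are affected --- the identity $\sum_j\overline{x^j}x^j=|x|^2$ you actually invoke is correct and makes the gradient radial, and \eqref{eq:esplapALE} is insensitive to the swap since $x^i\overline{x^j}\partial_i\partial_{\bar\jmath}$ and $\overline{x^i}x^j\partial_j\partial_{\bar\imath}$ coincide after renaming dummies --- but the bookkeeping remark should state the Neumann correction as $-4A^T$ and write the radial projector as $x^i\overline{x^j}/|x|^2$ so that $P\vec{x}=\vec{x}$ holds literally.
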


\noindent We conclude this section with an observation regarding fine mapping properties of

\begin{equation}
\Le: W_{\delta}^{4,2}\st X_{\Gamma} \dt\rightarrow L_{\delta-4}^{2}\st X_{\Gamma} \dt
\end{equation}
that will  be useful in the sequel.

\begin{prop}\label{GAP}
Let $\st X_{\Gamma},h,\eta \dt$ be a Ricci-flat non flat ALE K\"ahler manifold, $\delta\in\st  -m-2 , -m-1 \dt$, then we have the following characterization:
\begin{equation}\label{eq:caratt}
\Le\sq W_{\delta}^{4,2}\st X_{\Gamma} \dt\dq=\sg f\in L_{\delta}^{2}\st X_{\Gamma} \dt\,| \, \int_{X_{\Gamma}}f\,d\mu_{\eta}=0 \dg\,.
\end{equation}
Therefore the equation
\begin{equation}
\Le u= f
\end{equation} 
with $f\in L_{\delta-4}^{2}\st X_{\Gamma} \dt$ (respectively $f\in C_{\delta-4}^{0,\alpha}\st X_{\Gamma} \dt$)   and $\delta\in  \st -m-2,-m-1 \dt$ (respectively $\delta\in  \st 2-2m,4-2m \dt$) is solvable for $u\in W_{\delta}^{4,2}\st X_{\Gamma} \dt$ (respectively $u\in C_{\delta}^{4,\alpha}\st X_{\Gamma} \dt$) if and only if 
\begin{equation}
\int_{X_{\Gamma}}f\,d\mu_{\eta}=0\,.
\end{equation}
\end{prop}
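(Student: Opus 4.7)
The plan is to combine the Fredholm theory of Proposition~\ref{isomorfismopesati} with a direct computation of the cokernel of $\Le$, which I will show is one-dimensional and spanned by the constant function $1$; the characterization~\eqref{eq:caratt} will then follow by duality. Both weight ranges --- $\delta\in(-m-2,-m-1)$ for the Sobolev statement and $\delta\in(2-2m,4-2m)$ for the H\"older one --- lie strictly between two consecutive indicial roots of $\Le$ at infinity, so Proposition~\ref{isomorfismopesati} ensures that $\Le$ is Fredholm in either functional setting. By formal selfadjointness and the duality~\eqref{eq: dualita}, the cokernel is identified with $\ker\Le$ acting on the corresponding dual weight space, whose elements have growth at infinity bounded by $|x|^{1+\varepsilon}$ for some small $\varepsilon>0$. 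The proof thus reduces to showing that this dual kernel is exactly $\RR\cdot 1$.

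Let then $v$ belong to this dual kernel. Since $(X_\Gamma,h,\eta)$ is Ricci flat, $\Le=\Delta_\eta^2$, and by Lemma~\ref{espansioniALE} this operator differs from the Euclidean biharmonic $\Delta^2$ by a coefficient perturbation of order $\mathcal{O}(|x|^{-2m})$ at infinity. A standard indicial-root analysis --- expanding $v$ in the basis of $\Gamma$-invariant eigenfunctions of $\Delta_{\Sp^{2m-1}}$ and iteratively matching the resulting 4th-order radial ODE against the flat indicial polynomial, whose roots are $\{j,\,j+2,\,2-2m-j,\,4-2m-j\}$ --- produces an asymptotic development of $v$ at infinity. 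The growth constraint forces the vanishing of all modes with $\alpha\geq 2$, leaving only $\alpha=0$ (constants), $\alpha=1$ (first eigenspace), and the decaying modes. The $\alpha=1$ contribution is ruled out by Remark~\ref{nolinear}, since the $\Gamma$-invariance and nontriviality of $\Gamma$ exclude linear functions. In dimension $m=2$ the coincidence of the indicial roots $\alpha=j=0$ and $\alpha=-j=0$ produces an additional $\log|x|$ mode, which is eliminated by the flux identity $\int_{X_{\Gamma,R}}\Delta_\eta v\,d\mu_\eta=\int_{\partial X_{\Gamma,R}}\partial_\nu v\,d\sigma_\eta$: a nonzero $\log|x|$ coefficient would produce a right-hand side growing like $R^2$, contradicting the boundedness of the volume integral. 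Therefore $v=c+\mathcal{O}(|x|^{\alpha_-})$ at infinity, with $c\in\RR$ and $\alpha_-\leq 4-2m$ when $m\geq 3$ (respectively $\alpha_-\leq -2$ when $m=2$).

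Setting $w:=v-c$, the function $w$ is smooth and biharmonic on all of $X_\Gamma$ and decays at the rate $|x|^{\alpha_-}$ at infinity. The Green-type identity
\begin{equation*}
\int_{X_{\Gamma,R}}(\Delta_\eta w)^2\,d\mu_\eta \,\,=\,\, \int_{\partial X_{\Gamma,R}}\!\big[(\Delta_\eta w)\,\partial_\nu w \,-\, w\,\partial_\nu(\Delta_\eta w)\big]\,d\sigma_\eta
\end{equation*}
combined with $|\partial X_{\Gamma,R}|=\mathcal{O}(R^{2m-1})$ and the decay estimates on $w$ and its derivatives shows that the boundary term is of order $R^{2\alpha_-+2m-4}\to 0$ as $R\to\infty$. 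Hence $\Delta_\eta w\equiv 0$, and a harmonic function on the complete noncompact $X_\Gamma$ that decays at infinity vanishes identically by the maximum principle applied to $X_{\Gamma,R}$ as $R\to\infty$. Thus $w\equiv 0$, $v\equiv c$, the cokernel of $\Le$ is $\RR\cdot 1$, and its annihilator under~\eqref{eq: dualita} equals $\{f:\int_{X_\Gamma}f\,d\mu_\eta=0\}$, establishing~\eqref{eq:caratt} and the equivalent solvability criterion in both functional settings. The main technical obstacle is the mode analysis on the curved end: one must pass from the flat indicial-root picture to a genuine asymptotic expansion on $X_\Gamma$, iterating against the $\mathcal{O}(|x|^{-2m})$ perturbation of $\Le-\Delta^2$, and separately dispense with the borderline $\log|x|$ mode when $m=2$; Ricci flatness (through the sharp decay of Lemma~\ref{espansioniALE}) and the absence of $\Gamma$-invariant linear functions (Remark~\ref{nolinear}) are both essential.
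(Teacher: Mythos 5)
Your proposal is correct and follows the same overall strategy as the paper: formal self-adjointness reduces the statement to computing the kernel of $\Le$ on the dual (mildly growing) weight space, the Fourier/indicial analysis on the curved end extracts a constant as the only admissible non-decaying asymptotic, and Remark~\ref{nolinear} rules out the linear mode. The two noteworthy differences concern the final step and the borderline dimension. Where the paper, having shown $u-c_0$ lies in a fast-decaying weighted H\"older space, concludes $u-c_0\equiv 0$ by citing the injectivity of $\Le$ in the decaying range from Proposition~\ref{isomorfismopesati}, you instead give a self-contained argument: a Green-type integration by parts forcing $\Delta_\eta w\equiv 0$ from the decay rate, followed by the maximum principle on $X_{\Gamma,R}$ as $R\to\infty$. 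This is more elementary and makes the proof essentially independent of the Fredholm package once the asymptotics are in hand, at the cost of a little extra care with the boundary-integral exponents (which you check correctly in both $m\geq 3$ and $m=2$). You also explicitly dispose of the $\log|x|$ mode at the repeated indicial root when $m=2$ by a flux argument; the paper's proof as written only lists the constant and linear possibilities and does not address this mode, so your treatment is the more careful one on this point. Both proofs establish the proposition; yours is somewhat longer but tighter at the $m=2$ borderline.
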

\begin{proof}

Since $\Le$ is formally selfadjoint  we can identify, via duality \eqref{eq: dualita}, the cokernel of 
\begin{equation}
\Le: W_{\delta}^{4,2}\st X_{\Gamma} \dt\rightarrow L_{\delta}^{2}\st X_{\Gamma} \dt \qquad \delta\in \st -m-2,-m-1 \dt
\end{equation} 
with the kernel of 
\begin{equation}
\Le: W_{-\delta}^{4,2}\st X_{\Gamma} \dt\rightarrow L_{-\delta}^{2}\st X_{\Gamma} \dt\,.
\end{equation}
to get the characterization \eqref{eq:caratt} stated in the proposition we have to identify generators of this kernel. Let then $u\in W_{\delta'}^{4,2}\st X_{\Gamma} \dt$ such that
\begin{equation}
\Le u=0
\end{equation}

with $\delta'\in (m+1,m+2)$, by standard elliptic regularity we have that $u\in C_{loc}^{\omega}\st X_{\Gamma} \dt$. On $X_{\Gamma}\setminus  X_{\Gamma,R}$ we consider the Fourier expansion of $u$  
\begin{equation}u=\sum_{k=0}^{+\infty} u^{(k)}\st |x| \dt \phi_{k}\end{equation}     
with $u^{(k)}\in C_{\delta'-m}^{n,\alpha}\st [R,+\infty) \dt$ for any $n\in \NN$ and this sum is $C^{n,\alpha}$-convergent on compact sets. Then, using expansions\eqref{eq:inversaeta}, \eqref{eq:espnormaleALE},\eqref{eq:esplapALE}, we have on $X_{\Gamma}\setminus X_{\Gamma,R}$
\begin{align}
0=&\Delta_{\eta}^{2}u\\
=& \sum_{k=0}^{+\infty}\Delta^{2}\st u^{(k)}\st |x| \dt  \phi_{k}\dt +|x|^{-2m}L_{4}\st u \dt+|x|^{-1-2m}L_{3}\st u \dt+|x|^{-2-2m}L_{2}\st u \dt
\end{align}

\noindent and $L_{k}$ differential operators of order $k$ and uniformly bounded coefficients.  The equation 
\begin{equation}\sum_{k=0}^{+\infty} \Delta^{2}\st u^{(k)}\st |x| \dt \phi_{k} \dt =-|x|^{-2m}L_{4}\st u \dt-|x|^{-1-2m}L_{3}\st u \dt-|x|^{-2-2m}L_{2}\st u \dt\end{equation}

\noindent implies  
\begin{equation}\Delta^{2}\st u^{(k)} \phi_{k} \dt \in C_{\delta'-3m-4}^{n,\alpha}\st X_{\Gamma}\setminus X_{\Gamma,R} \dt\qquad  k\geq 0\,.\end{equation}
\noindent Suppose   
\begin{equation}\limsup_{|x|\rightarrow +\infty}|u|>0\,,\end{equation}
\noindent since $u^{(k)}\phi_{k}\in C_{\delta'-m}^{n,\alpha}\st X_{\Gamma}\setminus X_{\Gamma,R} \dt$ the only possibilities are
\begin{equation} u^{(0)}\st |x| \dt= c_{0}+\upsilon_{0}\st |x| \dt\end{equation}
\begin{equation} u^{(1)}\st |x| \dt= \st |x| +\upsilon_{1}\st |x| \dt\dt \phi_{1}\end{equation}
\noindent with $\upsilon_{0},\upsilon_{1}\in C_{\delta'-3m}^{n,\alpha}\st [R,+\infty) \dt$ and $c_{0}\in \RR$. But there aren't $\phi_{1}$ that are $\Gamma$-invariant (see Remark \ref{nolinear}), so the only possibility is that
\begin{equation}u^{(0)}\st |x| \dt= c_{0}+\upsilon_{0}\st |x| \dt\,.\end{equation}
\noindent We now show that $u$ is actually constant, indeed $u-c_{0}\in C_{\delta'-3m}^{n,\alpha}\st X \dt$ and
\begin{equation}\Le\st u-c_{0} \dt=\Delta_{\eta}^{2}\st u-c_{0} \dt=0\end{equation}
\noindent so by Proposition \ref{isomorfismopesati} we can conclude
\begin{equation}u-c_{0}\equiv 0\,.\end{equation}
The proposition now follows immediately.

\end{proof}

\begin{remark}

As we will see in the sequel, it will be necessary to solve this kind of equations  for constructing  refined families of metrics on model spaces $X_{\Gamma}$.
\end{remark}


\section{Nonlinear analysis}
\noindent In this section we collect all the critical estimates needed in the proof of Theorems \ref{belliebrutti} and \ref{maintheorem}. As in \cite{ap1} and \cite{ap2} we produce Kcsc metrics on manifolds wiht boundary  which we believe could be of independent interest (Propositions \ref{crucialbase}, \ref{crucialmodello}) We will give details only of the steps where the absence of the $4-2m$
 asymptotic in the model creates a genuine difference from the blow-up case.

Given $\varepsilon$ sufficiently small we look at the truncated manifolds $M_{\rep}$  and $X_{\Rep}$  where we impose the following relations:
\begin{equation}
\rep=\varepsilon^{\frac{2m-1}{2m+1}}=\varepsilon \Rep.
\end{equation}

As in \cite{ap1}, \cite{ap2} we introduce some functional spaces we will need in the sequel that will naturally work as ``space of parameters" for our construction:

\begin{equation}
\mathcal{B}_{j}:=C^{4,\alpha}\st \Sp^{2m-1}/\Gamma_{j} \dt\times C^{2,\alpha}\st \Sp^{2m-1}/\Gamma_{j} \dt
\end{equation}

\begin{equation}
\mathcal{B}:=\prod_{j=1}^{N}\mathcal{B}_{j}
\end{equation}

\begin{equation}
\mathfrak{B}\st\kappa, \sigma, \beta\dt:=\sg \st \hg,\kg \dt\in \mathcal{B}\,\left|\, \left\|h^{(0)}_{j},k^{(0)}_{j}\right\|_{\mathcal{B}_{j}}\leq \kappa \rep^{\beta},  \left\|h^{(\dagger)}_{j},k^{(\dagger)}_{j}\right\|_{\mathcal{B}_{j}}\leq \kappa \rep^{\sigma} \right. \dg
\end{equation}

\noindent Functions in $\mathcal{B}^{\alpha}$ will be used to parametrize solution of the Kcsc problem near a given
``skeleton" solution built by hand to match some of the first orders of the metrics coming on the two sides of the gluing. A key technical tool to implement such a 
strategy is given by using  outer (which will be transplanted on the base manifold) and inner (transplanted on the model) biharmonic extensions of functions on the unit sphere.

\begin{definition}
Let $(h,k)\in C^{4,\alpha}\st\Sp^{2m-1}\dt\times C^{4,\alpha}\st\Sp^{2m-1}\dt$ the outer biharmonic extension of $(h,k)$ is the function $H_{h,k}^{o}\in C^{4,\alpha}\st \CC^{m}\setminus B_{1} \dt$ solution fo the boundary value problem
\begin{equation}\begin{cases}
\Delta^{2} H_{h,k}^{o}=0 & \textrm{ on } \CC^{m}\setminus B_{1}\\
H_{h,k}^{o}=h &\textrm{ on } \partial B_{1}\\
\Delta H_{h,k}^{o}=k& \textrm{ on } \partial B_{1}
\end{cases}\end{equation}
Moreover $H_{h,k}^{o}$ has the following expansion in Fourier series for $m\geq 3$
\begin{equation}H^{o}_{h,k}:=\sum_{\gamma=0}^{+\infty}\st  \st  h^{(\gamma)}+\frac{k^{(\gamma)}}{4(m+\gamma-2)}  \dt |w|^{2-2m-\gamma}-\frac{k^{(\gamma)}}{4(m+\gamma-2)}|w|^{4-2m-\gamma} \dt \phi_\gamma\,,\label{eq:bihout3}\end{equation}
and for $m=2$
\begin{equation} H^{o}_{h,k}:=   h^{(0)}|w|^{-2}+\frac{k^{(0)}}{2}\log\st|w|\dt+  \sum_{\gamma=1}^{+\infty}\st  \st  h^{(\gamma)}+\frac{k^{(\gamma)}}{4\gamma}  \dt |w|^{-2-\gamma}-\frac{k^{(\gamma)}}{4\gamma}|w|^{-\gamma} \dt \phi_\gamma\,.\label{eq:bihout2}\end{equation}
\end{definition}

\begin{remark}
In the sequel we will take $\Gamma$-invariant $(h,k)\in C^{4,\alpha}\st\Sp^{2m-1}\dt\times C^{4,\alpha}\st\Sp^{2m-1}\dt$ and by the Remark \ref{nolinear} we will 
have no terms with $\phi_{1}$ in the formulas  \eqref{eq:bihout3} and \eqref{eq:bihout2} for nontrivial $\Gamma$.
\end{remark}

\begin{definition}
Let $\st \tilde{h}, \tilde{k}\dt\in C^{4,\alpha}\st \Sp^{2m-1} \dt\times C^{2,\alpha}\st \Sp^{2m-1} \dt$, the biharmonic extension $\hkjj$ on $B_1$ of $\st \tilde{h}, \tilde{k}\dt$ is the function $\hkjj\in C^{4,\alpha}\st \overline{B_{1}} \dt$ given by the solution of the boundary value problem
\begin{equation}\begin{cases}
\Delta^{2}H_{\tilde{h}\tilde{k}}^{I}=0 & w\in  B_{1} \\
H_{\tilde{h}\tilde{k}}^{I}=\tilde{h}& w\in \partial B_{1}\\
\Delta H_{\tilde{h}\tilde{k}}^{I}=\tilde{k}& w\in \partial B_{1}\\
\end{cases}\,.\end{equation}

\noindent The function $\hkjj$  has moreover the expansion

\begin{equation}\hkjj\st w\dt=\sum_{\gamma=0}^{+\infty}\left(\left(\tilde{h}^{(\gamma)}-\frac{\tilde{k}^{(\gamma)}}{4(m+\gamma)}  \dt |w|^\gamma+\frac{\tilde{k}^{(\gamma)}}{4(m+\gamma)}|w|^{\gamma+2} \dt \phi_\gamma\,.\end{equation}
\end{definition}

\begin{remark}
Again, if the group $\Gamma$ is non trivial, for $\Gamma$-invariant $(h,k)$ will be no $\phi_1$-term in the above summations and so we will have
\begin{equation}H_{h,k}^{I}=\left(\tilde{h}^{(0)}-\frac{\tilde{k}^{(0)}}{4m}  \dt  +\frac{\tilde{k}^{(0)}}{4m}|w|^{2}+  \sum_{\gamma=2}^{+\infty}\left(\left(\tilde{h}^{(\gamma)}-\frac{\tilde{k}^{(\gamma)}}{4(m+\gamma)}  \dt |w|^{\gamma}+\frac{\tilde{k}^{(\gamma)}}{4(m+\gamma)}|w|^{\gamma+2} \dt \phi_{\gamma}\,.\end{equation}
\end{remark}

\subsection{Kcsc metrics on the truncated base manifold:}\label{nonlinbase}

\noindent we want to find $F_{\ag,\bg,\cg,\hg,\kg}^{o}\in C^{4,\alpha}\st M_{\rep} \dt$ such that 

\begin{equation}\omega_{\ag,\bg,\cg,\hg,\kg}:=\omega +i\dd F_{\ag,\bg,\cg,\hg,\kg}^{o}\end{equation} 

\noindent is a metric on $M_{\rep}$ and 
\begin{equation}
\s_{\omega}\st F_{\ag,\bg,\cg,\hg,\kg}^{o} \dt=s_{\omega}+\mathfrak{s}_{\ag,\bg,\cg,\hg,\kg}\qquad \mathfrak{s}_{\ag,\bg,\cg,\hg,\kg} \in \RR
\end{equation}

\noindent The function $F_{\ag,\bg,\cg,\hg,\kg}^{o}$, as in \cite{ap2}, will be made of three blocks: a skeleton that keeps the metric near a Kcsc metric, modifications of biharmonic extensions of functions on the unit sphere that will allow us to glue the metric on the base with those on  the models and a small perturbation $f_{\ag,\bg,\cg,\hg,\kg}^{o}$ that ensures the constancy of the scalar curvature. The first observation is that the skeleton, in this new setting, has to take in account the different asymptotics of the model spaces and so it must contain a function ${\bf{G}}_{\ag,\bg,\cg}$ coming from Proposition \ref{balancrough}. If we apply the construction used in \cite{ap2} as it stands we will get exactly the same estimates on the term $F_{\ag,\bg,\cg,\hg,\kg}^{o}$ indeed we will obtain the following proposition.

\begin{prop}\label{famigliabaseap2}
Let $(M,g,\omega )$ a Kcsc orbifold with isolated singularities and $\st \hg,\kg \dt\in \mathcal{B}$  with
\begin{equation}
\left\|\hg,\kg\right\|_{\ambbda}\leq\kappa\rep^{4}
\end{equation}

\begin{enumerate}
\item
\label{sipuntibuoni}

If the set of singular points $\bf{q}$ with a scalar flat ALE K\"ahler resolution with $e\st \Gamma_{N+l} \dt\neq 0$ is nonempty and there exist $\ag\in \st\RR^{+}\dt^{K}$ such that, having set $\bar{\ag}:=\st \frac{a_{1}e\st \Gamma_{N+1} \dt}{|e\st \Gamma_{N+1} \dt|},\ldots, \frac{a_{K}e\st \Gamma_{N+K} \dt}{|e\st \Gamma_{N+K} \dt|} \dt$,

	\begin{displaymath}
	\left\{\begin{array}{lcl}
	\sum_{l=1}^{K}\bar{a}_{l}\varphi_{i}\st q_{l} \dt=0 && i=1,\ldots, d\\
	&&\\
	\st \bar{a}_{l} \varphi_{i}\st q_{l} \dt \dt_{\substack{1\leq i\leq d\\1\leq l\leq K}}&& \textrm{has full rank}
	\end{array}\right.
	\end{displaymath}
 then, for $\delta\in (4-2m,5-2m)$, for all $\cg\in \RR^{N}$  with $|\cg|\leq C\st \omega \dt|\ag|$ for some $C(\omega)>0$,  there exist $\tilde{\ag}\in\st\RR^{+}\dt^{K}$ and  $f_{\ag,\bg,\cg,\hg,\kg}^{o}\in C_{\delta}^{4,\alpha}\st M_{\p , \q} \dt\oplus\mathcal{D}_{\p,\q}\st \bar{\ag}, \bf{0},\bf{0} \dt$ 
 such that 
\begin{equation}\label{eq:errore}
\left\|f_{\ag,\bf{0},\cg,\hg,\kg}^{o}\right\|_{C_{\delta}^{4,\alpha}\st M_{\p , \q} \dt \oplus\mathcal{D}_{\p,\q}\st \bar{\ag}, \bf{0},\bf{0} \dt}\leq C\st  \kappa\dt \rep^{2m+1}
\end{equation}
and on $M_{\rep}$
\begin{equation}
\s_{\omega}\st \varepsilon^{2m-2}{\bf{G}}_{\tilde{\ag},\bf{0},\cg}+H_{\hg,\kg^{(\dagger)}}^{o}+ f_{\ag,\bf{0},\cg,\hg,\kg}^{o}\dt\equiv s_{\omega}+\mathfrak{s}_{\ag,\bf{0},\cg,\hg,\kg}.
\end{equation}
with $\bf{G}_{\tilde{\ag},\bf{0},\cg}$ constructed using Remark \ref{gabc}.
\item	
\label{solocattivi}
	If the set of singular points with a scalar flat non Ricci-flat ALE K\"ahler resolution is empty ($K=\emptyset$) and there exists $ \bg \in \st\mathbb{R}^{+}\dt^{N}$ and $\cg\in \RR^{N}$ such that
		\begin{displaymath}
	\left\{\begin{array}{lcl}
	\sum_{j=1}^{N}b_{j}\Delta_{\omega}\varphi_{i}\st p_{j} \dt+c_{j}\varphi_{i}\st p_{j} \dt=0 && i=1,\ldots, d\\
	&&\\
	\st b_{j}\Delta_{\omega}\varphi_{i}\st p_{j} \dt+c_{j}\varphi_{i}\st p_{j} \dt \dt_{\substack{1\leq i\leq d\\1\leq j\leq N}}&& \textrm{has full rank}
	\end{array}\right.
	\end{displaymath}
\noindent then, for $\delta\in (4-2m,5-2m)$, there exist $f_{\bg,\cg,\hg,\kg}^{o}\in C_{\delta}^{4,\alpha}\st M_{\p}\dt\oplus\mathcal{D}_{\p}\st \bg,\cg  \dt $ such that 
\begin{equation}\label{eq:errorebis}
\left\|f_{\bg,\cg,\hg,\kg}^{o}\right\|_{C_{\delta}^{4,\alpha}\st M_{\p } \dt\oplus \mathcal{D}_{\p}\st \bg,\cg  \dt}\leq C\st  \kappa\dt \rep^{2m+1}
\end{equation}
and on $M_{\rep}$
\begin{equation}
\s_{\omega}\st -\varepsilon^{2m}{\bf{G}}_{\bg,\cg}+H_{\hg,\kg^{(\dagger)}}^{o}+ f_{\bg,\cg,\hg,\kg}^{o}\dt\equiv s_{\omega}+\mathfrak{s}_{\bg,\cg,\hg,\kg}
\end{equation}
\noindent with ${\bf{G}}_{\bg,\cg}$ constructed with Remark \ref{gabc}. If $K=0$ it is assumed that the index $\ag$ does not appear in the above expressions.

\end{enumerate}

\end{prop}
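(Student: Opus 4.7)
The plan is a Banach fixed-point argument for a correction $f$ of an explicit skeleton, which is designed so that its scalar curvature deviates from a constant by an error small enough to be absorbed. In case (\ref{sipuntibuoni}) the skeleton is $F_{0} := \varepsilon^{2m-2}\GGG_{\tilde{\ag},\mathbf{0},\cg} + H^{o}_{\hg,\kg^{(\dagger)}}$, while in case (\ref{solocattivi}) it is $F_{0} := -\varepsilon^{2m}\GGG_{\bg,\cg} + H^{o}_{\hg,\kg^{(\dagger)}}$. The $\GGG$ summand, a multi-poles fundamental solution supplied by Remark~\ref{gabc}, endows the base potential with exactly the singular profiles demanded by Propositions~\ref{asintpsieta} and \ref{asintpsieta2}: a $|z|^{4-2m}$ blow-up at each $q_{l}$ (respectively $|z|^{2-2m}$ at each $p_{j}$) whose coefficient is tuned so as to cancel, on the gluing annulus, the leading non-Euclidean term of the rescaled ALE potential. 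The adjustment $\tilde{\ag}$ differs from $\bar{\ag}:=\st a_{l}\,e(\Gamma_{N+l})/|e(\Gamma_{N+l})|\dt_{l}$ by an $\varepsilon$-dependent factor implementing this scale matching, and by assumption $\bar{\ag}$ already meets the balancing and full-rank hypotheses needed to invoke Theorem~\ref{invertibilitapesatodef} with deficiency space $\mathcal{D}_{\p,\q}\st\bar{\ag},\mathbf{0},\mathbf{0}\dt$. The biharmonic extension $H^{o}_{\hg,\kg^{(\dagger)}}$ carries the higher Fourier modes of the matching data on $\partial B_{\rep}$, precisely the ingredient missing in the bare $\GGG$.

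Using the expansion~\eqref{eq:espsg}, the equation $\s_{\omega}(F_{0}+f) = s_{\omega}+\mathfrak{s}$ is equivalent to
\begin{equation*}
\Lg f \,+\, 2\mathfrak{s} \,\,=\,\, -\,\Lg F_{0} \,+\, 2\,\mathbb{N}_{\omega}(F_{0}+f) \,\,=:\,\, \mathcal{E}(f)\,.
\end{equation*}
Theorem~\ref{invertibilitapesatodef} provides a bounded right inverse $\mathbb{J}^{(\delta)}$ from $C^{0,\alpha}_{\delta-4}\st M_{\p,\q}\dt$ into $C^{4,\alpha}_{\delta}\st M_{\p,\q}\dt\oplus\mathcal{D}_{\p,\q}\times\RR$ for $\delta\in(4-2m,5-2m)$, and the problem is recast as the fixed-point equation $(f,2\mathfrak{s}) = \mathbb{J}^{(\delta)}\sq\mathcal{E}(f)\dq$ on a small ball.

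Two estimates drive the contraction argument. First, one controls $\mathcal{E}(0)$ in $C^{0,\alpha}_{\delta-4}$. Since $\GGG$ is a multi-poles fundamental solution, $\Lg\GGG$ reduces on $M_{\p,\q}$ to the constant $-\nu_{\tilde{\ag},\cg}$, which the right inverse allocates to $\mathfrak{s}$; meanwhile $\Delta^{2}H^{o}_{\hg,\kg^{(\dagger)}}=0$, so $\Lg H^{o}=(\Lg-\Delta^{2})H^{o}$ is governed by the $O(|z|^{2})$ deviation of $\omega$ from the Euclidean model times fourth derivatives of $H^{o}$. Combining the hypothesis $\|\hg,\kg\|_{\ambbda}\leq\kappa\rep^{4}$ with $\rep=\varepsilon^{(2m-1)/(2m+1)}$, and using that no $\phi_{1}$-mode is present by Remark~\ref{nolinear}, one obtains the $\rep^{2m+1}$ bound. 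The nonlinear remainder $\mathbb{N}_{\omega}(F_{0})$ is at least quadratic in $\nabla^{2}F_{0}$ and a weight count produces the same order. Second, the algebraic structure of $\mathbb{N}_{\omega}$ combined with standard product estimates in weighted H\"older norms gives the Lipschitz bound
\begin{equation*}
\left\|\mathcal{E}(f_{1})-\mathcal{E}(f_{2})\right\|_{C^{0,\alpha}_{\delta-4}} \,\leq\, C\,\bigl(\|f_{1}\|+\|f_{2}\|+\rep^{\gamma}\bigr)\,\|f_{1}-f_{2}\|\,,
\end{equation*}
so that $T(f):=\mathbb{J}^{(\delta)}\sq\mathcal{E}(f)\dq$ is a contraction on the ball of radius $C(\kappa)\,\rep^{2m+1}$ once $\varepsilon$ is small. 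Banach's theorem then yields the desired $f^{o}$ and the scalar-curvature constant $\mathfrak{s}$.

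The main obstacle is the sharp bookkeeping of scales required to produce the precise power $\rep^{2m+1}$. Because $\delta\in(4-2m,5-2m)$ sits at the edge of the admissible range, even a mild mismatch between the sizes of $\varepsilon^{2m-2}\GGG$ (resp.\ $\varepsilon^{2m}\GGG$), of $H^{o}_{\hg,\kg^{(\dagger)}}$, and of $\rep$ would break the contraction; the proof must exploit in a sharp way the improved asymptotic expansions of $\GGG$ supplied by Lemmata~\ref{Gbilapl} and~\ref{Glapl} together with the radial decay of $\psi_{\eta}^{(0)}$ from Propositions~\ref{asintpsieta} and \ref{asintpsieta2}, so that the leading singular pieces cancel identically on the gluing annulus and only remainders of the right order survive. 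Case~(\ref{solocattivi}) is completely parallel, with the blow-up rate $|z|^{4-2m}$ replaced by $|z|^{2-2m}$, the prefactor $\varepsilon^{2m-2}$ by $\varepsilon^{2m}$, and the deficiency space $\mathcal{D}_{\p,\q}\st\bar{\ag},\mathbf{0},\mathbf{0}\dt$ by $\mathcal{D}_{\p}\st\bg,\cg\dt$.
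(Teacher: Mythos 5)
Your proposal reconstructs the Arezzo--Pacard contraction-mapping scheme that the paper itself only cites: the paper gives no proof of Proposition~\ref{famigliabaseap2} beyond the assertion that ``the construction used in \cite{ap2} as it stands'' yields it, so there is no written proof to compare against, but your outline is a faithful account of that construction. The decomposition into skeleton ($\GGG$-term plus biharmonic extension) and fixed-point correction, the invocation of Theorem~\ref{invertibilitapesatodef} for the right inverse on $C^{4,\alpha}_{\delta}\oplus\mathcal{D}_{\p,\q}\times\RR$, the estimate of $\mathcal{E}(0)$ from Lemma~\ref{stimabiarmonichebase}-type bounds giving $\kappa\rep^{6-\delta}\lesssim\kappa\rep^{2m+1}$ for $\delta\in(4-2m,5-2m)$, and the Lipschitz estimate on $\mathbb{N}_\omega$ are all exactly what is needed. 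One small imprecision: the adjustment from $\bar{\ag}$ to $\tilde{\ag}$ is driven by the $\cg$-terms in the linear balancing condition \eqref{eq:generalbal} (solvable by the full-rank hypothesis on $(\bar{a}_l\varphi_i(q_l))$) rather than by an ``$\varepsilon$-dependent factor''; the $\varepsilon$-dependence in the coefficients $\mathfrak{a}_l$ appears only later in the data-matching argument of Theorem~\ref{belliebrutti}. This does not affect the correctness of the argument.
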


\noindent As we will see in the last Section, the above estimate is enough to conclude the existence of Kcsc metrics on the glued manifold
only in the first case above (\ref{sipuntibuoni}). Unfortunately if $K=0$ the above dependence of $C$ on $\kappa$ forbids the use of the final
fixed point argument and a much more refined construction is needed.

\centerline{ {\em{For this reason, from now on we assume $K=0$ and $\ag$ does not appear.}}}
			The first attempt to improve the above result is to include in $F_{\bg,\cg,\hg,\kg}^{o}$, suitably adapted, potentials $\psi_{\eta_{j}}$'s.
Even if we do this, things won't change indeed we would get exactly the same result. We have to do a much more refined construction to really get what we need for gluing the metrics. To improve the estimates we need to modify $g$ by the combination of ${\bf{G}}_{\bg,\cg}$ and $\psi_{\eta_{j}}$ which corresponds to the Ricci-flat model we want to insert since, by Proposition \ref{balancrough} and Lemma \ref{Gbilapl}, on a neighborhood of $p_{j}$ 
\begin{equation}
{\bf{G}}_{\bg,\cg}\sim \frac{b_{j} |\Gamma_{j}|}{2\st m-1 \dt |\Sp^{2m-1}|}G_{\Delta}\st p_{j},z \dt.
\end{equation} 
It is clear that the form
\begin{equation}
\omega +i\dd \sq - \varepsilon^{2m}{\bf{G}}_{\bg,\cg}+\st \frac{b_{j}|\Gamma_{j}|}{ 2\cgaj  \st m-1 \dt|\Sp^{2m-1}|}  \dt^{\frac{1}{m}}\varepsilon^{2}\chi_{j}\psi_{\eta_{j}}\st  \st \frac{2 \cgaj  \st m-1 \dt|\Sp^{2m-1}|}{b_{j}|\Gamma_{j}|}  \dt^{\frac{1}{m}} \frac{z}{\varepsilon}  \dt   \dq
\end{equation}
matches exactly at the highest order the form  $\st \frac{b_{j}|\Gamma_{j}|}{2 \cgaj  \st m-1 \dt|\Sp^{2m-1}|}  \dt^{\frac{1}{m}}\eta_{j}$; once we rescale (as we will in the final gluing) the model using the map 
\begin{equation}
x= \st \frac{2 \cgaj  \st m-1 \dt|\Sp^{2m-1}|}{b_{j}|\Gamma_{j}|}  \dt^{\frac{1}{m}} \frac{z}{\varepsilon},
\end{equation} 
where $ \cgaj  $ is given by Proposition \ref{asintpsieta}. It is then convenient, from now on, to set the following notation
\begin{equation}\label{eq:Bj}
B_{j}=\st \frac{b_{j}|\Gamma_{j}|}{ 2\cgaj  \st m-1 \dt|\Sp^{2m-1}|}  \dt^{\frac{1}{2m}}.
\end{equation}
It will also be convenient to identify the right constants $C_{j}$ such that 
\begin{equation}
\Lg\st {\bf{G}}_{\bg,\cg}-\sum_{j=1}^{N} \cgaj  B_{j}^{2m}G_{\Delta}\st p_{j},z \dt+ C_{j}G_{\Delta\Delta}\st p_{j},z \dt \dt\in C^{0,\alpha}\st M \dt.
\end{equation}
By Lemma \ref{Glapl} one gets
\begin{equation}\label{eq:Cj}
C_{j}=\frac{|\Gamma_{j}|}{8\st m-2 \dt\st m-1 \dt}\sq  2\cgaj  B_{j}^{2m}\frac{\st m-1 \dt|\Sp^{2m-1}|}{m|\Gamma_{j}|}s_{\omega}\st 1+\frac{\st m-1 \dt^{2}}{\st m+1 \dt} \dt-c_{j} \dq.
\end{equation}

\noindent  The last ``ad hoc" term we will insert in the potential of the seeked metric is the one depending on $\st \hg,\kg \dt\in \mathfrak{B}\st \kappa,\beta,\sigma \dt$

\begin{equation}
\textbf{H}_{\hg,\kg}^{o}:=\sum_{j=1}^{N}\chi_{j}H_{h_{j}^{(\dagger)},k_{j}^{(\dagger)}}^{o}\st \frac{z}{\rep} \dt
\end{equation}

\noindent with $\chi_{j}$ smooth cutoff functions identically $1$ on $B_{r_{0}}\st p_{j} \dt$ and identically $0$ outside $B_{2r_{0}}\st p_{j} \dt$.
\newline
\phantom{hhhhh}
\newline
{\em For the rest of the subsection $\chi_{j}$ will denote a smooth cutoff function identically $1$ on $B_{r_{0}}\st p_{j} \dt$ and identically $0$ outside $B_{2r_{0}}\st p_{j} \dt$.}
\newline
\phantom{hhhhh}
\newline
 We can now state the main proposition for the base space, whose proof will fill the rest of this subsection:

\begin{prop}
\label{crucialbase}
Let $(M,g,\omega )$ a Kcsc orbifold with isolated singularities and let $\p$ be the set of singular points with non trivial orbifold group that admit a Ricci flat resolution. Let $\st \hg,\kg \dt\in \mathcal{B}^{\alpha}$ such that 
\begin{equation}
\left\|h^{(0)}_{j},k^{(0)}_{j}\right\|_{\mathcal{B}_{j}}\leq \kappa \rep^{\beta}=\varepsilon^{4m+2}\rep^{-6m+4-\delta}\,,
\end{equation}

\begin{equation}
\left\|h^{(\dagger)}_{j},k^{(\dagger)}_{j}\right\|_{\mathcal{B}_{j}}\leq \kappa \rep^{\sigma}=\varepsilon^{2m+4}\rep^{2-4m-\delta}\,.
\end{equation}

\noindent If   there exist $ \bg \in \st\mathbb{R}^{+}\dt^{N}$, $\cg\in \RR^{N}$ such that
	\begin{displaymath}
\left\{\begin{array}{lcl}
\sum_{j=1}^{N}b_{j}\Delta_{\omega}\varphi_{i}\st p_{j} \dt+c_{j}\varphi_{i}\st p_{j} \dt=0 && i=1,\ldots, d\\
&&\\
\st b_{j}\Delta_{\omega}\varphi_{i}\st p_{j} \dt+c_{j}\varphi_{i}\st p_{j} \dt \dt_{\substack{1\leq i\leq d\\1\leq j\leq N}}&& \textrm{has full rank}
\end{array}\right.
\end{displaymath} 
then for $\delta\in (4-2m,5-2m)$ there exist $f_{\bg,\cg,\hg,\kg}^{o}\in C_{\delta}^{4,\alpha}\st M_{\p} \dt\oplus\mathcal{D}_{\p}\st \bg,\cg \dt$ such that 
\begin{equation}
\left\|f_{\bg,\cg,\hg,\kg}^{o}\right\|_{C_{\delta}^{4,\alpha}\st M_{\p} \dt\oplus\mathcal{D}_{\p}\st \bg,\cg \dt}\leq C\st g \dt \varepsilon^{2m+2}\rep^{2-2m-\delta}=\rep^{6+\frac{6}{2m-1}-\delta}=\rep^{2m+1+ \alpha}\quad \alpha>0
\end{equation}
and on $M_{\rep}$
\begin{equation}
\s_{\omega}\st - \varepsilon^{2m}{\bf{G}}_{\bg,\cg}+\sum_{j=1}^{N}B_{j}^{2}\varepsilon^{2}\chi_{j}\psi_{\eta_{j}}\st \frac{z}{B_{j}	\varepsilon} \dt+\textbf{H}_{\hg,\kg}^{o}+ f_{\bg,\cg,\hg,\kg}^{o}\dt\equiv s_{\omega}+\mathfrak{s}_{\bg,\cg,\hg,\kg}\,,
\end{equation}
with ${\bf{G}}_{\bg,\cg}$ constructed with Remark \ref{gabc}.
\end{prop}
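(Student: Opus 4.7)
The strategy follows the fixed-point scheme behind Proposition \ref{famigliabaseap2}, refined by inserting into the skeleton the rescaled Ricci-flat correction $\psi_{\eta_{j}}$ to improve the decay of the error. Set
\[
F_{0} \, := \, -\varepsilon^{2m}\,\mathbf{G}_{\bg,\cg} \, + \, \sum_{j=1}^{N} B_{j}^{2}\,\varepsilon^{2}\,\chi_{j}\,\psi_{\eta_{j}}\!\st z/(B_{j}\varepsilon) \dt \, + \, \hhko,
\]
and look for $(f,\mathfrak{s})$ in a ball of radius $C\,\varepsilon^{2m+2}\,\rep^{2-2m-\delta}$ of $C_{\delta}^{4,\alpha}(M_{\p}) \oplus \mathcal{D}_{\p}(\bg,\cg) \times \RR$ solving $\s_{\omega}(F_{0}+f) = s_{\omega} + \mathfrak{s}$. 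Using \eqref{eq:espsg} this is recast, via the right inverse $\mathbb{J}_{\bg,\cg}^{(\delta)}$ of Remark \ref{inversadef}, as the fixed point equation
\[
(f,\mathfrak{s}) \, = \, \mathbb{J}_{\bg,\cg}^{(\delta)}\!\sq\, -\Lg F_{0} \, + \, 2\,\NN_{\omega}(F_{0}+f) \,\dq.
\]

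The heart of the argument is the bound
\[
\left\|\,\Lg F_{0} \, - \, 2\,\NN_{\omega}(F_{0})\,\right\|_{C_{\delta-4}^{0,\alpha}(M_{\p})} \, \leq \, C\,\varepsilon^{2m+2}\,\rep^{2-2m-\delta}.
\]
Away from $\p$ the skeleton is smooth of controlled size and the bound is routine. Near each $p_{j}$, the choice \eqref{eq:Bj}, \eqref{eq:Cj} of $B_{j}$ and $C_{j}$ together with Lemma \ref{Glapl} and Lemma \ref{Gbilapl} show that $\Lg(-\varepsilon^{2m}\mathbf{G}_{\bg,\cg})$ is bounded up to a residual of order $\varepsilon^{2m}|z|^{2-2m}$, which at first glance would give back only the weaker estimate of Proposition \ref{famigliabaseap2}. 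The gain comes from the new term: in the rescaled coordinate $x = z/(B_{j}\varepsilon)$, we have $\Lg = \varepsilon^{-4}\Delta^{2} + O(\varepsilon^{-4}|x|^{-2m})$ (by Lemma \ref{espansioniALE}), and Ricci-flatness of $\eta_{j}$ yields $\mathbf{S}_{\mathrm{eucl}}\!\st -c(\Gamma_{j})|x|^{2-2m} + \psi_{\eta_{j}}(x)\dt = 0$; consequently $\Delta^{2}\psi_{\eta_{j}}$ cancels at top order the quadratic contribution of $\NN_{\omega}$ evaluated on $-\varepsilon^{2m}\mathbf{G}_{\bg,\cg}$. Using the refined decay $\psi_{\eta_{j}} = \mathcal{O}(|x|^{-2m})$ with radial part $\mathcal{O}(|x|^{2-4m})$ from Proposition \ref{asintpsieta}, the leftover pieces (commutators with $\chi_{j}$, lower order part of $\Lg - \Delta^{2}$, and cubic or higher nonlinearities) all produce terms of weighted size at most $\varepsilon^{2m+2}\rep^{2-2m-\delta}$. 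The contribution of $\hhko$ is estimated directly: since $\Delta^{2}H_{h_{j}^{(\dagger)}k_{j}^{(\dagger)}}^{o} = 0$, only the mismatch $\Lg - \Delta^{2}$ and the cutoff remain, and with $(\hg,\kg) \in \dombd$ the prescribed orders $\beta = \beta(\varepsilon)$ and $\sigma = \sigma(\varepsilon)$ in the hypothesis are chosen precisely to fit within the target bound.

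Finally, $\NN_{\omega}$ is locally Lipschitz on small balls of $C_{\delta}^{4,\alpha} \oplus \mathcal{D}_{\p}(\bg,\cg)$, with Lipschitz constant controlled by (a constant times) the weighted $C^{4,\alpha}$-norm of its argument. On the skeleton $F_{0}$ this norm is of size $\varepsilon^{2m}\rep^{2-2m} = o(1)$, so the map $f \mapsto \mathbb{J}_{\bg,\cg}^{(\delta)}\!\sq -\Lg F_{0} + 2\NN_{\omega}(F_{0}+f)\dq$ is a contraction on the above ball, and Banach's theorem delivers the sought $(f_{\bg,\cg,\hg,\kg}^{o},\mathfrak{s}_{\bg,\cg,\hg,\kg})$. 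The main obstacle is precisely the cancellation step above: matching, in rescaled variables, $\Lg$ applied to the $\psi_{\eta_{j}}$ correction against $\NN_{\omega}$ applied to the $\mathbf{G}_{\bg,\cg}$ piece requires both the strong hypothesis of Ricci-flatness (and not just scalar-flatness) of $\eta_{j}$ and the sharper radial decay of $\psi_{\eta_{j}}^{(0)}$ from Proposition \ref{asintpsieta}, without which one would only recover the weaker bound $C(\kappa)\rep^{2m+1}$ of Proposition \ref{famigliabaseap2}.
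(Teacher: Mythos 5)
Your plan follows the same route the paper takes: insert into the skeleton the rescaled Ricci-flat corrections $\mathfrak{p}_j := B_j^2\varepsilon^2\chi_j\psi_{\eta_j}(z/(B_j\varepsilon))$, exploit the scalar-flat equation for $\eta_j$ (in the form $\mathbf{S}_{eucl}(-c(\Gamma_j)|x|^{2-2m}+\psi_{\eta_j})=0$, made clean precisely because $e(\Gamma_j)=0$ under Ricci-flatness, cf.\ Proposition~\ref{asintpsieta}) to cancel $\Delta^2\mathfrak{p}_j$ against $\NN_{eucl}$ of the skeleton near $p_j$, use the improved radial decay $\psi_{\eta_j}^{(0)}=\mathcal{O}(|x|^{2-4m})$ to push the leftover into $\varepsilon^{2m+2}\rep^{2-2m-\delta}$, and close with the right inverse $\mathbb{J}^{(\delta)}_{\bg,\cg}$ and a contraction; this is essentially Lemma~\ref{stimascheletrobase} together with the Lipschitz lemmas for $\mathcal{T}$. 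One small correction: the expansion of $\Lg-\Delta^2$ in the $z$-variables near $p_j$ comes from the Kähler normal-coordinate expansion of Proposition~\ref{proprietacsck}, not from Lemma~\ref{espansioniALE} (which concerns $\Delta_\eta$ on the ALE model), and the quantity you cancel against is $\NN_{eucl}$ evaluated on $-c(\Gamma_j)B_j^{2m}\varepsilon^{2m}|z|^{2-2m}+\mathfrak{p}_j$ (leading piece of $\mathbf{G}_{\bg,\cg}$ \emph{plus} the new term), not on $\mathbf{G}_{\bg,\cg}$ alone.
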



\noindent First of all we point out that the assumption that there exist $ \bg \in \st\mathbb{R}^{+}\dt^{N}$, $\cg\in \RR^{N}$ such that
\begin{equation}
\sum_{j=1}^{N}b_{j}\Delta_{\omega}\varphi_{i}\st p_{j} \dt+c_{j}\varphi_{i}\st p_{j} \dt=0  \qquad i=1,\ldots, d
\end{equation}
we can construct the function ${\bf{G}}_{\bg,\cg}$ using Remark \ref{gabc}. In order to find the correction $f_{\bg,\cg,\hg,\kg}^{o}$ we set up a fixed point problem that we will solve using Banach-Caccioppoli Theorem. First of all we want to find a PDE that  $f_{\bg,\cg,\hg,\kg}^{o}$ has to satisfy. We are looking for small perturbations of $\omega $ and this allows us to use on $M_{\rep}$ the expansion \eqref{eq:espsg} of operator $\s_{\omega}$ that is stated in Proposition \ref{espsg}. 
 
\begin{align}
s_{\omega}+\mathfrak{s}_{\bg,\cg,\hg,\kg}=&\s_{\omega}\st -\varepsilon^{2m}{\bf{G}}_{\bg,\cg}+\sum_{j=1}^{N}B_{j}^{2}\varepsilon^{2}\chi_{j}\psi_{\eta_{j}}\st \frac{z}{B_{j}\varepsilon} \dt+ \textbf{H}_{\hg,\kg}^{o}+ f_{\bg,\cg,\hg,\kg}^{o}\dt\\
=&s_{\omega}-\frac{1}{2} \varepsilon^{2m}\nu_{\bg,\cg}-\frac{1}{2}\Lg\st\sum_{j=1}^{N}B_{j}^{2}\varepsilon^{2}\chi_{j}\psi_{\eta_{j}}\st \frac{z}{B_{j}\varepsilon} \dt\dt -\frac{1}{2}\Lg \textbf{H}_{\hg,\kg}^{o} -\frac{1}{2}\Lg  f_{\bg,\cg,\hg,\kg}^{o}\\
&+\NN _{\omega}\st -\varepsilon^{2m}{\bf{G}}_{\bg,\cg}+\sum_{j=1}^{N}B_{j}^{2}\varepsilon^{2}\chi_{j}\psi_{\eta_{j}}\st \frac{z}{B_{j}\varepsilon} \dt+ \textbf{H}_{\hg,\kg}^{o}+ f_{\bg,\cg,\hg,\kg}^{o}\dt
\end{align}

On $M_{\rep}$ we have to solve the following PDE:

\begin{align}\label{eq: PDEbase}
\Lg f_{\bg,\cg,\hg,\kg}^{o}+\mathfrak{s}_{\bg,\cg,\hg,\kg}+ \varepsilon^{2m}\nu_{\bg,\cg}=&-\Lg\st\sum_{j=1}^{N}B_{j}^{2}\varepsilon^{2}\chi_{j}\psi_{\eta_{j}}\st \frac{z}{B_{j}\varepsilon} \dt\dt -\Lg \textbf{H}_{\hg,\kg}^{o}\\
&+2\NN _{\omega}\st -\varepsilon^{2m}{\bf{G}}_{\bg,\cg}+\sum_{j=1}^{N}B_{j}^{2}\varepsilon^{2}\chi_{j}\psi_{\eta_{j}}\st \frac{z}{B_{j}\varepsilon} \dt+ \textbf{H}_{\hg,\kg}^{o}+ f_{\bg,\cg,\hg,\kg}^{o}\dt\,.
\end{align}

\noindent The assumption of Proposition \ref{crucialbase} that there exist $\bg\in \st\RR^{+}\dt^{N}$ and $\cg\in \RR^{N}$
\begin{equation}
\st b_{j}\Delta_{\omega}\varphi_{i}\st p_{j} \dt+c_{j}\varphi_{i}\st p_{j} \dt \dt_{\substack{1\leq i\leq d\\1\leq j\leq N}}
\end{equation}  
enables us, making use of Remark \ref{inversadef},  to invert the operator $\Lg$ on $M_{\p}$ so we look for a PDE defined on the whole $M_{\p}$ and such that on $M_{\rep}$ restricts to the one written above. To this aim we introduce a truncation-extension operator on weighted H\"older spaces.  

\begin{definition}
Let $f\in C_{\delta}^{0,\alpha}\st  M \dt $ we define $\mathcal{E}_{\rep}:C_{\delta}^{0,\alpha}\st  M \dt \rightarrow C_{\delta}^{0,\alpha}\st  M \dt $
\begin{displaymath}
\mathcal{E}_{\rep}\st  f \dt :\left\{ \begin{array}{ll}
f\st  z \dt  & z\in B_{2\rep}\setminus B_{\rep}   \\
f\st  \rep\frac{z}{|z|} \dt \chi\st  \frac{|z|}{\rep} \dt  & z\in B_{ \rep }\setminus B_{\frac{\rep}{2}}\\
0  & z\in B_{\frac{\rep}{2}}  
\end{array} \right.
\end{displaymath}
with $\chi\in C^{\infty}\st [0,+\infty) \dt$ a cutoff function identically $1$ on $[1,+\infty)$ and identically $0$ on $\sq 0,\frac{1}{2} \dq$. 
\end{definition}

Now if we set

\begin{equation}
\mathfrak{n}_{\bg,\cg,\hg,\kg}:= 2\mathfrak{s}_{\bg,\cg,\hg,\kg}+ \varepsilon^{2m}\nu_{\bg,\cg}
\end{equation}

and use the truncation operator we find our differential equation. 

\begin{align}
\Lg f_{\bg,\cg,\hg,\kg}^{o}+\mathfrak{n}_{\bg,\cg,\hg,\kg}=&-\mathcal{E}_{\rep}\Lg\st\sum_{j=1}^{N}B_{j}^{2}\varepsilon^{2}\chi_{j}\psi_{\eta_{j}}\st \frac{z}{B_{j}\varepsilon} \dt\dt -\mathcal{E}_{\rep}\Lg \textbf{H}_{\hg,\kg}^{o}\\
&+2\mathcal{E}_{\rep}\NN _{\omega} \st \varepsilon^{2m}{\bf{G}}_{\bg,\cg}+\sum_{j=1}^{N}B_{j}^{2}\varepsilon^{2}\chi_{j}\psi_{\eta_{j}}\st \frac{z}{B_{j}\varepsilon} \dt+ \textbf{H}_{\hg,\kg}^{o}+ f_{\bg,\cg,\hg,\kg}^{o}\dt
\end{align}

Now using the inverse of $\Lg$ of Remark	 \ref{inversadef} we construct the following operator 
\begin{equation}
\mathcal{T}:\Cqdd \rightarrow \Cqdd\,.
\end{equation}

\begin{align}
\mathcal{T}\st f,\hg,\kg \dt=&\mathbb{J}^{(\delta)}_{\bf{1},\cccc}\sq-\mathcal{E}_{\rep}\Lg\st\sum_{j=1}^{N}B_{j}^{2}\varepsilon^{2}\chi_{j}\psi_{\eta_{j}}\st \frac{z}{B_{j}\varepsilon} \dt\dt -\mathcal{E}_{\rep}\Lg \textbf{H}_{\hg,\kg}^{o}\right.\\
&\left.\phantom{aaaaa}+2\mathcal{E}_{\rep}\NN _{\omega}\st -\varepsilon^{2m}{\bf{G}}_{\bg,\cg}+\sum_{j=1}^{N}B_{j}^{2}\varepsilon^{2}\chi_{j}\psi_{\eta_{j}}\st \frac{z}{B_{j}\varepsilon} \dt+ \textbf{H}_{\hg,\kg}^{o}+ f_{\bg,\cg,\hg,\kg}^{o}\dt-\mathfrak{n}\dq
\end{align}

with

\begin{align}
\mathfrak{n} \vol\st M \dt=&\int_{M}\sq -\mathcal{E}_{\rep}\Lg\st\sum_{j=1}^{N}B_{j}^{2}\varepsilon^{2}\chi_{j}\psi_{\eta_{j}}\st \frac{z}{B_{j}\varepsilon} \dt\dt -\mathcal{E}_{\rep}\Lg \textbf{H}_{\hg,\kg}^{o}  \dq\, d\mu_{\omega}\\
&+2\int_{M} \mathcal{E}_{\rep}\NN _{\omega}\st -\varepsilon^{2m}{\bf{G}}_{\bg,\cg}+\sum_{j=1}^{N}B_{j}^{2}\varepsilon^{2}\chi_{j}\psi_{\eta_{j}}\st \frac{z}{B_{j}\varepsilon} \dt+ \textbf{H}_{\hg,\kg}^{o}+ f\dt  \,d\mu_{\omega}\,.
\end{align}

\noindent We prove the existence of a solution of equation \eqref{eq: PDEbase} by finding a fixed point of the operator $\mathcal{T}$ hence showing it satisfies the assumptions of  contraction Theorem. More precisely we want to prove that there exist a domain $\Omega\subset \Cqdd$ such that $\mathcal{T}\st \Omega \dt\subseteq \Omega$ and $\mathcal{T}$ is a contraction on $\Omega$. The first step  is looking at $\mathcal{T}\st 0,\textbf{0},\textbf{0} \dt$ that heuristically  tells us ``how far'' is  the metric
\begin{equation}
\omega +i\dd\st  - \varepsilon^{2m}{\bf{G}}_{\bg,\cg}+\sum_{j=1}^{N}B_{j}^{2}\varepsilon^{2}\chi_{j}\psi_{\eta_{j}}\st \frac{z}{B_{j}	\varepsilon} \dt  \dt
\end{equation}   
from being Kcsc on $M_{\rep}$.

\begin{lemma}\label{stimascheletrobase}
Under the assumptions of Proposition \ref{crucialbase} the following estimate holds
\begin{equation}
\left\|\mathcal{T}\st 0,\textbf{0},\textbf{0} \dt\right\|_{\Cqdd}\leq C\varepsilon^{2m+2}\rep^{2-\delta-2m}\,.
\end{equation}

\end{lemma}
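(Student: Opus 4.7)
The plan is to unpack the definition of $\mathcal{T}(0,\mathbf{0},\mathbf{0})$, use the bounded right inverse $\mathbb{J}^{(\delta)}_{\mathbf{1},\cg}$ from Remark~\ref{inversadef} to reduce the claim to a weighted $C^{0,\alpha}_{\delta-4}$ estimate on its argument, and then bound the three resulting contributions. Setting $f=0$, $\hg=\kg=\mathbf{0}$ and writing
\begin{equation*}
\tilde\psi_j(z)\,:=\,B_j^2\varepsilon^2\psi_{\eta_j}\!\st z/(B_j\varepsilon)\dt,\qquad \tilde c_j(z)\,:=\,-c(\Gamma_j)B_j^{2m}\varepsilon^{2m}|z|^{2-2m},
\end{equation*}
the argument of $\mathbb{J}^{(\delta)}_{\mathbf{1},\cg}$ becomes
\begin{equation*}
\mathcal{R}\,:=\,-\mathcal{E}_{\rep}\Lg\Big(\sum_j \chi_j\tilde\psi_j\Big)+2\mathcal{E}_{\rep}\NN_\omega\Big({-\varepsilon^{2m}\GGG_{\bg,\cg}}+\sum_j \chi_j\tilde\psi_j\Big)-\mathfrak{n},
\end{equation*}
and the target becomes $\|\mathcal R\|_{C^{0,\alpha}_{\delta-4}(M_\p)}\le C\,\varepsilon^{2m+2}\rep^{2-2m-\delta}$.

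The core observation is that a na\"ive bound on $\Lg\tilde\psi_j$ gives a contribution of size $\mathcal O(\varepsilon^{4m}|z|^{-4m-2})$ from $\Delta^2\tilde\psi_j$, which at the gluing scale $\rep$ exceeds the target by a factor $\varepsilon^{-2/(2m+1)}$, so the scalar flatness of the ALE model must be exploited to produce a cancellation with the nonlinear term. The rescaled K\"ahler form $i\dd\st |z|^2/2+\tilde c_j+\tilde\psi_j\dt$ is a pullback of $\eta_j$, hence scalar flat; applying \eqref{eq:espsg}--\eqref{eq:defQg} with Euclidean data (for which $s_{\mathrm{eucl}}=0$, $\rho_{\mathrm{eucl}}=0$, $\Lg=\Delta^2$) and using $\Delta^2\tilde c_j=0$ yields the identity $\tfrac12\Delta^2\tilde\psi_j=\NN_{\mathrm{eucl}}(\tilde c_j+\tilde\psi_j)$. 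By the choice of $C_j$ in \eqref{eq:Cj} and Lemma~\ref{Glapl}, $-\varepsilon^{2m}\GGG_{\bg,\cg}=\tilde c_j+\mathcal O(\varepsilon^{2m}|z|^{4-2m})$ near $p_j$, so the first step is to rewrite
\begin{align*}
-\Lg\tilde\psi_j+2\NN_\omega\st -\varepsilon^{2m}\GGG_{\bg,\cg}+\tilde\psi_j\dt\,=\,-(\Lg-\Delta^2)\tilde\psi_j+2\big[\NN_\omega(\cdot)-\NN_{\mathrm{eucl}}(\tilde c_j+\tilde\psi_j)\big],
\end{align*}
in which the troublesome Euclidean biharmonic term has been absorbed.

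Next, I would bound the surviving terms pointwise at scale $|z|=r$. In K\"ahler normal coordinates at $p_j$, $g_{i\bar\jmath}=\delta_{i\bar\jmath}+\mathcal O(|z|^2)$ and $\rho_\omega$ is bounded, while $|\partial^k\tilde\psi_j|=\mathcal O(\varepsilon^{2m+2}|z|^{-2m-k})$ and $|\partial^k\tilde c_j|=\mathcal O(\varepsilon^{2m}|z|^{2-2m-k})$. The dominant contributions, coming from $4\langle\rho_\omega|i\dd\tilde\psi_j\rangle$, from $(\Delta_\omega^2-\Delta^2)\tilde\psi_j$, and from the curvature coupling inside $\NN_\omega-\NN_{\mathrm{eucl}}$, are all of size $\mathcal O(\varepsilon^{2m+2}|z|^{-2m-2})$. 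The mismatch between $-\varepsilon^{2m}\GGG_{\bg,\cg}$ and $\tilde c_j$ contributes only a subleading error $\mathcal O(\varepsilon^{2m+4}|z|^{-2m})$ via the expansions in Lemmata~\ref{Gbilapl}--\ref{Glapl}, and commutators of the cutoff $\chi_j$, supported on the fixed annulus $r_0\le|z|\le 2r_0$, produce uniformly bounded contributions of order $\varepsilon^{2m+2}$, which are negligible.

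Finally, in the weighted norm, at scale $r\in[\rep,r_0]$ we have $r^{4-\delta}\cdot\varepsilon^{2m+2}r^{-2m-2}=\varepsilon^{2m+2}r^{2-2m-\delta}$; since $\delta\in(4-2m,5-2m)$ makes the exponent $2-2m-\delta$ strictly negative, the supremum is attained at the gluing radius $r=\rep$, yielding $\varepsilon^{2m+2}\rep^{2-2m-\delta}$. The constant $\mathfrak n$, being the $\vol(M)^{-1}$-normalised mean of the right-hand side, is controlled by the same quantity. Applying the boundedness of $\mathbb J^{(\delta)}_{\mathbf 1,\cg}$ from Remark~\ref{inversadef} then delivers the stated estimate. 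The hardest step is the cancellation: without writing the combination so that the Euclidean part of $\NN_\omega$ exactly compensates $\Delta^2\tilde\psi_j$, the best estimate for the linear piece alone would produce $\mathcal O(\varepsilon^{4m}\rep^{2-4m-\delta})$, strictly weaker than the required $\mathcal O(\varepsilon^{2m+2}\rep^{2-2m-\delta})$.
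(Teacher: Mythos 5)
Your proof follows essentially the same route as the paper: isolate the rescaled ALE potential near each $p_j$, add and subtract its Euclidean biharmonic and $\NN_{\mathrm{eucl}}$ contributions so that scalar-flatness of $\eta_j$ produces the exact cancellation $\tfrac12\Delta^2\tilde\psi_j=\NN_{\mathrm{eucl}}(\tilde c_j+\tilde\psi_j)$, and bound the residual $(\Lg-\Delta^2)\tilde\psi_j$ (of size $\varepsilon^{2m+2}\rho^{-2m-2}$) plus the operator/argument mismatches, then take the weighted supremum at $\rho=\rep$. The only small slip is attributing size $\mathcal O(\varepsilon^{2m+2}|z|^{-2m-2})$ to the curvature-coupling part of $\NN_\omega-\NN_{\mathrm{eucl}}$; that piece is in fact $\mathcal O(\varepsilon^{4m}|z|^{-4m})$, which is subdominant at the gluing scale, so your stated bound remains valid and the conclusion is unaffected.
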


\begin{proof}
For the sake of notation throughout this proof we set 
\begin{equation}
\mathfrak{p}_{j}\st z \dt:=B_{j}^{2}\varepsilon^{2}\chi_{j}\psi_{\eta_{j}}\st \frac{z}{B_{j}\varepsilon} \dt
\end{equation}

\noindent  We note that, on $M_{r_0}$, using estimates of Proposition \ref{asintpsieta} we have 
\begin{align}
-\mathcal{E}_{\rep}\Lg\st\sum_{j=1}^{N}\mathfrak{p}_{j}\dt +2\mathcal{E}_{\rep}\NN _{\omega}\st -\varepsilon^{2m}{\bf{G}}_{\bg,\cg}+\sum_{j=1}^{N}\mathfrak{p}_{j}\dt=& \varepsilon^{2m+2}\mathcal{O}_{C^{0,\alpha}\st M_{r_0} \dt}\st 1\dt
\end{align}

\noindent so we have 
\begin{equation}\left\|-\mathcal{E}_{\rep}\Lg\st\sum_{j=1}^{N}\mathfrak{p}_{j}\dt +2\mathcal{E}_{\rep}\NN _{\omega}\st -\varepsilon^{2m}{\bf{G}}_{\bg,\cg}+\sum_{j=1}^{N}\mathfrak{p}_{j}\dt\right\|_{C^{0,\alpha}\st M_{r_0}\dt}\leq C\st g, \boldsymbol{\eta} \dt \varepsilon^{2m+2}\,.\end{equation}

\noindent Now we estimate on $B_{2r_{0}}\st p_{j} \dt$ the quantity 
\begin{equation}\sup_{\rho\in[\rep ,r_0]} \rho^{4-\delta}\left\| -\mathcal{E}_{\rep}\Lg\st\sum_{j=1}^{N}\mathfrak{p}_{j}\dt +2\mathcal{E}_{\rep}\NN _{\omega}\st -\varepsilon^{2m}{\bf{G}}_{\bg,\cg}+\sum_{j=1}^{N}\mathfrak{p}_{j}\dt  \right\|_{C^{0,\alpha}\st  B_{2} \setminus B_{1} \dt}\,. \end{equation}

\noindent On $B_{2r_0}$, we have 
\begin{align}
\Lg\mathfrak{p}_{j} -2\NN _{\omega}\st -\varepsilon^{2m}{\bf{G}}_{\bg,\cg}+\mathfrak{p}_{j}\dt
=& \Delta^{2}\mathfrak{p}_{j}+ \sq\Lg-\Delta^{2}\dq\st\mathfrak{p}_{j}\dt\\
&-2\NN _{eucl}\st - \cgaj  \varepsilon^{2m}B_{j}^{2m}|z|^{2m}+\mathfrak{p}_{j}\dt\\
&+\sq 2\NN _{eucl}\st - \cgaj  \varepsilon^{2m}B_{j}^{2m}|z|^{2m}+\mathfrak{p}_{j}\dt\right.\\
&\phantom{..}\left.\phantom{-}-2\NN _{\omega}\st - \cgaj  \varepsilon^{2m}B_{j}^{2m}|z|^{2m}+\mathfrak{p}_{j}\dt\dq\\
&+\sq 2\NN _{\omega}\st - \cgaj  \varepsilon^{2m}B_{j}^{2m}|z|^{2m}+\mathfrak{p}_{j}\dt\right.\\
&\phantom{-}\left.\phantom{..}-2\NN _{\omega}\st -\varepsilon^{2m}{\bf{G}}_{\bg,\cg}+\mathfrak{p}_{j}\dt\dq\,.
\end{align}

\noindent The metric $\eta_{j}$ is scalar-flat and this fact gives us the algebraic identity
\begin{equation}
-\Delta^{2}\mathfrak{p}_{j}+2\NN _{eucl}\st - \cgaj  \varepsilon^{2m}B_{j}^{2m}|z|^{2m}+\mathfrak{p}_{j}\dt=0
\end{equation}

\noindent This cancellation let us to have better crucial estimates on $f_{\bg,\cg,\hg,\kg}^{o}$:

\begin{equation}\sq  \Lg-\Delta^{2} \dq \st \mathfrak{p}_{j} \dt= \varepsilon^{2m+2}\rho^{-2m-2} \mathcal{O}_{C^{0,\alpha}\st  B_{2} \setminus B_{1} \dt}\st 1 \dt\end{equation}

\begin{equation}
\NN _{eucl}\st - \cgaj  \varepsilon^{2m}B_{j}^{2m}|z|^{2m}+\mathfrak{p}_{j}\dt-\NN _{\omega}\st - \cgaj  \varepsilon^{2m}B_{j}^{2m}|z|^{2m}+\mathfrak{p}_{j}\dt=\varepsilon^{4m}\rho^{-4m}\mathcal{O}_{C^{0,\alpha}\st  B_{2} \setminus B_{1} \dt}\st 1 \dt
\end{equation}

\begin{equation}
\NN _{\omega}\st - \cgaj  \varepsilon^{2m}B_{j}^{2m}|z|^{2m}+\mathfrak{p}_{j}\dt-\NN _{\omega}\st -\varepsilon^{2m}{\bf{G}}_{\bg,\cg}+\mathfrak{p}_{j}\dt=\varepsilon^{4m}\rho^{-4m}\mathcal{O}_{C^{0,\alpha}\st  B_{2} \setminus B_{1} \dt}\st 1 \dt
\end{equation}

\noindent so we can conclude that

\begin{equation}\sup_{\substack{  1\leq j\leq N \\ \rho\in [\rep, r_0]}}\rho^{-\delta+4}\left\| \Lg\mathfrak{p}_{j} -2\NN _{\omega}\st -\varepsilon^{2m}{\bf{G}}_{\bg,\cg}+\mathfrak{p}_{j}\dt\right\|_{C^{0,\alpha}\st  B_{2} \setminus B_{1} \dt} \leq C\st g,\etag \dt\varepsilon^{2m+2}\rep^{2-2m-\delta }\,. \end{equation}

\end{proof}

\noindent  In light of  Lemma \ref{stimascheletrobase} we can take  $\left\|\mathcal{T}\st 0,\textbf{0},\textbf{0} \dt\right\|_{\Cqdd}$ as a reference for the magnitude of the diameter of the $\Omega$ we are looking for.  Indeed if we consider the set of $f\in \Cqdd$ such that 
\begin{equation}
\left\|f\right\|_{\Cqdd}\leq 2 \left\|\mathcal{T}\st 0,\textbf{0},\textbf{0} \dt\right\|_{\Cqdd}=2C\st g,\etag \dt\varepsilon^{2m+2}\rep^{2-2m-\delta }.
\end{equation}
we find our $\Omega$. The fact that 
\begin{equation}
\mathcal{T}:\Omega\rightarrow \Omega
\end{equation}

\noindent and it is a contraction follows from the following two Lemmas.

\begin{lemma}\label{stimabiarmonichebase}
Let $\delta\in (4-2m,5-2m)$ and $(\hg,\kg)\in \dombd$, then 
\begin{equation}\left\|\mathcal{E}_{\rep}\Lg\textbf{H}^{o}_{\hg,\kg}\right\|_{C_{\delta-4}^{0,\alpha}\st M_{\p} \dt}\leq C\st g \dt\left\|\hg^{\dagger},\kg^{\dagger}\right\|_{\mathcal{B}^{\alpha}} \rep ^{2-\delta}\,.\end{equation}
\end{lemma}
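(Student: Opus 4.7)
The plan is to reduce the estimate to a pointwise bound near each singular point $p_{j}$ and then translate this bound to the weighted H\"older norm.  The key observation is that each $H_{h_{j}^{(\dagger)},k_{j}^{(\dagger)}}^{o}$ is, by construction, biharmonic for the Euclidean operator $\Delta^{2}$, so that on the region where the cutoff $\chi_{j}$ is constant we can write
\[
\Lg \bigl[\chi_{j} H_{h_{j}^{(\dagger)},k_{j}^{(\dagger)}}^{o}(z/\rep)\bigr] \,\, = \,\, \bigl(\Lg - \Delta^{2}\bigr) H_{h_{j}^{(\dagger)},k_{j}^{(\dagger)}}^{o}(z/\rep) \, .
\]
Since $(h_{j}^{(\dagger)},k_{j}^{(\dagger)})$ carries no $\phi_{0}$, $\phi_{1}$ components (by $\Gamma$-invariance and by the splitting convention), the outer extension formula \eqref{eq:bihout3} starts at $\gamma=2$ and, for $|w|\geq 1$, is dominated by the slowest decaying term $|w|^{4-2m-\gamma}$; hence
\[
\bigl|\partial^{s} H_{h_{j}^{(\dagger)},k_{j}^{(\dagger)}}^{o}(w)\bigr| \,\, \leq \,\, C\,\left\|h_{j}^{(\dagger)},k_{j}^{(\dagger)}\right\|_{\mathcal{B}_{j}}\,|w|^{2-2m-s}\, ,\qquad s=0,\dots,4\, ,
\]
and similarly for the H\"older seminorms. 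Setting $w=z/\rep$ gives at scale $|z|\sim \rho\in[\rep,r_{0}]$ the bound $|\partial^{s} H^{o}(z/\rep)| \leq C \|\hg^{(\dagger)},\kg^{(\dagger)}\|_{\mathcal{B}^{\alpha}}\rep^{2m-2}|z|^{2-2m-s}$.

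Next we analyse the operator $\Lg - \Delta^{2}$.  By Proposition~\ref{proprietacsck}, in the K\"ahler normal chart around $p_{j}$ the potential is $|z|^{2}/2+\psi_{\omega}$ with $\psi_{\omega}=\mathcal{O}(|z|^{4})$, so the coefficients of $\Delta_{\omega}^{2}-\Delta^{2}$ are $\mathcal{O}(|z|^{2})$ on the fourth derivatives, $\mathcal{O}(|z|)$ on the third, etc., while $\rho_{\omega}$ is bounded and the term $4\left<\rho_{\omega}|i\dd(\cdot)\right>$ costs only two derivatives.  Combining with the pointwise bound above we obtain, for $z$ in a punctured neighborhood of $p_{j}$,
\[
\bigl|\,(\Lg-\Delta^{2}) H_{h_{j}^{(\dagger)},k_{j}^{(\dagger)}}^{o}(z/\rep)\bigr| \,\, \leq \,\, C\,\left\|\hg^{(\dagger)},\kg^{(\dagger)}\right\|_{\mathcal{B}^{\alpha}}\,\rep^{2m-2}\,|z|^{-2m}\, ,
\]
together with the analogous H\"older estimate on dyadic annuli $B_{2\rho}\setminus B_{\rho}$.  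On the transition annulus $B_{2r_{0}}(p_{j})\setminus B_{r_{0}}(p_{j})$ the derivatives of $\chi_{j}$ produce only terms of the same or better order, and on $M_{r_{0}}$ (where all $\chi_{j}$ vanish) the contribution is identically zero.  The operator $\mathcal{E}_{\rep}$ only rescales by a uniformly bounded factor inside $B_{\rep}\setminus B_{\rep/2}$ and kills the rest, so it respects these estimates.

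Finally we pass to the weighted H\"older norm.  For $\rho\in[\rep,r_{0}]$ the previous bound gives
\[
\rho^{4-\delta}\,\bigl\| \bigl(\Lg - \Delta^{2}\bigr) H_{h_{j}^{(\dagger)},k_{j}^{(\dagger)}}^{o}(\rho\,\cdot/\rep)\bigr\|_{C^{0,\alpha}(B_{2}\setminus B_{1})} \,\, \leq \,\, C\,\left\|\hg^{(\dagger)},\kg^{(\dagger)}\right\|_{\mathcal{B}^{\alpha}}\,\rep^{2m-2}\,\rho^{4-\delta-2m}\, .
\]
Since $\delta\in(4-2m,5-2m)$ the exponent $4-\delta-2m$ is negative, so the supremum over $\rho\in[\rep,r_{0}]$ is attained at $\rho=\rep$ and equals a constant multiple of $\rep^{2m-2}\cdot \rep^{4-\delta-2m}=\rep^{2-\delta}$.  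The contribution from $M_{r_{0}}$ is bounded by $C\,\|\hg^{(\dagger)},\kg^{(\dagger)}\|_{\mathcal{B}^{\alpha}}\,\rep^{2m-2}$, which is of lower order in $\rep$ in the range of weights considered.  Summing over $j=1,\dots,N$ yields the stated inequality.  The main bookkeeping obstacle is keeping track of the competing powers of $\rep$ and $|z|$ through all the different derivatives of $H^{o}$ in the annular regions; once one verifies that the slowest decay $|w|^{2-2m}$ controls everything, the estimate follows cleanly.
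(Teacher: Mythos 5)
Your proof is correct and follows exactly the route the authors leave implicit (the published proof consists of the single sentence ``This is a straightforward computation using Remark~\ref{nolinear}''), namely: use the Euclidean biharmonicity of the outer extension to reduce $\Lg\textbf{H}^o_{\hg,\kg}$ to $(\Lg-\Delta^2)\textbf{H}^o_{\hg,\kg}$, exploit the absence of the $\phi_0$ and $\phi_1$ modes in $H^o_{h^{(\dagger)},k^{(\dagger)}}$ to get the sharp decay $|w|^{2-2m}$ at infinity, and combine with the $\mathcal{O}(|z|^{2})$ (resp.\ $\mathcal{O}(|z|)$, $\mathcal{O}(1)$) coefficient expansion of $\Delta_\omega^2-\Delta^2$ (and the bounded Ricci term) to produce the weight $\rep^{2m-2}\rho^{-2m}$, whose supremum against $\rho^{4-\delta}$ on $[\rep,r_0]$ sits at $\rho\sim\rep$ and equals $\rep^{2-\delta}$. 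Your treatment of the transition annulus and of $M_{r_0}$, showing those contributions are of strictly lower order $\rep^{2m-2}$, is the right bookkeeping; there is nothing missing.
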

\begin{proof} 

This is a straightforward computation using Remark \ref{nolinear}.
\end{proof}

\begin{lemma}
Let  $ \st \hg',\kg' \dt \in \dombd$ and 
$f,f'\in \Cqdd$ such that 
\begin{equation}
\left\|f\right\|_{\Cqdd},\left\|f'\right\|_{\Cqdd}\leq 2 \left\|\mathcal{T}\st 0,\textbf{0},\textbf{0} \dt\right\|_{\Cqdd}.
\end{equation}
\noindent If assumptions of Proposition \ref{crucialbase} are satisfied then then the following estimates hold:

\begin{align}
\left\|\mathcal{T}\st f,\hg,\kg \dt-\mathcal{T}\st 0,\hg,\kg \dt\right\|_{\Cqdd}\,\leq&\, \frac{1}{2}\left\|\mathcal{T}\st 0,\textbf{0},\textbf{0} \dt\right\|_{\Cqdd}\\
\left\|\mathcal{T}\st f,\hg,\kg \dt-\mathcal{T}\st f',\hg,\kg \dt\right\|_{\Cqdd}\,\leq&\, \frac{1}{2}\left\|f-f'\right\|_{\Cqdd}\\
\left\|\mathcal{T}\st f,\hg,\kg \dt-\mathcal{T}\st f,\hg',\kg' \dt\right\|_{\Cqdd}\,\leq&\, \frac{1}{2}\left\|\hg-\hg',\kg-\kg'\right\|_{\mathcal{B}^{\alpha}}\,.
\end{align}

\end{lemma}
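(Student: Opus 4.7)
The plan is to note that all three estimates have the same structure: each difference on the left is obtained by applying the bounded right inverse $\mathbb{J}^{(\delta)}_{\mathbf{1},\cccc}$ of Remark \ref{inversadef} to the difference of the corresponding source terms, so it suffices to bound those source-term differences in $C^{0,\alpha}_{\delta-4}(M_\p)$. Writing $\mathfrak{P} := -\varepsilon^{2m}\GGG_{\bg,\cg} + \sum_{j=1}^N B_j^2\varepsilon^2 \chi_j \psi_{\eta_j}(z/(B_j\varepsilon))$ for the part that never changes in these comparisons, one immediately sees that the contribution $\mathcal{E}_{\rep}\Lg(\sum_j\chi_j B_j^2\varepsilon^2\psi_{\eta_j}(\cdot/(B_j\varepsilon)))$ cancels in all three differences, and the skeleton nonlinearity $\NN_\omega(\mathfrak{P})$ also cancels. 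Only the biharmonic-extension linearization $\Lg\mathbf{H}^o_{\hg,\kg}$ (linear in $(\hg,\kg)$, absent of $f$) and differences of $\NN_\omega$ evaluated at nearby points remain.

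For estimate (1), the remaining quantity is $2\mathcal{E}_{\rep}[\NN_\omega(\mathbf{H}^o_{\hg,\kg}+\mathfrak{P}+f) - \NN_\omega(\mathbf{H}^o_{\hg,\kg}+\mathfrak{P})]$ modulo the integral adjustment $\mathfrak{n}$. Since $\NN_\omega$ vanishes to second order at $0$, a first-order Taylor expansion gives $\mathcal{D}\NN_\omega(\mathbf{H}^o_{\hg,\kg}+\mathfrak{P})[f]$ plus quadratic remainder in $f$; the derivative operator at the base point has coefficients controlled by $|i\partial\bar\partial(\mathbf{H}^o_{\hg,\kg}+\mathfrak{P})|$, which on $M_{r_0}$ is $\mathcal{O}(\varepsilon^{2m})$ and on the annular region $B_{2\rho}(p_j)\setminus B_\rho(p_j)$ is $\mathcal{O}(\varepsilon^{2m}\rho^{-2m})$ (using the growth of $\GGG_{\bg,\cg}\sim|z|^{2-2m}$, the scaled profile $\chi_j\psi_{\eta_j}$, and the bound on $(\hg,\kg)$). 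Multiplying by $\|f\|_{\Cqdd}\leq 2C\varepsilon^{2m+2}\rep^{2-2m-\delta}$ and re-weighting via $\rho^{4-\delta}$, this produces a gain of at least $\varepsilon^{2m}$ over $\|\mathcal{T}(0,\mathbf{0},\mathbf{0})\|_{\Cqdd}$, giving the $1/2$ factor for $\varepsilon$ small. Estimate (2) follows from a completely parallel computation using the mean-value identity $\NN_\omega(A+f)-\NN_\omega(A+f')=\mathcal{D}\NN_\omega(A+\xi f+(1-\xi)f')[f-f']$ with $A=\mathbf{H}^o_{\hg,\kg}+\mathfrak{P}$; the same bound $\mathcal{O}(\varepsilon^{2m}\rho^{-2m})$ on the derivative operator produces a Lipschitz constant in $f$ of size $o(1)$ as $\varepsilon\to 0$, hence $\leq 1/2$.

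For estimate (3), two contributions must be controlled: the purely linear term $\mathcal{E}_{\rep}\Lg(\mathbf{H}^o_{\hg,\kg}-\mathbf{H}^o_{\hg',\kg'})$, which is handled directly by Lemma \ref{stimabiarmonichebase} applied to $(\hg-\hg',\kg-\kg')$ and produces $C\rep^{2-\delta}\|\hg-\hg',\kg-\kg'\|_{\mathcal{B}^\alpha}$; and the nonlinear difference $\mathcal{E}_{\rep}[\NN_\omega(\mathbf{H}^o_{\hg,\kg}+\mathfrak{P}+f)-\NN_\omega(\mathbf{H}^o_{\hg',\kg'}+\mathfrak{P}+f)]$, bounded by the same Taylor-expansion trick with derivative evaluated at an intermediate point. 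Both contributions, once multiplied by the norm of the inverse $\mathbb{J}^{(\delta)}_{\mathbf{1},\cccc}$ and weighted correctly, carry at least one positive power of $\varepsilon$ (given the relations $\rep=\varepsilon^{(2m-1)/(2m+1)}$ and the prescribed decays $\beta,\sigma$ in the definition of $\dombd$), and are thus $\leq 1/2$ for $\varepsilon$ sufficiently small.

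The main obstacle is the bookkeeping of weights in the annular regions $B_{2\rho}(p_j)\setminus B_\rho(p_j)$: one must verify that, after the weighting $\rho^{4-\delta}$ is applied, every nonlinear contribution coming from $\mathcal{D}\NN_\omega$ at the base point $\mathbf{H}^o_{\hg,\kg}+\mathfrak{P}$ retains at least one positive power of $\varepsilon$ in the worst case $\rho=\rep$. This accounting depends critically on the specific choice of the exponents $\beta=4m+2-(6m-4+\delta)\cdot(\ldots)$ and $\sigma$ defining the ball $\dombd$, on the size of $\|f\|_{\Cqdd}$ established in Lemma \ref{stimascheletrobase}, and on the fact that near $p_j$ the rescaled Ricci-flat potential $\psi_{\eta_j}(z/(B_j\varepsilon))$ is of the same magnitude as $\GGG_{\bg,\cg}$ so that no term of bounded order ever appears. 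Apart from this verification, which is essentially a direct computation mirroring the one carried out in Lemma \ref{stimascheletrobase}, no new ideas are required.
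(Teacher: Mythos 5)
Your proposal is correct and follows essentially the same approach the paper intends (the published proof simply defers to ``direct computation as \cite[Lemma 5.2]{ap2}'', and a commented-out draft of the argument in the source confirms the same structure: isolate the differences of $\NN_\omega$ after the cancellations you note, estimate them by linearizing around the skeleton, and track weights on the annuli). Two very small imprecisions worth noting, neither of which affects the argument: the derivative $\mathcal{D}\NN_\omega$ at the base point is controlled not only by $|i\dd(\mathbf{H}^o_{\hg,\kg}+\mathfrak{P})|$ but also by $|i\dd\Delta(\mathbf{H}^o_{\hg,\kg}+\mathfrak{P})|$ (the cross-terms $\tr(i\dd B \circ i\dd\Delta A)$ have four derivatives falling on the base point), though after the $\rho^{4-\delta}$ re-weighting both routes produce the same power $\varepsilon^{2m}\rho^{-2m}$; and the ``gain of at least $\varepsilon^{2m}$'' is an overstatement in the annular regions, where the actual gain at the worst scale $\rho=\rep$ is $\Rep^{-2m}=\varepsilon^{4m/(2m+1)}$ — still a positive power of $\varepsilon$, so the $1/2$ factor is obtained for $\varepsilon$ small, as required.
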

\begin{proof}
Follows by direct computation as \cite[Lemma 5.2]{ap2}.
\end{proof}

The proof of Proposition \ref{crucialbase} is now complete.

\subsection{Kcsc metrics on the truncated model spaces:} we now want to perform a similar analysis as done above  on  the model spaces $X_{j}$'s. 
To keep notations as short as possible we drop the subscript $j$. We then want to find $F_{B,\tilde{h},\tilde{k}}^{I}\in C^{4,\alpha}\st X_{\frac{\Rep}{B}} \dt$ such that 

\begin{equation}\eta_{B,\tilde{h},\tilde{k}}:=B^{2}\eta+i\dd F_{B,\tilde{h},\tilde{k}}^{I}\end{equation} 

\noindent is a metric on $X_{\frac{\Rep}{B}}$ and 
\begin{equation}
\s_{\eta}\st F_{B,\tilde{h},\tilde{k}}^{I} \dt=\varepsilon^{2}\st s_{\omega}+\mathfrak{s}_{\bg,\cg,\hg,\kg}\dt\,.
\end{equation}

\noindent The function $F_{B,\tilde{h},\tilde{k}}^{I}$, as in the base case, will be made of three blocks: a skeleton that keeps the metric near to a Kcsc metric, modifications of biharmonic extensions of functions on the unit sphere that will allow us to glue the metric on the base with those on  the models and a small perturbation $f_{B,\tilde{h},\tilde{k}}^{I}$ that ensures the constancy of the scalar curvature. Also in the case of model spaces we can't use the theory of \cite{ap2} as it stands, we would get too rough estimates that won't be enough to conclude the gluing process. Indeed we would get the following result.

\begin{prop}\label{famigliamodelloap2}
Let $\st\tilde{h},\tilde{k}\dt \in C^{4,\alpha}\st \Sp^{2m-1}/\Gamma \dt\times  C^{4,\alpha}\st \Sp^{2m-1}/\Gamma \dt$ such that 
\begin{equation}
\left\|\tilde{h}\right\|_{C^{4,\alpha}\st \Sp^{2m-1}/\Gamma \dt},\left\|\tilde{k}\right\|_{C^{2,\alpha}\st \Sp^{2m-1}/\Gamma \dt}\leq \kappa \rep^{4}\varepsilon^{-2}.
\end{equation}
Let $\delta\in (4-2m,5-2m)$ then there exist $f_{\tilde{h}\tilde{k}}^{I}\in C_{\delta}^{4,\alpha}\st X \dt$ such that
\begin{equation}\left\|f_{\tilde{h}\tilde{k}}^{I}\right\|_{C_{\delta}^{4,\alpha}\st X \dt}\leq C\st \etag  \dt \varepsilon^{2}\Rep^{4-\delta}\end{equation}
and on $X_{\Rep}$
\begin{equation}\eta_{\tilde{h},\tilde{k}}=\eta+i\dd\st \textbf{H}_{\tilde{h},\tilde{k}}^{I}+f_{\tilde{h}\tilde{k}}^{I} \dt\end{equation}
is a K\"ahler metric of constant scalar curvature  
\begin{equation}\s_{\eta}\st  \textbf{H}_{\tilde{h},\tilde{k}}^{I}+ f_{\tilde{h},\tilde{k}}^{I}  \dt=\varepsilon^{2}\st s_{\omega}+\nu \dt\,.\end{equation}
Moreover the metric $\eta_{\tilde{h},\tilde{k}}$ depends continuously on $\st \tilde{h},\tilde{k} \dt$.
\end{prop}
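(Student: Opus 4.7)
The plan is to mimic the fixed point argument of Proposition \ref{crucialbase} (and of \cite{ap1,ap2}), now transplanted onto the ALE model. Since $\eta$ is scalar (in fact Ricci) flat, the expansion \eqref{eq:espsg} for $\mathbf{S}_\eta$ reduces to $\mathbf{S}_\eta(F) = -\tfrac12 \mathbb{L}_\eta F + \mathbb{N}_\eta(F)$. Imposing the desired constant scalar curvature $\varepsilon^2(s_\omega + \mathfrak{s})$ on $X_{R_\varepsilon}$, writing $F = \mathbf{H}^I_{\tilde h,\tilde k} + f$, and truncating the right hand side back to all of $X_\Gamma$ via an analogue of the operator $\mathcal{E}_{r_\varepsilon}$ above (call it $\mathcal{E}^I_{R_\varepsilon}$, cutting off inside the core $X_{R_\varepsilon/2}$), the equation becomes
\begin{equation*}
\mathbb{L}_\eta f \,=\, -\mathcal{E}^I_{R_\varepsilon}\mathbb{L}_\eta\, \mathbf{H}^I_{\tilde h,\tilde k} \,+\, 2\,\mathcal{E}^I_{R_\varepsilon}\mathbb{N}_\eta\!\left(\mathbf{H}^I_{\tilde h,\tilde k}+f\right)\,-\,2\varepsilon^2(s_\omega+\mathfrak{s}).
\end{equation*}

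First I invoke Proposition \ref{isomorfismopesati} with weight $\delta\in(4-2m,5-2m)$: for $m\geq 3$ this interval lies inside $(4-2m,0)$ and the operator $\mathbb{L}_\eta:C^{4,\alpha}_\delta(X_\Gamma)\to C^{0,\alpha}_{\delta-4}(X_\Gamma)$ is an isomorphism; for $m=2$ it becomes $(0,1)$ and $\mathbb{L}_\eta$ is surjective with a one-dimensional constant kernel, which only affects $f$ by an irrelevant additive constant that can be fixed by an orthogonality condition. Denote the resulting bounded right inverse by $\mathbb{J}^{(\delta)}_I$, and set
\begin{equation*}
\mathcal{T}^I(f,\tilde h,\tilde k) \,:=\, \mathbb{J}^{(\delta)}_I\!\left[\,-\mathcal{E}^I_{R_\varepsilon}\mathbb{L}_\eta\,\mathbf{H}^I_{\tilde h,\tilde k} \,+\, 2\,\mathcal{E}^I_{R_\varepsilon}\mathbb{N}_\eta\!\left(\mathbf{H}^I_{\tilde h,\tilde k}+f\right)\,-\,2\varepsilon^2(s_\omega+\mathfrak{s})\,\right].
\end{equation*}
A solution of the PDE corresponds to a fixed point of $\mathcal{T}^I$ in $C^{4,\alpha}_\delta(X_\Gamma)$.

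The second step is to bound $\mathcal{T}^I(0,0,0)$, which measures how far the skeleton metric $\eta+ i\partial\bar\partial\,\mathbf{H}^I_{\tilde h,\tilde k}$ is from Kcsc. In the interior region $X_{R_0}$ everything is bounded by $C\varepsilon^2$. In the annular region $B_{2R}\setminus B_R$, $R\in[R_0,R_\varepsilon]$, one uses the asymptotic expansions of Lemma \ref{espansioniALE} (together with Proposition \ref{asintpsieta}) to see that $\mathbb{L}_\eta$ differs from $\Delta^2$ by $\mathcal{O}(|x|^{-2m})$ coefficients, while $\mathbf{H}^I_{\tilde h,\tilde k}$ grows at most like $|x|^2$ times its boundary norm times the scale $R_\varepsilon^{-2}$; the constant term $\varepsilon^2(s_\omega+\mathfrak{s})$ contributes only in the truncated shell and its weighted $C^{0,\alpha}_{\delta-4}$-norm is controlled by $C\varepsilon^2 R_\varepsilon^{4-\delta}$. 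Combining these, one obtains the claimed bound $\|\mathcal{T}^I(0,0,0)\|_{C^{4,\alpha}_\delta(X_\Gamma)}\le C\varepsilon^2 R_\varepsilon^{4-\delta}$.

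Third, working on the ball $\Omega=\{\|f\|_{C^{4,\alpha}_\delta(X_\Gamma)}\le 2\|\mathcal{T}^I(0,0,0)\|\}$ and using that $\mathbb{N}_\eta$ is quadratic with a controlled quadratic-then-higher Taylor structure, one verifies three Lipschitz estimates in complete analogy with the base-manifold case: $\|\mathcal{T}^I(f,\tilde h,\tilde k)-\mathcal{T}^I(0,\tilde h,\tilde k)\|\le \tfrac12\|\mathcal{T}^I(0,0,0)\|$, $\|\mathcal{T}^I(f,\cdot)-\mathcal{T}^I(f',\cdot)\|\le \tfrac12\|f-f'\|$, and continuous dependence on $(\tilde h,\tilde k)$. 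Each reduces to comparing $\mathbb{N}_\eta$ evaluated at nearby arguments, with the decisive smallness coming from the factor $\varepsilon^2$ in the size of $f$ and $\mathbf{H}^I_{\tilde h,\tilde k}$. Banach's contraction principle then yields a unique fixed point $f^I_{\tilde h,\tilde k}$ with the stated estimate and continuous dependence on the boundary data, while $\mathfrak{s}$ is determined (and estimated) a posteriori through the same fixed-point loop. The main technical obstacle to watch is keeping the weighted-norm bookkeeping consistent on the stretched annulus of size $R_\varepsilon$, since unlike the base side there is no singular piece to localise errors and the linear term $\mathbb{L}_\eta \mathbf{H}^I_{\tilde h,\tilde k}$ must be shown to produce only a harmless $\mathcal{O}(\varepsilon^2 R_\varepsilon^{4-\delta})$ correction once measured through $\mathbb{J}^{(\delta)}_I$.
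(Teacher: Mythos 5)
Your proposal reconstructs the standard Arezzo--Pacard fixed-point argument, which is exactly how the paper intends Proposition \ref{famigliamodelloap2} to be obtained: the text preceding it says it results from applying the construction of \cite{ap2} "as it stands", without giving a detailed proof. Your setup of $\mathcal{T}^I$, your invocation of Proposition \ref{isomorfismopesati} at weight $\delta\in(4-2m,5-2m)$ (isomorphism for $m\geq 3$, surjective modulo constants for $m=2$), and your identification of the cut-off constant $\varepsilon^2(s_\omega+\nu)$ as the dominant source of the $\varepsilon^2 R_\varepsilon^{4-\delta}$ error all match the intended argument.

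One small inaccuracy worth flagging: $\nu$ (the paper's $\mathfrak{s}_{\bg,\cg,\hg,\kg}$ once the two sides are tied together) is an \emph{input} inherited from the base-manifold fixed point (Proposition \ref{crucialbase}), not a quantity determined a posteriori by the model-side loop. Since $\mathbb{L}_\eta$ at the weights you use has no cokernel obstruction (Proposition \ref{isomorfismopesati}), the model equation is solvable for any prescribed right-hand-side constant; there is therefore no free scalar parameter on the model side to be solved for, and the constant must simply be taken to agree with the one produced on the base. This does not affect the validity of the contraction argument, but the phrasing "$\mathfrak{s}$ is determined through the same fixed-point loop" inverts the logical order of the two constructions.
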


\noindent This  kind of result isn't useful even if we use Proposition \ref{crucialbase} on the base. Also in this case we need to refine the construction, in particular, we need to add terms coming from the potentials of the metric on the base and to refine the construction of biharmonic extensions. To improve the estimates we need to modify $\eta$ by  $\psi_{\omega}$ that is the Kahler potential of $\omega $ at the point where we want to insert the Ricci-flat model. The naive attempt of adding the term $\psi_{\omega}$ only rescaling and multiplying it by a cutoff  function won't work since it won't lead us to the estimates we need. We recall that by Lemma \ref{proprietacsck} 
\begin{eqnarray}
\psi_{\omega}  &=& \sum_{k=0}^{+\infty}\Psi_{4+k}\,,\\
-\Delta^{2}\Psi_{4}&=& 2s_{\omega}\,,\\
-\Delta^{2}\Psi_{5}&=&0\,.
\end{eqnarray}

\noindent What we want to do to improve the estimates is to correct $\Psi_{4},\Psi_{5}$ with decaying functions in such a way they are in $\ker\st \Le \dt$ as much as possible. We could correct $\Psi_{4},\Psi_{5}$ with functions in the space $C_{\delta}^{4,\alpha}\st X \dt$ with $\delta\in (4-2m,5-2m)$ but again we would get too rough estimates. We will correct $\Psi_{4},\Psi_{5}$ with functions that we know almost explicitly and functions decaying much faster than those in $C_{\delta}^{4,\alpha}\st X \dt$ with $\delta\in (4-2m,5-2m)$. 
\newline
\phantom{hhhhh}
\newline
 {\em For the rest of the subsection $\chi$ will denote a smooth cutoff function identically $0$ on $X_{\frac{R_{0}}{2B}}$ and identically $1$ outside $X_{\frac{R_{0}}{B}}$.}
\newline
\phantom{hhhhh}
\newline
Using Lemmas \ref{espansioniALE} and \ref{proprietacsck} it is easy to see that 
\begin{equation}
\Delta_{\eta}^{2} \chi \Psi_{4}=-2s_{\omega}+\st \Phi_{2}+\Phi_{4} \dt\chi|x|^{-2m}+ \mathcal{O}\st |x|^{-2-2m} \dt
\end{equation}  
and
\begin{equation}
\Delta_{\eta}^{2} \chi \Psi_{5}=\st \Phi_{3}+\Phi_{5} \dt\chi|x|^{1-2m}+ \mathcal{O}\st |x|^{-1-2m} \dt\,.
\end{equation}
\noindent If we set 
\begin{align}
u_{4}:= \st \Phi_{2}+\Phi_{4} \dt\chi|x|^{-2m}&\qquad\textrm{for }m\geq3\\
u_{4}:= \st \Phi_{2}+\Phi_{4} \dt\chi\log\st|x|\dt&\qquad\textrm{for }m=2\\ 
 \end{align}

and
\begin{equation}
u_{5}:= \st \Phi_{3}+\Phi_{5} \dt\chi|x|^{5-2m}
\end{equation}

\noindent with suitable $\Phi_{2},\Phi_{4},\Phi_{3},\Phi_{5}\in C^{\infty}\st \Sp^{2m-1} \dt$ spherical harmonics then 
\begin{equation}
\Delta_{\eta}^{2}\st \chi \Psi_{4}+u_{4}\dt=-2s_{\omega}+ \mathcal{O}\st |x|^{-2-2m} \dt
\end{equation}  
and
\begin{equation}
\Delta_{\eta}^{2}\st \chi \Psi_{5}+u_{5}\dt= \mathcal{O}\st |x|^{-1-2m} \dt\,.
\end{equation}

\noindent Now we want to find $v_{4}\in C_{\delta}^{4,\alpha}\st X \dt$ with $\delta\in (2-2m,3-2m)$ and $v_{5}\in C_{\delta}^{4,\alpha}\st X \dt$ with $\delta\in (3-2m,4-2m)$ such that 
\begin{equation}
\Delta_{\eta}^{2}\st \chi \Psi_{4}+u_{4}+v_{4}\dt=-2s_{\omega}
\end{equation}  
and
\begin{equation}
\Delta_{\eta}^{2}\st \chi \Psi_{5}+u_{5}+v_{5}\dt= 0\,.
\end{equation}

\noindent Proposition \ref{isomorfismopesati} and Lemma \ref{GAP} tell us that we can find such $v_{4},v_{5}$ if and only if the integrals  
\begin{equation}\label{eq:int4}
\int_{X}\sq\Delta_{\eta}^{2}\st \chi \Psi_{4}+u_{4}\dt +2s_{\omega} \dq \, d\mu_{\eta}
\end{equation}
\noindent and
\begin{equation}\label{eq:int5}
\int_{X}\Delta_{\eta}^{2}\st \chi \Psi_{5}+u_{5}\dt\,d\mu_{\eta} 
\end{equation}
\noindent vanish identically. We have to compute the two integrals above and the crucial tool for the calculations will be the following Lemma.

\begin{lemma}\label{misuraeuclidea}
Let $\st X, h, \eta \dt$ be a Ricci flat ALE K\"ahler resolution of $\CC^{m}/\Gamma$ 
\begin{equation}\pi:X\rightarrow \CC^{m}/\Gamma\,,\end{equation}
 then on $X\setminus \pi^{-1}\st 0 \dt$ we have
\begin{equation}d\mu_{\eta}= d\mu_{0}\,,\end{equation}
and for $\rho>0$
\begin{equation}\text{Vol}_{\eta}\st X_{\rho}\dt=\frac{|\Sp^{2m-1}|}{2m\left| \Gamma\right|}\rho^{2m}\,.\end{equation}
\end{lemma}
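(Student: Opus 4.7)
The plan is to first show that the two volume forms agree on $X\setminus\pi^{-1}(0)$ as a consequence of the Ricci-flat condition combined with the ALE asymptotics, and then to derive the volume formula as a change-of-variables computation along the biholomorphism $\pi$.

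For the main step, since $\st X,h,\eta\dt$ is a Ricci-flat ALE K\"ahler resolution, $\Gamma$ is necessarily contained in $SU(m)$. Consequently the holomorphic volume form $\Omega_{0}:=dz^{1}\wedge\cdots\wedge dz^{m}$ on $\CC^{m}$ is $\Gamma$-invariant and descends to $\CC^{m}/\Gamma$; pulling it back via $\pi^{-1}$ defines a nowhere-vanishing holomorphic $(m,0)$-form on $X\setminus\pi^{-1}(0)$, which by the crepant resolution property of Ricci-flat ALE K\"ahler resolutions extends to a nowhere-vanishing holomorphic section $\Omega$ of $K_{X}$ on the whole of $X$. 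Writing
\begin{equation*}
\frac{\eta^{m}}{m!} \,=\, f\cdot c_{m}\cdot\Omega\wedge\overline{\Omega},
\end{equation*}
with $c_{m}$ a positive constant depending only on $m$ and normalized so that $c_{m}\,\Omega_{0}\wedge\overline{\Omega_{0}}=d\mu_{0}$ on $\CC^{m}$, the classical identity $\rho_{\eta}=-i\dd\log f$ together with $\rho_{\eta}\equiv 0$ forces $\log f$ to be pluriharmonic on $X$, hence in particular harmonic.

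The ALE expansion of Proposition \ref{asintpsieta} forces $f(x)\to 1$ as $|x|\to\infty$, since on the ALE end $\Omega$ coincides with $\Omega_{0}$ and $\eta^{m}/m!\to d\mu_{0}$. Thus $\log f$ is a bounded harmonic function on the complete connected manifold $X$ which vanishes at infinity, and the maximum principle yields $\log f\equiv 0$. Consequently $\eta^{m}/m!=c_{m}\,\Omega\wedge\overline{\Omega}$ throughout $X$, and restricting to $X\setminus\pi^{-1}(0)$, where $\Omega=\Omega_{0}$ via the identification with $\st\CC^{m}\setminus\sg 0\dg\dt/\Gamma$, the right-hand side is exactly $d\mu_{0}$, proving the first claim. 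The volume computation is then immediate: $\pi^{-1}(0)$ is a compact analytic subset of $X$ of real codimension at least two, hence $d\mu_{\eta}$-negligible, so
\begin{equation*}
\text{Vol}_{\eta}\st X_{\rho}\dt \,=\, \int_{X_{\rho}\setminus\pi^{-1}(0)}\!\! d\mu_{\eta} \,=\, \int_{\st B_{\rho}\setminus\sg 0\dg\dt/\Gamma}\!\! d\mu_{0} \,=\, \frac{1}{\left|\Gamma\right|}\cdot\frac{|\Sp^{2m-1}|\,\rho^{2m}}{2m}\,,
\end{equation*}
where the middle equality uses the biholomorphism $\pi$ on $X\setminus\pi^{-1}(0)$ and the last uses the free action of $\Gamma$ on $\CC^{m}\setminus\sg 0\dg$ with $|\Gamma|$ elements.

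The main obstacle in this plan is the extension of the holomorphic volume form $\Omega$ across the exceptional set $\pi^{-1}(0)$, which relies on the crepant resolution structure inherent to Ricci-flat ALE K\"ahler resolutions with $\Gamma\triangleleft SU(m)$. An alternative route which avoids this step is to work purely in the ALE coordinates at infinity by setting $F:=\log\det(\eta_{i\bar\jmath})+m\log 2$, noting that Ricci-flatness makes $F$ pluriharmonic there and that the asymptotic expansion of Proposition \ref{asintpsieta} forces $F\to 0$ at infinity, concluding $F\equiv 0$ on the exterior of a large ball by the maximum principle, and finally propagating the identity $d\mu_{\eta}=d\mu_{0}$ throughout $X\setminus\pi^{-1}(0)$ by real-analytic continuation.
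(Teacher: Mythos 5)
Your main argument takes a genuinely different route from the paper, and it is essentially correct modulo one hypothesis you should flag more carefully. The paper never leaves the flat coordinate chart: it pulls $\eta$ back to $\CC^m\setminus\{0\}$, notes that Ricci-flatness makes $\partial\log\det\st (\pi_\Gamma)^*(\pi^{-1})^*\eta\dt$ a closed $1$-form, invokes $H^1\st\CC^m\setminus B_\rho,\CC\dt=0$ to split $\log\det$ into a holomorphic piece plus an antiholomorphic piece plus a constant, extends the two pieces across the puncture by Hartogs, and then finishes with Liouville on $\CC^m$. Your version instead works globally on $X$: you trivialize $K_X$ by a nowhere-vanishing holomorphic $m$-form $\Omega$, write $\eta^m/m!=f\,c_m\,\Omega\wedge\overline\Omega$, observe that Ricci-flatness makes $\log f$ pluriharmonic on $X$, and conclude $\log f\equiv 0$ from boundedness, decay at infinity, and the maximum principle on exhausting compacts. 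The trade is this: your proof is more conceptual but requires knowing that $K_X$ is holomorphically trivial, i.e.\ that the resolution is crepant. That is standard in this context but is not free --- the Ricci-flat holonomy gives a flat unitary structure on $K_X$, which is trivialized only once one knows the monodromy representation $\pi_1(X)\to U(1)$ is trivial (for instance because $X$ is simply connected). The paper's proof sidesteps this global topological input entirely, which is precisely why it does the seemingly more roundabout $\partial$-decomposition plus Hartogs plus Liouville.

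Your ``alternative route'' has a genuine gap. On the exterior domain $\CC^m\setminus B_\rho$, a pluriharmonic function $F$ with $F\to 0$ at infinity need not vanish: the maximum principle relates its interior values to the sup over $\partial B_\rho$ \emph{and} the value at infinity, and you have no a priori control on $\left.F\right|_{\partial B_\rho}$. So you cannot conclude $F\equiv 0$ on the exterior domain and then propagate by analytic continuation. This is exactly the difficulty the paper's Hartogs-plus-Liouville step is designed to resolve: it produces a harmonic function on \emph{all} of $\CC^m$ agreeing with $F$ near infinity, and only then does Liouville apply. If you want an argument avoiding the trivial-$K_X$ hypothesis, you need something equivalent to that step.
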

\begin{proof}
Let $\pi_{\Gamma}:\CC^{m}\rightarrow \CC^{m}/\Gamma$ the canonical holomorphic quotient map, since 
\begin{equation}\Ric\st \eta \dt=0\,,\end{equation}
on $\st\CC^{m}\setminus B_{\rho}\dt/\Gamma$  we have 
\begin{equation}i\dd\sq\log\st\det\st \st\pi_{\Gamma}\dt^{*}\st\pi^{-1}\dt^{*}\eta\dt\dt\dq=0\,.\end{equation} 
We want to prove that on \begin{equation}\CC^{m}\setminus \sg 0 \dg\end{equation}
\begin{equation}\det\st\st\pi_{\Gamma}\dt^{*}\st\pi^{-1}\dt^{*}\eta\dt\equiv C\qquad C\in \RR\,.\end{equation}
By Proposition \ref{asintpsieta} we have  on $\CC^{m}\setminus B_{\rho}$ 
\begin{equation}\st\pi_{\Gamma}\dt^{*}\st\pi^{-1}\dt^{*}\eta=\frac{\delta_{i\bar{\jmath}}}{2}- \cga \dd \left|x\right|^{2-2m} +	\dd \mathcal{O}\st \left|x\right|^{2-2m}  \dt \end{equation}

\noindent that implies immediately

\begin{align}
\log\st \det\st \st\pi_{\Gamma}\dt^{*}\st\pi^{-1}\dt^{*}\eta \dt \dt=&-m\log\st 2 \dt+\mathcal{O}\st \left|x\right|^{-2-2m} \dt\,.
\end{align}

\noindent On $\CC^{m}\setminus B_{\rho}$ we have
\begin{align}
i\dd \log\st \det\st \st\pi_{\Gamma}\dt^{*}\st\pi^{-1}\dt^{*}\eta \dt \dt=&-id\st \partial \log\st \det\st  \st\pi_{\Gamma}\dt^{*}\st\pi^{-1}\dt^{*}\eta \dt\dt\dt\,, 
\end{align}

so 
\begin{equation}\partial \log\st \det\st \st\pi_{\Gamma}\dt^{*}\st\pi^{-1}\dt^{*}\eta \dt \dt\in H^{1}\st \CC^{m}\setminus B_{\rho},\CC \dt\end{equation} but $H^{1}\st \CC^{m}\setminus B_{\rho},\CC \dt=0$  
and there exists $h_{1}\in C^{1}\st \CC^{m}\setminus B_{\rho},\CC  \dt$ such that
\begin{equation}\partial \log\st \det\st \st\pi_{\Gamma}\dt^{*}\st\pi^{-1}\dt^{*}\eta \dt \dt=dh_{1}=\partial h_{1} +\overline{\partial}h_{1}\qquad\Rightarrow\qquad \overline{\partial}h_{1}=0\,.\end{equation}

Analogously, there is $h_{2}\in C^{1}\st \CC^{m}\setminus B_{\rho},\CC  \dt$ such that
\begin{equation}\overline{\partial} \sq\log\st \det\st \st\pi_{\Gamma}\dt^{*}\st\pi^{-1}\dt^{*}\eta \dt \dt -h_{1} \dq=dh_{2}=\partial h_{2}+\overline{\partial}h_{2}\qquad \Rightarrow\qquad \partial h_{2}=0\,.\end{equation}

It is now clear that 
\begin{align}
d\sq \log\st \det\st \st\pi_{\Gamma}\dt^{*}\st\pi^{-1}\dt^{*}\eta \dt \dt-h_{1}-h_{2} \dq=&0\,.
\end{align}
We conclude that on $\CC^{m}\setminus B_{\rho}$
\begin{equation}\log\st \det\st \st\pi_{\Gamma}\dt^{*}\st\pi^{-1}\dt^{*}\eta \dt \dt=h_{1}+h_{2}+K\qquad K\in \RR \qquad \mathfrak{Im}h_{2}=-\mathfrak{Im}h_{1} \end{equation}
moreover  $h_{1},\overline{h_{2}}$ are holomorphic on $\CC^{m}\setminus B_{\rho}$ and by Hartogs extension theorem they are extendable to functions $H_{1},H_{2}$ holomorphic on $\CC^{m}$. Since $H_{1},H_{2}$ are holomorphic, their real and imaginary parts are harmonic with respect to the euclidean metric on $\CC^{m}$ and by assumptions on $\eta$  we have on  $\CC^{m}\setminus B_{\rho}$
\begin{equation}\mathfrak{Re}H_{1}+\mathfrak{Re}H_{2}+K=-m\log\st 2 \dt+\mathcal{O}\st \left|x\right|^{-2-2m} \dt\,.\end{equation}  
Since $\mathfrak{Re}H_{1}+\mathfrak{Re}H_{2}+K$ is harmonic and bounded, Liouville theorem implies it is constant, so 
\begin{equation}\log\st \det\st \st\pi_{\Gamma}\dt^{*}\st\pi^{-1}\dt^{*}\eta \dt \dt=C\Rightarrow \det\st \st\pi_{\Gamma}\dt^{*}\st\pi^{-1}\dt^{*}\eta \dt=\frac{1}{2^{m}}\end{equation}

\noindent We can now see that

\begin{align}
\frac{1}{m!}\st\pi_{\Gamma}\dt^{*}\sq \st\pi^{-1}\dt^{*}\eta \dq^{\wedge m}=&d\mu_{0}\,.
\end{align}

\noindent and then

\begin{equation}
\text{Vol}_{\eta}\st X_{\rho}\dt=\int_{B_{\rho}/\Gamma\setminus \sg 0 \dg}d\mu_{\st\pi^{-1}\dt^{*}\eta}=\frac{\mu\st S^{2m-1} \dt}{2m\left| \Gamma\right|}\rho^{2m}
\end{equation}

\noindent 	so the lemma follows.
\end{proof}

\noindent We start computing integral \eqref{eq:int4}. By means of divergence Theorem and Lemma \ref{misuraeuclidea} we can write 
\begin{equation}
\int_{X}\sq\Delta_{\eta}^{2}\st \chi \Psi_{4}+u_{4}\dt +2s_{\omega} \dq \, d\mu_{\eta}=\lim_{\rho\rightarrow+\infty}\sq \int_{\partial X_{\rho}}\partial_{\nu}\Delta_{\eta}\st  \chi \Psi_{4}\dt d\mu_{\eta}+\frac{s_{\omega}| \Sp^{2m-1} | }{m\left| \Gamma\right|}\rho^{2m}\dq\,,
\end{equation} 
\noindent with $\nu$ outward unit normal to the boundary. Then using Proposition \ref{proprietacsck} and Lemma \ref{espansioniALE}
\begin{equation}
\partial_{\nu}\Delta_{\eta}\Psi_{4}\left.d\mu_{\eta}\right|_{\partial X_{\rho}} = \sq -\frac{s_{\omega}}{m} \rho^{2m}- \frac{4  \cga \st m-1 \dt^{2}s_{\omega}}{m\st m+1 \dt}+\mathcal{O}\st 1\dt\st \Phi_{2}+\Phi_{4}\dt +\mathcal{O}\st\rho^{-1}\dt\dq \left.d\mu_{0}\right|_{\Sp^{2m-1} / \Gamma }\,,
\end{equation}
and integrating we obtain
\begin{equation}
\int_{X}\sq\Delta_{\eta}^{2}\st \chi \Psi_{4}+u_{4}\dt +2s_{\omega} \dq \, d\mu_{\eta} = - \frac{4  \cga \st m-1 \dt^{2}| \Sp^{2m-1} | s_{\omega}}{m\st m+1 \dt |\Gamma|}\,.
\end{equation}
\noindent Hence  can't solve the equation
\begin{equation}
\Delta_{\eta}^{2}\st \chi \Psi_{4}+u_{4}+v_{4}\dt=-2s_{\omega}
\end{equation}
for $v_{4}\in C_{\delta}^{4,\alpha}\st X \dt$ with $\delta\in (2-2m,3-2m)$, but adding a function that is ``almost in $\ker\st \Delta_{\eta} \dt$'' we reach our goal, precisely we can solve the equation
\begin{displaymath}
\begin{array}{lcl}
\Delta_{\eta}^{2}\st \chi \Psi_{4}+u_{4}+\frac{  \cga \st m-1 \dt s_{\omega} }{2\st m-2 \dt   m\st m+1 \dt }\chi |x|^{4-2m}+v_{4}\dt&=&-2s_{\omega}\qquad \textrm{ for }m\geq3\\
&&\\
\Delta_{\eta}^{2}\st \chi \Psi_{4}+u_{4}-\frac{  \cga  s_{\omega} }{6 }\chi \log\st|x|\dt+v_{4}\dt&=&-2s_{\omega}\qquad \textrm{ for }m=2
\end{array}
\end{displaymath}
\noindent for $v_{4}\in C_{\delta}^{4,\alpha}\st X \dt$ with $\delta\in (2-2m,3-2m)$. In a completely analogous way we can compute integral \eqref{eq:int5} that vanishes identically  and so we can solve the equation 
\begin{equation}
\Delta_{\eta}^{2}\st \chi \Psi_{5}+u_{5}+v_{5}\dt= 0\,.
\end{equation}
\noindent for $v_{5}\in C_{\delta}^{4,\alpha}\st X \dt$ with $\delta\in (3-2m,4-2m)$. If we define  
\begin{equation}
V:=\varepsilon^{2}B^{4}\st u_{4}+ v_{4}\dt+\varepsilon^{3}B^{5}\st u_{5}+v_{5}\dt \,.
\end{equation}
\noindent then we can define as the skeleton the function in $C^{4,\alpha}\st X_{\frac{\Rep}{B}} \dt$
\begin{align}
\label{eq:scheletromod1}\frac{1}{\varepsilon^{2}}\chi\psi_{\omega}\st \frac{B x}{\varepsilon} \dt+\frac{  \cga \st m-1 \dt s_{\omega}B^{4}\varepsilon^{2} }{2\st m-2 \dt   m\st m+1 \dt }\chi |x|^{4-2m}+V&\qquad \textrm{ for }m\geq3\,,\\
\\
\label{eq:scheletromod2} \frac{1}{\varepsilon^{2}}\chi\psi_{\omega}\st \frac{B x}{\varepsilon} \dt-\frac{  \cga  s_{\omega}B^{4}\varepsilon^{2} }{6 }\chi \log\st|x|\dt+V&
\qquad \textrm{ for }m=2\,.
\end{align}

\noindent We then construct a function  $\chi H_{\tilde{h},\tilde{k}}^{I}\st \frac{ B x}{\Rep} \dt \in C^{4,\alpha}\st X_{\frac{\Rep}{B}} \dt$ that resembles inner biharmonic extensions. We want to build a function on $X$ that is ``almost'' in the kernel of $\Delta_{\eta}^{2}$ and, for our purposes, we need a more refined construction than \cite{ap2}. We note that 

\begin{eqnarray}
\Delta_{\eta}^{2}\st\chi|x|^{2}\dt&=& \mathcal{O}\st |x|^{-2-2m} \dt\,,\\
\Delta_{\eta}^{2}\st \chi|x|^{2}\Phi_{2}\dt&=&\mathcal{O}\st |x|^{-2m-2} \dt\,,\\
\Delta_{\eta}^{2}\st\chi|x|^{3}\Phi_{3}\dt&=& \chi|x|^{-1-2m} \Phi_{3} +\mathcal{O}\st|x|^{-3-2m}\dt\,.
\end{eqnarray}

\noindent As for the skeleton we want to correct the functions on the left hand sides of equations in such a way they are in $\ker\st \Delta_{\eta} \dt$. Precisely we want to solve the equations 
\begin{eqnarray}
\Delta_{\eta}^{2}\st\chi|x|^{2}+v^{(0)}\dt&=& 0\,, \\
\Delta_{\eta}^{2}\st \chi|x|^{2}\Phi_{2}+v^{(2)}\dt&=&0\,,\\
\Delta_{\eta}^{2}\st\chi|x|^{3}\Phi_{3}+ u^{(3)} +v^{(3)}\dt&=& 0\,.
\end{eqnarray}

\noindent with $v^{(0)},v^{(2)},v^{(3)}\in C_{\delta}^{4,\alpha}\st X \dt$ for $\delta\in (2-2m,3-2m)$ and  
\begin{equation}
u^{(3)}:=\chi|x|^{3-2m} \Phi_{3}
\end{equation}
\noindent for a suitable  spherical harmonic $\Phi_{3}$. The existence of  $v^{(0)},v^{(2)},v^{(3)}$ follows from Proposition \ref{isomorfismopesati}, Lemma \ref{GAP} and 
\begin{equation}
\int_{X}\Delta_{\eta}^{2}\st\chi|x|^{2}\dt\,d\mu_{\eta}=\int_{X}\Delta_{\eta}^{2}\st\chi|x|^{2}\Phi_{2}\dt\,d\mu_{\eta}=\int_{X}\Delta_{\eta}^{2}\st\chi|x|^{3}\Phi_{3}\dt\,d\mu_{\eta}=0  
\end{equation}
\noindent as one can easily check using exactly the same ideas exposed for the skeleton. We are ready to define the function $\hkii \in C^{4,\alpha}(X_{\frac{\Rep}{B}})$ 
\begin{align}
\hkii :=& H_{\tilde{h},\tilde{k}}^{I}\st 0\dt+ \chi\st H_{\tilde{h},\tilde{k}}^{I}\st \frac{ B x}{\Rep} \dt-  H_{\tilde{h},\tilde{k}}^{I}\st 0\dt\dt+\frac{\tilde{k}^{(0)}B^2}{4m \Rep^2}v^{(0)}  \\
&+ \st \tilde{h}^{(2)}-\frac{\tilde{k}^{(2)}}{4(m+2)} \dt \frac{B^{2}v^{(2)}}{\Rep^2}+ \st \tilde{h}^{(3)}-\frac{\tilde{k}^{(3)}B^{3}}{4(m+3)}  \dt  \frac{u^{(3)}+v^{(3)}}{\Rep^3}\,.\label{eq:thki}
\end{align}

\noindent We are now ready to state the main result on the model spaces.

\begin{prop}\label{crucialmodello}
Let $(X,h, \eta)$ an ALE  Ricci-Flat K\"ahler resolution of $\CC^{m}/\Gamma$. Let $\st \tilde{h},\tilde{k} \dt\in C^{4,\alpha}\st \Sp^{2m-1}/\Gamma \dt\times C^{2,\alpha}\st \Sp^{2m-1}/\Gamma \dt$ such that
\begin{equation}
\left\|\tilde{h}^{(0)},\tilde{k}^{(0)}\right\|_{\mathcal{B}^{\alpha}}\leq \kappa \rep^{\beta}\varepsilon^{-2}\,,
\end{equation}

\begin{equation}
\left\|\tilde{h}^{(\dagger)},\tilde{k}^{(\dagger)}\right\|_{\mathcal{B}^{\alpha}}\leq \kappa \rep^{\sigma}\varepsilon^{-2}\,.
\end{equation}

\noindent Then there exists $f_{B,\tilde{h},\tilde{k}}^{I}\in C_{\delta}^{4,\alpha}\st X \dt$ for $\delta\in (4-2m,5-2m)$ with 
\begin{equation}
\left\|f_{B,\tilde{h},\tilde{k}}^{I}\right\|_{C_{\delta}^{4,\alpha}\st X \dt}\leq C\st g,\kappa \dt \rep^{\sigma-2}\Rep^{-2}
\end{equation}
such that  on $X_{\frac{\Rep}{B}}$
\begin{equation}
\s_{B^{2}\eta}\st \frac{1}{\varepsilon^{2}}\chi\psi_{\omega}\st \frac{B x}{\varepsilon} \dt+\frac{  \cga \st m-1 \dt s_{\omega}B^{4}\varepsilon^{2} }{2\st m-2 \dt   m\st m+1 \dt }\chi |x|^{4-2m}+V+\textbf{H}_{\tilde{h},\tilde{k}}^{I}+ f_{B,\tilde{h},\tilde{k}}^{I}\dt\equiv \varepsilon^{2}\st s_{\omega}+\mathfrak{s}_{\bg,\cg,\hg,\kg}\dt.
\end{equation}
for $m\geq 3$, and
\begin{equation}
\s_{B^{2}\eta}\st \frac{1}{\varepsilon^{2}}\chi\psi_{\omega}\st \frac{B x}{\varepsilon} \dt-\frac{  \cga  s_{\omega}B^{4}\varepsilon^{2} }{6 }\chi \log\st|x|\dt+V+\textbf{H}_{\tilde{h},\tilde{k}}^{I}+ f_{B,\tilde{h},\tilde{k}}^{I} + C\dt\equiv \varepsilon^{2}\st s_{\omega}+\mathfrak{s}_{\bg,\cg,\hg,\kg}\dt.
\end{equation}
for $m=2$, where $C$ is the constant term in the expansion  at $B_{2r_{0}}\st p \dt\setminus B_{\rep}\st p \dt$ of  
\begin{equation}
- \varepsilon^{2m}{{\bf{G}}}_{\bg,\cg}+\sum_{j=1}^{N}B_{j}^{2}\varepsilon^{2}\chi_{j}\psi_{\eta_{j}}\st \frac{z}{B_{j} \varepsilon} \dt+\textbf{H}_{\hg,\kg}^{o}+ f_{\bg,\cg,\hg,\kg}^{o}\,.
\end{equation}
\end{prop}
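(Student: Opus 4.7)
The plan is to set up a fixed point problem for $f^I_{B,\tilde h,\tilde k}$ on $X$ and solve it via Banach contraction, in complete analogy with Proposition \ref{crucialbase}. Writing the skeleton $\mathfrak{S}$ for the explicit expression in \eqref{eq:scheletromod1}--\eqref{eq:scheletromod2} (adding the constant $C$ in the case $m=2$), expanding $\s_{B^2\eta}$ via \eqref{eq:espsg}, and using that $\Le=\Delta_{\eta}^{2}$ on the Ricci flat manifold $(X,\eta)$, the constant scalar curvature equation becomes, on $X_{\Rep/B}$,
\[
\Le f_{B,\tilde h,\tilde k}^{I}+\varepsilon^{2}\nu_{\bg,\cg}+2\mathfrak{s}_{\bg,\cg,\hg,\kg}\,=\,-\Le\mathfrak{S}-\Le\hkii+2\NN_{B^{2}\eta}\st\mathfrak{S}+\hkii+f_{B,\tilde h,\tilde k}^{I}\dt.
\]
I extend this PDE to all of $X$ by a truncation-extension operator $\mathcal{E}_{\Rep/B}$ of the same shape as $\mathcal{E}_{\rep}$, and use the right inverse $\mathbb{J}^{(\delta)}$ provided by Proposition \ref{isomorfismopesati} for $\delta\in(4-2m,5-2m)$ to define the nonlinear map
\[
\mathcal{T}\st f,\tilde h,\tilde k\dt\,:=\,\mathbb{J}^{(\delta)}\sq-\mathcal{E}_{\Rep/B}\Le\mathfrak{S}-\mathcal{E}_{\Rep/B}\Le\hkii+2\mathcal{E}_{\Rep/B}\NN_{B^{2}\eta}\st\mathfrak{S}+\hkii+f\dt\dq,
\]
where when $m=2$ one subtracts off the integral of the argument so as to land in the image of $\Le$ (this fixes the free additive constant of the inverse and is exactly what accounts for the constant $C$ in the statement).

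\textbf{The crucial estimate on $\mathcal{T}(0,\mathbf{0},\mathbf{0})$.} The entire point of the elaborate construction of $\mathfrak{S}$ is that, by Proposition \ref{proprietacsck}, Lemma \ref{espansioniALE} and the solvability statements in Proposition \ref{isomorfismopesati}--Proposition \ref{GAP}, the auxiliary correctors $u_{4},v_{4},u_{5},v_{5}$ and the balancing terms $\pm\tfrac{\cga(m-1)s_{\omega}B^{4}\varepsilon^{2}}{2(m-2)m(m+1)}\chi|x|^{4-2m}$ (resp.\ $-\tfrac{\cga s_{\omega}B^{4}\varepsilon^{2}}{6}\chi\log|x|$) are chosen \emph{precisely} so that
\[
\Delta_{\eta}^{2}\sq \chi\Psi_{4}+u_{4}+v_{4}+(\text{balancing})\dq=-2s_{\omega}\,,\qquad \Delta_{\eta}^{2}\sq\chi\Psi_{5}+u_{5}+v_{5}\dq=0\,,
\]
identically on $X_{\Rep/B}$, the vanishing (resp.\ explicit evaluation) of the obstruction integrals being provided by Lemma \ref{misuraeuclidea}. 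Hence the residual $-\Le\mathfrak{S}-2s_{\omega}$ on $X_{\Rep/B}$ is governed only by the tail $\sum_{k\geq 2}\Psi_{4+k}$ of the base potential and the gap $\Le-\Delta^{2}_{\eta}$ acting on $u_{4},u_{5}$; both are of order $\varepsilon^{2}B^{4}\mathcal{O}(|x|^{-2-2m})$ after rescaling. Combined with Lemma \ref{stimabiarmonichebase}-type bounds on $\Le\hkii$ based on the decomposition \eqref{eq:thki} and the weight bounds on $(\tilde h,\tilde k)$, this gives $\|\mathcal{T}(0,\mathbf{0},\mathbf{0})\|_{C^{4,\alpha}_\delta(X)}\leq C\,\rep^{\sigma-2}\Rep^{-2}$.

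\textbf{Contraction and conclusion.} Setting $\Omega:=\{f\in C^{4,\alpha}_{\delta}(X)\,:\,\|f\|\leq 2\|\mathcal{T}(0,\mathbf{0},\mathbf{0})\|\}$, one shows by the same algebraic manipulations used in the base case (adding and subtracting Euclidean and $\eta$-nonlinearities, exploiting that $\NN_{B^{2}\eta}$ is at least quadratic in its argument, and absorbing everything in powers of $\varepsilon$) that $\mathcal{T}(\cdot,\tilde h,\tilde k)$ maps $\Omega$ into itself and is a $\tfrac12$-contraction there, with an analogous Lipschitz bound in the boundary parameters $(\tilde h,\tilde k)$. Banach's fixed point theorem yields the required $f^{I}_{B,\tilde h,\tilde k}$ with the stated estimate and continuous dependence on $(\tilde h,\tilde k)$.

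\textbf{Main obstacle.} The core difficulty is not the fixed point machinery itself, which is routine once set up, but the correct design of the skeleton: one has to cancel both the $\Psi_{4}$- and $\Psi_{5}$-level obstructions in a way that respects the Ricci flat structure of $\eta$, and the only reason this is possible is the integral identity $\int_{X}\Delta_{\eta}^{2}(\chi\Psi_{4}+u_{4})\,d\mu_{\eta}=-\tfrac{4\cga(m-1)^{2}|\Sp^{2m-1}|s_{\omega}}{m(m+1)|\Gamma|}\neq 0$, whose precise value --- available thanks to $d\mu_{\eta}=d\mu_{0}$ outside the exceptional set (Lemma \ref{misuraeuclidea}) --- dictates the coefficient of the $\chi|x|^{4-2m}$ correction and matches the coefficient $C_{j}$ in \eqref{eq:Cj} coming from the base side, so that the two metrics will eventually glue. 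The $m=2$ case adds the further subtlety that $\Le$ is only invertible modulo constants, which forces the explicit constant $C$ in the statement to absorb the mismatch with the outer expansion on the base.
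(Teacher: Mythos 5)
Your proposal follows the paper's argument faithfully: same reformulation of $\mathbf{S}_{B^{2}\eta}(\cdot)=\varepsilon^{2}(s_{\omega}+\mathfrak{s}_{\bg,\cg,\hg,\kg})$ as a PDE for $f^{I}_{B,\tilde h,\tilde k}$, same truncation-extension-plus-$\mathbb{J}^{(\delta)}$ fixed-point scheme, same reliance on Lemma \ref{misuraeuclidea}, Proposition \ref{isomorfismopesati} and Proposition \ref{GAP} to show that the correctors $u_4,v_4,u_5,v_5$ and the $\chi|x|^{4-2m}$ (resp.\ $\chi\log|x|$) term cancel the $\Psi_4$- and $\Psi_5$-level obstructions, and the same use of Ricci-flatness to reduce $\Le$ to $\Delta_\eta^2$. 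The one imprecision worth correcting is the choice of radius for $\Omega$: you write $\Omega=\{\,\|f\|\leq 2\|\mathcal{T}(0,\mathbf{0},\mathbf{0})\|\,\}$ and assert $\|\mathcal{T}(0,\mathbf{0},\mathbf{0})\|\leq C\,\rep^{\sigma-2}\Rep^{-2}$, but with $\tilde h=\tilde k=0$ the biharmonic block $\hkii$ vanishes, so $\|\mathcal{T}(0,\mathbf{0},\mathbf{0})\|$ is actually of the much smaller order $\varepsilon^{4}\Rep^{6-2m-\delta}$; the quantity that sets the size of the fixed point (and hence of $\Omega$) is the separate estimate $\|\mathbb{J}^{(\delta)}\mathcal{E}_{\Rep}\Delta_\eta^2\hkii\|\leq C\,\rep^{\sigma-2}\Rep^{-2}$ coming from the $(\tilde h^{(\dagger)},\tilde k^{(\dagger)})$-data, which is what the paper uses to define $\Omega$. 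With $\Omega$ defined by $\mathcal{T}(0,\mathbf{0},\mathbf{0})$ alone, invariance $\mathcal{T}(\Omega,\tilde h,\tilde k)\subseteq\Omega$ would fail once the $\hkii$-contribution exceeds that smaller radius. Since you do invoke the Lemma \ref{stimabiarmonichebase}-type bound on $\Le\hkii$ as the dominant term, this is a labelling slip rather than a missing idea, but it should be fixed for the fixed-point argument to close correctly.
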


\noindent Again  we want to find a PDE that  that $\csfii$ has to solve on $X_{\frac{\Rep}{B}}$. Since we are looking for small perturbations of $\eta$ we can use on $X_{\frac{\Rep}{B}}$ the expansion \eqref{eq:espsg} of operator $\s_{\eta}$ given by Proposition \ref{espsg}.  

\begin{align}
\varepsilon^{2}\st s_{\omega}+\mathfrak{s}_{\bg,\cg,\hg,\kg}\dt
=&\varepsilon^{2}s_{\omega}-\frac{1}{2\varepsilon^{2}B^{4}}\Delta_{\eta}^{2}\st \sum_{k=6}^{+\infty}\chi\Psi_{k}\st \frac{B x}{\varepsilon} \dt\dt-\frac{1}{2B^{4}}\Delta_{\eta}^{2}\st \textbf{H}_{\tilde{h},\tilde{k}}^{I}\dt-\frac{1}{2B^{4}}\Delta_{\eta}^{2}\st f_{B,\tilde{h},\tilde{k}}^{I}  \dt\\
&+\NN _{B^{2}\eta}\st \frac{1}{\varepsilon^{2}}\chi\psi_{\omega}\st \frac{B x}{\varepsilon} \dt+\frac{  \cga \st m-1 \dt s_{\omega}B^{4}\varepsilon^{2} }{2\st m-2 \dt   m\st m+1 \dt }\chi |x|^{4-2m}+V+\textbf{H}_{\tilde{h},\tilde{k}}^{I}+ f_{B,\tilde{h},\tilde{k}}^{I}  \dt\\
\end{align} 

\noindent On $X_{\frac{\Rep}{B}}$ we have then to solve the following PDE:

\begin{align}\label{eq: PDEmodello}
\Delta_{\eta}^{2}\st f_{B,\tilde{h},\tilde{k}}^{I}  \dt=&-2\varepsilon^{2}B^{4}\mathfrak{s}_{\bg,\cg,\hg,\kg}-\frac{1}{\varepsilon^{2}}\Delta_{\eta}^{2}\st \sum_{k=6}^{+\infty}\chi\Psi_{k}\st \frac{B x}{\varepsilon} \dt\dt-\Delta_{\eta}^{2}\st \textbf{H}_{\tilde{h},\tilde{k}}^{I}\dt\\
&+2B^{4}\NN_{B^{2}\eta}\st \frac{1}{\varepsilon^{2}}\chi\psi_{\omega}\st \frac{B x}{\varepsilon} \dt+\frac{  \cga \st m-1 \dt s_{\omega}B^{4}\varepsilon^{2} }{2\st m-2 \dt   m\st m+1 \dt }\chi |x|^{4-2m}+V+\textbf{H}_{\tilde{h},\tilde{k}}^{I}+ f_{B,\tilde{h},\tilde{k}}^{I}  \dt\,.
\end{align}

\noindent In analogy with what we did on the base manifold, we  look for a PDE defined on the whole $X$ and such that on $X_{\frac{\Rep}{B}}$ restricts to the one written above. To this aim we introduce a truncation-extension operator on weighted H\"older spaces

\begin{definition}
Let $f\in C_{\delta}^{0,\alpha} \st X \dt $, we define $\mathcal{E}_{R}:C_{\delta}^{0,\alpha} \st X \dt \rightarrow C_{\delta}^{0,\alpha} \st X \dt $
\begin{displaymath}
\mathcal{E}_{\Rep} \st f \dt :\left\{ \begin{array}{ll}
f \st x \dt  & x\in X_{\frac{\Rep}{B}}   \\
f \st R\frac{x}{|x|} \dt \chi \st \frac{|x|B}{\Rep} \dt  & x\in X_{\frac{2\Rep}{B}}\setminus X_{\frac{\Rep}{B}}\\
0  & x\in X\setminus X_{\frac{2\Rep}{B}}  
\end{array} \right.
\end{displaymath}
with $\chi\in C^{\infty}\st [0,+\infty) \dt$ a  cutoff function that is identically $1$ on $[0,1]$ and identically $0$ on $[2,+\infty)$. 
\end{definition}

\noindent Using the right inverse for $\Delta_{\eta}^{2}$ of Proposition \ref{isomorfismopesati}  
\begin{equation}
\mathbb{J}^{(\delta)}:C_{\delta-4}^{0,\alpha}\st X \dt\rightarrow C_{\delta}^{4,\alpha}\st X \dt
\end{equation}
and truncation-extension operator we define the nonlinear operator
\begin{equation}
\mathcal{T}:C_{\delta}^{4,\alpha}\st X \dt\rightarrow C_{\delta}^{4,\alpha}\st X \dt
\end{equation}

\begin{align}
\mathcal{T}\st f,\tilde{h},\tilde{k} \dt:=&-2\varepsilon^{2}B^{4}\mathbb{J}^{(\delta)}\mathcal{E}_{\Rep}\mathfrak{s}_{\bg,\cg,\hg,\kg}-\frac{1}{\varepsilon^{2}}\mathbb{J}^{(\delta)}\mathcal{E}_{\Rep}\Delta_{\eta}^{2}\st \sum_{k=6}^{+\infty}\chi\Psi_{k}\st \frac{B x}{\varepsilon} \dt\dt-\mathbb{J}^{(\delta)}\mathcal{E}_{\Rep}\Delta_{\eta}^{2}\st \textbf{H}_{\tilde{h},\tilde{k}}^{I}\dt\\
&+2B^{4}\mathbb{J}^{(\delta)}\mathcal{E}_{\Rep}\NN _{B^{2}\eta}\st \frac{1}{\varepsilon^{2}}\chi\psi_{\omega}\st \frac{B x}{\varepsilon} \dt+\frac{  \cga \st m-1 \dt s_{\omega}B^{4}\varepsilon^{2} }{2\st m-2 \dt   m\st m+1 \dt }\chi |x|^{4-2m} +V+\textbf{H}_{\tilde{h},\tilde{k}}^{I}+ f_{B,\tilde{h},\tilde{k}}^{I}  \dt
\end{align}

\noindent We prove the existence of a solution of equation \eqref{eq: PDEmodello} finding a fixed point of the operator $\mathcal{T}$ following exactly the same strategy we used on the base. We need to find a domain $\Omega\subseteq C_{\delta}^{4,\alpha}\st X \dt$ such that $\mathcal{T}\st \Omega \dt$ and $\mathcal{T}$ is contractive. To decide what kind of domain will be our $\Omega$ we need some informations on the behavior of $\mathcal{T}$ that we find in the following two lemmas.

\begin{lemma} 
Let $\delta\in (4-2m,5-2m)$ and $\st \varepsilon^{2}\tilde{h}, \varepsilon^{2}\tilde{k}\dt\in \dombd$, then the following estimate holds
\begin{equation}\left\|\mathcal{E}_{\Rep}\Delta_{\eta}^{2}\hkii\right\|_{\Ccx}\leq  \frac{ C\st \eta \dt }{\Rep^4}\left\|\tilde{h}^{(\dagger)},\tilde{k}^{(\dagger)}\right\|_{\ambbda}\,.\end{equation}
\end{lemma}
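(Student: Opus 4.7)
The plan is to decompose $\hkii$ into its Fourier modes on $\Sp^{2m-1}/\Gamma$ and exploit the cancellations that the corrections $v^{(0)}$, $v^{(2)}$, $u^{(3)}+v^{(3)}$ were designed to produce. Writing $H^{I}_{\tilde h,\tilde k}(w) = \sum_{\gamma} [a_{\gamma}|w|^{\gamma} + b_{\gamma}|w|^{\gamma+2}]\phi_{\gamma}$ with $a_{\gamma}=\tilde h^{(\gamma)}-\tilde k^{(\gamma)}/(4(m+\gamma))$ and $b_{\gamma}=\tilde k^{(\gamma)}/(4(m+\gamma))$, and recalling that $\phi_{1}$ is absent by Remark \ref{nolinear}, we would first observe that the $\gamma=0$ contribution to $\hkii$ combines with the correction $(\tilde k^{(0)}B^{2}/(4m\Rep^{2}))v^{(0)}$ into $(\tilde k^{(0)}B^{2}/(4m\Rep^{2}))(\chi|x|^{2}+v^{(0)})$ plus the constant $H^{I}_{\tilde h,\tilde k}(0)$, both of which lie in the kernel of $\Delta_{\eta}^{2}$; analogously, the leading $|w|^{\gamma}$-parts of the $\gamma=2$ and $\gamma=3$ modes cancel exactly against the $v^{(2)}$ and $u^{(3)}+v^{(3)}$ corrections by construction.

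Next, we would collect what actually survives, namely
\[
\Delta_{\eta}^{2}\hkii \,=\, b_{2}\tfrac{B^{4}}{\Rep^{4}}\Delta_{\eta}^{2}(\chi|x|^{4}\phi_{2}) + b_{3}\tfrac{B^{5}}{\Rep^{5}}\Delta_{\eta}^{2}(\chi|x|^{5}\phi_{3}) + \sum_{\gamma\geq 4}\Delta_{\eta}^{2}\bigl(\chi H^{I}_{\gamma}(Bx/\Rep)\bigr).
\]
On the region $\{\chi\equiv 1\}$, every residual term $f=|x|^{\alpha}\phi_{\gamma}$ with $\alpha\in\{\gamma,\gamma+2\}$ satisfies $\Delta^{2}f=0$, hence $\Delta_{\eta}^{2}f = (\Delta_{\eta}^{2}-\Delta^{2})f$. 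Plugging the ALE expansions from Lemma \ref{espansioniALE} into $(\Delta_{\eta}^{2}-\Delta^{2}) = \Delta_{\eta}(\Delta_{\eta}-\Delta) + (\Delta_{\eta}-\Delta)\Delta$, the leading $|x|^{-2m}\Delta$ contribution cancels whenever it hits a harmonic factor, and a direct bookkeeping yields
\[
\bigl|(\Delta_{\eta}^{2}-\Delta^{2})(|x|^{\alpha}\phi_{\gamma})\bigr| \,\leq\, C\, |x|^{\alpha-4-2m}.
\]
In the fixed transition annulus $\{|x|\in[R_{0}/(2B),R_{0}/B]\}$, the commutator $[\Delta_{\eta}^{2},\chi]$ acts on data with bounded $C^{4}$-norm and produces a contribution of the same order.

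Finally, we would compute the weighted norm. For each residual the contribution at scale $R\in[R_{0},2\Rep/B]$ is dominated by
\[
R^{4-\delta}\bigl[|a_{\gamma}|(B/\Rep)^{\gamma}R^{\gamma-4-2m} + |b_{\gamma}|(B/\Rep)^{\gamma+2}R^{\gamma-2-2m}\bigr];
\]
since $\delta\in(4-2m,5-2m)$, optimizing over $R$ shows that both terms are bounded by $C(|a_{\gamma}|+|b_{\gamma}|)/\Rep^{\delta+2m}\leq C(|a_{\gamma}|+|b_{\gamma}|)/\Rep^{4}$. Summing over $\gamma\geq 2$ and controlling the Fourier coefficients by the $C^{4,\alpha}\times C^{2,\alpha}$-regularity of $(\tilde h^{(\dagger)},\tilde k^{(\dagger)})$ gives the claimed bound. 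The main subtlety, and the reason the refined construction of $\hkii$ was necessary in the first place, is precisely that without the corrections $v^{(0)}$, $v^{(2)}$, $u^{(3)}+v^{(3)}$ the naive residuals for $\gamma=0,2,3$ would contribute at orders $\Rep^{-2}$ or $\Rep^{-3}$, defeating the target rate $\Rep^{-4}$ that is needed to close the fixed-point argument.
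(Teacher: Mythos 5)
Your proposal is correct and follows essentially the same route as the paper's own proof: decompose $\hkii$ into Fourier modes, observe that the corrections $v^{(0)}$, $v^{(2)}$, $u^{(3)}+v^{(3)}$ were built precisely so that the low modes ($\gamma=0$ and the leading $|w|^{\gamma}$-parts of $\gamma=2,3$) combine into $\ker(\Delta_{\eta}^{2})$, then rewrite $\Delta_{\eta}^{2}\hkii$ as $(\Delta_{\eta}^{2}-\Delta^{2})$ applied to the surviving residuals and estimate via the ALE expansions of Lemma~\ref{espansioniALE}. The only minor imprecision, shared with the paper's own display, is in the final optimization over the radial scale: for the lowest surviving modes the sup is attained near $R=R_{0}$ and yields exactly $\Rep^{-4}$ rather than $\Rep^{-\delta-2m}$, but since this is precisely the rate the lemma claims, the conclusion is unaffected.
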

\begin{proof}
Using formula \eqref{eq:thki} we have 
\begin{align}
\Delta_{\eta}^{2} \hkii =& \Delta_{\eta}^{2}\st H_{\tilde{h},\tilde{k}}^{I}\st 0\dt+ \chi\st H_{\tilde{h},\tilde{k}}^{I}\st \frac{ B x}{\Rep} \dt- \chi H_{\tilde{h},\tilde{k}}^{I}\st 0\dt\dt\dt\\
&+\Delta_{\eta}^{2}\st \frac{\tilde{k}^{(0)}B^2}{4m \Rep^2}v^{(0)}  + \st \tilde{h}^{(2)}-\frac{\tilde{k}^{(2)}}{4(m+2)} \dt \frac{B^{2}v^{(2)}}{\Rep^2}+ \st \tilde{h}^{(3)}-\frac{\tilde{k}^{(3)}B^{3}}{4(m+3)}  \dt  \frac{u^{(3)}+v^{(3)}}{\Rep^3}\dt\\
=&\sq \Delta_{\eta}^{2}-\Delta^{2} \dq\st \frac{\tilde{k}^{(2)}}{4(m+2)}\chi\left|\frac{B  x}{\Rep}\right|^{4}  \phi_{2}  + \frac{\tilde{k}^{(\gamma)}}{4(m+3)}\chi\left|\frac{B  x}{\Rep}\right|^{5}  \phi_{3}  \dt\\
&+\sq \Delta_{\eta}^{2}-\Delta^{2} \dq\st   \chi\sum_{\gamma=4}^{+\infty}\left(\left(\tilde{h}^{(\gamma)}-\frac{\tilde{k}^{(\gamma)}}{4(m+\gamma)}  \dt \left|\frac{B x}{\Rep}\right|^{\gamma}+\frac{\tilde{k}^{(\gamma)}}{4(m+\gamma)}\left|\frac{B  x}{\Rep}\right|^{\gamma+2} \dt \phi_{\gamma}   \dt
\end{align}
and so we deduce that
\begin{equation}\left\|\Delta_{\eta}^{2} \hkii\right\|_{C^{0,\alpha}\st X_{\frac{R_0}{B}} \dt}\leq C\st \eta\dt \frac{\left\|\tilde{h}^{(\dagger)},\tilde{k}^{(\dagger)}\right\|_{\ambbda}}{\Rep^4}\,.\end{equation}
Now we estimate the quantity
\begin{equation}\sup_{\rho\in [R_0,\Rep]}\rho^{-\delta+4}\left\| \Le \hkii\right\|_{C^{0,\alpha}\st  B_{1}\setminus B_{\frac{1}{2}}  \dt}\,.\end{equation}
Using again formula \eqref{eq:thki}, we have  

\begin{align}
\Delta_{\eta}^{2} \hkii\st\rho w\dt =&  \sq \Delta_{\eta}^{2}-\Delta^{2} \dq\st \frac{\tilde{k}^{(2)}}{4(m+2)}\chi\left|\frac{B  x}{\Rep}\right|^{4}  \phi_{2}  + \frac{\tilde{k}^{(\gamma)}}{4(m+3)}\chi\left|\frac{B  x}{\Rep}\right|^{5}  \phi_{3}  \dt\\
&+\sq \Delta_{\eta}^{2}-\Delta^{2} \dq\st   \chi\sum_{\gamma=4}^{+\infty}\left(\left(\tilde{h}^{(\gamma)}-\frac{\tilde{k}^{(\gamma)}}{4(m+\gamma)}  \dt \left|\frac{B x}{\Rep}\right|^{\gamma}+\frac{\tilde{k}^{(\gamma)}}{4(m+\gamma)}\left|\frac{B  x}{\Rep}\right|^{\gamma+2} \dt \phi_{\gamma}   \dt\\
=& \frac{\left\|\tilde{h}^{(\dagger)},\tilde{k}^{(\dagger)}\right\|}{\Rep^4}\rho^{-2m}\mathcal{O}_{C^{0,\alpha}\st   B_{1}\setminus B_{\frac{1}{2}}  \dt}\st 1+\frac{\rho}{\Rep} \dt\,.
\end{align}

\noindent So we have 

\begin{equation}\sup_{\rho\in [R_0,\Rep]}\rho^{-\delta+4}\left\| \Delta_{\eta}^{2} \hkii  \right\|_{C^{0,\alpha}\st   B_{1}\setminus B_{\frac{1}{2}}   \dt}\leq C\st \eta \dt \frac{\left\|\tilde{h}^{(\dagger)},\tilde{k}^{(\dagger)}\right\|_{\ambbda}}{\Rep^{\delta+2m}}\,.\end{equation}

\end{proof}

\begin{lemma}
Let $\delta\in (4-2m,5-2m)$, then the following estimate holds: 
\begin{equation}\left\| \mathcal{T}\st 0, 0,0 \dt \right\|_{\Ccx}\leq C\st g,\eta \dt\varepsilon^{4}\Rep^{6-2m-\delta}\,.\end{equation}
\end{lemma}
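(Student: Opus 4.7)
The plan is to observe that $\mathcal{T}(0,0,0)=\mathbb{J}^{(\delta)}[\mathcal{R}]$, where $\mathcal{R}$ is the right-hand side of \eqref{eq: PDEmodello} evaluated at $f=0$ and $\textbf{H}_{0,0}^{I}=0$, and then to estimate $\mathcal{R}$ in $\Ccx$: by the boundedness of $\mathbb{J}^{(\delta)}$ given in Proposition \ref{isomorfismopesati} this will yield the stated control. The residual $\mathcal{R}$ naturally splits into three contributions: the constant piece $-2\varepsilon^{2}B^{4}\mathcal{E}_{\Rep}\mathfrak{s}_{\bg,\cg,\hg,\kg}$, the Taylor-tail piece $-\varepsilon^{-2}\mathcal{E}_{\Rep}\Delta_{\eta}^{2}\bigl(\sum_{k\geq 6}\chi\Psi_{k}(Bx/\varepsilon)\bigr)$, and the nonlinear piece $2B^{4}\mathcal{E}_{\Rep}\NN_{B^{2}\eta}(S)$, where $S$ denotes the skeleton \eqref{eq:scheletromod1}--\eqref{eq:scheletromod2}.

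The constant piece is the easiest: $|\mathfrak{s}|$ inherits a factor $\mathcal{O}(\varepsilon^{2m})$ from the analogue of Proposition \ref{crucialbase} on the base (it is fixed by the condition $\int\mathcal{R}\,d\mu_{\eta}=0$ inherent in the Fredholm setup), and the weighted norm of a constant supported on $X_{2\Rep/B}$ is of order $\Rep^{4-\delta}$, so this term is bounded by $\mathcal{O}(\varepsilon^{2m+2}\Rep^{4-\delta})$, strictly below the target. The dominant piece is the Taylor tail, and this is where the careful design of the skeleton is essential. By Proposition \ref{proprietacsck} one has $\Delta^{2}\Psi_{4}=-2s_{\omega}$ and $\Delta^{2}\Psi_{5}=0$; by Lemma \ref{espansioniALE} one can expand $\Delta_{\eta}^{2}=\Delta^{2}+|x|^{-2m}L_{4}+|x|^{-1-2m}L_{3}+\mathcal{O}(|x|^{-2-2m})L_{2}$ with $L_{j}$ differential operators of order $j$. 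The corrections $u_{4}+v_{4}$ and $u_{5}+v_{5}$ inside $V$, together with the explicit $\chi|x|^{4-2m}$ term in $S$, have been tailored precisely so that their bilaplacians cancel the $|x|^{-2m}$ and $|x|^{1-2m}$ residuals coming from $(\Delta_{\eta}^{2}-\Delta^{2})$ acting on the $\Psi_{4}$- and $\Psi_{5}$-pieces, as well as the leading bulk $|x|^{-2m}$-contribution produced by the pure bilaplacian of the $|x|^{4-2m}$ correction. After these cancellations, what remains is a residual of weighted size $\mathcal{O}(\varepsilon^{4}|x|^{2-2m})$ coming jointly from the bilaplacian of the true tail $\sum_{k\geq 6}\chi\Psi_{k}(Bx/\varepsilon)$ (dominated by the $k=6$ term, which gives $B^{6}\varepsilon^{4}|x|^{2}$ modified by the $|x|^{-2m}$ ALE correction) and from the next-order cross-terms $|x|^{-2-2m}L_{2}$ acting on the rescaled $\Psi_{4},\Psi_{5}$. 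Taking $\sup_{\rho\in[R_{0},\Rep]}\rho^{4-\delta}\cdot\rho^{2-2m}=\Rep^{6-2m-\delta}$ yields the factor $\varepsilon^{4}\Rep^{6-2m-\delta}$. Finally, the nonlinear piece $\NN_{B^{2}\eta}(S)$ is, by \eqref{eq:defQg}, a sum of products of at least two factors of $i\partial\overline{\partial}S$ or of $\Delta_{\eta}S$; since every such factor is of order $\varepsilon^{2}$ times a polynomial in $|x|$, the quadratic/higher nature produces a bound strictly better than the linear tail contribution, via an argument entirely parallel to the one used on the base in Lemma \ref{stimascheletrobase}.

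The main technical obstacle I anticipate is the bookkeeping of the \emph{double} expansion: one must expand simultaneously in powers of $B\varepsilon|x|$ (the Taylor series of $\psi_{\omega}$ near the singular point of the base) and in powers of $|x|^{-1}$ (the ALE expansion of $\Delta_{\eta}^{2}$ given by Lemma \ref{espansioniALE}), and track which cross-terms get cancelled by which piece of the skeleton. Absent the cancellations built into $V$ and the $\chi|x|^{4-2m}$ correction, the leading residual would be of order $\varepsilon^{2}|x|^{2-2m}$, two powers of $\varepsilon$ too large compared to the target; the nontrivial content of the proof is verifying that each of these leading residual terms is matched identically by the constructed correction, leaving only the desired $\mathcal{O}(\varepsilon^{4}|x|^{2-2m})$ tail.
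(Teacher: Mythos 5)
Your decomposition of $\mathcal{T}(0,0,0)=\mathbb{J}^{(\delta)}[\mathcal{R}]$ into the constant piece, the Taylor tail, and the nonlinear piece matches the paper's structure, and your treatment of the constant piece and the final weighted supremum computation $\sup_{\rho\in[R_0,\Rep]}\rho^{4-\delta}\cdot\varepsilon^4\rho^{2-2m}=\varepsilon^4\Rep^{6-2m-\delta}$ are both correct. However, your handling of the nonlinear piece contains a genuine gap that, as stated, would make the estimate fail by a factor of $\Rep^{2m}$.

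You claim that ``the quadratic/higher nature produces a bound strictly better than the linear tail contribution,'' i.e.\ that $\NN_{B^2\eta}(S)$ can be estimated independently and is subleading. This is false. Writing the skeleton as $S=\varepsilon^{-2}\chi\psi_\omega(B\varepsilon x)+\cdots$, the dominant term of $\NN_{B^2\,eucl}(S)$ is $\tfrac12\Delta\,\mathrm{tr}(i\dd S\circ i\dd S)$ with $i\dd S\sim\varepsilon^2 B^4\,i\dd\Psi_4=\mathcal{O}(\varepsilon^2|x|^2)$, so $\NN_{B^2\,eucl}(S)=\mathcal{O}(\varepsilon^4|x|^2)$ on the annulus. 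This is of \emph{exactly the same order} as the flat bilaplacian $\Delta^2\bigl(\varepsilon^4 B^6\chi\Psi_6\bigr)=\mathcal{O}(\varepsilon^4|x|^2)$ of the leading Taylor-tail term. Neither piece is $\mathcal{O}(\varepsilon^4|x|^{2-2m})$ on its own; if you bound them separately the best you can conclude is $\|\mathcal{R}\|_{\Ccx}\lesssim\varepsilon^4\Rep^{6-\delta}$, which overshoots the target by $\Rep^{2m}\to\infty$.

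What rescues the estimate is an exact cancellation that your proposal never invokes: the constant scalar curvature of the \emph{base} metric gives the algebraic identity
\begin{equation}
-\tfrac12\,\Delta^2\psi_\omega\;+\;\NN_{eucl}(\psi_\omega)\;=\;s_\omega\,,
\end{equation}
whose rescaled version kills the sum of the flat bilaplacian of the full Taylor series (including the tail $\sum_{k\geq6}$) and the Euclidean nonlinear term, leaving only the scalar curvature constant. The paper uses precisely this identity to rewrite the residual on the annulus as the sum of three \emph{differences},
\begin{align}
\mathcal{R}\;=\;&-\tfrac12\,\bigl[\Delta_\eta^2-\Delta^2\bigr]\Bigl(\sum_{k\geq6}\varepsilon^{k-2}B^k\chi\Psi_k\Bigr)\\
&+\bigl[\NN_{B^2\eta}(\mathfrak{P})-\NN_{B^2\eta}(\varepsilon^{-2}\chi\psi_\omega)\bigr]\\
&+\bigl[\NN_{B^2\eta}(\varepsilon^{-2}\chi\psi_\omega)-\NN_{B^2\,eucl}(\varepsilon^{-2}\chi\psi_\omega)\bigr]\,,
\end{align}
each of which is an ALE correction or a skeleton-correction difference, and each of which is genuinely $\mathcal{O}(\varepsilon^4|x|^{2-2m})$. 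Your parenthetical ``modified by the $|x|^{-2m}$ ALE correction'' gestures in this direction, but your explicit claim that the nonlinear piece is subleading contradicts it. You correctly observed that the analogous cancellation in Lemma~\ref{stimascheletrobase} comes from the scalar-flatness of $\eta_j$; here it is the dual identity, the constant scalar curvature of $\omega$, that plays the same role, and it must be invoked \emph{before} any separate estimate of the nonlinear term is attempted.
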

\begin{proof}
We will prove the lemma for the case $m\geq 3$, for the case $m=2$ the proof is identical. For the sake of notation, throughout this proof we will use the following convention
\begin{equation}
\mathfrak{P}:=\frac{1}{\varepsilon^{2}}\chi\psi_{\omega}\st \frac{B x}{\varepsilon} \dt+\frac{  \cga \st m-1 \dt s_{\omega}B^{4}\varepsilon^{2} }{2\st m-2 \dt   m\st m+1 \dt }\chi |x|^{4-2m}+V.
\end{equation} 
By formulas \eqref{eq:scheletromod1} and \eqref{eq:scheletromod2}, on $X_{ \frac{R_{0}}{B}  }$, we have    
\begin{align}
-\varepsilon^{2}B^{4}s_{\omega}-\frac{1}{2}\Delta_{\eta}^{2}\mathfrak{P}+\mathbb{N}_{B^{2}\eta}\st \mathfrak{P} \dt =&-\frac{1}{2}\Delta_{\eta}^{2}\st \sum_{k=6}^{+\infty}\frac{B^{k}}{\varepsilon^{k-2}}\chi\Psi_{k} \dt+\mathbb{N}_{B^{2}\eta}\st \mathfrak{P} \dt\\
=&\varepsilon^{4}\mathcal{O}_{C^{0,\alpha}\st X_{\frac{R_0}{B}} \dt}\st 1\dt
\end{align}
and so 
\begin{equation}\left\|-\varepsilon^{2}B^{4}s_{\omega}-\frac{1}{2}\Delta_{\eta}^{2}\mathfrak{P}+\mathbb{N}_{B^{2}\eta}\st \mathfrak{P} \dt\right\|_{C^{0,\alpha}\st X_{\frac{R_0}{B}} \dt} \leq C\st g, \eta \dt \varepsilon^4\,.\end{equation}

Now we estimate the weighted part of the norm. On $X_{\frac{\Rep}{B}}\setminus X_{\frac{R_0}{ 2 B}}$, using Proposition \ref{proprietacsck}, precisely the algebraic identity
\begin{equation}
-\frac{1}{2}\Delta \psi_{\omega}+\NN_{eucl}\st \psi_{\omega} \dt=s_{\omega}\,,
\end{equation}
we have
\begin{align}
-\varepsilon^{2}B^{4}s_{\omega}-\frac{1}{2}\Delta_{\eta}^{2}\mathfrak{P}+\NN _{B^{2}\eta}\st \mathfrak{P} \dt
=&-\frac{1}{2}\sq \Delta_{\eta}^{2}-\Delta^{2} \dq\st \sum_{k=6}^{+\infty}\frac{B^{k}}{\varepsilon^{k-2}}\chi\Psi_{k} \dt\\
&+\sq \NN_{B^{2}\eta}\st \mathfrak{P} \dt-\mathbb{N}_{B^{2}\eta}\st  \frac{1}{\varepsilon^{2}}\chi\psi_{\omega}\st \frac{B x}{\varepsilon} \dt  \dt\dq\\
&+\sq \NN_{B^{2}\eta}\st  \frac{1}{\varepsilon^{2}}\chi\psi_{\omega}\st \frac{B x}{\varepsilon} \dt  \dt-\NN _{B^{2}\,eucl}\st  \frac{1}{\varepsilon^{2}}\chi\psi_{\omega}\st \frac{B x}{\varepsilon} \dt  \dt\dq
\end{align}
 
And so, on $X_{\frac{\rho}{B}}\setminus X_{\frac{\rho}{ 2 B}}$ we have
\begin{align}		
-\varepsilon^{2}B^{4}s_{\omega}-\frac{1}{2}\Delta_{\eta}^{2}\mathfrak{P}+\NN_{B^{2}\eta}\st \mathfrak{P} \dt &=\varepsilon^4\rho^{2-2m}\mathcal{O}_{C^{0,\alpha}\st   B_{1}\setminus B_{\frac{1}{2}}   \dt}\st 1+\varepsilon\rho\dt 
\end{align}

\noindent and we can conclude 

\begin{equation}\sup_{\rho\in[R_0,\Rep]}\rho^{-\delta+4}\left\|-\varepsilon^{2}B^{4}s_{\omega}-\frac{1}{2}\Delta_{\eta}^{2}\mathfrak{P}+\NN_{B^{2}\eta}\st \mathfrak{P} \dt\right\|_{C^{0,\alpha}\st   B_{1}\setminus B_{\frac{1}{2}}   \dt} \leq C\st g, \eta \dt \varepsilon^4 \Rep^{6-2m-\delta}\,. \end{equation}

\end{proof}

\noindent  

We consider the subset of $C_{\delta}^{4,\alpha}\st X \dt$ with $\delta\in (4-2m,5-2m)$ 
\begin{equation}\left\| f \right\|_{C_{\delta}^{4,\alpha}\st X \dt}\leq 2\left\|\mathbb{J}^{(\delta)}\mathcal{E}_{\Rep}\Delta_{\eta}^{2}\hkii\right\|_{\Ccx}\end{equation}

and we study continuity properties of $\mathcal{T}$ on this domain.

\begin{lemma}
If  $\st\varepsilon^{2}\tilde{h}',\varepsilon^{2}\tilde{k}'\dt\in \dombd$, $f,f'\in  C_{\delta}^{4,\alpha}\st X \dt$ 

\begin{equation}\left\|f\right\|_{C_{\delta}^{4,\alpha}\st X \dt},\left\|f'\right\|_{C_{\delta}^{4,\alpha}\st X \dt}\leq  2\left\|\mathbb{J}^{(\delta)}\mathcal{E}_{\Rep}\Delta_{\eta}^{2}\hkii\right\|_{\Ccx} \end{equation}
and assumptions of Proposition \ref{crucialmodello} are satisfied, then the following estimates hold:

\begin{align}\left\|\mathcal{T}\st f,\tilde{h},\tilde{k} \dt - \mathcal{T}\st 0,0,0 \dt\right\|_{\Ccx}\,\leq&\, \frac{3}{2} \left\|\mathbb{J}^{(\delta)}\mathcal{E}_{\Rep}\Delta_{\eta}^{2}\hkii\right\|_{\Ccx}\\
\left\|\mathcal{T}\st f,\tilde{h},\tilde{k} \dt - \mathcal{T}\st f',\tilde{h},\tilde{k} \dt\right\|_{\Ccx}\,\leq&\, \frac{1}{2} \left\|f-f'\right\|_{C_{\delta}^{4,\alpha}\st X \dt}\\
\left\|\mathcal{T}\st f,\tilde{h},\tilde{k} \dt - \mathcal{T}\st f,\tilde{h}',\tilde{k}' \dt\right\|_{C_{\delta}^{4,\alpha}\st X \dt}\,\leq&\, \frac{1}{2} \left\|\tilde{h}-\tilde{h}',\tilde{k}-\tilde{k}'\right\|_{\ambbda}\,.
\end{align}

\end{lemma}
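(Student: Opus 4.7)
The three estimates are obtained by applying the bounded right inverse $\mathbb{J}^{(\delta)}$ from Proposition \ref{isomorfismopesati} and reducing each inequality to a bound on the data side in $C^{0,\alpha}_{\delta-4}(X)$. Since $\mathbb{J}^{(\delta)}$ and the truncation--extension $\mathcal{E}_{\Rep}$ are uniformly bounded linear operators, the problem reduces to controlling the corresponding differences of $\Delta_\eta^2\hkii$ and of $\mathbb{N}_{B^2\eta}(\cdots)$ in the weighted norm.

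For the first estimate, write $\mathcal{T}(f,\tilde h,\tilde k)-\mathcal{T}(0,0,0)=\mathbb{J}^{(\delta)}\mathcal{E}_{\Rep}$ applied to $-\Delta_\eta^2\hkii$ plus $2B^4\bigl[\mathbb{N}_{B^2\eta}(\mathfrak{P}+\hkii+f)-\mathbb{N}_{B^2\eta}(\mathfrak{P})\bigr]$, with $\mathfrak{P}$ denoting the ALE skeleton introduced in Proposition \ref{crucialmodello}. The $\Delta_\eta^2\hkii$ piece is bounded by exactly $\|\mathbb{J}^{(\delta)}\mathcal{E}_{\Rep}\Delta_\eta^2\hkii\|_{\Ccx}$. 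For the nonlinear piece one inserts and subtracts $\mathbb{N}_{B^2 eucl}$ to separate the Euclidean quadratic core from the curvature corrections of order $\mathcal{O}(|x|^{-2-2m})$ coming from Lemma \ref{espansioniALE}; because the increments $\hkii+f$ are small in the weighted norm (by the preceding lemma and the a priori bound on $f$), the quadratic structure of $\mathbb{N}$ ensures this difference is at most $\tfrac{1}{2}\|\mathbb{J}^{(\delta)}\mathcal{E}_{\Rep}\Delta_\eta^2\hkii\|_{\Ccx}$ for $\varepsilon$ small, thanks to the choices $\rep=\varepsilon^{(2m-1)/(2m+1)}$ and $\Rep=\rep/\varepsilon$. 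Adding the two contributions yields the factor $3/2$.

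For the second (Lipschitz in $f$) estimate, only the $\mathbb{N}_{B^2\eta}$ summand of $\mathcal{T}$ depends on $f$. Expanding $\mathbb{N}_{B^2\eta}(\mathfrak{P}+\hkii+f)-\mathbb{N}_{B^2\eta}(\mathfrak{P}+\hkii+f')$ via formula \eqref{eq:defQg}, each term is bilinear in $(i\partial\overline\partial f,\ i\partial\overline\partial (\mathfrak{P}+\hkii+f''))$ or of higher order via $\mathbb{R}_\omega$, hence is estimated in $C^{0,\alpha}_{\delta-4}(X)$ by $C\,\|f-f'\|_{C^{4,\alpha}_\delta(X)}$ times the $C^{4,\alpha}_\delta$-norm of the remaining ingredients. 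By construction these remaining ingredients carry factors of $\varepsilon^2\Rep^{4-\delta}\cdot \Rep^{-4}$, $\kappa\rep^\sigma$ and $\varepsilon^2$, all of which are small. Multiplying by the bounded operator norm of $\mathbb{J}^{(\delta)}\mathcal{E}_{\Rep}$ and absorbing constants produces the contraction factor $1/2$.

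For the third (Lipschitz in $(\tilde h,\tilde k)$) estimate, the dependence on the boundary data enters through $\hkii$ both in the linear term $\mathbb{J}^{(\delta)}\mathcal{E}_{\Rep}\Delta_\eta^2\hkii$ and inside $\mathbb{N}_{B^2\eta}$. The preceding lemma, applied to $(\tilde h-\tilde h',\tilde k-\tilde k')$, gives $\|\mathcal{E}_{\Rep}\Delta_\eta^2 \textbf{H}^I_{\tilde h-\tilde h',\tilde k-\tilde k'}\|_{\Ccx}\le C\Rep^{-4}\|\tilde h-\tilde h',\tilde k-\tilde k'\|_{\ambbda}$, which is $\le \tfrac14\|\tilde h-\tilde h',\tilde k-\tilde k'\|_{\ambbda}$ once $\varepsilon$ is small. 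For the nonlinear piece one proceeds as in step 2: the difference $\mathbb{N}_{B^2\eta}(\mathfrak P+\hkii+f)-\mathbb{N}_{B^2\eta}(\mathfrak P+\textbf{H}^I_{\tilde h',\tilde k'}+f)$ is linear in $\hkii-\textbf{H}^I_{\tilde h',\tilde k'}=\textbf{H}^I_{\tilde h-\tilde h',\tilde k-\tilde k'}$ times small factors coming from the skeleton and from $f$, and its weighted norm is bounded by $\tfrac14\|\tilde h-\tilde h',\tilde k-\tilde k'\|_{\ambbda}$. Summing the two contributions gives the constant $1/2$.

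The only non-routine point is bookkeeping: one must verify that with the specific choices $\rep=\varepsilon^{(2m-1)/(2m+1)}$, $\sigma$, $\beta$ fixed above and $\delta\in(4-2m,5-2m)$, all the small parameters appearing when one expands $\mathbb{N}_{B^2\eta}$ (namely powers of $\varepsilon$, $\kappa$, $\rep$ and $\Rep^{-1}$) combine to beat the $\rho^{-\delta+4}$ weight uniformly in $\rho\in[R_0,\Rep]$. This is exactly the computation carried out in \cite[Lemma 5.2]{ap2} and transplants verbatim to the present setting.
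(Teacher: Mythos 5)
Your sketch is correct and mirrors the paper's intended approach: the authors invoke the bounded right inverse $\mathbb{J}^{(\delta)}$ and the truncation--extension operator, decompose $\mathcal{T}(f,\tilde h,\tilde k)-\mathcal{T}(0,0,0)$ into the linear $\Delta_\eta^2\hkii$ piece plus the nonlinear difference $\mathbb{N}_{B^2\eta}(\mathfrak P+\hkii+f)-\mathbb{N}_{B^2\eta}(\mathfrak P)$, and then run Euclidean comparison and weight bookkeeping exactly as you describe (cf. the analogous argument for the base in \cite[Lemma 5.3]{ap2}). The only thing to note is that the paper's printed proof is just a one-line pointer to that reference; your sketch spells out precisely the computation being referenced, so it is a faithful expansion rather than a different route.
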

\begin{proof}
Follows by direct computations as \cite[Lemma 5.3]{ap2}
\end{proof}

\section{Data matching}

Now that we have the families of metrics on the base orbifold and on model spaces we want to glue them. To perform the data matching construction we will rescale all functions involved in such a way that functions on $X$ are  functions on the annulus $\overline{B_{1}}\setminus B_{\frac{1}{2}}$ and functions on $M$ are functions on the annulus $\overline{B_{2}}\setminus B_{1}$. The main technical tool we will use in this section is the ``Dirichet to Neumann'' map for biharmonic extensions that we introduce with the following Theorem whose proof can be found in \cite[Lemma 6.3]{ap1}.

\begin{teo} \label{dirneu}
The map
\begin{equation}\mathcal{P}:C^{4,\alpha} \st \Sp^{2m-1} \dt \times C^{2,\alpha} \st \Sp^{2m-1} \dt \rightarrow C^{3,\alpha} \st \Sp^{2m-1} \dt \times C^{1,\alpha} \st \Sp^{2m-1} \dt \end{equation} 
\begin{equation}\mathcal{P} \st h,k \dt = \st \partial_{|w|} \st H_{h,k}^{o}-H_{h,k}^{I} \dt ,\partial_{|w|}\Delta \st H_{h,k}^{o}-H_{h,k}^{I} \dt  \dt \end{equation}
is an isomorphism of Banach spaces with inverse $\mathcal{Q}$.
\end{teo}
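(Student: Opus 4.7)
The strategy is to exploit the explicit Fourier decomposition of $H^{o}_{h,k}$ and $H^{I}_{h,k}$ along the eigenfunctions $\{\phi_\gamma\}$ of $-\Delta_{\Sp^{2m-1}}$ displayed in \eqref{eq:bihout3}, \eqref{eq:bihout2}, and in the expansion for $H^{I}_{h,k}$. Since both extensions are block-diagonal mode by mode, the map $\mathcal{P}$ preserves the Fourier decomposition: its restriction to the $\phi_\gamma$-eigenspace is a linear endomorphism $M_\gamma$ of $\RR^{2}$ acting on the coefficients $(h^{(\gamma)}, k^{(\gamma)})$. The proof thus reduces to three steps: computing $M_\gamma$, checking its uniform invertibility in $\gamma$, and promoting these mode-by-mode statements to the asserted Banach-space isomorphism.

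For the first two steps I would differentiate the explicit expansions using
\begin{equation*}
\Delta\st r^{\alpha} \phi_\gamma \dt \,=\, (\alpha - \gamma)(\alpha + \gamma + 2m - 2)\, r^{\alpha - 2}\, \phi_\gamma\,,
\end{equation*}
which, applied to the four building blocks $r^{2-2m-\gamma}, r^{4-2m-\gamma}, r^{\gamma}, r^{\gamma+2}$, yields for $m\geq 3$ (or $m=2, \gamma\geq 1$)
\begin{equation*}
M_\gamma \,=\, \begin{pmatrix} -2(m+\gamma-1) & -\tfrac{1}{2}\st \tfrac{1}{m+\gamma-2} + \tfrac{1}{m+\gamma}\dt \\ 0 & -2(m+\gamma-1) \end{pmatrix}\,.
\end{equation*}
This matrix is lower triangular with determinant $4(m+\gamma-1)^{2}>0$; the special case $m=2$, $\gamma=0$ is handled separately using the $\log$ term in \eqref{eq:bihout2} and again yields an invertible lower triangular matrix with nonzero determinant. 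Inverting $M_\gamma$ produces explicit formulas for $M_\gamma^{-1}$ whose entries are $O\st\gamma^{-1}\dt$ as $\gamma\to\infty$, precisely consistent with gaining one derivative from $C^{3,\alpha}\times C^{1,\alpha}$ to $C^{4,\alpha}\times C^{2,\alpha}$.

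Continuity of $\mathcal{P}$ follows from classical Schauder theory for the biharmonic Dirichlet problem on $B_{1}$ and on $\CC^{m}\setminus\overline{B_{1}}$ (with decay at infinity), combined with the trace theorem. The main technical point is establishing continuity of $\mathcal{Q}$, since the mode-by-mode inversion does not by itself yield H\"older bounds. My plan is to realize $\mathcal{Q}$ as the solution operator of a biharmonic \emph{transmission problem}: given $(\mu,\nu)\in C^{3,\alpha}\times C^{1,\alpha}$, seek a function $U$ on $\CC^{m}\setminus\{0\}$, biharmonic on $B_{1}$ and on $\CC^{m}\setminus\overline{B_{1}}$ with the appropriate decay at infinity, whose trace and the trace of its Laplacian are continuous across $\partial B_{1}$, and whose normal derivatives have jumps $[\partial_\rho U]_{\partial B_{1}} = \mu$ and $[\partial_\rho \Delta U]_{\partial B_{1}} = \nu$. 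Existence and uniqueness of $U$ follow at once from the mode-by-mode invertibility of $M_\gamma$, while the classical Agmon-Douglis-Nirenberg Schauder regularity for fourth-order transmission problems with smooth interface gives
\begin{equation*}
\left\|U\right\|_{C^{4,\alpha}\st \overline{B_{1}}\dt} + \left\|U\right\|_{C^{4,\alpha}\st\overline{B_{2}}\setminus B_{1}\dt} \,\leq\, C \st \left\|\mu\right\|_{C^{3,\alpha}\st\Sp^{2m-1}\dt} + \left\|\nu\right\|_{C^{1,\alpha}\st\Sp^{2m-1}\dt} \dt\,.
\end{equation*}
Setting $h := U|_{\partial B_{1}}$ and $k := \Delta U|_{\partial B_{1}}$ simultaneously gives $(h,k)\in C^{4,\alpha}\st\Sp^{2m-1}\dt\times C^{2,\alpha}\st\Sp^{2m-1}\dt$ with the required bound and $\mathcal{P}(h,k) = (\mu,\nu)$ by construction, so $\mathcal{Q}$ is well-defined, bounded, and inverse to $\mathcal{P}$. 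The main obstacle is the rigorous invocation of Schauder regularity in the transmission framework; although by now classical for fourth-order elliptic systems, it requires some care at the interface $\partial B_{1}$. An alternative route would identify $\mathcal{Q}$ with a pseudodifferential operator of order $-1$ on $\Sp^{2m-1}$ directly from the spectral formulas above, whose H\"older-Banach continuity is standard.
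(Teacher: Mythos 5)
Your proof is correct and follows the same strategy as the cited reference (Arezzo--Pacard, Lemma~6.3): decompose into spherical harmonics, compute the $2\times 2$ mode matrix $M_\gamma$, observe it is triangular with determinant $4(m+\gamma-1)^2$ because $\partial_{|w|}\Delta\st H^o_{h,k}-H^I_{h,k}\dt$ depends only on $k^{(\gamma)}$, and promote the mode-by-mode inversion (a Fourier multiplier of order $-1$) to H\"older continuity via elliptic regularity. One cosmetic slip: the matrix you display has its nonzero off-diagonal entry in position $(1,2)$, so it is \emph{upper}, not lower, triangular; the determinant and invertibility conclusion are unaffected.
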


\begin{proof}[Proof of Theorem \ref{belliebrutti}]: 
we give details for the case $m\geq 3$ and for $m=2$ the proof is exactly the same.   We take the families of metrics constructed in Proposition \ref{famigliabaseap2} (\ref{sipuntibuoni}) and Proposition \ref{famigliamodelloap2}.  We consider K\"ahler potentials $\mathcal{V}^{o}_{j,\ag,\bf{0},\cg,\hg,\kg}$,$\mathcal{V}^{o}_{l,\ag,\bf{0},\cg,\hg,\kg}$  of $\omega_{\ag,\bf{0},\cg,\hg,\kg}$ at the annulus $\overline{B_{2\rep}\st p_{j} \dt}\setminus B_{\rep}\st p_{j} \dt$ respectively $\overline{B_{2\rep}\st q_{l} \dt}\setminus B_{\rep}\st q_{l} \dt$ under the homothety
\begin{equation}
z=\rep w\,.
\end{equation} 

We consider the  K\"ahler potential $\mathcal{V}^{I}_{j,\mathfrak{b}_{j},\tilde{h}_{j},\tilde{k}_{j}}$  of $\varepsilon^{2}\mathfrak{b}^{2}\eta_{j,\mathfrak{b}_{j},\tilde{h}_{j},\tilde{k}_{j}}$ (with $\mathfrak{b}_{j}\in \RR^{+}$ to be determined later) at the annulus $\overline{X_{\Gamma_{j},\frac{\Rep}{\mathfrak{b}_{j}}}}\setminus X_{\Gamma_{j},\frac{\Rep}{2\mathfrak{b}_{j}}}$ under the homothety

\begin{equation}
x=\frac{\Rep w}{\mathfrak{b}_{j}}\,.
\end{equation} 
and the  K\"ahler potential $\mathcal{V}^{I}_{l,\mathfrak{a}_{l},\tilde{h}_{N+l},\tilde{k}_{N+l}}$  of $\varepsilon^{2}\mathfrak{a}^{2}\theta_{l,\mathfrak{a}_{l},\tilde{h}_{N+l},\tilde{k}_{N+l}}$ (with $\mathfrak{a}_{l}\in \RR^{+}$ to be determined later) at the annulus $\overline{Y_{\Gamma_{N+l},\frac{\Rep}{\mathfrak{a}_{l}}}}\setminus Y_{\Gamma_{N+l},\frac{\Rep}{2\mathfrak{a}_{l}}}$ under the homothety
\begin{equation}
x=\frac{\Rep w}{\mathfrak{a}_{l}}\,.
\end{equation}

For the sake of clearness we set
\begin{equation}
A_{l}:=\frac{|\Gamma_{N+l}|a_{l}}{8\st m-2 \dt\st m-1 \dt|\Sp^{2m-1}|}
\end{equation}
and we point out in table \ref{tab2bis} the various  growth in $\varepsilon$ of various expansions.  Since $\varepsilon^{2m-2}\rep^{4-2m}=\rep^{\gamma}$ and $\gamma<4$ and the size of $\st \hg,\kg \dt$ is $\rep^{4}$ one cannot recover terms of size $\varepsilon^{2m-2}\rep^{4-2m}$ as boundary conditions for the biharmonic extensions, hence we are forced to define   
\begin{align}
\egal \mathfrak{a}_{l}^{2m-2}\varepsilon^{2}\Rep^{4-2m}|w|^{2-2m}=&  \frac{\egal A_{l}\varepsilon^{2m-2}\rep^{4-2m}}{|\egal|}|w|^{4-2m}\\
&{+\frac{k_{N+l}^{(0)}}{4m-8}|w|^{4-2m}}\\
&{  +   \frac{\egal\hat{f}_{l,\ag,\bf{0},\cg,\hg,\kg}^{o}A_{l}\rep^{4-2m}}{|\egal|}    |w|^{4-2m} }
\end{align}

(where $ \hat{f}_{l,\ag,\bf{0},\cg,\hg,\kg}^{o}$ is the coefficient of the leading asymptotic of the expansion of $ \hat{f}_{\ag,\bf{0},\cg,\hg,\kg}^{o}$ near the point $q_l$)  that implies 
\begin{equation}\label{eq:sceltaa}
\mathfrak{a}_{l}^{2m-2}=\frac{A_{l}\st 1 + \hat{f}_{l,\ag,\bf{0},\cg,\hg,\kg}^{o}\varepsilon^{2-2m} \dt}{|\egal|}+ \frac{k_{N+l}^{(0)}}{4\st m-2\dt \egal\varepsilon^{2}\Rep^{4-2m}} 
\end{equation}
and moreover we have to define
\begin{equation}\label{eq:sceltac}
c_{j}=\frac{2\st m-1 \dt|\Gamma_{j}|\rep^{2m-4}}{|\Sp^{2m-1}|\varepsilon^{2m-2}}k_{j}^{(0)}\,.
\end{equation} 
Note that with this choice we regained the mean of the functions $\kg$.  After this choice of $\mathfrak{a}_{l}$'s and $c_{j}$'s we can summarize the orders of dependence on $\varepsilon$ of various terms on table \ref{tab2bis}. 

The data matching procedure works exactly as performed in \cite{ap1} and \cite{ap2} and the proof of Theorem \ref{belliebrutti} is complete.  
\end{proof}

\noindent If there are no points $\q$  the situation is more delicate, indeed if we try to use Propositions \ref{famigliabaseap2}, Proposition \ref{famigliamodelloap2} and then  apply the matching procedure as \cite{ap2} we can't regain the degree of freedom lost taking  $\kg^{(\dagger)}$ (functions on the unit sphere with zero mean) instead of  $\kg$. This difficulty occurs because at  points $\p$ there is no asymptotic $|z|^{4-2m}$ to perturb as we did above and the final argument of matching doesn't work.  If we don't require the vanishing of means for the boundary conditions then the estimate \eqref{eq:errore} of Proposition \ref{famigliabaseap2} worsen and become
\begin{equation}\label{eq:errore2}
\left\|f_{\bg,\cg,\hg,\kg}^{o}\right\|_{C_{\delta}^{4,\alpha}\st M_{\p } \dt}\leq C\st  \kappa\dt \rep^{2m}\,.
\end{equation}
This translates to the fact that biharmonic extensions don't control all the terms that aren't matched perfectly (e.g. last row in table \ref{tab2}) because of the dependence on $\kappa$ of constant of estimate \eqref{eq:errore2} and again the final argument of matching doesn't work. Our refined construction for families of metrics on the base manifold (Proposition \ref{crucialbase}) and on model spaces (Proposition \ref{crucialmodello}) let us to overcome these difficulties and now we ilustrate the matching procedure in the case $\q=\emptyset$ in order to complete the proof of Theorem \ref{maintheorem}.

\newpage
\thispagestyle{empty}
\newgeometry{left=1cm,right=1cm,bottom=1cm, top=1cm}

\landscape

\begin{table}
\centering
\caption{\textbf{Sizes in $\varepsilon$ before ``by hand'' identifications}}\label{tab2bis}
\begin{tabular}{p{1.6	cm}||p{5cm}|p{5cm}||p{7.8cm}|p{5cm}}
\hline

{\scriptsize}&{\scriptsize}&{\scriptsize}&{\scriptsize}&{\scriptsize}\\

{\scriptsize {Size in $\varepsilon$ } }
&
{\scriptsize  $\mathcal{V}^{o}_{j,\ag,\bg,\cg,\hg,\kg}\st \rep w \dt$    }
&
{\scriptsize  $\mathcal{V}^{I}_{j,\mathfrak{b}_{j},\tilde{h}_{j},\tilde{k}_{j}}\st \frac{\Rep w}{\mathfrak{b}_{j}} \dt$   }
&
{\scriptsize    $\mathcal{V}^{o}_{l,\ag,\bf{0},\cg,\hg,\kg}\st \rep w \dt$   }
&
{\scriptsize  $\mathcal{V}^{I}_{l,\mathfrak{a}_{l},\tilde{h}_{N+l},\tilde{k}_{N+l}}\st \frac{\Rep w}{\mathfrak{a}_{l}} \dt$  } \\ 

\hline
\hline
{\scriptsize}&{\scriptsize}&{\scriptsize}&{\scriptsize}&{\scriptsize}\\

{\scriptsize \textbf{ $\mathcal{O}\st \rep^{2} \dt$} }&{\scriptsize  $\frac{\rep^{2}|w|^{2}}{2}$ }&{\scriptsize  $\frac{\varepsilon^{2}\Rep^{2}|w|^{2}}{2}$ }&{\scriptsize $\frac{\rep^{2}|w|^{2}}{2}$ }&{\scriptsize $\frac{\varepsilon^{2}\Rep^{2}|w|^{2}}{2}$}\\

{\scriptsize}&{\scriptsize}&{\scriptsize}&{\scriptsize}&{\scriptsize}\\
\hline
{\scriptsize}&{\scriptsize}&{\scriptsize}&{\scriptsize}&{\scriptsize}\\

{\scriptsize  \textbf{  $\mathcal{O}\st\frac{\varepsilon^{2m-2}}{\rep^{2m-4}}\dt$ }}
&
{\scriptsize  $\frac{c_{j}\varepsilon^{2m-2}}{\rep^{2m-4}}|w|^{4-2m}$ }
&{\scriptsize}
&
{\scriptsize  $\frac{\egal A_{l}\varepsilon^{2m}\rep^{4-2m}}{|\egal|}|w|^{4-2m}$ $+\frac{\egal\hat{f}_{l,\ag,\bf{0},\cg,\hg,\kg}^{o}A_{l}\rep^{4-2m}}{|\egal|}    |w|^{4-2m}$  }  
&
{\scriptsize $\frac{\egal \mathfrak{a}_{l}^{2m-2}\varepsilon^{2}}{\Rep^{2m-4}}|w|^{2-2m}$}\\

{\scriptsize}&{\scriptsize}&{\scriptsize}&{\scriptsize}&{\scriptsize}\\
\hline
{\scriptsize}&{\scriptsize}&{\scriptsize}&{\scriptsize}&{\scriptsize}\\

{\scriptsize \textbf{$\mathcal{O}\st \rep^{4} \dt$}  }
&
{\scriptsize  $H_{h_{j},k_{j}}^{o}\st w \dt$       $+\frac{k_{j}^{(0)}}{4m-8}|w|^{4-2m} $ $+\psi_{\omega}\st \rep w \dt$        }
&
{\scriptsize  $\varepsilon^{2}H_{\tilde{h}_{j},\tilde{k}_{j}}^{I}\st w \dt$}
&
{\scriptsize $H_{h_{N+l},k_{N+l}}^{o}\st w \dt$ $+\frac{k_{N+l}^{(0)}}{4m-8}|w|^{4-2m} $ $+\psi_{\omega}\st \rep w \dt$ }&{\scriptsize $\varepsilon^{2}H_{\tilde{h}_{N+l},\tilde{k}_{N+l}}^{I}\st w \dt$ }\\

{\scriptsize}&{\scriptsize}&{\scriptsize}&{\scriptsize}&{\scriptsize}\\
\hline
{\scriptsize}&{\scriptsize}&{\scriptsize}&{\scriptsize}&{\scriptsize}\\

{\scriptsize \textbf{$o\st\rep^{4}\dt$ }  }
&
{\scriptsize    $\sq\varepsilon^{2m-2}{\bf{G}}_{\ag,\bf{0},\cg}\st \rep w \dt+\frac{k_{j}^{(0)}}{4m-8}|w|^{4-2m} \dq$ 	$ + f_{\ag,\bf{0},\cg,\hg,\kg}^{o}$}
&
{\scriptsize  $ - \frac{\cgaj  \mathfrak{b}_{j}^{2m}\varepsilon^{2}}{\Rep^{2m-2}}|w|^{2-2m}$$+\mathfrak{b}_{j}^{2}\varepsilon^{2}\psi_{\eta_{j}}\st \frac{\Rep w}{\mathfrak{b}_{j}} \dt$  $+\varepsilon^{2} f_{j,\mathfrak{b}_{j},\tilde{h}_{j},\tilde{k}_{j}}^{I}\st \frac{\Rep w}{\mathfrak{b}_{j}} \dt$  }
&
{\scriptsize   $\sq \varepsilon^{2m-2}{\bf{G}}_{\ag,\bf{0},\cg}\st \rep w \dt - \frac{\egal A_{l}\varepsilon^{2m}\rep^{4-2m}}{|\egal|}|w|^{4-2m} \dq$                        $+\sq f_{\ag,\bf{0},\cg,\hg,\kg}^{o}-\frac{\egal\hat{f}_{l,\ag,\bf{0},\cg,\hg,\kg}^{o}A_{l}\rep^{4-2m}}{|\egal|}    |w|^{4-2m} \dq$                    }
&
{\scriptsize  $\frac{\cgal  \mathfrak{a}_{l}^{2m}\varepsilon^{2}}{\Rep^{2m-2}}|w|^{2-2m}$$+\mathfrak{a}_{l}^{2}\varepsilon^{2}\psi_{\theta_{l}}\st \frac{\Rep w}{\mathfrak{a}_{l}} \dt$ $+\varepsilon^{2} f_{l,\mathfrak{a}_{l},\tilde{h}_{N+l},\tilde{k}_{N+l}}^{I}\st \frac{\Rep w}{\mathfrak{a}_{l}} \dt$ }\\

{\scriptsize}&{\scriptsize}&{\scriptsize}&{\scriptsize}&{\scriptsize}\\

\hline 
\end{tabular}
\end{table}

\begin{table}
\centering
\caption{\textbf{Sizes in $\varepsilon$ after ``by hand'' identifications}}\label{tab2}
\begin{tabular}{p{1.6	cm}||p{5cm}|p{5cm}||p{7.8cm}|p{5cm}} 
\hline

{\scriptsize}&{\scriptsize}&{\scriptsize}&{\scriptsize}&{\scriptsize}\\

{\scriptsize  {Size in $\varepsilon$ } }
&
{\scriptsize  $\mathcal{V}^{o}_{j,\ag,\bg,\cg,\hg,\kg}\st \rep w \dt$    }
	&
{\scriptsize  $\mathcal{V}^{I}_{j,\mathfrak{b}_{j},\tilde{h}_{j},\tilde{k}_{j}}\st \frac{\Rep w}{\mathfrak{b}_{j}} \dt$   }
&
{\scriptsize    $\mathcal{V}^{o}_{l,\ag,\bf{0},\cg,\hg,\kg}\st \rep w \dt$   }
&
{\scriptsize  $\mathcal{V}^{I}_{l,\mathfrak{a}_{l},\tilde{h}_{N+l},\tilde{k}_{N+l}}\st \frac{\Rep w}{\mathfrak{a}_{l}} \dt$  } \\

\hline
\hline
{\scriptsize}&{\scriptsize}&{\scriptsize}&{\scriptsize}&{\scriptsize}\\

{\scriptsize \textbf{ $\mathcal{O}\st \rep^{2} \dt$} }
&
{\scriptsize  $\frac{\rep^{2}|w|^{2}}{2}$ }
&
{\scriptsize  $\frac{\varepsilon^{2}\Rep^{2}|w|^{2}}{2}$ }
&
{\scriptsize $\frac{\rep^{2}|w|^{2}}{2}$ }
&
{\scriptsize $\frac{\varepsilon^{2}\Rep^{2}|w|^{2}}{2}$}\\

{\scriptsize}&{\scriptsize}&{\scriptsize}&{\scriptsize}&{\scriptsize}\\
\hline
{\scriptsize}&{\scriptsize}&{\scriptsize}&{\scriptsize}&{\scriptsize}\\

{\scriptsize  \textbf{  $\mathcal{O}\st\frac{\varepsilon^{2m-2}}{\rep^{2m-4}}\dt$ }}
&
{\scriptsize }
&
{\scriptsize}
&
{\scriptsize $\frac{\egal A_{l}\varepsilon^{2m}\rep^{4-2m}}{|\egal|}|w|^{4-2m}$$+\frac{k_{N+l}^{(0)}}{4m-8}|w|^{4-2m}$  $+\frac{\egal\hat{f}_{l,\ag,\bf{0},\cg,\hg,\kg}^{o}A_{l}\rep^{4-2m}}{|\egal|}    |w|^{4-2m}$  }
&
{\scriptsize $\frac{\egal \mathfrak{a}_{l}^{2m-2}\varepsilon^{2}}{\Rep^{2m-4}}|w|^{2-2m}$}\\

{\scriptsize}&{\scriptsize}&{\scriptsize}&{\scriptsize}&{\scriptsize}\\
\hline
{\scriptsize}&{\scriptsize}&{\scriptsize}&{\scriptsize}&{\scriptsize}\\

{\scriptsize \textbf{$\mathcal{O}\st \rep^{4} \dt$}  }
&
{\scriptsize  $H_{h_{j},k_{j}}^{o}\st w \dt$ $+\psi_{\omega}\st \rep w \dt$ }
&
{\scriptsize  $\varepsilon^{2}H_{\tilde{h}_{j},\tilde{k}_{j}}^{I}\st w \dt$}
&
{\scriptsize $H_{h_{N+l},k_{N+l}}^{o}\st w \dt$ $+\psi_{\omega}\st \rep w \dt$ }
&
{\scriptsize $\varepsilon^{2}H_{\tilde{h}_{N+l},\tilde{k}_{N+l}}^{I}\st w \dt$ }\\

{\scriptsize}&{\scriptsize}&{\scriptsize}&{\scriptsize}&{\scriptsize}\\
\hline
{\scriptsize}&{\scriptsize}&{\scriptsize}&{\scriptsize}&{\scriptsize}\\

{\scriptsize \textbf{$o\st\rep^{4}\dt$ }  }
&
{\scriptsize    $\sq\varepsilon^{2m-2}{\bf{G}}_{\ag,\bf{0},\cg}\st \rep w \dt+\frac{k_{j}^{(0)}}{4m-8}|w|^{4-2m} \dq$ 	$ + f_{\ag,\bf{0},\cg,\hg,\kg}^{o}$}
&
{\scriptsize  $ - \frac{\cgaj  \mathfrak{b}_{j}^{2m}\varepsilon^{2}}{\Rep^{2m-2}}|w|^{2-2m}$$+\mathfrak{b}_{j}^{2}\varepsilon^{2}\psi_{\eta_{j}}\st \frac{\Rep w}{\mathfrak{b}_{j}} \dt$  $+\varepsilon^{2} f_{j,\mathfrak{b}_{j},\tilde{h}_{j},\tilde{k}_{j}}^{I}\st \frac{\Rep w}{\mathfrak{b}_{j}} \dt$  }
&
{\scriptsize   $\sq \varepsilon^{2m-2}{\bf{G}}_{\ag,\bf{0},\cg}\st \rep w \dt - \frac{\egal A_{l}\varepsilon^{2m}\rep^{4-2m}}{|\egal|}|w|^{4-2m} \dq$                        $+\sq f_{\ag,\bf{0},\cg,\hg,\kg}^{o}-\frac{\egal\hat{f}_{l,\ag,\bf{0},\cg,\hg,\kg}^{o}A_{l}\rep^{4-2m}}{|\egal|}    |w|^{4-2m} \dq$                    }
&
{\scriptsize  $\frac{\cgal  \mathfrak{a}_{l}^{2m}\varepsilon^{2}}{\Rep^{2m-2}}|w|^{2-2m}$$+\mathfrak{a}_{l}^{2}\varepsilon^{2}\psi_{\theta_{l}}\st \frac{\Rep w}{\mathfrak{a}_{l}} \dt$ $+\varepsilon^{2} f_{l,\mathfrak{a}_{l},\tilde{h}_{N+l},\tilde{k}_{N+l}}^{I}\st \frac{\Rep w}{\mathfrak{a}_{l}} \dt$ }\\

{\scriptsize}&{\scriptsize}&{\scriptsize}&{\scriptsize}&{\scriptsize}\\

\hline 
\end{tabular}
\end{table}

\endlandscape

\restoregeometry

\begin{proof}[Proof of Theorem \ref{maintheorem}]:  We consider the radial and non radial part of K\"ahler potential $\mathcal{V}^{o}_{j,\bg,\cg,\hg,\kg}$  of $\omega_{\bg,\cg,\hg,\kg}$ at the annulus $\overline{B_{2\rep}\st p_{j} \dt}\setminus B_{\rep}\st p_{j} \dt$ under the homothety
\begin{equation}
z=\rep w\,.
\end{equation}

We consider also the radial and non radial part of K\"ahler potential $\mathcal{V}^{I}_{j,\mathfrak{b}_{j},\tilde{h}_{j},\tilde{k}_{j}}$  of $\varepsilon^{2}\mathfrak{b}^{2}\eta_{j,\mathfrak{b}_{j},\tilde{h}_{j},\tilde{k}_{j}}$ at the annulus $\overline{X_{j,\frac{\Rep}{\mathfrak{b}_{j}}}}\setminus X_{j,\frac{\Rep}{2\mathfrak{b}_{j}}}$ under the homothety
\begin{equation}
x=\frac{\Rep w}{\mathfrak{b}_{j}}\,.
\end{equation}
As in the previous case we cannot recover the terms in the second row of table \ref{tab1bis} as boundary conditions for the biharmonic extensions and we are forced to have
\begin{align}
\cgaj  \mathfrak{b}_{j}^{2m}\varepsilon^{2}\Rep^{2-2m}|w|^{2-2m}=&\cgaj  B_{j}^{2m}\varepsilon^{2m}\rep^{2-2m}|w|^{2-2m}\\
&+\st h_{j}^{(0)}+\frac{k_{j}^{(0)}}{4m-8}\dt|w|^{2-2m}\\
&-\hat{f}_{j,\bg,\cg,\hg,\kg}^{o}\cgaj B_{j}^{2m} \rep^{2-2m}|w|^{2-2m}\,,
\end{align}

\begin{align}
\frac{  \cga \st m-1 \dt s_{\omega}\mathfrak{b}^{2m}\varepsilon^{4}\Rep^{4-2m} }{2\st m-2 \dt   m\st m+1 \dt } |w|^{4-2m}=& -  C_{j}\varepsilon^{2m}\rep^{4-2m}|w|^{4-2m}\\
&-\frac{k_{j}^{(0)}}{4m-8}|w|^{4-2m}\\
&+\hat{f}_{j,\bg,\cg,\hg,\kg}^{o}C_{j}\rep^{4-2m}|w|^{4-2m}\,,
\end{align}



where $\hat{f}_{j,\bg,\cg,\hg,\kg}^{o}$ is the coefficient of the leading asymptotic of the expansion of $\hat{f}_{\bg,\cg,\hg,\kg}^{o}$near the point $p_j$. We recall that coefficients $B_{j}$ and $C_{j}$ are defined in Section \ref{nonlinbase} respectively by equations \eqref{eq:Bj} and \eqref{eq:Cj}. Conditions above force us to set: 

\begin{equation}\label{eq:sceltaB}
\mathfrak{b}_{j}^{2m}=B_{j}^{2m}\st 1-	\frac{\hat{f}_{j,\bg,\cg,\hg,\kg}^{o} }{  \varepsilon^{2m}} \dt+\frac{1}{ \cgaj  }\st h_{j}^{(0)}+\frac{k_{j}^{(0)}}{4m-8}\dt\frac{\rep^{2m-2}}{\varepsilon^{2m}}
\end{equation}


\begin{align}\label{eq:sceltaC}
C_{j}=&-\frac{1}{2\st m-2 \dt\st  \varepsilon^{2m}-\hat{f}_{j,\bg,\cg,\hg,\kg}^{o} \dt}\st \frac{  \cga \st m-1 \dt s_{\omega}\mathfrak{b}^{2m}\varepsilon^{4}\Rep^{4-2m} }{  m\st m+1 \dt }+k_{j}^{(0)} \dt
\end{align}

\begin{equation}\label{eq:sceltagamma}
c_{j}=s_{\omega}b_{j}
\end{equation}

\noindent This explains the importance of assumption \eqref{eq:tuning} in Theorem \ref{maintheorem}.

\newpage
\thispagestyle{empty}
\newgeometry{left=2.5cm,right=1cm,bottom=1cm, top=1cm}

\landscape

\begin{table}
\centering
\caption{\textbf{Sizes in $\varepsilon$ before ``by hand'' identifications}}\label{tab1bis}
\begin{tabular}{p{4cm}|| p{6.3cm}| p{4.5cm}|| p{4.2cm}| p{4.5cm} } 
\hline

{\scriptsize}&{\scriptsize}&{\scriptsize}&{\scriptsize}&{\scriptsize}\\

{\scriptsize {Size in $\varepsilon$ } }
&
{\scriptsize   $\sq\mathcal{V}^{o}_{j,\bg,\cg,\hg,\kg}\st \rep w \dt\dq^{(0)}$   }
&
{\scriptsize   $\sq\mathcal{V}^{I}_{j,\mathfrak{b}_{j},\tilde{h}_{j},\tilde{k}_{j}}\st \frac{\Rep w}{\mathfrak{b}_{j}} \dt\dq^{(0)}$  }
&
{\scriptsize   $\sq\mathcal{V}^{o}_{j,\bg,\cg,\hg,\kg}\st \rep w \dt\dq^{(\dagger)}$    }
&
{\scriptsize   $\sq\mathcal{V}^{I}_{j,\mathfrak{b}_{j},\tilde{h}_{j},\tilde{k}_{j}}\st \frac{\Rep w}{\mathfrak{b}_{j}} \dt\dq^{(\dagger)}$  } \\ 

{\scriptsize}&{\scriptsize}&{\scriptsize}&{\scriptsize}&{\scriptsize}\\
\hline
\hline
{\scriptsize}&{\scriptsize}&{\scriptsize}&{\scriptsize}&{\scriptsize}\\

{\scriptsize \textbf{$o\st \rep^{2} \dt$ for first two columns, $o\st \rep^{4} \dt$ for third and fourth columns} }
&
{\scriptsize ${   \frac{\rep^{2}|w|^{2}}{2}+ \psi_{\omega}^{(0)}\st \rep w \dt}$ }
&
{\scriptsize ${  \frac{\varepsilon^{2}\Rep^{2}|w|^{2}}{2}+ \psi_{\omega}^{(0)}\st \varepsilon^{2}\Rep w \dt}$ }
&
{\scriptsize ${  \psi_{\omega}^{(\dagger)}\st \rep w \dt}$}
&
{\scriptsize ${  \psi_{\omega}^{(\dagger)}\st \varepsilon^{2}\Rep w \dt}$}\\

{\scriptsize}&{\scriptsize}&{\scriptsize}&{\scriptsize}&{\scriptsize}\\
\hline
{\scriptsize}&{\scriptsize}&{\scriptsize}&{\scriptsize}&{\scriptsize}\\

{\scriptsize  \textbf{$\mathcal{O}\st\varepsilon^{2m}\rep^{2-2m}\dt$}}
&
{\scriptsize    $-\cgaj  B_{j}^{2m}\varepsilon^{2m}\rep^{2-2m}|w|^{2-2m}$   $+\hat{f}_{j,\bg,\cg,\hg,\kg}^{o}\cgaj B_{j}^{2m} \rep^{2-2m}|w|^{2-2m}$   $- C_{j}\varepsilon^{2m}\rep^{4-2m}|w|^{4-2m}$       $+\hat{f}_{j,\bg,\cg,\hg,\kg}^{o}C_{j}\rep^{4-2m}|w|^{4-2m}$}
&
{\scriptsize {  $- \cgaj  \mathfrak{b}_{j}^{2m}\varepsilon^{2}\Rep^{2-2m}|w|^{2-2m}$ $+\frac{  \cga \st m-1 \dt s_{\omega}\mathfrak{b}^{2m}\varepsilon^{4}\Rep^{4-2m} }{2\st m-2 \dt   m\st m+1 \dt } |w|^{4-2m}$} }&{\scriptsize  }&{\scriptsize } \\

{\scriptsize}&{\scriptsize}&{\scriptsize}&{\scriptsize}&{\scriptsize}\\
\hline
{\scriptsize}&{\scriptsize}&{\scriptsize}&{\scriptsize}&{\scriptsize}\\

{\scriptsize \textbf{$\mathcal{O}\st \rep^{\beta} \dt$ for first two columns, $\mathcal{O}\st \rep^{\sigma} \dt$ for third and fourth columns}  }
&
{\scriptsize {   } }
&
{\scriptsize ${  \varepsilon^{2}H_{\tilde{h}_{j}^{(0)},\tilde{k}_{j}^{(0)}}^{I}\st w \dt}$ }
&
{\scriptsize ${  H_{h_{j}^{(\dagger)},k_{j}^{(\dagger)}}^{o}\st w \dt}$ }&{\scriptsize ${  \varepsilon^{2}H_{\tilde{h}_{j}^{(\dagger)},\tilde{k}_{j}^{(\dagger)}}^{I}\st w \dt}$}\\

{\scriptsize}&{\scriptsize}&{\scriptsize}&{\scriptsize}&{\scriptsize}\\
\hline
{\scriptsize}&{\scriptsize}&{\scriptsize}&{\scriptsize}&{\scriptsize}\\

{\scriptsize \textbf{$o\st \rep^{\beta} \dt$ for first two columns, $o\st \rep^{\sigma} \dt$ for third and fourth columns}    }
&
{\scriptsize $B_{j}^{2}\varepsilon^{2}\psi_{\eta_{j}}^{(0)}\st \frac{\rep w}{B_{j}	\varepsilon} \dt$$-\sq \varepsilon^{2m}{\bf{G}}_{\bg,\cg}\st \rep w \dt\right.$ $- \cgaj  B_{j}^{2m}\varepsilon^{2m}\rep^{2-2m}|w|^{2-2m}$    $\left.- \cgaj  C_{j}\varepsilon^{2m}\rep^{4-2m}|w|^{4-2m} \dq^{(0)}$
$+\sq f_{\bg,\cg,\hg,\kg}^{o}- \hat{f}_{j,\bg,\cg,\hg,\kg}^{o}\cgaj B_{j}^{2m} \rep^{2-2m}|w|^{2-2m}\right.$ $\left.-\hat{f}_{j,\bg,\cg,\hg,\kg}^{o}C_{j}\rep^{4-2m}|w|^{4-2m}\dq^{(0)}$ }
&
{\scriptsize $\mathfrak{b}_{j}^{2}\varepsilon^{2}\psi_{\eta_{j}}^{(0)}\st \frac{\Rep w}{\mathfrak{b}_{j}} \dt$

$+\varepsilon^{2}V^{(0)}\st \frac{\Rep w}{\mathfrak{b}_{j}} \dt$
$+\varepsilon^{2}\sq \hkii\st \frac{\Rep w}{\mathfrak{b}_{j}} \dt-   H_{\tilde{h}_{j},\tilde{k}_{j}}^{I}\st w \dt\dq^{(0)}$
$+\sq  \varepsilon^{2}f_{j,\mathfrak{b}_{j},\tilde{h}_{j},\tilde{k}_{j}}^{I}\st \frac{\Rep w}{\mathfrak{b}_{j}} \dt \dq^{(0)}$}&{\scriptsize $B_{j}^{2}\varepsilon^{2}\psi_{\eta_{j}}^{(\dagger)}\st \frac{\rep w}{B_{j}	\varepsilon} \dt$
$-\sq \varepsilon^{2m}{\bf{G}}_{\bg,\cg}\st \rep w \dt  \dq^{(\dagger)}$
$+\sq f_{\bg,\cg,\hg,\kg}^{o}\dq^{(\dagger)}$}&{\scriptsize $\mathfrak{b}_{j}^{2}\varepsilon^{2}\psi_{\eta_{j}}^{(\dagger)}\st \frac{\Rep w}{\mathfrak{b}_{j}} \dt$
                                               
$+\varepsilon^{2}V^{(\dagger)}\st \frac{\Rep w}{\mathfrak{b}_{j}} \dt$
$+\varepsilon^{2}\sq \hkii\st \frac{\Rep w}{\mathfrak{b}_{j}} \dt-   H_{\tilde{h}_{j},\tilde{k}_{j}}^{I}\st w \dt\dq^{(\dagger)}$
$+\sq \varepsilon^{2}f_{j,\mathfrak{b}_{j},\tilde{h}_{j},\tilde{k}_{j}}^{I}\st \frac{\Rep w}{\mathfrak{b}_{j}} \dt \dq^{(\dagger)}$ }\\

{\scriptsize}&{\scriptsize}&{\scriptsize}&{\scriptsize}&{\scriptsize}\\

\hline 
\end{tabular}
\end{table}
\endlandscape

\restoregeometry



\newpage
\thispagestyle{empty}
\newgeometry{left=2.5cm,right=1cm,bottom=1cm, top=1cm}

\landscape

\begin{table}
\centering
\caption{\textbf{Sizes in $\varepsilon$ after ``by hand'' identifications}}\label{tab1}
\begin{tabular}{p{4cm}|| p{6.3cm}| p{4.5cm}|| p{4.2cm}| p{4.5cm} } 
\hline

{\scriptsize}&{\scriptsize}&{\scriptsize}&{\scriptsize}&{\scriptsize}\\

{\scriptsize {Size in $\varepsilon$ } }
&
{\scriptsize   $\sq\mathcal{V}^{o}_{j,\bg,\cg,\hg,\kg}\st \rep w \dt\dq^{(0)}$   }
&
{\scriptsize   $\sq\mathcal{V}^{I}_{j,\mathfrak{b}_{j},\tilde{h}_{j},\tilde{k}_{j}}\st \frac{\Rep w}{\mathfrak{b}_{j}} \dt\dq^{(0)}$  }
&
{\scriptsize   $\sq\mathcal{V}^{o}_{j,\bg,\cg,\hg,\kg}\st \rep w \dt\dq^{(\dagger)}$    }
&
{\scriptsize   $\sq\mathcal{V}^{I}_{j,\mathfrak{b}_{j},\tilde{h}_{j},\tilde{k}_{j}}\st \frac{\Rep w}{\mathfrak{b}_{j}} \dt\dq^{(\dagger)}$  } \\ 

{\scriptsize}&{\scriptsize}&{\scriptsize}&{\scriptsize}&{\scriptsize}\\
\hline
\hline
{\scriptsize}&{\scriptsize}&{\scriptsize}&{\scriptsize}&{\scriptsize}\\

{\scriptsize \textbf{$o\st \rep^{2} \dt$ for first two columns, $o\st \rep^{4} \dt$ for third and fourth columns} }
&
{\scriptsize ${   \frac{\rep^{2}|w|^{2}}{2}+ \psi_{\omega}^{(0)}\st \rep w \dt}$ }&{\scriptsize ${  \frac{\varepsilon^{2}\Rep^{2}|w|^{2}}{2}+ \psi_{\omega}^{(0)}\st \varepsilon^{2}\Rep w \dt}$ }
&
{\scriptsize ${  \psi_{\omega}^{(\dagger)}\st \rep w \dt}$}&{\scriptsize ${  \psi_{\omega}^{(\dagger)}\st \varepsilon^{2}\Rep w \dt}$}\\

{\scriptsize}&{\scriptsize}&{\scriptsize}&{\scriptsize}&{\scriptsize}\\
\hline
{\scriptsize}&{\scriptsize}&{\scriptsize}&{\scriptsize}&{\scriptsize}\\

{\scriptsize  \textbf{$\mathcal{O}\st\varepsilon^{2m}\rep^{2-2m}\dt$}}
&
{\scriptsize    $-\cgaj  B_{j}^{2m}\varepsilon^{2m}\rep^{2-2m}|w|^{2-2m}$   $-\st h_{j}^{(0)}+\frac{k_{j}^{(0)}}{4m-8}\dt|w|^{2-2m}$     $+\hat{f}_{j,\bg,\cg,\hg,\kg}^{o}\cgaj B_{j}^{2m} \rep^{2-2m}|w|^{2-2m}$       $- C_{j}\varepsilon^{2m}\rep^{4-2m}|w|^{4-2m}$$-\frac{k_{j}^{(0)}}{4m-8}|w|^{4-2m}$         $+\hat{f}_{j,\bg,\cg,\hg,\kg}^{o}C_{j}\rep^{4-2m}|w|^{4-2m}$
  }
&
{\scriptsize {  $- \cgaj  \mathfrak{b}_{j}^{2m}\varepsilon^{2}\Rep^{2-2m}|w|^{2-2m}$ $+\frac{  \cga \st m-1 \dt s_{\omega}\mathfrak{b}^{2m}\varepsilon^{4}\Rep^{4-2m} }{2\st m-2 \dt   m\st m+1 \dt } |w|^{4-2m}$} }&{\scriptsize  }&{\scriptsize } \\

{\scriptsize}&{\scriptsize}&{\scriptsize}&{\scriptsize}&{\scriptsize}\\
\hline
{\scriptsize}&{\scriptsize}&{\scriptsize}&{\scriptsize}&{\scriptsize}\\

{\scriptsize \textbf{$\mathcal{O}\st \rep^{\beta} \dt$ for first two columns, $\mathcal{O}\st \rep^{\sigma} \dt$ for third and fourth columns}  }
&
{\scriptsize {   $H_{h_{j}^{(0)},k_{j}^{(0)}}^{o}\st w \dt$}} 
&
{\scriptsize ${  \varepsilon^{2}H_{\tilde{h}_{j}^{(0)},\tilde{k}_{j}^{(0)}}^{I}\st w \dt}$ }
&
{\scriptsize ${  H_{h_{j}^{(\dagger)},k_{j}^{(\dagger)}}^{o}\st w \dt}$ }&{\scriptsize ${  \varepsilon^{2}H_{\tilde{h}_{j}^{(\dagger)},\tilde{k}_{j}^{(\dagger)}}^{I}\st w \dt}$}\\

{\scriptsize}&{\scriptsize}&{\scriptsize}&{\scriptsize}&{\scriptsize}\\
\hline
{\scriptsize}&{\scriptsize}&{\scriptsize}&{\scriptsize}&{\scriptsize}\\

{\scriptsize \textbf{$o\st \rep^{\beta} \dt$ for first two columns, $o\st \rep^{\sigma} \dt$ for third and fourth columns}    }
&
{\scriptsize $B_{j}^{2}\varepsilon^{2}\psi_{\eta_{j}}^{(0)}\st \frac{\rep w}{B_{j}	\varepsilon} \dt$$-\sq \varepsilon^{2m}{\bf{G}}_{\bg,\cg}\st \rep w \dt\right.$ $- \cgaj  B_{j}^{2m}\varepsilon^{2m}\rep^{2-2m}|w|^{2-2m}$    $\left.- \cgaj  C_{j}\varepsilon^{2m}\rep^{4-2m}|w|^{4-2m} \dq^{(0)}$
$+\sq f_{\bg,\cg,\hg,\kg}^{o}- \hat{f}_{j,\bg,\cg,\hg,\kg}^{o}\cgaj B_{j}^{2m} \rep^{2-2m}|w|^{2-2m}\right.$ $\left.-\hat{f}_{j,\bg,\cg,\hg,\kg}^{o}C_{j}\rep^{4-2m}|w|^{4-2m}\dq^{(0)}$ }
&
{\scriptsize $\mathfrak{b}_{j}^{2}\varepsilon^{2}\psi_{\eta_{j}}^{(0)}\st \frac{\Rep w}{\mathfrak{b}_{j}} \dt$

$+\varepsilon^{2}V^{(0)}\st \frac{\Rep w}{\mathfrak{b}_{j}} \dt$
$+\varepsilon^{2}\sq \hkii\st \frac{\Rep w}{\mathfrak{b}_{j}} \dt-   H_{\tilde{h}_{j},\tilde{k}_{j}}^{I}\st w \dt\dq^{(0)}$
$+\sq  \varepsilon^{2}f_{j,\mathfrak{b}_{j},\tilde{h}_{j},\tilde{k}_{j}}^{I}\st \frac{\Rep w}{\mathfrak{b}_{j}} \dt \dq^{(0)}$}&{\scriptsize $B_{j}^{2}\varepsilon^{2}\psi_{\eta_{j}}^{(\dagger)}\st \frac{\rep w}{B_{j}	\varepsilon} \dt$
$-\sq \varepsilon^{2m}{\bf{G}}_{\bg,\cg}\st \rep w \dt  \dq^{(\dagger)}$
$+\sq f_{\bg,\cg,\hg,\kg}^{o}\dq^{(\dagger)}$}&{\scriptsize $\mathfrak{b}_{j}^{2}\varepsilon^{2}\psi_{\eta_{j}}^{(\dagger)}\st \frac{\Rep w}{\mathfrak{b}_{j}} \dt$
                                               
$+\varepsilon^{2}V^{(\dagger)}\st \frac{\Rep w}{\mathfrak{b}_{j}} \dt$
$+\varepsilon^{2}\sq \hkii\st \frac{\Rep w}{\mathfrak{b}_{j}} \dt-   H_{\tilde{h}_{j},\tilde{k}_{j}}^{I}\st w \dt\dq^{(\dagger)}$
$+\sq \varepsilon^{2}f_{j,\mathfrak{b}_{j},\tilde{h}_{j},\tilde{k}_{j}}^{I}\st \frac{\Rep w}{\mathfrak{b}_{j}} \dt \dq^{(\dagger)}$ }\\

{\scriptsize}&{\scriptsize}&{\scriptsize}&{\scriptsize}&{\scriptsize}\\

\hline 
\end{tabular}
\end{table}
\endlandscape

\restoregeometry				

The problem of gluing metrics $\omega_{\bg,\cg,\hg,\kg}$ and $\varepsilon^{2}\eta_{j,\mathfrak{b}_{j},\tilde{h}_{j},\tilde{k}_{j}}$ is equivalent to the problem of finding the correct parameters such that at $\Sp^{2m-1	}$ there is a $C^{3}$ matching of potentials $\mathcal{V}^{o}_{j,\bg,\cg,\hg,\kg}\st \rep w \dt$ and $\mathcal{V}^{I}_{j,\mathfrak{b}_{j},\tilde{h}_{j},\tilde{k}_{j}}\st \frac{\Rep w}{\mathfrak{b}_{j}} \dt$. As proved in \cite{ap1}   there is the $C^{3}$ matching at the boundaries  if and only if the following system is verified    
\begin{equation}
(\Sigma_{j}):\left\{
\begin{array}{rcl}
\mathcal{V}^{o}_{j,\bg,\cg,\hg,\kg}\st \rep w \dt&=&\mathcal{V}^{I}_{j,\mathfrak{b}_{j},\tilde{h}_{j},\tilde{k}_{j}}\st \frac{\Rep w}{\mathfrak{b}_{j}} \dt\\
\partial_{|w|}\mathcal{V}^{o}_{j,\bg,\cg,\hg,\kg}\st \rep w \dt&=&\partial_{|w|}\mathcal{V}^{I}_{j,\mathfrak{b}_{j},\tilde{h}_{j},\tilde{k}_{j}}\st \frac{\Rep w}{\mathfrak{b}_{j}} \dt\\
\Delta\mathcal{V}^{o}_{j,\bg,\cg,\hg,\kg}\st \rep w \dt&=&\Delta\mathcal{V}^{I}_{j,\mathfrak{b}_{j},\tilde{h}_{j},\tilde{k}_{j}}\st \frac{\Rep w}{\mathfrak{b}_{j}} \dt\\
\partial_{|w|}\Delta\mathcal{V}^{o}_{j,\bg,\cg,\hg,\kg}\st \rep w \dt&=&\partial_{|w|}\Delta\mathcal{V}^{I}_{j,\mathfrak{b}_{j},\tilde{h}_{j},\tilde{k}_{j}}\st \frac{\Rep w}{\mathfrak{b}_{j}} \dt
\end{array}\right.
\end{equation}

After choices \eqref{eq:sceltaB},\eqref{eq:sceltaC}, \eqref{eq:sceltagamma} and some algebraic manipulations, systems $(\Sigma_{j})$ become 


\begin{equation}
(\Sigma_{j}):\left\{
\begin{array}{rcl}
\varepsilon^{2}\tilde{h}_{j}&=&h_{j}- \xi_{j}\\
\varepsilon^{2}\tilde{k}_{j}&=&k_{j}- \Delta\xi_{j}\\
\partial_{|w|}\sq H_{h_{j},k_{j}}^{o}-H_{h_{j},k_	{j}}^{I}\dq&=& \partial_{|w|}\st \xi_{j}-H_{\xi_{j},\Delta\xi_{j}}^{I}\dt\\
\partial_{|w|}\Delta\sq H_{h_{j},k_{j}}^{o}-H_{h_{j},k_{j}}^{I}\dq&=& \partial_{|w|}\Delta\st\xi_{j}-H_{\xi_{j},\Delta\xi_{j}}^{I}\dt
\end{array}\right.
\end{equation}
with  $\xi_{j}$ a function depending linearly on elements of fourth row of table \ref{tab1}.  Using Theorem \ref{dirneu} we define the operators 

\begin{equation}
\mathcal{S}_{j}\st \varepsilon^{2}\tilde{h}_{j},\varepsilon^{2}\tilde{k}_{j} , h_{j},k_{j} \dt:=\st h_{j}- \xi_{j},  k_{j}- \Delta\xi_{j},\mathcal{Q}\st \partial_{|w|}\st \xi_{j}-H_{\xi_{j},\Delta\xi_{j}}^{I}\dt,\partial_{|w|}\Delta\st\xi_{j}-H_{\xi_{j},\Delta\xi_{j}}^{I}\dt   \dt \dt
\end{equation}

and then the operator $\mathcal{S}: \dombd^{2}\rightarrow \ambbda^{2}$

\begin{equation}
\mathcal{S}:=\st \mathcal{S}_{1},\ldots, \mathcal{S}_{N} \dt\,.
\end{equation}

Note also that biharmonic extensions, seen as operators 
\begin{equation}
H_{\cdot,\cdot}^{o},H_{\cdot,\cdot}^{I}:C^{4,\alpha}\st \Sp^{2m-1} \dt\times C^{2,\alpha}\st \Sp^{2m-1} \dt\rightarrow C^{4,\alpha}\st \Sp^{2m-1} \dt
\end{equation}
and the operator
\begin{equation}
\mathcal{Q}:C^{3,\alpha}\st \Sp^{2m-1} \dt\times C^{1,\alpha}\st \Sp^{2m-1} \dt\rightarrow C^{4,\alpha}\st \Sp^{2m-1} \dt\times C^{2,\alpha}\st \Sp^{2m-1} \dt
\end{equation}
defined in Proposition \ref{dirneu}, preserve eigenspaces of $\Delta_{\Sp^{2m-1}}$.

Thanks to the explicit knowledge of the various  terms we constructed ``by hand'' and that don't match perfectly (fourth row of table \ref{tab1}) we can find  $\kappa>0$ such that 
\begin{equation}
\mathcal{S}: \dombd^{2}\rightarrow \dombd^{2}\,.
\end{equation}
Now the conclusion follows immediately applying a Picard iteration scheme and standard regularity theory.
\end{proof}

\newpage

\section{Applications}

\noindent In this Section we list few examples where our results can be applied. We have confined ourselves to the case when $M$ is a {\em{toric}} K\"ahler-Einstein orbifold, but there is no doubt that this is far from a comprehensive list.

\begin{example}
Consider $\st\PP^{1}\times\PP^{1},\pi_{1}^{*}\omega_{FS}+\pi_{2}^{*}\omega_{FS}\dt$ and let $\ZZ_{2}$ act in the following way
\begin{equation}\st[x_{0}:x_{1}],[y_{0}:y_{1}]\dt\longrightarrow  \st[x_{0}:-x_{1}],[y_{0}:-y_{1}]\dt\end{equation} 

\noindent It's immediate to check that this action is in $SU(2)$ with four fixed points
\begin{gather}
p_{1}= \st[1:0],[1:0]\dt \\
p_{2}= \st[1:0],[0:1]\dt \\
p_{3}= \st[0:1],[1:0]\dt \\
p_{4}= \st[0:1],[0:1]\dt 
\end{gather}

\noindent The quotient space $X_{2}:=\PP^{1}\times\PP^{1}/\ZZ_{2}$ is a K\"ahler-Einstein, Fano orbifold. Since it is K\"ahler-Einstein, conditions for applying our construction become exactly the conditions of  \cite{ap2}, so we have to verify that the matrix
\begin{equation}
\Theta\st \textrm{\bf{1}} , s_{\omega} \textrm{\bf{1}}\dt  =\st \frac{ s_{\omega}}{2} \varphi_{j}\st p_{i} \dt \dt_{\substack{ 1\leq i \leq 2 \\1\leq j\leq 4}}\,.
\end{equation}
has full rank and there exist a positive element in $\ker \Theta\st \textrm{\bf{1}} , s_{\omega} \textrm{\bf{1}}\dt$. It is immediate to see that we have 
\begin{equation}H^{0}\st X_{2}, T^{(1,0)}X_{2} \dt=H^{0}\st \PP^{1}/\ZZ_{2}, T^{(1,0)}\st  \PP^{1}/\ZZ_{2} \dt \dt\oplus H^{0}\st \PP^{1}/\ZZ_{2}, T^{(1,0)}\st  \PP^{1}/\ZZ_{2} \dt \dt\,.\end{equation}
Moreover \begin{equation}H^{0}\st \PP^{1}/\ZZ_{2}, T^{(1,0)}\st  \PP^{1}/\ZZ_{2} \dt \dt\end{equation} is generated by holomorphic vector fields on $\PP^{1}$ that vanish on points 
$[0:1],[1:0]$ so 
\begin{equation}\dim_{\CC} H^{0}\st \PP^{1}/\ZZ_{2}, T^{(1,0)}\st  \PP^{1}/\ZZ_{2} \dt \dt=1\end{equation}
and an explicit generator is the vector field
\begin{equation}V=z^{1}\partial_{1}\,.\end{equation} 
We can compute explicitly its potential $\varphi_{V}$ with respect to $\omega_{FS}$ that is
\begin{equation}\varphi_{V}\st [z_{0}:z_{1}] \dt=- \frac{|z_{0}z_{1}|}{|z_{0}|^{2}+|z_{1}|^{2}}+\frac{1}{2}\end{equation}
and it is easy to see that it is a well defined function and 
\begin{equation}\int_{\PP^1}\varphi_{V}\omega_{FS}=0\,.\end{equation}
Summing up everything, we have that the matrix $\Theta\st \textrm{\bf{1}} , s_{\omega} \textrm{\bf{1}}\dt$ for $X_{2}$ is a $2\times 4$ matrix and can be written explicitly
\begin{equation}\Theta\st \textrm{\bf{1}} , s_{\omega} \textrm{\bf{1}}\dt=\frac{s_{\omega}}{2}\begin{pmatrix}
-1&-1 & 1&1\\
-1&1&-1&1
\end{pmatrix}\end{equation}
that has rank $2$ and every vector of type $\st a,b,b,a \dt$ for $a,b>0$ lies in  $\ker\Theta\st \textrm{\bf{1}} , s_{\omega} \textrm{\bf{1}}\dt$.

\end{example}

\begin{example}

Consider $\st\PP^{2},\omega_{FS}\dt$ and let $\ZZ_{3}$ act in the following way
\begin{equation}[z_{0}:z_{1}:z_{2}]\longrightarrow  [x_{0}:\zeta_{3}x_{1}:\zeta_{3}^{2}x_{2}]\qquad \zeta_{3}\neq 1, \zeta_{3}^{3}=1\end{equation} 

\noindent It's immediate to check that this action is in $SU(2)$ with three fixed points
\begin{gather}
p_{1}= [1:0:0]\\
p_{2}= [0:1:0] \\
p_{3}= [0:0:1] 
\end{gather}

\noindent The quotient space $X_{3}:=\PP^{2}/\ZZ_{3}$ is a K\"ahler-Einstein, Fano orbifold and it is isomorphic to the singular cubic surface in $\PP^{3}$
\begin{equation}
x_{1}x_{2}x_{3}-x_{0}^{3}=0\,.
\end{equation}
Again, conditions for applying our construction become exactly the conditions of Theorem \cite[Theorem  ]{ap2}, so we have to verify that the matrix
\begin{equation}
\Theta\st \textrm{\bf{1}} , s_{\omega} \textrm{\bf{1}}\dt  =\st \frac{ 2s_{\omega}}{3} \varphi_{j}\st p_{i} \dt \dt_{\substack{ 1\leq i \leq 2 \\1\leq j\leq 3}}\,.
\end{equation}
has full rank and there exist a positive element in $\ker \Theta\st \textrm{\bf{1}} , s_{\omega} \textrm{\bf{1}}\dt $. It is immediate to see that we have 
\begin{equation}
\dim_{\CC} H^{0}\st X_{3}, T^{(1,0)}X_{3} \dt=2
\end{equation}
because $H^{0}\st X_{3}, T^{(1,0)}X_{3} \dt$ it is generated by holomorphic vector fields on $\PP^{2}$ vanishing at points $p_{1},p_{2},p_{3}$.
Explicit generators are the vector fields
\begin{gather}
V_{1}=z^{1}\partial_{1}+z^{2}\partial_{2}\\
V_{2}=z^{0}\partial_{0}+z^{1}\partial_{1}
\end{gather} 
We can compute explicitly their potentials $\phi_{V_{1}},\phi_{V_{2}}$ with respect to $\omega_{FS}$ that are
\begin{equation}\phi_{V_{1}}\st [z_{0}:z_{1}:z_{2}] \dt=  -\frac{|z^{0}|^{2}}{|z^{0}|^{2}+|z^1|^{2}+|z^{2}|^{2}}+\frac{1}{3}\end{equation}
\begin{equation}\phi_{V_{2}}\st [z_{0}:z_{1}:z_{2}] \dt=-\frac{|z^{2}|^{2}}{|z^{0}|^{2}+|z^1|^{2}+|z^{2}|^{2}}+\frac{1}{3}\end{equation}
and it is easy to see that are well defined functions and 
\begin{equation}\int_{\PP^2}\phi_{V_{1}}\frac{\omega_{FS}^{2}}{2}=\int_{\PP^2}\phi_{V_{1}}\frac{\omega_{FS}^{2}}{2}=0\,\end{equation}
One can check that

\begin{align}
\varphi_{1}=&-3\st \phi_{1}+2\phi_{2}  \dt\\
\varphi_{2}=&-3\st 2\phi_{1}+\phi_{2}  \dt
\end{align}
is a basis of the space of potentials of holomorphic vector fields vanishing somewhere on $X_{3}$. Summing up everything, we have that the matrix $\Theta\st \textrm{\bf{1}} , s_{\omega} \textrm{\bf{1}}\dt$ for $X_{3}$ is a $2\times 3$ matrix and can be written explicitly
\begin{equation}\Theta\st \textrm{\bf{1}} , s_{\omega} \textrm{\bf{1}}\dt=\frac{2s_{\omega}}{3}\begin{pmatrix}
1&-1 & 0\\
0&-1&1
\end{pmatrix}\end{equation}
that has rank $2$ and every vector of type $\st a,a,a \dt$ for $a>0$ lies in  $\ker \Theta\st \textrm{\bf{1}} , s_{\omega} \textrm{\bf{1}}\dt$.

\end{example}

\subsection{Equivariant version and partial desingularizations}

\noindent If the manifold is acted on by a compact group it is immediate to observe that our proof goes through taking at every step of the proof equivariant spaces and averaging on the group with its Haar measure. We can then use the following

\begin{teo}\label{maintheoremequiv}
Let $\st M,\omega, g\dt$ be a compact Kcsc orbifold with isolated singularities and let $G$ be a compact subgroup of holomorphic isometries
such that $\omega $ is invariant under the action of $G$. Let $\p\:=\sg p_1,\ldots,p_{N}\dg\subseteq M$ the set of points  with neighborhoods biholomorphic to  a ball of  $\CC^m/\Gamma_{j}$ with $\Gamma_{j}$ nontrivial such that $\CC^{m}/\Gamma_{j}$ admits an ALE  Kahler Ricci-flat resolution $\st X_{\Gamma_{j}},h,\eta_{j} \dt$
and
\begin{align}
\ker\st   \Lg  \dt^{G} :=&\ker\st   \Lg  \dt\cap \left\{f\in C^{2}\st M \dt| \gamma^{*}\dd f=\dd f \quad \forall\gamma \in G \right\}\\
=&\left<1,\varphi_1,\ldots,\varphi_d \right>\,.
\end{align}

Suppose moreover that there exist $ \bg \in (\mathbb{R}^{+})^{N}$ and $\cg\in\RR^{N}$ such that
			
		\begin{displaymath}\label{eq:matricebalequiv}
		\left\{\begin{array}{lcl}
		\sum_{j=1}^{N}b_{j}\Delta_{\omega}\varphi_{i}\st p_{j} \dt+c_{j}\varphi_{i}\st p_{j} \dt=0 && i=1,\ldots, d\\
		&&\\
		\st b_{j}\Delta_{\omega}\varphi_{i}\st p_{j} \dt+c_{j}\varphi_{i}\st p_{j} \dt \dt_{\substack{1\leq i\leq d\\1\leq j\leq N}}&& \textrm{has full rank}
		\end{array}\right.
		\end{displaymath}
	
	If 
	\begin{equation}\label{eq:tuningequiv}
	c_{j}=s_{\omega}b_{j}\,,
	\end{equation}
then
\[
\tilde{M} : = M \sqcup _{{p_{1}, \varepsilon}} X_{\Gamma_1} \sqcup_{{p_{2},\varepsilon}} \dots
\sqcup _{{p_N, \varepsilon}} X_{\Gamma_N}
\]

\noindent has a Kcsc metric in the class 
\begin{equation}
\pi^{*}\sq\omega \dq+ \sum_{j=1}^{N}\varepsilon^{2m}\mathfrak{b}_{j}^{2m}\sq \tilde{\eta}_{j} \dq
\end{equation}
\noindent where
\begin{itemize}
\item 
$\mathfrak{i}_{j}^{*}\sq \tilde{\eta}_{j} \dq=[\eta_{j}]$
with  $\mathfrak{i}_{j}:X_{\Gamma_{j},\Rep}\hookrightarrow \tilde{M}$,
\item
$\left|\mathfrak{b}_{j}^{2m} - \frac{|\Gamma_{j}|b_{j}}{2\st m-1 \dt}\right| \leq C \varepsilon^{\gamma}$, for some $\gamma>0$.
\end{itemize}
\end{teo}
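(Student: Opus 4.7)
The plan is to repeat verbatim the proof of Theorem \ref{maintheorem}, but systematically restricting every function space to its $G$-invariant subspace and using $G$-averaged cutoffs and parameters. The starting point is the observation that if $\omega$ is $G$-invariant then all the operators that enter the construction—namely $\Lg$, $\Delta_\omega$, $\mathbb{N}_\omega$, the Euclidean biharmonic operator on the local models, and the Lichnerowicz operator $\mathbb{L}_{\eta_j}$ on each Ricci-flat resolution $(X_{\Gamma_j},\eta_j)$ (provided one equips $X_{\Gamma_j}$ with the action of the stabilizer of $p_j$, trivially extended near infinity)—commute with the pullback by elements of $G$. Consequently, the weighted H\"older spaces $C^{k,\alpha}_\delta(M_{\bf p})$, $C^{k,\alpha}_\delta(X_{\Gamma_j})$, the deficiency spaces $\mathcal{D}_{\bf p}({\bf b},{\bf c})$ and the sphere spaces $\mathcal{B}_j$ all split into $G$-invariant and anti-invariant summands, and the whole construction preserves the $G$-invariant part.

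The linear analysis of Section~3 must first be redone equivariantly. By standard averaging, the formally self-adjoint operator
\[
\Lg \,:\, [C^{4,\alpha}_\delta(M_{\bf p})]^G \,\longrightarrow\, [C^{0,\alpha}_{\delta-4}(M_{\bf p})]^G
\]
is Fredholm with bounded kernel equal to $\ker(\Lg)^G=\langle 1,\varphi_1,\dots,\varphi_d\rangle$, and its cokernel coincides with this same kernel; the right inverse $\mathbb{J}^{(\delta)}_{{\bf 1},{\bf c}}$ of Remark~\ref{inversadef} can be chosen to land in $G$-invariant functions simply by replacing the original inverse with its $G$-average. The multi-poles fundamental solutions ${\bf G}_{{\bf b},{\bf c}}$ of Proposition~\ref{balancrough} are then automatically $G$-invariant provided the datum $\sum_j b_j\Delta\delta_{p_j}+c_j\delta_{p_j}$ is, which, since the balancing condition \eqref{eq:matricebalequiv} is only tested against $G$-invariant $\varphi_i$, forces $(b_j,c_j)$ to be constant along $G$-orbits of $\mathbf{p}$—equivalent to averaging ${\bf b}$ and ${\bf c}$ over $G$. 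The non-degeneracy hypothesis together with \eqref{eq:tuningequiv} thus supplies an equivariant right inverse to $\Lg+\nu$ with image in $[C^{4,\alpha}_\delta(M_{\bf p})]^G\oplus [\mathcal{D}_{\bf p}({\bf b},{\bf c})]^G$.

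The nonlinear construction of Proposition~\ref{crucialbase} and Proposition~\ref{crucialmodello} goes through without changes: the skeletons, the correction functions $v_4,v_5,v^{(0)},v^{(2)},v^{(3)}$ on each $X_{\Gamma_j}$, and the biharmonic extensions $H^o_{h,k}, H^I_{\tilde h,\tilde k}$ are all $G$-invariant by construction (since $\psi_\omega, \psi_{\eta_j}, {\bf G}_{{\bf b},{\bf c}}$ are, and Fourier projection onto harmonic modes commutes with the $G$-action on $\mathbb{S}^{2m-1}/\Gamma_j$); the fixed-point operator $\mathcal{T}$ preserves the $G$-invariant slice of $C^{4,\alpha}_\delta\oplus \mathcal{D}_{\bf p}({\bf b},{\bf c})$ and of $C^{4,\alpha}_\delta(X_{\Gamma_j})$, and its contractivity estimates are those of Section~5 applied verbatim to the $G$-invariant spaces. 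The same happens for the Dirichlet–Neumann isomorphism $\mathcal{Q}$ of Theorem~\ref{dirneu}, which splits into $G$-invariant and $G$-anti-invariant components.

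The only delicate point—and the one I expect to be the main bookkeeping obstacle—is the data-matching step of Section~6. The matching operator $\mathcal{S}$ produces, for each singular point, a pair of boundary data $(\tilde h_j,\tilde k_j)$ on $\mathbb{S}^{2m-1}/\Gamma_j$, and one needs these to be equivariant with respect to the stabilizer of $p_j$ inside $G$ acting on the corresponding model. Because the inputs $h_j, k_j, \xi_j$ already lie in the $G$-invariant sector by the previous steps, and because Theorem~\ref{dirneu} is diagonal in the spherical-harmonic decomposition (hence in particular in the isotypical decomposition under the stabilizer), the output is automatically in the right equivariant subspace. The Picard iteration and the final regularity argument therefore produce $G$-invariant solutions, yielding a $G$-invariant Kcsc metric in the prescribed class on $\tilde M$ with the stated estimates on $\mathfrak{b}_j$.
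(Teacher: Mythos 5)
Your proposal is correct and follows essentially the same route as the paper, whose proof of Theorem \ref{maintheoremequiv} consists precisely of the observation that the argument for Theorem \ref{maintheorem} goes through after restricting every function space to its $G$-invariant part and averaging over $G$ with its Haar measure; you have simply spelled out that observation step by step. One minor wording issue: the balancing condition does not by itself \emph{force} $(\mathbf{b},\mathbf{c})$ to be constant along $G$-orbits of $\mathbf{p}$, but one may always \emph{choose} such a pair (average any solution over $G$, using that $\mathbf{p}$ is $G$-invariant), after which $\mathbf{G}_{\mathbf{b},\mathbf{c}}$ is $G$-invariant as you claim.
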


\noindent If the K\"ahler orbifold $(M,\omega )$ is  a toric variety,  $\omega $ is K\"ahler-Einstein and $G=\st S^{1} \dt^{m}$ then  $\omega $ is $G$-invariant (by Matsushima-Lichnerowicz) and 
\begin{equation}
\ker\st \Lg \dt^{G}=\left\{ 1,\varphi_{1},\ldots, \varphi_{m} \right\}\,.
\end{equation}

By definition, the functions  $\varphi_{j}$ are such that  
\begin{equation}
\partial^{\sharp}\varphi_{j}\st p \dt=\left.\frac{d}{dt}\sq \st e^{t\log\st \lambda_{j}^{1} \dt},\ldots, e^{t\log\st \lambda_{j}^{m} \dt} \dt \cdot p  \dq\right|_{t=0}\qquad (\lambda_{j}^{1},\ldots,\lambda_{j}^{m})\in \st\CC^{*}\dt^{m}
\end{equation}
\noindent and can be chosen in such a way that,  having set
\begin{equation}
\mu:M\rightarrow \RR^{m}\qquad \mu\st p\dt := \st \varphi_{1}\st p \dt,\ldots,\varphi_{d}\st p \dt \dt\,,
\end{equation}
the set $\mu\st M \dt$ is a convex polytope that coincides up to transformations in $SL(2,\ZZ)$ with the polytope associated to the pluri-anticanonical polarization of the toric variety $M$. Moreover 
\begin{equation}
\Lg=\Delta_{\omega}^{2}+\frac{s_{\omega}}{m}\Delta_{\omega}
\end{equation}
and 
\begin{equation}
\Delta\varphi_{j}=-\frac{s_{\omega}}{m}\varphi_{j}
\end{equation}
so 
\begin{equation}
\Theta\st \textrm{\bf{1}} , s_{\omega} \textrm{\bf{1}}\dt=\Theta\st \textrm{\bf{0}} , \frac{\st  m-1 \dt s_{\omega}}{m} \textrm{\bf{1}}\dt  =\st \frac{\st  m-1 \dt s_{\omega}}{m} \varphi_{j}\st p_{i} \dt \dt_{\substack{1\leq j\leq d\\1\leq i \leq N}}\,.
\end{equation}
Moreover the set $\mu\st \p \dt$ is a subset of the vertices of $\mu\st M \dt$,  indeed points of $\p$ are critical points for $\varphi_{j}$ since their gradients vanish at these points (indeed the holomorphic vector fields $\partial^{\sharp}\varphi_{j}$ must vanish at these points since they must preserve the isolated singularities). Assumptions of Theorem \ref{maintheoremequiv} are then satisfied if the barycenter of the set $\mu\st \p \dt$ is the origin of $\RR^{m}$.

\begin{example}
Let  $X^{(1)}$ be the toric K\"ahler-Einstein threefold whose  1-dimensional fan  $\Sigma^{(1)}_{1}$ is generated by points
\begin{equation}\Sigma^{(1)}_{1}=\left\{(1,3,-1), (-1,0,-1), (-1,-3,1), (-1,0,0), (1,0,0), (0,0,1), (0,0,-1), (1,0,1)\right\}
\end{equation}
and its $3$-dimensional fan $\Sigma^{(1)}_{3}$  is generated by $12$ cones 
\begin{align}
C_{1}:=& \left<        (-1,  0, -1),(-1, -3,  1),(-1,  0,  0)\right>\\
C_{2}:=& \left<        ( 1,  3, -1),(-1,  0, -1),(-1,  0,  0)\right>\\
C_{3}:=& \left<        (-1, -3,  1),(-1,  0,  0),( 0,  0,  1)\right>\\
C_{4}:=&\left<        ( 1,  3, -1),(-1,  0,  0),( 0,  0,  1)\right>\\
C_{5}:= &\left<        ( 1,  3, -1),(-1,  0, -1),( 0,  0, -1)\right>\\
C_{6}:=&\left<        (-1,  0, -1),(-1, -3,  1),( 0,  0, -1)\right>\\
C_{7}:= & \left<        (-1, -3,  1),( 1,  0,  0),( 0,  0, -1)\right>\\
C_{8}:=& \left<        (1,  3, -1),(1,  0,  0),(0,  0, -1)\right>\\
C_{9}:= &\left<        (1,  3, -1),(0,  0,  1),(1,  0,  1)\right>\\
C_{10}:=& \left<        (-1, -3,  1),( 1,  0,  0),( 1,  0,  1)\right>\\
C_{11}:=& \left<        (1,  3, -1),(1,  0,  0),(1,  0,  1)\right>\\
C_{12}:= & \left<        (-1, -3,  1),( 0,  0,  1),( 1,  0,  1)\right>
\end{align}

\noindent All these cones are singular and $C_{1},C_{4},C_{5},C_{7},C_{11},C_{12}$ are cones relative to affine open subsets of $X^{(1)}$ containing a $SU(3)$ singularity, while the others  are cones relative to affine open subsets of $X^{(1)}$ containing a $U(3)$ singularity. 


\noindent The  3-anticanonical polytope  $P_{-3K_{X^{(1)}}}$ is the convex hull of vertices
\begin{align}
P_{-3K_{X^{(1)}}}:=&\left<(0,-2,-3), (-3,0,0), (-3,1,3), (0,0,3), (3,-2,0),\right.\\
&\left. (0,2,3), (0,0,-3), (-3,2,0), (-3,3,3), (3,0,0), (3,-1,-3), (3,-3,-3)\right>
\end{align}


\noindent With 2-faces 
\begin{align}
F_{1}:=&\left<        ( 0, -2, -3),( 3, -3, -3),(-3,  0,  0),(-3,  1,  3),( 0,  0,  3),( 3, -2,  0)\right>\\
F_{2}:=&\left<        (-3,  1,  3),( 0,  0,  3),( 0,  2,  3),(-3,  3,  3)\right>\\
F_{3}:=&\left<        (0,  0,  3),(3, -2,  0),(0,  2,  3),(3,  0,  0)\right>\\
F_{4}:=&\left<        ( 0, -2, -3),(-3,  0,  0),( 0,  0, -3),(-3,  2,  0)\right>\\
F_{5}:=&\left<        ( 3, -1, -3),( 0,  2,  3),( 0,  0, -3),(-3,  2,  0),(-3,  3,  3),( 3,  0,  0)\right>\\
F_{6}:=&\left<        (-3,  0,  0),(-3,  1,  3),(-3,  2,  0),(-3,  3,  3)\right>\\
F_{7}:=&\left<        (3, -1, -3),(0, -2, -3),(3, -3, -3),(0,  0, -3)\right>\\
F_{8}:=&\left<        (3, -1, -3),(3, -3, -3),(3, -2,  0),(3,  0,  0)\right>
\end{align}


\noindent We have the following correspondences between cones containing a $SU(3)$-singularity and vertices of $P_{-3K_{X^{(1)}}}$
\begin{align}
C_{1}& \longleftrightarrow F_{3}\cap F_{5}\cap F_{8}=\sg (3,0,0) \dg\\
C_{4}& \longleftrightarrow F_{1}\cap F_{7}\cap F_{8}=\sg (3,-3,-3) \dg\\
C_{5}& \longleftrightarrow F_{1}\cap F_{2}\cap F_{3}=\sg (0,0,3) \dg\\
C_{7}& \longleftrightarrow F_{2}\cap F_{5}\cap F_{7}=\sg (-3,3,3) \dg\\
C_{11}& \longleftrightarrow F_{1}\cap F_{4}\cap F_{6}=\sg (-3,0,0) \dg\\
C_{12}& \longleftrightarrow F_{4}\cap F_{5}\cap F_{7}=\sg (0,0,-3) \dg
\end{align}

\noindent Since in complex dimension $3$ every $SU(3)$-singularity admits a K\"ahler crepant resolution it is then immediate to see that all assumptions of Theorem \ref{maintheoremequiv} are satisfied. 

\end{example}

\begin{example}
Let  $X^{(4)}$ be the toric K\"ahler-Einstein threefold whose  1-dimensional fan  $\Sigma^{(3)}_{1}$ is generated by points
\begin{equation}\Sigma^{(4)}_{1}=\left\{(0,3,1), (1,1,2), (1,0,0), (-1,0,0), (-2,-1,-2), (1,-3,-1)\right\}\end{equation}
and its $3$-dimensional fan $\Sigma^{(4)}_{3}$  is generated by $8$ cones

\begin{align}
C_{1}:=&\left<( 0,  3,  1),( 1,  1,  2),(-1,  0,  0)\right>\\
C_{2}:=&\left<(0, 3, 1),(1, 1, 2),(1, 0, 0)\right>\\
C_{3}:=&\left<( 0,  3,  1),(-1,  0,  0),(-2, -1, -2)\right>\\
C_{4}:=&\left<( 0,  3,  1),( 1,  0,  0),(-2, -1, -2)\right>\\
C_{5}:=&\left<( 1,  0,  0),(-2, -1, -2),( 1, -3, -1)\right>\\
C_{6}:=&\left<( 1,  1,  2),(-1,  0,  0),( 1, -3, -1)\right>\\
C_{7}:=&\left<(-1,  0,  0),(-2, -1, -2),( 1, -3, -1)\right>\\
C_{8}:=&\left<(1,  1,  2),(1,  0,  0),(1, -3, -1)\right>
\end{align}

\noindent The cones $C_{1},C_{4},C_{7},C_{8}$ are relative to affine open subsets of $X^{(4)}$ containing a $SU(3)$ singularity and the  other cones are  relative to affine open subsets of $X^{(4)}$ containing a $U(3)$ singularity.

\noindent The  5-anticanonical polytope  $P_{-5K_{X^{(4)}}}$ is the convex hull of vertices
\begin{align}
P_{-5K_{X^{(4)}}}:=&\left< (5,-1,-2), (5,0,-5), (-5,-2,1), (-5,0,0),\right.\\
&\left. (5,5,-5), (-5,-5,10), (-5,-3,9), (5,6,-8) \right>
\end{align}
With 2-faces 
\begin{align}
F_{1}:=&\left<( 5,  0, -5),(-5, -2,  1),(-5,  0,  0),( 5,  6, -8)\right>\\
F_{2}:=&\left<( 5, -1, -2),( 5,  0, -5),(-5, -2,  1),(-5, -5, 10)\right>\\
F_{3}:=&\left<(5, -1, -2),(5,  0, -5),(5,  5, -5),(5,  6, -8)\right>\\
F_{4}:=&\left<( 5, -1, -2),( 5,  5, -5),(-5, -5, 10),(-5, -3,  9)\right>\\
F_{5}:=&\left<(-5, -2,  1),(-5,  0,  0),(-5, -5, 10),(-5, -3,  9)\right>\\
F_{6}:=&\left<(-5,  0,  0),( 5,  5, -5),(-5, -3,  9),( 5,  6, -8)\right>
\end{align}


\noindent We have the following correspondences between cones containing a $SU(3)$-singularity and vertices of $P_{-5K_{X^{(4)}}}$
\begin{align}
C_{1}& \longleftrightarrow F_{1}\cap F_{2}\cap F_{5}=\sg (-5,-2,1) \dg\\
C_{4}& \longleftrightarrow F_{2}\cap F_{3}\cap F_{4}=\sg (5,-1,-2) \dg\\
C_{7}& \longleftrightarrow F_{4}\cap F_{5}\cap F_{6}=\sg (-5,-3,9) \dg\\
C_{8}& \longleftrightarrow F_{1}\cap F_{3}\cap F_{6}=\sg (5,6,-8) \dg
\end{align}

\noindent Since in complex dimension $3$ every $SU(3)$-singularity admits a K\"ahler crepant resolution it is then immediate to see that all assumptions of Theorem \ref{maintheoremequiv} are satisfied.

\end{example}


\providecommand{\bysame}{\leavevmode\hbox to3em{\hrulefill}\thinspace}
\providecommand{\MR}{\relax\ifhmode\unskip\space\fi MR }
\providecommand{\MRhref}[2]{%
  \href{http://www.ams.org/mathscinet-getitem?mr=#1}{#2}
}
\providecommand{\href}[2]{#2}


\end{document}